\documentclass[12pt]{amsart}
\usepackage{graphicx} 
\usepackage{amssymb,amscd,amsthm,amsmath,color}
\usepackage{fullpage}
\usepackage{epsfig}
\usepackage{graphicx}
\usepackage{xypic}
\usepackage{url}
\usepackage{mathtools}
\usepackage{hyperref}
\usepackage{mathrsfs}
\usepackage{quiver}
\usepackage{comment}
\usepackage{enumitem}

\numberwithin{equation}{section}

\newcommand{\PP}{\mathbb{P}}
\newcommand{\OO}{\mathcal{O}}

\newcommand{\ev}{\operatorname{ev}}

\newcommand{\codim}{\operatorname{codim}}

\newcommand{\rk}{\operatorname{rk}}

\newcommand{\Eff}{\overline{\operatorname{Eff}}}
\newcommand{\Nef}{\operatorname{Nef}}
\newcommand{\Supp}{\operatorname{Supp}}
\newcommand{\Mor}{\operatorname{Mor}}

\newcommand{\mult}{\mathrm{mult}}

\newcommand{\elle}{e}

\newtheorem{theorem}{Theorem}[section]
\newtheorem{lemma}[theorem]{Lemma}
\newtheorem{proposition}[theorem]{Proposition}
\newtheorem{corollary}[theorem]{Corollary}

\theoremstyle{definition}
\newtheorem{definition}[theorem]{Definition}
\newtheorem{notation}[theorem]{Notation}

\newtheorem{remark}[theorem]{Remark}

\newtheorem{example}[theorem]{Example}

\newtheorem{claim}[theorem]{Claim}

\definecolor{ao(english)}{rgb}{0.0, 0.5, 0.0}

\newcommand{\exctowerheight}{\Theta}

\input xy
\xyoption{all}

\makeatletter
\def\dasharrowfill@#1#2#3#4{%
        $\m@th
        \thickmuskip0mu
        \medmuskip\thickmuskip
        \thinmuskip\thickmuskip
        \relax
        #4#1\mkern2mu
        \xleaders\hbox{$#4\mkern2mu#2\mkern2mu$}\hfill
        \mkern2mu
        #3$%
}

\def\dashrightarrowfill@{\dasharrowfill@\relbar\relbar\rightarrow}

\providecommand*\xdashrightarrow[2][]{%
  \ext@arrow 0055{\dashrightarrowfill@}{#1}{#2}}
\makeatother

\author{Eric Jovinelly}
\address{Department of Mathematics \\
Brown University \\
Box 1917 \\
151 Thayer Street \\
Providence, RI 02912}
\email{eric\_jovinelly@brown.edu}

\author{Brian Lehmann}
\address{Department of Mathematics \\
Boston College  \\
Chestnut Hill, MA \, \, 02467}
\email{lehmannb@bc.edu}

\author{Eric Riedl}
\address{Department of Mathematics \\
University of Notre Dame  \\
255 Hurley Hall \\
Notre Dame, IN 46556}
\email{eriedl@nd.edu}

\title{Finding large families of rational curves through bend-and-break}

\begin{document}

\begin{abstract}
    We present a new construction that allows us to break off large-degree rational curves from families of higher genus curves.  Our construction and results deepen the connection between rational curves and positivity of the anticanonical divisor.  Specifically, we show that varieties with large Fujita invariant admit large families of rational curves.  We also construct free rational curves on certain singular Fano varieties. 
    As an explicit consequence of our results, we prove that for a general Fano hypersurface of index at least 3, all spaces of genus $g$ curves of sufficiently large degree have the expected dimension.
\end{abstract}

\maketitle

\section{Introduction}

Mori's celebrated Bend-and-Break Theorem is a critical tool for understanding the birational geometry of projective varieties.  It says that a family of maps from an irrational curve that fixes the image of a point must limit to a map from a reducible curve with at least one rational component.  One challenge with applying this theorem is that classical versions do not give much information about the reducible curve.  For example, when the image of the fixed point is general then Bend-and-Break produces a rational curve that deforms in a dominant family but does not provide much information about its degree.

Our main result is a new breaking technique for dominant families of irrational curves on a projective variety $Y$.  When $Y$ is a smooth Fano, our technique produces a free rational curve whose cohomology class is very close to the cohomology class of the original curve (see Theorem \ref{theo:exctowerbreaking}).  For arbitrary $Y$, the correct analogue is to produce a rational curve that deforms as much as the original curve.  The following theorem achieves this over an algebraically closed field of characteristic 0.

\begin{theorem} \label{theo:introexctower}
    Let $Y$ be a projective variety and $C$ be a smooth projective curve.  Suppose that $M \subset \Mor(C,Y)$ is a subvariety such that $\dim M > \dim Y$ and $M$ parameterizes a dominant family of maps $s: C \to Y$.

    Then there is a subvariety $N \subset \Mor(\PP^1, Y)$ parameterizing a dominant family of maps $f: \mathbb{P}^{1} \to Y$ such that 
    $$\dim N \geq \dim M$$
    and $s_{*}C - f_{*}\mathbb{P}^{1}$ is algebraically equivalent to an effective $1$-cycle.
\end{theorem}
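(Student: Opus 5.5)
We argue by induction on $\dim M - \dim Y$ (or on the degree of $s_*C$ against a fixed ample divisor $H$), breaking the curves repeatedly while keeping track of how much each piece deforms. If $C \cong \PP^1$ there is nothing to do, so assume $g(C) \geq 1$.

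First we reduce to fixing a point. Fix a general point $p \in C$ and consider $\ev_p: M \to Y$, which is dominant. A general fiber $M_y$ has dimension $\dim M - \dim Y \geq 1$. Choose an irreducible curve $T \subset M_y$ through a general member. After passing to a further subvariety, we may assume that maps in $T$ do not all differ by automorphisms of $C$. If $g(C)=1$, we additionally use a second point or quotient by translations so that the family is genuinely non-isotrivial. Mori's Bend-and-Break, applied to the surface $C \times \overline{T} \to Y$, then shows that the family degenerates: $s_*C$ is algebraically equivalent to $s'_*C + \sum R_i$, where the $R_i$ are rational curves and at least one rational component passes through $y = s(p)$.

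The key step is dimension bookkeeping. We want a rational curve through a general point whose family has dimension at least $\dim M$. Since $y$ is general and the broken curves come from an $(\dim M - \dim Y)$-dimensional family through $y$, we stratify the boundary of the closure of $M$ in the space of stable maps (Kontsevich space, or Chow) by the combinatorial type of the decomposition $C' \cup \bigcup R_i$. The boundary has dimension at least $\dim M - 1$. Meanwhile, the locus of reducible maps is dominated by a fiber product of families of the components glued at nodes, each gluing costing $\dim Y - 1$ conditions. Hence for some component, either the rational tails through $y$ sweep out a family $N$ with $\dim N \geq \dim M$, or the residual higher-genus part lies in a family $M'$ of smaller degree with $\dim M' > \dim Y$ still dominant. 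In the latter case we apply induction to $M'$. The inequality $\dim N + \dim M' \geq \dim M + \dim Y$ (roughly, additivity of dimension under gluing with a node) is what forces one of the two alternatives. Effectivity of $s_*C - f_*\PP^1$ is automatic, since $f_*\PP^1$ is a component of a degeneration of $s_*C$. Degree strictly drops, so the induction terminates.

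The main obstacle is this dimension count. Naive Bend-and-Break loses control: the rational component may move in a small family, and the boundary of $\overline{M}$ may have many components. I would first try to set up the ``tower'' construction suggested by the paper's terminology (cf.\ Theorem~\ref{theo:exctowerbreaking}). We repeatedly break, always choosing the component carrying the largest-dimensional deformation space, and we use a lower-semicontinuity estimate on $\dim$ of families of components through general points to guarantee that no dimension is lost at each gluing node. The delicate point is ensuring that the chosen family remains dominant at each stage, which requires choosing the breaking point general and using properness of the Chow variety to keep the limits in a bounded family.
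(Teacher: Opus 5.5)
Your proposal has a genuine gap, and it sits exactly at the step you flag as the main obstacle. The inequality $\dim N+\dim M'\ge \dim M+\dim Y$ needs an \emph{upper} bound on the dimension of a boundary stratum of $\overline{M}$, obtained from a transverse fiber-product count. For that count to be valid, the nodes must behave like general points for the evaluation maps of the families containing the components, and those families must have their expected dimensions. Bend-and-Break only makes the component through $y$ general, and none of the following is excluded:
\begin{itemize}
\item the other components can be non-free, can sweep out a proper subvariety, or can be attached at points over which the evaluation maps have excess-dimensional fibers;
\item the rational part can be a tree, which need not smooth to a map in $\Mor(\PP^1,Y)$ unless its components are free on a smooth model;
\item $\dim M$ itself can exceed the expected dimension, and for singular $Y$ there is no expected dimension at all.
\end{itemize}
In any of these situations the glued locus can be larger than your count predicts. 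A stratum of dimension $\ge \dim M-1$ then no longer yields the inequality, and nothing in the argument forces $N$ or $M'$ to be dominant. This is precisely the lack of control over the broken curve that the introduction attributes to classical Bend-and-Break.

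The induction is also incomplete even if you grant the dichotomy. Applied to $M'$, it only produces $N'$ with $\dim N'\ge\dim M'$, and typically $\dim M'<\dim M$. For instance, when all families have the expected dimension, $\dim M'=\dim M+\dim Y-\dim N\le\dim M-2$, since any dominant family of rational curves has $\dim N\ge\dim Y+2$. You would therefore have to glue members of $N$ and $N'$ at a general point and smooth the result, which again requires freeness on a smooth model.

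The missing idea is a mechanism that converts $\dim M_0$ (maps with $s(q)=p$) into a lower bound on the anticanonical degree of a broken-off rational piece. The dimension of $N$ then follows from freeness, and no boundary strata are ever counted. The paper's argument runs as follows.
\begin{itemize}
\item It passes to a smooth model and to graphs in $C\times Y$, and takes a general exceptional tower of height $k=\dim M_0-1$ based at $(q,p)$. This tower is not repeated breaking: it encodes $k$-fold tangency to a general divisor at $p$.
\item Lemma~\ref{lem: climb tower morphism scheme}(1), which applies to graphs, shows these tangency conditions cut $M_0$ down by exactly $k$, leaving a one-parameter family.
\item This family is broken once on the blow-up. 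Lemma~\ref{lem-numericalExceptionalTowers} then forces the components adjacent to the exceptional locus to meet the tower with weighted multiplicity at least $k+1$.
\item Corollary~\ref{cor: rational curves degree}, which rests on the local log tangent sheaf computation of Proposition~\ref{prop:lower_bound_exceptional_tower}, turns this into $-K\cdot R\ge\dim M_0$ on the smooth model. Here $R$ is a connected genus $0$ subcurve all of whose components pass through $p$.
\item The only remaining case, where the irrational component itself enters the tower at a level $m>0$, is handled by induction on $\dim M_0$ using Corollary~\ref{cor:meaningofvg}.
\end{itemize}
Since $p$ is general, the components of $R$ are free. So $R$ smooths to a free rational curve whose component of $\Mor(\PP^1,Y)$ has dimension $-K\cdot\beta+\dim Y\ge\dim M_0+\dim Y=\dim M$. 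This is Theorem~\ref{theo:strongest breaking result} together with the proof of Theorem~\ref{theo:exctowerbreaking}.
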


The key idea in the proof of Theorem \ref{theo:introexctower} is to break a family of curves that are highly tangent to a general divisor at a fixed general point of $Y$.  We will record the tangency information at this point by sequentially blowing-up $Y$ to obtain a ``tower'' of exceptional divisors on a birational model $Y_{k} \to Y$.  We carry out the breaking argument on $Y_{k}$ and carefully analyze how the constructed rational curves interact with the exceptional divisors to obtain the lower bound in Theorem \ref{theo:introexctower}. 

Theorem \ref{theo:introexctower} is useful both in explicit examples and for developing general theory.  The rest of the introduction is devoted to describing several applications of the construction used for our main result.  In particular, we address:
\begin{itemize}
    \item the existence of free rational curves in the smooth loci of certain singular Fano varieties,
    \item the close relationship between the dimension of families of curves and Fujita's invariant, and
    \item new results about curves on general Fano hypersurfaces.
\end{itemize}

\subsection{Existence of free rational curves} \label{sect:introfreecurves}
By combining our breaking results with the recent techniques of \cite{JLR25b} and \cite{JLR25}, we obtain new families of rational curves on mildly singular Fano varieties.  The following theorem is a simplified version of Theorem \ref{thm: nicest curves}.

\begin{theorem}\label{theo:introratcurves}
Let $X$ be a terminal Fano variety of dimension $n \geq 3$. 
For a general point $p \in X$ and for any integer $\Theta \geq 1$, there is a rational curve $s : \PP^1 \to X$ that passes through $p$ and satisfies
\begin{itemize}
\item $\Theta \leq - K_{X} \cdot s_* \PP^1 \leq \Theta + n$; and
\item $s(\PP^1)$ meets the singular locus of $X$ at most $n -2$ times.
\end{itemize}
\end{theorem}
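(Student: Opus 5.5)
We would deduce the statement from Theorem \ref{theo:introexctower}, applied on a resolution of $X$, together with a dimension count controlling how often the resulting rational curves meet $\mathrm{Sing}(X)$.

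\emph{Step 1: a large family of higher genus curves.} Since $X$ is terminal it is smooth in codimension $2$, so $\dim \mathrm{Sing}(X) \leq n-3$. Fix a smooth curve $C$ of genus $g$. Let $H$ be a very ample multiple of $-K_X$. We consider complete intersection curves in the smooth locus, or maps $C \to X_{\mathrm{sm}}$ obtained by smoothing such curves. Their normal bundles are sufficiently positive, so the space $M$ of maps $s : C \to X$ near such a map has dimension at least
$$-K_X \cdot s_*C + n(1-g)$$
at points where $s$ avoids $\mathrm{Sing}(X)$. Taking the degree large makes $\dim M > \dim X$, and the family is dominant. We then cut $M$ down by imposing passage through the general point $p$ and by fixing the anticanonical degree window we want. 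Concretely, we choose a family $M$ whose dimension exceeds $n$ by about $\Theta$, using curves of anticanonical degree roughly $\Theta + n(g-1) + n$.

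\emph{Step 2: apply Theorem \ref{theo:introexctower}.} We pass to a resolution $\phi : \widetilde X \to X$ and apply the theorem on $\widetilde X$, or directly on $X$, obtaining a dominant family $N$ of rational curves with $\dim N \geq \dim M$. We also have $s_*C - f_*\PP^1$ effective, so $-K_X \cdot f_*\PP^1 \leq -K_X \cdot s_*C$. Because $N$ is dominant, a general member passes through a general $p$. The lower bound on $\dim N$, together with the expected dimension estimate
$$\dim N \lesssim -K_X \cdot f_*\PP^1 + n + (\text{correction from singularities}),$$
gives $-K_X \cdot f_*\PP^1 \gtrsim \Theta$. To land in the window $[\Theta, \Theta+n]$, we start from a long enough curve and, if the degree is too large, break further or use an inductive descent: each application can drop the degree by at most about $n$ while keeping the lower bound. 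We realise this by choosing $M$ so that $\dim M - n$ is exactly calibrated to $\Theta$, and by using that the effective difference $s_*C - f_*\PP^1$ is controlled.

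\emph{Step 3: intersections with the singular locus.} This is the main obstacle. Each time a rational curve passes through $\mathrm{Sing}(X)$, which has codimension at least $3$, it imposes conditions. On a terminal variety the discrepancy gives a deformation deficit, so a family of rational curves meeting $\mathrm{Sing}(X)$ $m$ times has dimension at most about $-K_X\cdot f_*\PP^1 + n - 3 - (m-1)$ or similar. Comparing this with $\dim N \geq \dim M$ bounds $m \leq n-2$. We would first try to make this precise on a log resolution $\widetilde X$, where $K_{\widetilde X} = \phi^*K_X + \sum a_i E_i$ with $a_i > 0$. The dimension of the family of strict transforms is then at most
$$-K_{\widetilde X}\cdot \tilde f_*\PP^1 + n - 3 = -K_X\cdot f_*\PP^1 - \sum a_i\, E_i \cdot \tilde f + n - 3,$$
and positivity of the $a_i$ penalises each meeting. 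The delicate point is that the $a_i$ can be small fractions, so the penalty per intersection is not an integer. Here we would invoke the more refined results referenced from \cite{JLR25b, JLR25} to obtain the sharp bound $n-2$.
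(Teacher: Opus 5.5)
\textbf{There is a genuine gap.} It sits exactly at what you call the main obstacle, and it also affects the degree window.

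\emph{What the dimension count actually gives.} Make Step 3 precise on a resolution $\phi\colon X'\to X$ with $K_{X'}=\phi^*K_X+\sum a_iE_i$ and $a_i>0$. The rational curve $f$ produced by Theorem \ref{theo:exctowerbreaking} on $X'$ is free there, so its family has dimension exactly $-K_{X'}\cdot f_*\PP^1+n$. Suppose $M$ consists of free genus $g$ curves in $X^{sm}$ of anticanonical degree $d$, as you intend. Then $\dim N\ge \dim M=d+n(1-g)$ and $-K_X\cdot \phi_*f_*\PP^1\le d$ give
\[
d-ng\;\le\;-K_{X'}\cdot f_*\PP^1\;\le\;-K_X\cdot \phi_*f_*\PP^1\;\le\; d,\qquad \sum_i a_i\,E_i\cdot f_*\PP^1\;\le\; ng .
\]
That is all a dimension count can give, and it falls short in two ways.
\begin{itemize}
\item The degree window has width $ng$, not $n$. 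It cannot be narrowed by ``inductive descent'', because Theorem \ref{theo:introexctower} is vacuous when $C\cong\PP^1$, so it cannot be reapplied to the rational curves.
\item More seriously, each point of $\mathrm{Sing}(X)$ costs a discrepancy $a_i$, not $1$ or $2$. Terminal singularities such as $\tfrac1r(1,-1,1)$ have discrepancy $1/r$, so you only get at most $ng/a_{\min}$ singular points, and no choice of $g$ brings this down to $n-2$. Your formula with $n-3-(m-1)$ has no derivation, and deferring to unspecified ``refined results'' leaves the statement unproved.
\end{itemize}
Step 1 also needs repair. Complete intersection curves are not maps from a fixed $C$. Moreover, for them $2g(C)-2\ge -K_X\cdot C$, so the expected dimension of $\Mor(C,X)$ there is negative. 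The existence of $r$-free maps $C\to X^{sm}$ for every $r$ is a real input, namely \cite[Theorem 1.3]{JLR25}.

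\emph{The missing idea.} Both bounds come from a degree count in a degeneration, averaged over many points, not from the deformation space of one rational curve. The paper proceeds as follows.
\begin{enumerate}
\item Set $\delta=\max(0,\Theta-2)$ and impose height-$\delta$ exceptional towers at $k+e$ successive general points, using Lemma \ref{lem: climb tower morphism scheme}(3). This pins $d\le \sum_j(n_j+\delta)+k+gn+\delta$.
\item Degenerate via \cite[Lemma 2.1]{JLR25b}; this produces at least $k$ rational trees $T_j$.
\item Lemma \ref{lem-numericalExceptionalTowers} with Corollary \ref{cor: rational curves degree} gives $-K_{X'}\cdot R_j\ge \delta+2-\beta_j$.
\item Suppose no $R_j$ has the required properties. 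By \cite[Lemmas 4.2, 4.3]{JLR25}, a component meeting the exceptional locus in at least $n-1$ points forces at least $n-2$ further components, each of $-K_{X'}$-degree at least $1$ by terminality. Together with the discreteness of $-K_X$-degrees in steps of $a_{\min}$, each tree with $\beta_j=0$ then has $-K_X$-degree at least $n+\delta+a_{\min}$.
\item Summing over roughly $k+e$ trees (with a finer count on $C_0$ via Proposition \ref{prop:lower_bound_exceptional_tower} when many trees attach to it), the small surplus $a_{\min}$ accumulates past $d$. This is why $e'$ is chosen with $a_{\min}(k+e'-2g)>3gn+2g\delta+\delta+k$.
\item Strong induction on $\Theta$ handles trees attached to $C_0$. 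Lemma \ref{lemm:loggluing} smooths $R_j$ while preserving its contact points with the exceptional divisor.
\end{enumerate}
This is how small discrepancies are overcome, and it is where both $n-2$ and the width $n$ come from.
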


A famous conjecture of \cite{KM99} predicts that a terminal Fano variety will contain free rational curves in its smooth locus.  Theorem \ref{theo:introratcurves} proves something a little weaker: it yields rational curves of arbitrarily large degree which meet the singular locus a bounded number of times.  We can use it to prove new cases of the conjecture of \cite{KM99}.

\begin{theorem} \label{theo:freeonterminal}
Let $X$ be a terminal Fano variety.  Suppose $X$ is either:
\begin{enumerate}
\item smoothable (as specified in Theorem \ref{theo:smoothableterminal}), or
\item a fourfold with LCIQ singularities.
\end{enumerate}
Then there is a free rational curve $s: \mathbb{P}^{1} \to X^{sm}$ in the smooth locus $X^{sm} \subset X$.
\end{theorem}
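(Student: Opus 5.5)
Our plan is to deduce both cases from Theorem \ref{theo:introratcurves}: produce rational curves of large anticanonical degree that meet $\operatorname{Sing}(X)$ only boundedly often, then smooth away the intersections with the singular locus. Fix a general point $p$ and a large $\Theta$. Theorem \ref{theo:introratcurves} gives $s:\PP^1\to X$ through $p$ with $\Theta\le -K_X\cdot s_*\PP^1\le \Theta+n$, meeting $\operatorname{Sing}(X)$ in at most $n-2$ points $q_1,\dots,q_r$. Since $p$ is general and $s$ passes through it, $s$ belongs to a family dominating $X$. By the usual Koll\'ar--Mori dimension estimate, that family has dimension at least $-K_X\cdot s_*\PP^1+n-\text{(correction from singular points)}$.

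The main point is to bound the correction coming from each $q_i$ so that the space of deformations of $s$ has enough room to move the curve off $q_i$.

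\emph{Case (1), smoothable.} Let $\mathcal X\to \Delta$ be the smoothing provided by the hypotheses of Theorem \ref{theo:smoothableterminal}, with smooth Fano general fibers $\mathcal X_t$.
\begin{enumerate}
\item Because $p$ is general and $s$ has high degree, we deform $s$ as a map into the total space $\mathcal X$.
\item On a smooth Fano $\mathcal X_t$, we expect a dominant family of rational curves of anticanonical degree at least $\Theta$ to consist generically of free curves, and in fact of very free curves after taking large degree.
\item We then specialize such a family back to $X_0=X$. Its dimension is at least $\Theta+n-3$, which for $\Theta\gg 0$ exceeds the dimension of the locus of curves through $\operatorname{Sing}(X)$. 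Terminal singularities have codimension $\ge 3$, so passing through a singular point imposes at least $2$ conditions, minus whatever local correction the singularity contributes.
\end{enumerate}
Hence a general member of the limit family avoids $\operatorname{Sing}(X)$ and is free on $X^{sm}$. The rational curves from Theorem \ref{theo:introratcurves} serve to show that the limit family is nonempty and has irreducible components of the expected size near $s$.

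\emph{Case (2), LCIQ fourfolds.} Here the singular locus is finite, since terminal fourfold singularities are isolated in codimension $\ge 3$, up to curves, which are handled similarly. The curve $s$ meets it at most $2$ times. For LCIQ singularities we can compute $\chi(s^*T_X)$ on a local index-one quasi-étale cover, which is lci. This gives a deformation space of $s$ of dimension at least $-K_X\cdot s+\dim X-\epsilon$, where $\epsilon$ is bounded by a constant depending only on the local embedding dimension at $q_i$. Imposing passage through $q_i$ costs about $3$ conditions for an isolated point in a fourfold. For $\Theta\gg0$ the general deformation of $s$ therefore leaves $\operatorname{Sing}(X)$ while still passing through a general point, so it is free.

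\emph{Main obstacle.} The hard step is controlling the obstruction and deformation theory at the points $q_i$. We must show that the component of $\Mor(\PP^1,X)$ containing $s$ has dimension strictly larger than the subvariety of maps through $q_i$. In case (2) we would first try the lci cover to get a lower bound of the form $-K_X\cdot s+n-c\,r$ with a uniform constant $c$. We would then use the bound $r\le n-2$ from Theorem \ref{theo:introratcurves} to absorb $c\,r$ once $\Theta$ is large.
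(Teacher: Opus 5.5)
There is a genuine gap, and it sits at the same step in both cases. You try to show that a general member of a large family misses $\operatorname{Sing}(X)$ by comparing a lower bound on the dimension of the whole family with the ``number of conditions'' imposed by meeting $\operatorname{Sing}(X)$. Nothing in your argument bounds from above the dimension of the sublocus of curves meeting $\operatorname{Sing}(X)$, so the lower bound cannot show that this sublocus is proper. A dominant family can have every member through a singular point (the rulings of a cone). The heuristic ``codimension $\geq 3$ gives two conditions'' assumes exactly the good behaviour of the evaluation map over $\operatorname{Sing}(X)$ that is in question.

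In Case (1) there are further problems. A component of the limit family over $0$ may consist entirely of reducible stable maps. Your Step 1, deforming $s$ into $\mathcal X$, is unjustified, since $s$ meets points where you have no unobstructedness.

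The missing idea is the exceptional tower, which makes the limit rigid. The paper proceeds as follows:
\begin{enumerate}
\item Take a free curve of degree $d$ on $\mathcal X_t$, from \cite{KMM92b}.
\item Use Lemma \ref{lem: climb tower morphism scheme}(2) to make a deformation of it meet the top divisor of a general tower of height $k=d-2$.
\item Spread a general tower $\Phi(X,p,k)$ to a relative tower over an \'etale neighborhood of $0$, and take the limit $f_0\colon C\to X$.
\end{enumerate}
On a resolution $\phi\colon X'\to X$, write $\tilde f$ for the strict transform and $C_{adj}$ for the components adjacent to the part of $C$ lying over the tower. Remark \ref{rmk-numericalExceptionalTowers} and Corollary \ref{cor: rational curves degree} give
\[
d=-K_X\cdot f_{0*}C\ \geq\ -\phi^*K_X\cdot \tilde f_*C_{adj}\ \geq\ -K_{X'}\cdot \tilde f_*C_{adj}\ \geq\ |\mathcal L|+k+1=|\mathcal L|+d-1\ \geq\ d .
\]
So every inequality is an equality, and $C$ is irreducible. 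Positivity of the terminal discrepancies then forces the curve to miss the $\phi$-exceptional locus. Hence it lies in $X^{sm}$, passes through a general point, and is free. Theorem \ref{theo:introratcurves} plays no role in this case.

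In Case (2) your first move matches the paper: Theorem \ref{theo:introratcurves} with $n=4$ gives a dominant family of rational curves meeting $\operatorname{Sing}(X)$ at most twice. The paper then simply cites \cite[5.9 Corollary]{KM99}. Your replacement for that citation has the same dimension-count gap. ``About $3$ conditions'' is unsupported. Even a lower bound of the form $-K_X\cdot s_*\PP^1+n-cr$ on the deformation space says nothing about the subfamily that still passes through $q_i$.

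You also misread the role of the number of intersection points. The hypothesis in \cite{KM99} is ``at most twice,'' not ``boundedly often,'' and that is exactly why the paper gets only fourfolds, where $n-2=2$. The underlying reason is that the index-one covers exist only locally. To lift $s$ through them while keeping a rational source, one uses a cyclic cover $\PP^1\to\PP^1$ totally ramified over the intersection points. By Riemann--Hurwitz, such a cover exists only for at most two points.

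You treat $r\le n-2$ as a constant to be absorbed by taking $\Theta$ large. If that worked, it would work in every dimension and settle the Keel--McKernan conjecture for all LCIQ terminal Fanos. The paper does not claim this, which is a clear sign the argument is incomplete. Note also that $\operatorname{Sing}(X)$ of a terminal fourfold can be a curve, not a finite set.
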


In many cases we can prove the existence of a very free rational curve in the smooth locus; for example, Theorem \ref{theo:terminalthreefoldveryfree} verifies this property for terminal Fano threefolds. 
Combined with the results of \cite{Tian12}, this yields:

\begin{corollary}[{\cite[Theorem 1.9]{Tian12}}]
    Suppose that $X$ is a terminal Fano threefold.  Then there is a resolution of singularities $\phi: X' \to X$ such that $X'$ is symplectic rationally connected.
\end{corollary}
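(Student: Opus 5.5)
The corollary follows by feeding the output of Theorem \ref{theo:terminalthreefoldveryfree} into \cite[Theorem 1.9]{Tian12}, so the plan is mainly to check that the hypotheses of Tian's result are met. As I understand it, \cite[Theorem 1.9]{Tian12} says the following. Let $X$ be a projective threefold, and suppose there is a very free rational curve $s:\PP^1 \to X^{sm}$ lying in the smooth locus. Then $X$ admits a resolution $\phi: X' \to X$ such that $X'$ is symplectic rationally connected.

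The key point is that the very free curve must avoid the singular locus. Only then does it lift isomorphically to any resolution that is an isomorphism over $X^{sm}$. Its normal bundle is unchanged by the lift, so it stays very free on $X'$. Tian then upgrades this algebraic very free curve to a nonzero genus-zero Gromov--Witten invariant with point insertions, and this is invariant under symplectic deformation.

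The steps, in order, are as follows.

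\begin{enumerate}
\item Let $X$ be a terminal Fano threefold. Since terminal singularities in dimension $3$ are isolated, $X^{sing}$ is a finite set of points.
\item Apply Theorem \ref{theo:terminalthreefoldveryfree}. This gives a very free rational curve $s: \PP^1 \to X^{sm}$.
\item Choose a resolution $\phi: X' \to X$ that is an isomorphism over $X^{sm}$. Such a resolution exists by Hironaka, since the singular locus is finite.
\item The curve $s$ lifts to $s': \PP^1 \to X'$ with $s'^*T_{X'} \cong s^*T_X$, which is ample. So $X'$ carries a very free rational curve, and Tian's theorem then gives that $X'$ is symplectic rationally connected.
\end{enumerate}

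The main obstacle is matching hypotheses precisely. Depending on the exact formulation in \cite{Tian12}, the input may need to be a very free curve in the smooth locus of $X$ itself, or a very free curve on a smooth projective threefold. In either case the construction above supplies what is needed. The geometric substance is entirely in Theorem \ref{theo:terminalthreefoldveryfree}, which in turn rests on Theorem \ref{theo:introratcurves} and the breaking results, and here we simply assume it.
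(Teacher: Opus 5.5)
Your proposal is correct and matches the paper's argument: Theorem \ref{theo:terminalthreefoldveryfree} produces an $r$-free (in particular very free) rational curve whose image lies in $X^{sm}$ by the definition of $r$-free, and feeding this into \cite[Theorem 1.9]{Tian12} gives the resolution $X'$. Two cautions: your steps 3--4 are unnecessary, and the alternative reading you hedge toward (a very free curve on the smooth threefold $X'$ suffices) cannot be the correct input, because $X'$ is rationally connected and always carries very free curves, which would make the corollary independent of Theorem \ref{theo:terminalthreefoldveryfree}; Tian's hypothesis is a very free curve in the smooth locus of the terminal Fano threefold itself, not of an arbitrary projective threefold.
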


Even when $X$ is a smooth Fano variety, Theorem \ref{theo:introexctower} yields new insight into the properties of rational curves on $X$.  The following result resolves an important open question in the study of Geometric Manin's Conjecture (see \cite[Remark 4.14]{LRT23}).

\begin{theorem}
Let $X$ be a smooth Fano variety.  Then every rational ray in the interior of $\Nef_{1}(X)$ is generated by the class of a very free rational curve.
\end{theorem}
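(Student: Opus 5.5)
Given a rational ray in the interior of $\Nef_{1}(X)$, I will produce a very free rational curve whose class is a positive multiple of a generator $\alpha$ of the ray, proceeding in three steps: first realize a large multiple of $\alpha$ by a dominant family of higher genus curves whose dimension exceeds $\dim X$ by a lot, then break off a rational curve using Theorem \ref{theo:introexctower}, and finally correct the small class discrepancy using known structure of free curves on Fano varieties.

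\emph{Step 1: high-genus curves in class $m\alpha$.} Since $\alpha$ is in the interior of the nef cone of curves, which is dual to $\Eff^1(X)$, a multiple of $\alpha$ can be written as a positive combination of complete intersection classes $H_1\cdots H_{n-1}$ of very ample divisors. Smoothing a union of such complete intersection curves meeting at nodes gives a smooth curve $C$ of genus $g$ in class $m\alpha$. Such curves are unobstructed with normal bundle of large positive degree, so the component $M$ of $\Mor(C,X)$ containing them is dominant and has dimension about $-K_X\cdot m\alpha + n(1-g)$. Choosing $m$ large relative to $g$ (the genus grows roughly linearly in the number of components, so I would use a fixed complete intersection class scaled up) gives $\dim M > \dim X$, and in fact $\dim M \geq -K_X\cdot m\alpha - c$ for a constant $c$ independent of $m$.

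\emph{Step 2: breaking.} Theorem \ref{theo:introexctower} yields a dominant family $N$ of rational curves $f$ with $\dim N\ge \dim M$ and $m\alpha - f_*\PP^1$ effective. On a smooth variety a dominant component of $\Mor(\PP^1,X)$ consists generically of free curves and has dimension exactly $-K_X\cdot f_*\PP^1 + n$. Hence $-K_X\cdot(m\alpha - f_*\PP^1)\le c'$, and since $-K_X$ is ample the effective remainder lies in a bounded set of classes. Thus $f_*\PP^1 = m\alpha - \beta$ with $\beta$ ranging over a finite set.

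\emph{Step 3: correcting the class and upgrading to very free.} This is the main obstacle: converting a free curve of class $m\alpha-\beta$ into a very free curve of class exactly proportional to $\alpha$. My plan is to run the construction for many multiples $m$ and use gluing: free curves can be glued and smoothed, and sums with a fixed very free curve (which exists since $X$ is rationally connected) are very free. Because $\alpha$ is interior, $k\alpha - \beta$ is a positive combination of classes of free curves for $k$ large. So I would take the free curve in class $m\alpha-\beta$, attach free rational curves whose classes sum to $\beta + r\alpha$ with $r$ chosen so that this is representable by a chain containing a very free component, and smooth the resulting comb or chain. The result is a very free curve of class $(m+r)\alpha$. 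The delicate points are showing that such free curves with prescribed total class exist, presumably by using the finitely generated monoid structure of free curve classes from earlier work in the Geometric Manin program, and ensuring the smoothing stays in a component of very free curves. If the monoid argument fails, I would instead apply Step 2 to $m\alpha+\gamma$ with $\gamma$ a cleverly chosen correction class, arranging that the remainder is forced to be zero by a degree count.
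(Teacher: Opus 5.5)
There is a genuine gap in Steps 1 and 3. You use the right engine (break a high-genus family via Theorem~\ref{theo:exctowerbreaking} and compare dimensions), but neither end of the argument holds.

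\textbf{Step 1.} First, an interior class of $\Nef_{1}(X)$ need not have a multiple that is a positive combination of complete intersection classes on $X$. By BDPP, $\Nef_1(X)=\Eff^1(X)^{\vee}$ is generated by pushforwards of complete intersections from birational models, and on $X$ itself the cone can be strictly smaller once $X$ has small modifications.

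For example, take the Fano fourfold $X=\PP_{\PP^2}(\OO\oplus\OO(1)^{\oplus 2})$, with $h$ pulled back from $\PP^2$ and $\xi=\OO_X(1)$ (so $-K_X=3\xi+h$). One has $h^3=0$, $h^2\xi^2=1$, $h\xi^3=2$ and $\xi^4=3$. The pencil $\xi-h$ spans an edge of $\Eff^1(X)$, and every product $\gamma$ of three nef divisors satisfies $(\xi-h)\cdot\gamma\ge\frac12\,h\cdot\gamma$. So interior classes with $0<(\xi-h)\cdot\gamma<\frac12\,h\cdot\gamma$ are never reached.

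Second, and more seriously, realizing $m\alpha$ by smoothing unions or scalings of complete intersection curves makes the genus $g$ grow at least linearly in $m$. The bound available to you is $\dim M\ge -K_X\cdot m\alpha+n(1-g)$, so your constant $c$ is not independent of $m$. Theorem~\ref{theo:exctowerbreaking} then only gives $-K_X\cdot(m\alpha-f_*\PP^1)\le g\dim X$, which is again of order $m$. Hence the remainders $\beta$ do not lie in a finite set, and $f_*\PP^1/m$ need not approach $\alpha$.

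What you need is a free curve with a \emph{fixed} domain $C$ in arbitrarily large multiples of a class near $\alpha$. The paper gets this from two results:
\begin{itemize}
\item \cite[Theorem 3.6]{JLR25}: a dense set of rays in $\Nef_1(X)$ is represented by curves with $\mu^{min}(s^*T_X)>0$;
\item \cite[Theorem 3.2]{JLR25}: there are free curves $s':C\to X$ with $s'_*C=m's_*C$.
\end{itemize}
With these, the discrepancy is bounded by $g(C)\dim X$ independently of $m'$.

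\textbf{Step 3.} This step presupposes what is being proved. Writing $\beta+r\alpha$ as a sum of classes of free rational curves requires free rational curve classes that span a neighborhood of $\alpha$. That is essentially the density statement the paper establishes with the breaking theorem, so it cannot simply be assumed. Your fallback also cannot force the remainder to vanish, since breaking only bounds its $-K_X$-degree by $g\dim X$.

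The paper never corrects a particular $\beta$. It chooses $m'$ large compared with the finitely many effective classes of $-K_X$-degree at most $g(C)\dim X$, and uses breaking only to show that rays spanned by very free rational curves are dense in the interior of $\Nef_1(X)$. Each free interior curve is upgraded to a very free curve of proportional class by Lemma~\ref{lemm:interiorfreeimpliesvf}. Exact proportionality then comes from choosing finitely many very free curves whose classes span a cone containing the target rational ray in its interior, and gluing and smoothing a positive integer combination lying on that ray.
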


\subsection{Curves and Fujita invariants} \label{sect:introfujinv}
Next, we use Theorem \ref{theo:introexctower} to strengthen the connections between an invariant of the minimal model program and the existence of curves.  The key notion is the Fujita invariant.

\begin{definition}
\label{defi:introa-invariant}
Let $Y$ be a smooth projective variety and let $L$ be a big and nef $\mathbb{Q}$-Cartier divisor on $Y$.  The Fujita invariant of $(Y,L)$ is
\begin{equation*}
a(Y,L) = \min \{ t\in \mathbb{R} \mid  K_Y + tL \textrm{ is pseudo-effective }\}.
\end{equation*}
\end{definition}

The Fujita invariant plays an important role in various areas of birational geometry, including Fujita's classification theory (see e.g.~\cite{Fujita89}, \cite{Horing10}) and the effective study of Iitaka fibrations (see e.g.~\cite{Dicerbo14}, \cite{BZ16}).  It also is important in arithmetic geometry due to its close connection with the geometry of dominant families of curves on varieties (see e.g.~\cite{HTT15}, \cite{LT19}, \cite{LRT23}).  Our focus is the latter topic: we complete the story begun in earlier work by showing that varieties with large Fujita invariants carry large families of curves.

Prior work, which we summarize in Proposition \ref{prop:fujupperbound}, shows that if $s: C \to Y$ is a member of an irreducible component $M \subset \Mor(C,Y)$ parameterizing a dominant family of genus $g$ curves on $Y$, then $\dim M \leq a(Y,L) s_*C \cdot L + 2g + \dim Y$. We prove a type of converse statement: there will exist infinitely many families $M$ so that the inequality above is almost satisfied.

\begin{theorem} \label{theo:introainv}
Let $Y$ be a smooth uniruled projective variety, let $L$ be a big and nef Cartier divisor on $Y$, and let $C$ be a smooth curve of genus $g$.  There are infinitely many irreducible components $N \subset \Mor(C,Y)$ parameterizing dominant families of curves $s: C \to Y$ with
\begin{equation*}
\dim N > a(Y,L)(L \cdot s_{*}C) - 2g\dim Y.
\end{equation*}
\end{theorem}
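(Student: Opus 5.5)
We will deduce Theorem \ref{theo:introainv} from Theorem \ref{theo:introexctower}. We first produce large families of rational curves whose dimension is controlled by $a(Y,L)$, and then glue on a genus $g$ curve to pass to $\Mor(C,Y)$.

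\emph{Step 1: an upper bound on $K_Y$-degree.} Write $a = a(Y,L)$. Since $Y$ is uniruled, $K_Y$ is not pseudo-effective, so $a > 0$. By definition $K_Y + aL$ lies on the boundary of the pseudo-effective cone. By BDPP duality there is a movable class $\alpha$ with $(K_Y + aL)\cdot \alpha = 0$ and $L \cdot \alpha > 0$. Taking multiples and small perturbations, we find dominant families of curves with $-K_Y$-degree close to $a(L\cdot\alpha)$. Concretely, we use strongly movable classes $\alpha_\epsilon$, coming from complete intersections on birational models, with $(K_Y + (a-\epsilon)L)\cdot \alpha_\epsilon < 0$.

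\emph{Step 2: a large family of higher genus curves.} For a smooth curve $C'$ of large genus, consider maps $C' \to Y$ in the class $m\alpha_\epsilon$. We would realize these as complete intersection curves pushed forward from a model $Y' \to Y$. By Riemann--Roch, any component containing such a map has dimension at least $-K_Y\cdot m\alpha_\epsilon + (1-g')\dim Y$. The genus $g'$ grows only linearly in $m$, with slope controlled by the adjunction degree, so this lower bound alone seems too weak. The cleaner route is the standard trick: replace deformation-theoretic bounds by a count of parameters. Complete intersections of $n-1$ very ample divisors in $|H|$ give a family of dimension roughly $(n-1)h^0(H)$. This dominates $\dim Y$ once $m \gg 0$, so Theorem \ref{theo:introexctower} applies.

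\emph{Step 3: break and glue.} Theorem \ref{theo:introexctower} yields a dominant family $N_0$ of rational curves $f$ with $\dim N_0 \ge \dim M$ and $f_*\PP^1 \le s_*C'$. The real issue is to get $\dim N_0 \gtrsim -K_Y\cdot f_*\PP^1 \ge a(L\cdot f)$ up to bounded error. For free rational curves we have $\dim N_0 = -K_Y\cdot f + \dim Y$. However, the rational curves produced need not be free on a singular model, and their $L$-degree could drop.

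We handle this using Proposition \ref{prop:fujupperbound} in reverse. Since $\dim N_0$ is bounded above by $a(L\cdot f) + \dim Y$, a large $\dim N_0$ forces $L\cdot f$ to be large, and hence $\dim N_0 > a(L\cdot f) - $ const. Finally, we attach a general map $C \to Y$ of fixed low degree to $f$, and smooth the resulting comb to get maps $C \to Y$ of $L$-degree $L\cdot f + O(1)$. Since $f$ is a general member of a dominant family, $f$ is free. The smoothing then has dimension at least $\dim N_0 - g\dim Y + O(1)$ by gluing, which gives the bound $> a(L\cdot s_*C) - 2g\dim Y$. Letting the degree grow gives infinitely many components.

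\emph{Main obstacle.} The hard step is Step 2/3: ensuring that the starting family $M$ has dimension close to $a(L\cdot s_*C')$ with error sublinear in the degree, so that the comparison survives the breaking. I would first try taking $M$ to be the family of all maps in a fixed high multiple class, whose dimension is at least the Riemann--Roch expected dimension. I would then use the freeness of general members to make the $g'$ term cancel after breaking.
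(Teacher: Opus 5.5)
There is a genuine gap, and it sits exactly at the step you flag as the main obstacle.

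\textbf{The Step 3 inference is invalid, and the route is circular.} Proposition \ref{prop:fujupperbound} is an upper bound, $\dim N_0 \le a(L\cdot f_*\PP^1)+\dim Y$. It cannot be run in reverse to give $\dim N_0 > a(L\cdot f_*\PP^1)-\mathrm{const}$. Knowing that $\dim N_0$ is large forces $L\cdot f_*\PP^1$ to be large, but it says nothing about the gap $a(L\cdot f_*\PP^1)-\dim N_0$, which could grow with the degree.

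The only lower bound breaking gives you is $\dim N_0\ge\dim M$, together with $L\cdot f_*\PP^1\le L\cdot s_*C'$. So you would need a starting family $M\subset\Mor(C',Y)$ with $\dim M\ge a(L\cdot s_*C')-\mathrm{const}$. That is the theorem itself for $C'$. Steps 1--2 do not supply it:
\begin{itemize}
\item The BDPP perturbation only gives $-K_Y\cdot m\alpha_\epsilon>(a-\epsilon)L\cdot m\alpha_\epsilon$. The error $\epsilon m\,(L\cdot\alpha_\epsilon)$ grows linearly in $m$.
\item The complete-intersection parameter count is for curves of varying moduli. It does not give a subvariety of $\Mor(C',Y)$ for a fixed $C'$, which Theorem \ref{theo:introexctower} requires.
\item The Riemann--Roch bound $-K_Y\cdot s_*C'+(1-g')\dim Y$ helps only if you already know $(K_Y+aL)\cdot s_*C'$ is bounded independently of the degree. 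That is precisely the hard input.
\end{itemize}

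\textbf{Breaking cannot reach the stated constant even with the best available input.} Proposition \ref{prop:orthogonaltoadjoint} gives some fixed curve $C'$, of uncontrolled genus $g'$, with almost $r$-free maps of class $\alpha$ satisfying $(K_Y+aL)\cdot\alpha=0$. Theorem \ref{theo:exctowerbreaking} then gives $-K_Y\cdot\beta\ge -K_Y\cdot\alpha-g'\dim Y$ and $L\cdot\beta\le L\cdot\alpha$. You only get free rational curves with $(K_Y+aL)\cdot\beta\le g'\dim Y$; the $g'$ term does not cancel. Yet already the case $C=\PP^1$ of the statement amounts to free rational curves of unbounded $L$-degree with $(K_Y+aL)\cdot f_*\PP^1<\dim Y$, since $\dim N=-K_Y\cdot f_*\PP^1+\dim Y$ for dominant components.

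The paper gets this bound without breaking, through Theorem \ref{theo:approximatebound}:
\begin{enumerate}
\item Run a relative $(K+aL)$-MMP over the base of the Iitaka fibration of $K_Y+aL$. Continue with a $(K+(1-\epsilon)aL)$-MMP to reach a Mori fibration whose general fiber $T$ is terminal Fano with $-K_T\equiv a\,\gamma_*\phi^*L$.
\item Apply Theorem \ref{thm: nicest curves} on a resolution $T'$. This gives free rational curves with $-K_T$-degree in $[\Theta,\Theta+\dim T]$ and $(K_{T'}-\gamma_W^*K_T)\cdot s'_*\PP^1\le\dim T-2$.
\item The negativity lemma turns this into $(K_Y+aL)\cdot s_*\PP^1\le m-2$, where $m\le\dim Y$ is the Iitaka fiber dimension.
\end{enumerate}

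To pass to arbitrary $C$, the paper does not smooth a comb. In your comb, the attached map $C\to Y$ would contribute its own uncontrolled $(K_Y+aL)$-degree. Instead the paper composes with a degree-$\rho$ map $C\to\PP^1$. The expected-dimension bound then gives $\dim N\ge a(L\cdot s_*C)+(1-g-\rho)\dim Y+2\rho$, and $\rho\le g+1$ yields the strict inequality. Letting $\Theta\to\infty$ gives infinitely many components.
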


In particular, the case $C = \PP^1$ shows one can compute the Fujita invariant of a variety from the properties of the rational curves that it carries, answering an open question of \cite[Section 6.4]{LT19}.  For stronger versions of Theorem \ref{theo:introainv}, see Proposition \ref{prop:orthogonaltoadjoint} and Theorem \ref{theo:ainvandcurves}.

Our results are particularly useful for studying curves on a Fano variety $X$.  In combination with \cite{LRT24}, Theorem \ref{theo:introainv} shows that the subvarieties $Y \subset X$ with Fujita invariant greater than $1$ are the ``accumulating subvarieties'' which admit curves that deform much more than expected.  In particular, by leveraging the dictionary between Fujita invariants and curves, we can relate the geometric properties of curves on $X$ of different genera.

\begin{theorem}\label{thm:introequivalentaconditions}
For a smooth Fano variety $X$, the following are equivalent.
\begin{enumerate}
\item For every closed subvariety $Y \subsetneq X$, $a(Y,-K_X|_Y) \leq 1$.
\item There exists a smooth projective curve $C$ such that the dimension of every irreducible component of $\Mor(C,X)$ is at most $g(C)\dim X$ more than the expected dimension.
\item For every smooth projective curve $C$ and every irreducible component $M \subset \Mor(C,X)$ the dimension of $M$ is at most $g(C)\dim X$ more than the expected dimension.
\end{enumerate}
\end{theorem}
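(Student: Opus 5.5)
Expected dimension of $\Mor(C,X)$ at a map $s$ is $-K_X\cdot s_*C + (1-g)\dim X$. I will prove (3)$\Rightarrow$(2)$\Rightarrow$(1)$\Rightarrow$(3). The implication (3)$\Rightarrow$(2) is trivial. In the other two implications, the curves are organized by the closure $Y$ of the locus swept out by a family, and the Fujita invariant of $(Y',-K_X|_{Y'})$ on a resolution $Y'\to Y$ is compared with dimension counts via Proposition \ref{prop:fujupperbound} and Theorem \ref{theo:introainv}.

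For (2)$\Rightarrow$(1), I argue by contrapositive. Suppose $Y\subsetneq X$ has $a(Y,-K_X|_Y)>1$. Pass to a resolution $\phi: Y'\to Y$ and set $L=\phi^*(-K_X|_Y)$, which is big and nef, so $a(Y',L)>1$ and in particular $Y'$ is uniruled. Theorem \ref{theo:introainv} applied to $(Y',L,C)$ gives infinitely many components $N\subset\Mor(C,Y')$ of dominant families with
\[
\dim N > a(Y',L)(L\cdot s_*C) - 2g\dim Y'.
\]
Since there are infinitely many such $N$, the degree $L\cdot s_*C$ is unbounded, so we may take it as large as we like. Pushing forward to $X$, the family lies in some component $M\subset\Mor(C,X)$ with $\dim M\ge\dim N$. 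The defect from the expected dimension is then at least $(a-1)(-K_X\cdot s_*C) - 2g\dim Y' - (1-g)\dim X$. Because $a>1$, this exceeds $g\dim X$ once the degree is large, which contradicts (2) for every $C$.

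For (1)$\Rightarrow$(3), take a component $M\subset\Mor(C,X)$. Let $Y$ be the closure of the locus swept out by $M$, and let $M$ correspond to a dominant family on a resolution $Y'$ of $Y$; lifting maps to $Y'$ is harmless after passing to an open subset. If $Y=X$, Proposition \ref{prop:fujupperbound} with $a(X,-K_X)=1$ gives $\dim M\le -K_X\cdot s_*C+2g+\dim X$. I would like the sharper bound $-K_X\cdot s_*C + (1-g)\dim X + g\dim X = -K_X\cdot s_*C+\dim X$. This is the delicate point. I would sharpen the estimate using deformation theory: $\dim M\le h^0(s^*T_X)\le \chi(s^*T_X)+h^1(s^*T_X)$, and for a general member of a dominant family, generic global generation of $s^*T_X$ gives $h^1\le g\dim X$. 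If $Y\subsetneq X$, Proposition \ref{prop:fujupperbound} on $Y'$ gives $\dim M\le a(Y',L)\, L\cdot s_*C+2g+\dim Y$, with $a\le1$ by (1). Compared with $-K_X\cdot s_*C+\dim X$, the difference is covered by $\dim X-\dim Y\ge1$ and the $g$-terms only if the Proposition's $2g$ can be replaced by something at most $g\dim X$ plus $\dim X - \dim Y$.

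I expect the main obstacle to be this bookkeeping of genus-dependent constants in (1)$\Rightarrow$(3). The statement's $g\dim X$ margin has to absorb both the $2g$ from Proposition \ref{prop:fujupperbound} and the $g\dim Y$-type terms arising from $h^1$. In the proper-subvariety case I would first check directly whether $\dim M\le h^0(s^*T_{Y'})$ together with generic generation of $s^*T_{Y'}$ gives $\dim M\le -K_{Y'}\cdot s_*C+\dim Y+g\dim Y$. Then I would use $a\le 1$, via pseudo-effectivity of $K_{Y'}+L$ tested against the movable curve $s_*C$, to get $-K_{Y'}\cdot s_*C\le -K_X\cdot s_*C$, and conclude since $\dim Y<\dim X$.
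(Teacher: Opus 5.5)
\textbf{Where the proposal breaks down.} Your (3)$\Rightarrow$(2) and (2)$\Rightarrow$(1) are fine and follow essentially the paper's argument. The paper applies Theorem \ref{theo:ainvandcurves} directly, which also gives the large $L$-degree. Your step ``infinitely many components, hence unbounded degree'' is correct, because $L$ is big and the families are dominant, but it deserves a sentence.

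The genuine gap is (1)$\Rightarrow$(3), which you leave open, and the repair you sketch does not work as stated. You claim that a general member of a dominant family has $s^*T_X$ generically globally generated, and hence $h^1(s^*T_X)\le g\dim X$. This is not justified, and it is not a general property of dominant families of higher-genus curves.
\begin{itemize}
\item Generic smoothness of $\mathrm{ev}\colon C\times M\to X$ only says that the sections in $T_{[s]}M$ \emph{together with} $ds(T_C)$ span $T_X$ at a general point.
\item For $g\ge 2$ we have $H^0(C,T_C)=0$, so the tangent direction of the curve itself need not lie in the span of global sections.
\item This really happens. Graphs $c\mapsto (c,h(c))$ in $C\times\mathbb{P}^1$ form a dominant family, yet the global sections of $T_C\oplus h^*T_{\mathbb{P}^1}$ generate only the second summand.
\item In your case $Y\subsetneq X$, the resolution $Y'$ need not be rationally connected, so nothing rules this out. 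For $Y=X$ you would still have to prove the claim.
\end{itemize}
If you account for the missing direction honestly, with $V\oplus T_C\hookrightarrow s^*T_X$ and $V$ of rank $\dim X-1$, you only get an excess of about $g\dim X+2g-3$. That is the same $2g$ loss already present in Proposition \ref{prop:fujupperbound}. So deformation theory of the genus-$g$ maps alone cannot close the bookkeeping you correctly flagged as the obstacle.

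\textbf{The missing idea} is to replace the genus-$g$ family by a family of rational curves, where this genus loss disappears.
\begin{itemize}
\item Suppose $\dim M>\mathrm{expdim}\,M+g\dim X=-K_X\cdot\alpha+\dim X$, and let $Y'$ resolve the locus swept out by $M$. The lifted family has the same dimension, which exceeds $\dim Y'$.
\item Theorem \ref{theo:exctowerbreaking} (via Theorem \ref{theo:introexctower}) then gives a dominant family $N$ of rational curves on $Y'$ with $\dim N\ge\dim M$ and $\alpha-\beta$ effective.
\item Since $-K_X$ is ample, $-K_X\cdot\beta\le -K_X\cdot\alpha$, so $\dim N>-K_X\cdot\beta+\dim X$.
\item For $g=0$, Proposition \ref{prop:fujupperbound} has no $2g$ term. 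Hence $a(Y',L)\,(L\cdot\beta)\ge\dim N-\dim Y'>L\cdot\beta$, i.e.\ $a(Y',L)>1$.
\item Because $a(X,-K_X)=1$, this forces $Y\subsetneq X$, contradicting (1).
\end{itemize}
The paper phrases the last step via \cite[Theorem 1.1]{LT19}. This argument treats $Y=X$ and $Y\subsetneq X$ uniformly and never needs generic global generation.
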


Theorem \ref{thm:equivalentaconditions} proves a related statement for Fano varieties for which every closed subvariety $Y \subsetneq X$ has $a(Y,-K_{X}|_{Y}) < 1$.

\subsection{Hypersurfaces} \label{sect:introhypersurface}

Finally, we demonstrate how Theorem \ref{theo:introexctower} can be applied to examples by studying curves on general Fano hypersurfaces.  These moduli spaces have been studied extensively in their own right as well as for their applications to Gromov-Witten theory.  Recently there has been a renewed interest due to connections with arithmetic geometry via the geometric circle method. 

Our key result shows that there are no ``accumulating subvarieties'' for general Fano hypersurfaces in the largest possible degree range.  It strengthens a recent result of \cite{HaseLiu25} which established the same property in a more limited degree range by leveraging techniques from arithmetic geometry.

 \begin{theorem}\label{theorem:hypersurfaces_a_invariant}
 Let $X \subset \PP^n$ be a general hypersurface of degree at most $n-2$.  Then for every closed subvariety $Y \subsetneq X$ we have $a(Y,-K_{X}|_{Y}) < 1$.
 \end{theorem}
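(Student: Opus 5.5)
We argue by contradiction and convert the Fujita invariant condition into a statement about families of rational curves, where known dimension estimates on general hypersurfaces apply. Suppose $Y \subsetneq X$ is a closed subvariety with $a(Y,-K_X|_Y) \geq 1$; we may pass to a resolution $\widetilde{Y} \to Y$, since the Fujita invariant is birational and computed on a smooth model. Write $H$ for the hyperplane class, so $-K_X = (n+1-d)H$ and the index is $r = n+1-d \geq 3$. We apply Theorem \ref{theo:introainv} with $C = \PP^1$ and $L = -K_X|_{\widetilde{Y}}$. It produces infinitely many components $N \subset \Mor(\PP^1,\widetilde{Y})$ parameterizing dominant families on $\widetilde{Y}$ with $\dim N > a(Y,L)(L\cdot f_*\PP^1) \geq -K_X \cdot f_*\PP^1$. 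Pushing forward to $X$, these give families of rational curves on $X$ sweeping out $Y$ whose dimension exceeds the expected dimension $-K_X\cdot C + \dim X$ minus $\dim X$. More precisely, $\dim N \geq -K_X\cdot C + 1$, and $N$ is confined to a proper subvariety.

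The sharper version we want is: either $a > 1$, giving a linear excess $(a-1)(-K_X\cdot C)$, or $a = 1$, giving a family of dimension at least $-K_X \cdot C + \dim Y -$(bounded). To get the latter in the $a=1$ case, we use the stronger forms Proposition \ref{prop:orthogonaltoadjoint} and Theorem \ref{theo:ainvandcurves}. These should give components with $\dim N \geq a(Y,L)\,L\cdot C + \dim Y - 3$ or so, with $L\cdot C$ arbitrarily large.

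We then compare with the known structure of rational curves on general hypersurfaces. For a general hypersurface of degree $d \leq n-2$ (index $\geq 3$), the spaces $\overline{M}_{0,0}(X,e)$ are irreducible of the expected dimension $e(n+1-d) + n - 4$ for all $e$; this is the Harris–Roth–Starr range $d < (n+1)/2$, extended by Riedl–Yang to $d \leq n-2$. In particular, every component of rational curves of degree $e$ has dimension exactly $e r + n-4$, and its general member passes through a general point of $X$. A family of curves of degree $e$ contained in a proper subvariety $Y$, of dimension at least $er + \dim Y - c$, would be a closed subfamily. Once $e \gg 0$, we rule it out in one of two ways. One option is to compare it against the fibers of the evaluation map, using that curves through a general point have dimension $er - 2$ and that the family of curves meeting $Y$ has codimension $n-1-\dim Y$. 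The other is to note that such a family would constitute a component of larger than expected dimension after accounting for curves meeting $Y$ in many points.

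The cleanest contradiction comes from irreducibility. The subfamily of curves lying in $Y$ has dimension $\geq er + \dim Y - c$, whereas curves meeting $Y$ in $k$ points have codimension roughly $k(n-1-\dim Y)$, and a curve inside $Y$ meets it in unboundedly many points as $e$ grows. Estimating this needs a bend-and-break-type bound on subfamilies of $\overline{M}_{0,0}(X,e)$ contained in $Y$, obtained by breaking into lower-degree pieces lying in $Y$ and inducting on $e$. The main obstacle is making this precise at the boundary case $a(Y)=1$ with the optimal constant. The case $a>1$ is easy: linear excess in $e$ immediately contradicts the expected-dimension statement for $e$ large. So the hard case is $a=1$, and there I would use the stronger statement of Theorem \ref{theo:ainvandcurves} to obtain an additive gain of $\dim Y$ and then the inductive breaking argument above to show the excess contradicts the codimension of families constrained to lie in $Y$.
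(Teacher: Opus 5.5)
\textbf{Verdict: there is a genuine gap in the case $a=1$, which is the real content of the theorem.}

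Your handling of $a(Y,-K_X|_Y)>1$ is fine. Grant, as you do, that every component of $\Mor(\PP^1,X,e)$ has the expected dimension $(n+1-m)e+n-1$ by \cite{RY19}. The components produced by Theorem \ref{theo:ainvandcurves} on a resolution of $Y$ map generically injectively into $\Mor(\PP^1,X,e)$. A linear excess $(a-1)(-K_X\cdot C)$ is therefore impossible for large $e$.

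Two steps fail when $a=1$.

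First, Theorem \ref{theo:ainvandcurves} with $g=0$ and $\rho=1$ only gives $\dim N\geq a\,L\cdot C+2$. It does not give an additive gain of $\dim Y$. Proposition \ref{prop:orthogonaltoadjoint} gives the sharp count, but only for some curve $C$ that need not be rational. Getting a family of rational curves on $Y$ of the sharp dimension is a separate, nontrivial step. The paper does this in Lemma \ref{lem: Eric J section 7 Lemma 1}, using \cite[Proposition 6.5]{RY19}.

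Second, and decisively, even the sharp family contradicts nothing you invoke. In the index-$3$ case, a family of degree $e$ maps into $Y$ of dimension $3e+\dim Y$ has codimension exactly $c=\codim(Y,X)$ in $\Mor(\PP^1,X,e)$. That is entirely compatible with the space of rational curves on $X$ being irreducible of the expected dimension.

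Your estimate that curves meeting $Y$ in $k$ points have codimension about $kc$ assumes the $k$-pointed evaluation maps have fibers of expected dimension over $Y^k$. For a special subvariety $Y$ this is exactly what is unknown. Without it the heuristic proves too much: taking $Y$ to be a single rational curve, it would show that curve cannot exist. The ``inductive breaking inside $Y$'' you mention is never formulated, and it is unclear what quantity it would bound.

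What you are missing is a way to turn ``codimension exactly $c$'' into a contradiction. This is where the paper's proof actually lives:
\begin{enumerate}
\item It reduces by hyperplane sections \cite[Theorem 3.15]{LT19b} to degree $m=n-2$, or to $Y$ a curve, where $a(Y,H)\leq 2$.
\item A threshold-degree argument produces a dominant, generically injective family $M_e$ on $Y$ with $\dim M_e=3e+\dim Y$. The degree lies in the low range $\frac{n-c+2}{3}<e\leq 2\frac{n-c+2}{3}$, with $e\neq 3$.
\item Proposition \ref{prop:behaviorOfCurvesOnGeneralHyps} shows that degree $e$ curves which are not rational normal curves form a locus of codimension at least $n-e+1>c$. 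So the general member of $M_e$ is a rational normal curve.
\item Since $\deg N_{f/\widetilde{Y}}=3e-2=\deg N_{C/X}$, the normal bundle $N_{C/X}$ has a rank $c$ quotient of degree $\leq 0$.
\item Mioranci's codimension formula for splitting types \cite{Mioranci}, applied in Lemma \ref{lemma:hypersurfaceNormalBundleNoQuotients}, shows such rational normal curves form a locus of codimension at least $c+1$. This contradicts the codimension $c$ of $M_e$.
\end{enumerate}
The normal-bundle computation in the last two steps is the key idea absent from your plan. It is only computable because of the earlier reductions to low degree and to rational normal curves.
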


Theorem \ref{theorem:hypersurfaces_a_invariant} implies several  strong structural results about curves on general Fano hypersurfaces.  First, by combining it with Theorem \ref{thm:introequivalentaconditions} and the results of \cite{RY19} we obtain a dimension bound.

\begin{theorem} \label{theo:introhypcomparison}
Let $X \subset \mathbb{P}^{n}$ be a general Fano hypersurface of degree at most $n-2$.  Then for every curve $s: C \to X$ we have
\begin{equation*}
\dim_{[s]}\Mor(C,X) \leq -K_{X} \cdot s_{*}C + n-1.
\end{equation*}
\end{theorem}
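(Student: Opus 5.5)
We reduce Theorem \ref{theo:introhypcomparison} to the equivalence in Theorem \ref{thm:introequivalentaconditions} combined with Theorem \ref{theorem:hypersurfaces_a_invariant}, and then handle the small discrepancy between the bound "$g\dim X$ more than expected" and the stated bound using the results of \cite{RY19}.

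Let $X\subset\PP^n$ be a general hypersurface of degree $d\le n-2$. It has dimension $n-1$, and $-K_X=(n+1-d)H$, so its index is at least $3$. By Theorem \ref{theorem:hypersurfaces_a_invariant}, every proper closed subvariety $Y\subsetneq X$ satisfies $a(Y,-K_X|_Y)<1$. In particular condition (1) of Theorem \ref{thm:introequivalentaconditions} holds. Condition (3) then shows that, for any smooth curve $C$ of genus $g$ and any component $M\ni[s]$ of $\Mor(C,X)$,
\[
\dim M\le -K_X\cdot s_*C+(1-g)(n-1)+g(n-1)=-K_X\cdot s_*C+n-1 .
\]
This is exactly the claimed bound, since the expected dimension is $-K_X\cdot s_*C+(1-g)\dim X$ and the allowed excess is $g\dim X$ with $\dim X=n-1$. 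Because $\dim_{[s]}\Mor(C,X)$ is the maximum of $\dim M$ over the components $M$ containing $[s]$, the inequality holds for every $s$.

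We should check that Theorem \ref{thm:introequivalentaconditions} applies to all maps, including non-dominant and constant ones. For constant maps, $\dim=\dim X=n-1$ and the bound holds. For a map onto a curve in some $Y\subsetneq X$, the components of $\Mor(C,X)$ containing $[s]$ may parameterize maps whose images do not dominate $X$. Condition (3) is stated for every component, so it covers these cases. The non-dominant case is presumably where the subvarieties with $a\le 1$ enter, via Proposition \ref{prop:fujupperbound} applied on a resolution of the closure $Y$ of the locus swept out.

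The main obstacle is this non-dominant case and making its inequality sharp. Proposition \ref{prop:fujupperbound} on $Y$ gives roughly $\dim M\le a(Y,L)\,s_*C\cdot L+2g+\dim Y$, which is not obviously at most $-K_X\cdot s_*C+n-1$ for large $g$. This is where I would invoke \cite{RY19}: for general hypersurfaces of this degree range, it controls the dimension of spaces of genus-$g$ curves contained in subvarieties, or alternatively gives the needed bound in low degree. Combining that control with $a(Y,-K_X|_Y)<1$ should absorb the $2g$ term once $-K_X\cdot s_*C$ is large. In the bounded-degree range, the bound follows from \cite{RY19} directly. If the proof of Theorem \ref{thm:introequivalentaconditions} already handles this, then the argument reduces to the two-line combination above.
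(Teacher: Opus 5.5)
Your core argument is exactly the paper's proof. Theorem~\ref{theorem:hypersurfaces_a_invariant} gives condition (1) of Theorem~\ref{thm:otherequivalentaconditions}, and the implication to the ``every curve, every component'' condition allows an excess of at most $g(n-1)$ over the expected dimension $-K_X\cdot s_*C+(1-g)(n-1)$, which is precisely the stated bound.

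The worry in your final paragraph is unnecessary and can be dropped. That implication is proved for every irreducible component, dominant or not: the paper resolves the swept-out subvariety, breaks to a rational family of at least the same dimension via Theorem~\ref{theo:exctowerbreaking}, and applies \cite{LT19}. So no separate appeal to \cite{RY19} or Proposition~\ref{prop:fujupperbound} is needed.
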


Second, Theorem \ref{theorem:hypersurfaces_a_invariant} can be used to show that most irreducible components of $\Mor(C,X)$ parameterize free curves.  Recall that a morphism $s: C \to X$ is said to be free if the minimal slope of a non-zero quotient of $s^{*}T_{X}$ is at least $2g(C)$.  The following is a special case of Theorem \ref{theo:strongestainvcurves} applied to hypersurfaces using Theorem \ref{theorem:hypersurfaces_a_invariant}.

\begin{corollary} \label{coro:besthypersurfaces}
     Let $X \subset \PP^n$ be a general hypersurface of degree at most $n-2$.  There is a quadratic function $d(g)$ such that, for every non-negative integer $g$, if $s: C \to X$ is a genus $g$ curve of degree $\geq d(g)$ then a general deformation of $s$ is free.
\end{corollary}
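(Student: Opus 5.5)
This follows from Theorem \ref{theorem:hypersurfaces_a_invariant} combined with the general freeness criterion, Theorem \ref{theo:strongestainvcurves}. That criterion applies to a smooth Fano variety $X$ in which every proper closed subvariety $Y \subsetneq X$ satisfies $a(Y,-K_X|_Y) < 1$. Its conclusion should have the following shape: there is a function $d(g)$, quadratic in $g$, such that every irreducible component $M \subset \Mor(C,X)$ of genus $g$ curves with $-K_X \cdot s_*C \geq d(g)$ has a free general member.

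The argument has three steps.

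\textbf{Step 1: the hypotheses hold.} A general hypersurface $X \subset \PP^n$ of degree $e \leq n-2$ is smooth. It is also Fano, with $-K_X = \OO(n+1-e)$ of index at least $3$. By Theorem \ref{theorem:hypersurfaces_a_invariant}, every $Y \subsetneq X$ has $a(Y,-K_X|_Y) < 1$, which is the strict inequality required.

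\textbf{Step 2: apply the criterion.} Theorem \ref{theo:strongestainvcurves} then yields the degree bound $d(g)$ for $X$.

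\textbf{Step 3: convert to the hyperplane degree.} The corollary is phrased in terms of the degree $\deg s_*C = \OO(1)\cdot s_*C$. Since $-K_X\cdot s_*C = (n+1-e)\deg s_*C$, rescaling $d(g)$ by the constant factor $n+1-e$ gives a bound that is still quadratic in $g$.

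If Theorem \ref{theo:strongestainvcurves} only gives the conclusion for components of dimension near the expected one, I would first bound the dimension using Theorem \ref{theo:introhypcomparison}. It gives $\dim_{[s]}\Mor(C,X) \leq -K_X\cdot s_*C + n-1$, so every component is within a bounded excess of the expected dimension $-K_X\cdot s_*C + (n-1)(1-g)$.

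In that case the freeness itself would come from the following argument. Suppose a general deformation $s$ is not free. Take the maximal destabilizing quotient of $s^*T_X$ of slope below $2g$. The deformations of $s$ then sweep out a family whose tangent directions factor through the corresponding subsheaf. Via a relative Harder--Narasimhan and foliation argument, this family is forced into a proper subvariety $Y$, or into the fibers of a fibration, that carries more curves than $a(Y,-K_X|_Y)<1$ permits. That contradicts Proposition \ref{prop:fujupperbound} applied to $Y$ once the degree exceeds a quadratic threshold in $g$.

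The main obstacle is this non-dominant or fibration case: showing that a non-free component forces $a(Y,-K_X|_Y) \geq 1$ for some $Y$. I would treat it by applying the upper bound of Proposition \ref{prop:fujupperbound} to the image family. Because $a(Y,-K_X|_Y)<1$ strictly, the dimension deficit grows linearly in the degree, while the error terms are $O(g\dim X)$. The gap therefore closes once the degree is large relative to $g$, and the quadratic form of $d(g)$ comes from the $g$-dependence of the slope condition $2g$.
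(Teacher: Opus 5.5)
Your Steps 1--3 are exactly the paper's proof. Theorem \ref{theorem:hypersurfaces_a_invariant} supplies the strict inequality $a(Y,-K_X|_Y)<1$ required by Theorem \ref{theo:strongestainvcurves}; with $r=0$ that theorem gives a free general member for every component of $\Mor(C,X)$ above a quadratic $-K_X$-degree threshold, and the index $n+1-e\geq 3$ converts this to hyperplane degree.

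Your contingency paragraph is unnecessary, because Theorem \ref{theo:strongestainvcurves} puts no dimension hypothesis on the component $M$. Also, its quadratic threshold comes from the functions $Q_1(g),Q_2(g)$ in its proof, not from the linear slope condition $2g$.
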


\bigskip

\noindent \textbf{Acknowledgements:}
We thank Izzet Coskun, Matthew Hase-Liu, Brendan Hassett, James M\textsuperscript{c}Kernan, Joaqu\'in Moraga, and Sho Tanimoto for helpful conversations.  We would particularly like to thank Eric Larson for providing Example \ref{example: Eric L's family of conics}.

Eric Jovinelly was supported by an NSF postdoctoral research fellowship, DMS-2303335. Brian Lehmann was supported by Simons Foundation grant Award Number 851129.  Eric Riedl was supported by NSF CAREER grant DMS-1945944 and Simons Foundation grants 00011850 and 00013673.

\section{Preliminaries}

Throughout we work over a ground field $\mathbb{K}$ that is algebraically closed of characteristic $0$.  We will only work with schemes which are separated and whose connected components have finite type over the ground field.  A variety is irreducible and reduced.  A pair $(X,\Delta)$ will consist of a normal variety $X$ and an effective $\mathbb{Q}$-Weil divisor $\Delta$ such that $K_{X} + \Delta$ is $\mathbb{Q}$-Cartier.  We say the pair is reduced if every irreducible component of $\Delta$ occurs with coefficient $1$.

For a projective variety $X$, we let $N^{1}(X)_{\mathbb{R}}$ denote the space of numerical equivalence classes of $\mathbb{R}$-Cartier divisors and let $N_{1}(X)_{\mathbb{R}}$ denote the dual space of numerical equivalence classes of $\mathbb{R}$-$1$-cycles.  We denote by $\Eff^{1}(X)$ the pseudo-effective cone of divisors and by $\Nef_{1}(X)$ the dual cone of nef curves.

Given a smooth projective curve $C$ and a projective variety $X$, we denote by $\Mor(C,X)$ the moduli space of morphisms from $C$ to $X$. We let $\overline{\mathcal{M}}_{g,n}(X)$ denote the Kontsevich stack of $n$-pointed genus $g$ stable maps and $\mathcal{M}_{g,n}(X)$ denote the open substack of maps with smooth irreducible domain.  If we want to specify the numerical class $\alpha \in N_{1}(X)_{\mathbb{R}}$ of the curves, we write $\Mor(C,X,\alpha)$ or $\overline{\mathcal{M}}_{g,n}(X,\alpha)$.

\subsection{Positive curves}

\begin{definition}
    Let $X$ be a quasiprojective variety and let $C$ be a smooth projective curve.  Fix a non-negative $r \in \mathbb{R}$.  A nonconstant morphism $s: C \to X$ is said to be $r$-free if $s(C)$ lies in the smooth locus of $X$ and every non-zero quotient of $s^{*}T_{X}$ has slope at least $2g(C) + r$.
\end{definition}

When we say a curve is $r$-free, we are implicitly assuming $r \geq 0$. In some cases, it will be useful to work with curves satisfying a weaker positivity condition intended to describe curves that are $r$-free in fibers of morphisms.

\begin{definition} \label{defi:almostfree}
Let $X$ be a projective variety and let $r$ be a non-negative number.  We say that a curve $s: C \to X$ is almost $r$-free if there is a smooth open subset $U \subset X$, a smooth morphism $\pi: U \to Z$, and a fiber $F$ of $\pi$ such that $s$ has image in $F$ and is an $r$-free curve in $F$.
\end{definition}

\section{Exceptional Towers}\label{sec: exceptional towers}
In this section we present our main breaking technique, Theorem \ref{theo:exctowerbreaking}.  Theorem \ref{theo:introexctower} is an immediate consequence: apply Theorem \ref{theo:exctowerbreaking} to an irreducible component of $\Mor(C,Y)$ containing the subvariety $M$ from Theorem \ref{theo:introexctower}.  

\begin{theorem}\label{theo:exctowerbreaking}
    Let $Y$ be a projective variety and $C$ be a smooth projective curve. 
    Suppose $M \subset \Mor(C,Y,\alpha)$ is an irreducible component parameterizing a dominant family of maps.  If $\dim M > \dim Y$, then there is a component $N \subset \Mor(\mathbb{P}^1, Y,\beta)$ such that:
    \begin{enumerate}
        \item $\dim N \geq \dim M$;
        \item $\alpha - \beta$ is algebraically equivalent to an effective $1$-cycle; and
        \item $N$ parameterizes a dominant family of maps.
    \end{enumerate}
    If $Y$ is smooth, then we can also ensure that $$-K_{Y} \cdot \beta \geq -K_{Y} \cdot \alpha - g \dim Y.$$
    In particular, if $Y$ is smooth and Fano then $\alpha-\beta$ is contained in a bounded subset of $N_{1}(Y)_{\mathbb{R}}$ that does not depend on $\alpha$ or $\beta$.
\end{theorem}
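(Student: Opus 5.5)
We follow the strategy sketched in the introduction: record high-order tangency at a fixed general point via an exceptional tower, then break there. Let $m = \dim M$ and $n = \dim Y$, and fix a general point $y \in Y$ together with a general point $c \in C$.

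\textbf{Step 1: the incidence family.} Consider the sublocus $M_{c \mapsto y} \subset M$ of maps with $s(c) = y$. Since $M$ is dominant and $y$ is general, this sublocus has dimension $m - n > 0$. The positivity of this dimension is exactly what bend-and-break requires. However, applying Mori's bend-and-break directly only gives a rational curve through $y$ with no control on its class. We therefore impose tangency conditions instead.

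\textbf{Step 2: the exceptional tower.} Construct a tower $Y_k \to Y_{k-1} \to \cdots \to Y_0 = Y$. Here $Y_1$ is the blow-up of $y$, and $Y_{i+1}$ is the blow-up of a general point $y_i \in E_i$ of the newest exceptional divisor. A map in $M_{c\mapsto y}$ lifts to $Y_k$ through $y_{k-1}$ exactly when it has a prescribed $k$-jet at $c$. Each step imposes roughly $n-1$ conditions, since we choose one direction in $E_i \cong \PP^{n-1}$.

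\textbf{Step 3: choosing $k$ and breaking.} Choose $k$ maximal so that the family $M_k$ of maps to $Y_k$ passing through $y_{k-1}$ at $c$ still has positive dimension. This gives roughly $\dim M_k \geq m - n - k(n-1) \geq 1$. Apply bend-and-break on $Y_k$ to a one-parameter subfamily of $M_k$ fixing $c \mapsto y_{k-1}$. The limit is a stable map with a rational component $R$ through $y_{k-1}$. By construction $R$ cannot be contained in the exceptional tower, and the tangency data forces it to be non-contracted by $Y_k \to Y$. Indeed, any component contracted to $y$ would carry only exceptional degree, and we will rule this out by choosing $k$ maximal and $y_i$ general. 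Let $f: \PP^1 \to Y$ be the pushforward of $R$. The difference $\alpha - \beta$ is then the image of the remaining components, which gives conclusion (2).

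\textbf{Step 4: dimension count and dominance (main obstacle).} We must show that the component $N$ containing $f$ satisfies $\dim N \geq m$. The rational curve $R$ passes through the general points $y_0,\dots,y_{k-1}$ of the tower, so it has high-order contact with a general germ at the general point $y$. Because the points of the tower were general, $f$ deforms with all these jets. Counting the jets from $Y_k$ back down to $Y$ gives
$$\dim N \geq n + k(n-1) + (\text{contribution from } E_k\text{-intersection}).$$
Maximality of $k$ then yields $\dim N \geq m$. Dominance follows because $y$ was general.

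The delicate point is the analysis of how $R$ meets the $E_i$. We must show that $R$ genuinely inherits the full tangency, meaning its intersection numbers with the pullbacks of the $E_i$ match those of the original curve, rather than some of the tangency being absorbed by another limit component. I would first try to handle this by a careful intersection-number bookkeeping of the limit cycle against the $E_i$. This should show that the component through $y_{k-1}$ carries the necessary multiplicities, and I expect it to be the hardest part of the argument.

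\textbf{Step 5: the smooth case.} When $Y$ is smooth, $\dim M \geq -K_Y\cdot\alpha + n(1-g)$, and the dimension of $N$ is governed by $-K_Y\cdot\beta + n - 3$ plus the corrections above. One can set up the count so that the anticanonical degree lost in breaking is at most $g n$, which gives $-K_Y\cdot\beta \geq -K_Y\cdot\alpha - g\dim Y$. If in addition $Y$ is Fano, then $\alpha - \beta$ is effective with bounded $-K_Y$-degree. Such classes lie in a bounded region of the cone of curves, which gives the boundedness statement.
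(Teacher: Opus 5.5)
There is a genuine gap, and it begins before the step you flagged. Step 2 assumes that a general point $y_i$ of each new exceptional divisor is still reached by curves of the family, at a cost of $n-1$ conditions per level. This fails in general. Take $Y=\PP^1\times W$ and let $M$ be the component of maps $x\mapsto(h(x),w_0)$ with $\deg h\gg 0$. Then $M$ is dominant with $\dim M>\dim Y$. But every map with $c\mapsto y$ has image $\PP^1\times\{w\}$, so all strict transforms meet the first exceptional divisor in the same point, and a general point of that divisor is never reached.

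Even if you grant your count, point blow-ups are too coarse. With $k$ maximal, maximality only gives $m\leq n+k(n-1)+(n-1)$. So a bound $\dim N\geq n+k(n-1)$ yields only $\dim N\geq m-(n-1)$, because maximality bounds $m$ from above rather than $\dim N$ from below. The unspecified ``contribution from $E_k$'' is exactly the missing quantity, and nothing in your argument produces it.

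The paper avoids both problems in two ways. It breaks graphs in $C\times Y$. It also uses its own exceptional tower, whose centers after the first point have codimension two: $S_0$ is a hyperplane of $E_0$, and then each $S_i$ is a minimal moving section of the $\PP^1$-bundle $E_i\to S_{i-1}$. Each level then imposes exactly one condition, and Lemma \ref{lem: climb tower morphism scheme}(1) shows these conditions are independent for graphs. So one can take height $k=\dim M_0-1$ and land on a one-parameter family.

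Second, breaking cannot deliver what Step 4 needs, namely one rational component through the top of the tower that inherits all of the tangency. Your claims that $R$ is not contained in the exceptional locus, and that components contracted to $y$ can be excluded, are unjustified. Such components do occur in the limit. The tangency is in general split among several rational components attached to them, and part of it can be absorbed by the genus-$g$ component.

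The paper never tracks jets of a single component; it uses anticanonical degree, which is additive. Let $C_{exc}$ be the connected component of the preimage of the exceptional locus containing the marked point. Lemma \ref{lem-numericalExceptionalTowers} gives $\sum_{\ell}\sum_{q\in\mathcal{Q}_\ell}\sum_{i=0}^{k}(i+1)\mult_q(s|_{C_\ell},\widetilde{E}_i)\geq k+1$ over all components $C_\ell$ adjacent to $C_{exc}$. Its proof uses $D\cdot F=0$ for divisors $D$ supported in a fiber $F$ of the family surface, together with Lemma \ref{lemm:intersectionvalues}.

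Corollary \ref{cor: rational curves degree} then converts the share of each adjacent rational component into $-K$-degree at least $1+\sum_i(i+1)\mult$. This rests on Proposition \ref{prop:lower_bound_exceptional_tower} and on global generation of $s^*T_Y$ for rational curves through a general point. Hence the connected genus-zero subcurve $C_{exc}\cup\{\text{rational }C_\ell\}$ has $-K_Y$-degree at least $\dim M_0=\dim M-n$, provided the genus-$g$ component meets the tower at the bottom level or not at all. Smoothing it to a free rational curve gives $\dim N=-K_Y\cdot\beta+n\geq\dim M$ directly.

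When the genus-$g$ component meets $\widetilde{E}_m$ with $m>0$, the paper runs an induction on $\dim M_0$ (Theorem \ref{theo:strongest breaking result}). That induction uses Corollary \ref{cor:meaningofvg} and the fact that a graph meets the exceptional locus with total multiplicity one. Your proposal has no counterpart to this step.

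Finally, Step 5 needs no separate count: $-K_Y\cdot\beta+n=\dim N\geq\dim M\geq -K_Y\cdot\alpha+(1-g)n$. Note that the normalization for $\Mor(\PP^1,Y)$ is $+n$, not $+n-3$.
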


In fact, the proof will show that maps in $M$ degenerate to a stable map $s: C' \to Y$  such that the general map in $N$ is a smoothing of the restriction of $s$ to some connected subcurve of $C'$.  The following definition identifies the main construction in our argument.  This construction will allow us to observe the behavior of curves very closely at a smooth point in $Y$.

\begin{definition}\label{def:exceptional tower}
Let $Y$ be a projective variety.  An \textit{exceptional tower} over $Y$ is a sequence 
$$Y_k \xrightarrow{\phi_k} Y_{k-1} \xrightarrow{\phi_{k-1}} \ldots \xrightarrow{\phi_1} Y_0 \xrightarrow{\phi_0} Y$$
of blow-ups $\phi_i$ whose centers $S_{i-1}$ and exceptional divisors $E_i$ satisfy the following conditions:
\begin{itemize}
\item The map $\phi_0 : Y_0 \rightarrow Y$ is the blow-up of a smooth point $p \in Y$ with exceptional divisor $E_{0}$; 
\item The map $\phi_1 : Y_1 \rightarrow Y_0$ is the blow-up of a hyperplane $S_0 \subset E_0 \cong \PP^{\dim Y - 1}$ with exceptional divisor $E_{1}$;
\item For $i > 1$, suppose we have constructed $S_{i-2} \cong \mathbb{P}^{\dim Y-2}$ and the $(i-1)$st exceptional divisor $E_{i-1}$ is isomorphic to $\mathbb{P}_{S_{i-2}}(\mathcal{O} \oplus \mathcal{O}(i))$.  Then the center $S_{i-1} \subset Y_{i-1}$ of the blow-up $\phi_i : Y_i \rightarrow Y_{i-1}$ is a minimal moving section of the $\PP^1$-bundle $\phi_{i-1}|_{E_{i-1}} \colon E_{i-1} \to S_{i-2}$.  In particular $S_{i-1} \cong \mathbb{P}^{\dim Y-2}$ and $E_{i} \cong \mathbb{P}_{S_{i-1}}(\mathcal{O} \oplus \mathcal{O}(i+1))$.
\end{itemize}
We call $p$ the base and $k$ the height of this exceptional tower. We use the notation $\Phi(Y,p,k)$ to denote such an exceptional tower on $Y$ (for some unspecified choice of blow-ups $\phi_{1},\ldots,\phi_{k}$).
\end{definition}

\begin{notation} \label{nota:exctower}
Let $(Y,\Delta)$ be a pair with reduced boundary; if no boundary $\Delta$ is specified we take $\Delta = \emptyset$. 
Given a point $p$ in the smooth locus of $Y \setminus \Delta$ and an exceptional tower $\Phi(Y,p,k)$ as in Definition \ref{def:exceptional tower}:
\begin{itemize}
\item for $0 \leq i < j \leq k$, we denote by $\widetilde{E}_{i}$ the strict transform on $Y_{j}$ of the exceptional divisor $E_{i}$ for $\phi_{i}: Y_{i} \to Y_{i-1}$ and we also set $\widetilde{E}_k = E_k$ for convenience; 
\item for $0 \leq i \leq k$, we let $\Delta_i \subset Y_i$ be 
$\big(\phi_1 \circ \ldots \circ \phi_i\big)^* (E_0 + \phi_0^*\Delta)$;
\item for $1 \leq i \leq k$, we denote by $\ell_{i}$ the class of the strict transform on $Y_{k}$ of a general line in a fiber of the projective bundle $\phi_{i}|_{E_{i}}: E_{i} \to S_{i-1}$ and we denote by $\ell_{0}$ the class of the strict transform of a general line in $E_{0}$; 
\item given a smooth projective curve $C$ and a nonconstant map $s : C \to Y$,  for each $0 \leq i \leq k$ we let $s_i : C \to Y_i$ denote the strict transform of $s$ mapping to $Y_i$.
\end{itemize}
\end{notation}

A straightforward computation yields:

\begin{lemma} \label{lemm:intersectionvalues}
Let $Y$ be a projective variety.  Consider an exceptional tower over $Y$ as in Definition \ref{def:exceptional tower}.  Then for $0 < i < k$ we have
\begin{equation*}
\widetilde{E}_{i} \cdot \ell_{i} = -2 \qquad \qquad \widetilde{E}_{i} \cdot \ell_{i-1} = \widetilde{E}_{i} \cdot \ell_{i+1} = 1
\end{equation*}
and $\widetilde{E}_{i} \cdot \ell_{j} = 0$ if $|i-j| \geq 2$.  If $k > 0$, we also have
\begin{align*}
E_{k} \cdot \ell_{k} = -1 \qquad E_{k} \cdot \ell_{k-1} = 1 \qquad \widetilde{E}_{0} \cdot \ell_{0} = -2 \qquad \widetilde{E}_{0} \cdot \ell_{1} = 1.
\end{align*}
\end{lemma}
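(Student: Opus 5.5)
The plan is to compute everything locally near the base point, using two standard facts about blowing up a smooth center $S$ in a smooth variety $W$ (recall that $p$ lies in the smooth locus of $Y$, so every $Y_i$ is smooth near the exceptional locus). The first fact: if $\phi: W' \to W$ is the blow-up with exceptional divisor $E$, then $E$ has degree $-1$ on a line in a fiber of $E \to S$. The second fact: if $D$ is a divisor smooth along $S \subset D$, then $\phi^*D = \widetilde{D} + E$. Combined with the projection formula $\phi^*D \cdot \widetilde{\gamma} = D \cdot \phi_*\widetilde{\gamma}$, these determine all the numbers. The bookkeeping tracks how each curve $\ell_i$ is modified by the later blow-ups $\phi_{i+1}, \dots, \phi_k$.

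\emph{Step 1: the case $i=0$.} On $Y_0$ a line in $E_0 \cong \PP^{n-1}$ satisfies $E_0 \cdot \text{line} = -1$. The center $S_0$ is a hyperplane in $E_0$, so a general line meets it transversally in exactly one point. Its strict transform in $Y_1$ is therefore $\phi_1^*(\text{line}) - f_1$, where $f_1$ is a fiber of $E_1 \to S_0$. Since $\phi_1^*E_0 = \widetilde{E}_0 + E_1$, this gives $\widetilde{E}_0 \cdot \ell_0 = -1 - 1 = -2$ and $E_1 \cdot \ell_0 = 1$. A fiber $f_1$ is contracted by $\phi_1$, so $0 = \phi_1^*E_0 \cdot f_1 = \widetilde{E}_0 \cdot f_1 + E_1 \cdot f_1$, which yields $\widetilde{E}_0 \cdot \ell_1 = 1$.

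\emph{Step 2: the general step.} For $i \geq 1$, I would run the same argument. Let $f_i$ be a fiber of $E_i \to S_{i-1}$ on $Y_i$. Since $S_{i-1} \subset \widetilde{E}_{i-1}$, we have $E_i \cdot f_i = -1$ and $\widetilde{E}_{i-1} \cdot f_i = 1$. The center $S_i$ is a section of $E_i \to S_{i-1}$, so it meets $f_i$ transversally in one point. Hence on $Y_{i+1}$ the strict transform is $\phi_{i+1}^* f_i - f_{i+1}$, which gives $\widetilde{E}_i \cdot \ell_i = -2$ and $E_{i+1} \cdot \ell_i = 1$.

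\emph{Step 3: stability under later blow-ups.} It remains to show that these numbers do not change under the subsequent blow-ups $\phi_{i+2}, \dots, \phi_k$. It also remains to show $\widetilde{E}_j \cdot \ell_i = 0$ for $|i-j| \geq 2$. Both reduce to one geometric claim.

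\emph{Key claim.} Inside $E_{i+1} \cong \PP_{S_i}(\OO \oplus \OO(i+2))$, the intersection $\widetilde{E}_i \cap E_{i+1}$ is the negative (rigid) section. Granting the claim, the strict transform of $f_i$ meets $E_{i+1}$ only at a point of this negative section. A minimal moving section $S_{i+1}$ is disjoint from the negative section, so $\ell_i$ is disjoint from all later centers. Consequently $\ell_i$, which lies in $\widetilde{E}_i$, meets only $\widetilde{E}_{i-1}$, $\widetilde{E}_i$ and $E_{i+1}$. This gives all the stated values and the vanishing for $|i-j| \geq 2$. For the top index, no further blow-ups occur, so $E_k \cdot \ell_k = -1$ and $E_k \cdot \ell_{k-1} = 1$ directly.

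\emph{Main obstacle.} The step I expect to be hardest is the key claim, together with the compatible splitting type asserted in Definition~\ref{def:exceptional tower}. To prove it, I would compute the normal bundle $N_{S_i/Y_i}$ as an extension of $N_{S_i/E_i}$ by $N_{E_i}|_{S_i}$. The section $\widetilde{E}_i \cap E_{i+1}$ corresponds to the sub-line-bundle $N_{S_i/E_i}$, whose degree (relative to a hyperplane class on $S_i \cong \PP^{n-2}$) is lower than that of $N_{E_i}|_{S_i}$. A sub-line-bundle of smaller degree yields the negative section of the projectivization. I would verify this first in the base case, where $N_{S_0/Y_0} = \OO(1) \oplus \OO(-1)$. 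I would then verify it inductively using $S_i \cap \widetilde{E}_{i-1} = \emptyset$, as a double-check on the degrees.
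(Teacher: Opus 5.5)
Your overall route is the right one, and it is presumably the ``straightforward computation'' the paper has in mind; the paper itself gives no proof. The blow-up formulas and the projection formula do yield every value in the lemma. So does the observation that $\ell_i$ avoids every center after $S_i$, and that $S_i \not\subset \widetilde{E}_{i-1}$, both because $\widetilde{E}_i \cap E_{i+1}$ is the rigid section of $E_{i+1}$. What needs fixing is the justification you sketch for that key claim: it is wrong in two places, and the two errors cancel.

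Take $E_{i+1}=\PP(N_{S_i/Y_i})$ to be the bundle of lines in the normal bundle. The section $\PP(L)$ attached to a sub-line-bundle $L\subset V$ has normal bundle $L^{\vee}\otimes V/L$. So it is the negative section when $\deg L>\deg V/L$, not when $\deg L$ is smaller. Moreover, in $0\to N_{S_i/E_i}\to N_{S_i/Y_i}\to N_{E_i/Y_i}|_{S_i}\to 0$ the sub-line-bundle has the larger degree, as the following induction shows.
\begin{itemize}
\item In the base case, $N_{S_0/E_0}\cong\OO(1)$ and $N_{E_0/Y_0}|_{S_0}\cong\OO(-1)$.
\item Suppose $N_{S_{i-1}/Y_{i-1}}\cong\OO(i)\oplus\OO(-1)$ with $\OO(i)=N_{S_{i-1}/E_{i-1}}$. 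A minimal moving section $S_i\subset E_i$ is $\PP(L)$ for a sub-line-bundle $L\cong\OO(-1)$ projecting isomorphically onto the $\OO(-1)$ summand.
\item Since $\OO_{E_i}(E_i)$ is the tautological $\OO(-1)$, this gives $N_{E_i/Y_i}|_{S_i}\cong L\cong\OO(-1)$ and $N_{S_i/E_i}\cong L^{\vee}\otimes\OO(i)\cong\OO(i+1)$.
\item The extension splits because $H^1(\PP^{\dim Y-2},\OO(i+2))=0$. Hence $N_{S_i/Y_i}\cong\OO(i+1)\oplus\OO(-1)$, which recovers $E_{i+1}\cong\PP(\OO\oplus\OO(i+2))$.
\end{itemize}
Consequently $\widetilde{E}_i\cap E_{i+1}=\PP(N_{S_i/E_i})$ has normal bundle $\OO(-i-1)\otimes\OO(-1)\cong\OO(-i-2)$ in $E_{i+1}$, i.e.\ it is the rigid section. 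The minimal moving sections are the $\PP(L')$ with $L'\cong\OO(-1)$ projecting isomorphically onto the $\OO(-1)$ summand, so they are disjoint from it. With this correction your argument goes through.
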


The following provides geometric intuition for exceptional towers. For any exceptional tower $\Phi(Y,p,k)$, there is a very ample divisor $A \subset Y$ such that $p \in A^{sm}$ and for all $i < k$ the strict transform of $A$ in $Y_i$ intersects $E_i$ along $S_i$.  
Curves in $Y$ that are $k$-fold tangent to $A$ at $p$ are precisely those whose strict transforms in $Y_k$ meet $E_k$.  The language of exceptional towers allows us to study specializations of these curves as pointed stable maps.

\subsection{Local multiplicities}
We next define and describe a notion of multiplicity for maps that is local on the source rather than the target.  This will be helpful for describing how curves with a marked point interact with exceptional towers.

\begin{definition}\label{def:local multiplicity}
Let $Y$ be a variety, $C$ be a smooth curve, and $s \colon C \rightarrow Y$ be a nonconstant map.  Let $Z \subset Y$ be a smooth closed subscheme contained in the smooth locus of $Y$ such that $s(C) \not\subseteq Z$.  For any point $q \in C$, the \textit{local intersection multiplicity} $\mathrm{mult}_q(s,Z)$ of $Z$ with $s$ at $q$ is the maximal $r\geq 0$ such that $(s^{-1}\mathcal{I}_Z \cdot \OO_C)_{\mathfrak{m}_q} \subset (\mathfrak{m}_q^{r})_{\mathfrak{m}_{q}}$ where the subscript $(\ \ )_{\mathfrak{m}_{q}}$ denotes localization.
\end{definition}

We analyze deformations of maps that preserve certain local multiplicities.  The following sheaf records the infinitesimal structure of such deformations.

\begin{definition} \label{defi:locallogtangentsheaf}
Let $(Y,\Delta)$ be a log smooth pair with reduced boundary.   
Let $C$ be a smooth projective curve and let $\mathcal{Q} \subset C$ be a finite set of points.
Suppose that $s : C \to Y$ is a morphism such that $s(C) \not \subset \Delta$.
Consider the standard exact sequence of coherent sheaves on $Y$
\begin{equation*}
0 \to T_{Y}(- \log \Delta) \to T_{Y} \to N_{\Delta}.
\end{equation*} 
We define the \textit{local log tangent sheaf} $s^{*}T_{Y}(-\log_{(s,\mathcal{Q})} \Delta)$ to be the kernel of the natural map
\begin{equation*}
s^{*}T_{Y} \to (s^{*}N_{\Delta})_{\mathcal{Q}}
\end{equation*}
where for a torsion sheaf $\mathcal{T}$ on $C$, $\mathcal{T}_{\mathcal{Q}}$ denotes the part of $\mathcal{T}$ supported along $\mathcal{Q}$.
\end{definition}

    With notation as in Definition \ref{defi:locallogtangentsheaf},  the image of the natural map $T_Y \to N_{\Delta}$ is the direct sum $\oplus_{i=1}^{r} N_{\Delta_{i}}$ of the normal sheaves of the irreducible components $\{ \Delta_{i} \}_{i=1}^{r}$ of $\Delta$.  Therefore global sections of $s^{*}T_{Y}(-\log_{(s,\mathcal{Q})} \Delta)$ correspond to infinitesimal deformations of $s$ that preserve the local intersection multiplicities of $\Delta$ with $s$ along $\mathcal{Q}$.

We will need to understand how local log tangent sheaves interact with the blow-ups defining exceptional towers.

\begin{remark}\label{rem: cokernel}
Let $\pi : Y \to X$ be the blow-up of a smooth subvariety $Z \subset X$.  Let $E$ be the exceptional divisor of $\pi$ and consider $E$ as the relative Grassmannian $\pi|_{E} : E  \to Z$ of lines in the vector bundle $N_Z$.  The universal quotient bundle on $E$ is isomorphic to $T_{E / Z}(E)$.  The cokernel of $d\pi : T_Y \to \pi^* T_X$ may be identified with the composition of the standard map $\pi^*T_X \to \pi^* N_Z$ and the tautological quotient-bundle map $\pi^* N_Z \to T_{E / Z}(E)$.
\end{remark}

\begin{lemma} \label{lemm:loctancomputation}
Let $(Y,\Delta)$ be a log smooth pair with reduced boundary.  Let $p \in Y\setminus \Delta$ be a point.  Let $C$ be a smooth projective curve and $\mathcal{Q} \subset C$ be a finite set of points.  Fix an exceptional tower $\Phi(Y,p,k)$ with Notation \ref{nota:exctower}.  Consider a nonconstant map $s : C \to Y$ with $s(C) \not \subset \Delta$ and let $\mathcal{P}$ be the set of points in $s^{-1}(p)$ not contained in $\mathcal{Q}$.
\begin{enumerate}
\item There is an exact sequence
$$0 \to s_{0}^*T_{Y_{0}}(-\log_{(s_{0},\mathcal{Q})} \Delta_{0}) \to s^*T_Y(-\log_{(s,\mathcal{Q})} \Delta)\otimes \mathcal{O}\big (-\sum_{q \in \mathcal{Q}} \mathrm{mult}_q(s,p)\cdot q\big) \to \big (s^*T_{E_0}(E_0)\big )_{\mathcal{P}} \to 0.$$
\item For each integer $0 \leq i < k$, 
we have an exact sequence
\begin{equation*}
\begin{split}
0 \to s_{i+1}^*T_{Y_{i+1}}(-\log_{(s_{i+1},\mathcal{Q})} \Delta_{i+1}) & \to  s_{i}^*T_{Y_{i}}(-\log_{(s_{i},\mathcal{Q})} \Delta_{i}) \\
& \xrightarrow{\tau} \big (s_{i+1}^{*}T_{E_{i+1}/S_{i}}(E_{i+1} - \widetilde{E}_i) \big )_{\mathcal{Q}} \oplus \big (s_{i+1}^{*}T_{E_{i+1}/S_{i}}(E_{i+1}) \big )_{\mathcal{P}} \to 0.
\end{split}
\end{equation*}
\end{enumerate}
\end{lemma}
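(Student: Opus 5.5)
Both exact sequences come from the same local computation: compare $T_{Y_{i+1}}$ with $\phi_{i+1}^*T_{Y_i}$ via Remark \ref{rem: cokernel}, then track the log conditions along $\mathcal{Q}$. Since $\mathcal{P}$ and $\mathcal{Q}$ are disjoint finite sets, the statement is local on $C$. I will therefore check it separately near points of $\mathcal{P}$, near points of $\mathcal{Q}$, and away from $s^{-1}(p)$. Away from $s^{-1}(p)$ every map is an isomorphism.

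\textbf{Points of $\mathcal{P}$.} Here no log condition is imposed, so we are comparing $s_{i+1}^*T_{Y_{i+1}}$ with $s_{i+1}^*\phi_{i+1}^*T_{Y_i}$.
\begin{itemize}
\item By Remark \ref{rem: cokernel}, $d\phi_{i+1}$ is injective. Its cokernel is $T_{E_{i+1}/S_i}(E_{i+1})$, supported on $E_{i+1}$.
\item Pulling back by $s_{i+1}$ preserves injectivity, because $s_{i+1}(C)\not\subset E_{i+1}$.
\item The cokernel pulls back to $(s_{i+1}^*T_{E_{i+1}/S_i}(E_{i+1}))_{\mathcal{P}}$. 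Its length is the rank of $T_{E/S}$ times the local intersection multiplicity with $E_{i+1}$.
\end{itemize}
For $i=0$ this gives the $\mathcal{P}$-part of (1): $E_0$ lies over a point, so $T_{E_0/p}(E_0)=T_{E_0}(E_0)$. I will verify the identification with Remark \ref{rem: cokernel} applied to $Z=p$, where $N_Z$ is trivial.

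\textbf{Points of $\mathcal{Q}$.} Here we use log tangent sheaves. Near $q$, $\Delta_{i+1}$ contains $E_{i+1}$ and $\widetilde{E}_i$, together with earlier strict transforms, which do not pass through the relevant points for generic position. I will compute in local coordinates adapted to the tower. At $Y_i$, choose coordinates $x,y,z_2,\dots,z_{n-1}$ with:
\begin{itemize}
\item $E_i=\{x=0\}$;
\item the center $S_i=\{x=y=0\}$, a codimension-two smooth subvariety inside $E_i$.
\end{itemize}
Then $T_{Y_i}(-\log E_i)$ is generated by $x\partial_x,\partial_y,\partial_{z}$. On the chart $x=uy$ we have $E_{i+1}=\{y=0\}$ and $\widetilde{E}_i=\{u=0\}$. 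A direct computation gives
\[
\phi^*T_{Y_i}(-\log E_i)\big/T_{Y_{i+1}}(-\log(E_{i+1}+\widetilde{E}_i)) \cong \OO_{E_{i+1}}\cdot\partial_y,
\]
and this class is identified with $T_{E_{i+1}/S_i}(E_{i+1}-\widetilde{E}_i)$. I will check this by comparing degrees on a fiber: $T_{\mathbb{P}^1}(-1)\otimes \OO(-1)$ against the intersection numbers in Lemma \ref{lemm:intersectionvalues}. Pulling back along $s_{i+1}$ and localizing at $\mathcal{Q}$ then gives the $\mathcal{Q}$-summand in (2). One must also confirm that the local log sheaf only modifies the stalk at $\mathcal{Q}$, which is exactly how Definition \ref{defi:locallogtangentsheaf} is built.

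\textbf{The base step (1) at $\mathcal{Q}$.} The base step differs because $p\notin\Delta$: the target imposes no log condition at $q$, but $s_0$ must stay tangent to $E_0$. Pulled back vector fields at $p$ vanishing to order $m=\mathrm{mult}_q(s,p)$ along $s$ lift to log fields along $E_0$. So the twist by $\OO(-m q)$ makes the inclusion an isomorphism at $q$. I will check this in coordinates by blowing up the origin with $x_j=ux_j'$: the field $\partial_{x_j}$ becomes $u^{-1}(\cdots)$, and pulling back along $s$ with $u\circ s_0$ of order $m$ shows $s^*\partial_{x_j}\otimes t^m$ lands in $s_0^*T_{Y_0}(-\log E_0)$ and generates it.

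\textbf{Main obstacle.} The main obstacle is the identification of the cokernel at $\mathcal{Q}$ in (2) as exactly $T_{E_{i+1}/S_i}(E_{i+1}-\widetilde{E}_i)$, including the surjectivity of $\tau$. I will handle it with the explicit chart computation above, and also check the other chart $y=vx$, where $\widetilde{E}_i$ is absent.
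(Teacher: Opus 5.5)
Your proposal is correct and is essentially the paper's proof. At $\mathcal{P}$ both arguments use the blow-up sequence of Remark \ref{rem: cokernel}. At $\mathcal{Q}$ your chart computations (which amount to $T_{Y_0}(-\log E_0)=\phi_0^*T_Y(-E_0)$ in (1), and to the quotient $\mathcal{O}_{E_{i+1}}\cdot\partial_y$ in (2)) are coordinate versions of the paper's snake-lemma diagrams, after the paper's reduction to $\Delta=0$.

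Two small corrections:
\begin{itemize}
\item The earlier $\widetilde{E}_j$ with $j<i$ miss $E_{i+1}$, but not by generic position, since the tower in this lemma is arbitrary. The real reason is structural: each center $S_m$ with $m\geq 1$ is a minimal moving section of $E_m\to S_{m-1}$, hence disjoint from the rigid section $\widetilde{E}_{m-1}\cap E_m$.
\item Fiber degrees cannot pin down a line bundle on $E_{i+1}$. This is harmless for the torsion sheaves on $C$, but the identification of $\tau$ matters later in Lemma \ref{lemm:meaningofvg}. Instead, map your quotient into $\operatorname{coker}(d\phi_{i+1})=T_{E_{i+1}/S_i}(E_{i+1})$. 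In the chart $x=uy$ the relation $\partial_y\equiv -u\,\partial_x$ shows its image is exactly the sections vanishing along $\widetilde{E}_i$.
\end{itemize}
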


\begin{proof}
Since the desired exact sequences can be checked on the preimage of an open neighborhood of $p$ in $Y$, we may replace $Y$ by $Y \backslash \Delta$ to assume that $\Delta = 0$.

(1) We claim that the log tangent bundle $T_{Y_{0}}(-\log E_{0})$ may be described as the kernel of the standard map $\varphi : \phi_0^*T_Y \to \phi_0^* N_p$.  Consider the commuting diagram
\begin{equation*}
\xymatrix{
0 \ar[r] & \mathcal{F} \ar[r] \ar[d]_{f} & \phi_0^{*}T_{Y} \ar[r]^{\varphi} \ar[d]_{=} & \phi_0^{*}N_{p} \ar[r] \ar[d]_{g} & 0 \\
0 \ar[r] & T_{Y_0} \ar[r]^{d\phi_0} & \phi_0^{*}T_{Y} \ar[r] & T_{E_0/p}(E_0) \ar[r] & 0
}
\end{equation*}
where the bottom row is the standard exact sequence for a smooth blow-up, $\mathcal{F}$ is defined as the kernel of $\varphi$, and the morphism $g: \phi_0^{*}N_{p} \to T_{E_0/p}(E_0)$ is the tautological quotient-bundle map as in Remark \ref{rem: cokernel}.  

The kernel of $g$ is the tautological subbundle of $\phi_0^{*}N_{p}$, namely $\mathcal{O}_{E_0/p}(-1) = N_{E_{0}}$.  By the snake lemma, we see that the cokernel of $f$ is also isomorphic to $N_{E_{0}}$ and that the map $T_{Y_0} \to N_{E_{0}}$ is induced by $\varphi \circ d\phi_0$ and thus is the natural quotient.
We conclude that $\mathcal{F}$ is isomorphic to $T_{Y_{0}}(-\log E_{0})$.  This proves our claim.

Note that the entire diagram above remains exact upon pulling back via $s_{0}$.  Observe that the local log tangent sheaf combines the behavior of the restricted tangent sheaf $s_0^*T_{Y_{0}}$ near $\mathcal{P}$ and the log tangent sheaf $s_0^*T_{Y_{0}}(-\log E_{0})$ near $\mathcal{Q}$.  Transitioning from the pullback of the log tangent sheaf to the local log tangent sheaf, we obtain a diagram
\begin{equation*}
\xymatrix{
0 \ar[r] & s_0^*T_{Y_0}(-\log_{(s_0, \mathcal{Q})} E_0) \ar[r] \ar[d] & s^{*}T_{Y} \ar[r] \ar[d]_{=} & (s^{*}N_{p})_{\mathcal{Q}} \oplus (s^{*}T_{E_0/p}(E_0))_{\mathcal{P}} \ar[r] \ar[d] & 0 \\
0 \ar[r] & s_{0}^{*}T_{Y_0} \ar[r] & s^{*}T_{Y} \ar[r] & s^{*}T_{E_0/p}(E_0) \ar[r] & 0
}
\end{equation*}
Furthermore, since $N_p = T_Y \otimes_{\OO_Y} (\OO_Y / \mathfrak{m}_p)$ the kernel of $s^{*}T_{Y} \to (s^{*}N_{p})_{\mathcal{Q}}$ can be identified with the twist $s^*T_Y(-\sum_{q \in \mathcal{Q}} \mathrm{mult}_q(s,p)\cdot q)$.    
Finally, because $p \notin \Delta$, the intersection of $s_0^*T_{Y_0}(-\log_{(s_0, \mathcal{Q})} E_0)$ with $s^*T_Y(-\log_{(s, \mathcal{Q})} \Delta)$ is the desired sheaf $s_0^*T_{Y_0}(-\log_{(s_0, \mathcal{Q})} \Delta_0)$.  This proves (1).

(2) We have a commuting diagram with exact rows

\begin{equation*}
\xymatrix{
0 \ar[r] & T_{Y_{i+1}}(-\log \Delta_{i+1}) \ar[r] \ar[d]_{d_{\log} \phi_{i+1}} & T_{Y_{i+1}} \ar[r] \ar[d]_{d\phi_{i+1}} & N_{E_{i+1}} \oplus N_{\widetilde{E}_{i}} \oplus_{j = 0}^{i-1} N_{\widetilde{E}_j} \ar[r] \ar[d]_{\iota} & 0 \\
0 \ar[r] & \phi_{i+1}^{*}T_{Y_{i}}(-\log \Delta_{i}) \ar[r]  & \phi_{i+1}^{*}T_{Y_{i}} \ar[r] & \phi_{i+1}^{*}N_{E_i} \oplus_{j = 0}^{i-1} N_{\widetilde{E}_j} \ar[r] & 0 
}
\end{equation*}
All three vertical maps are injective, and the cokernel of $\iota$ is naturally the restriction $\phi_{i+1}^*N_{E_i}|_{E_{i+1} \cap \widetilde{E}_i}$.  Letting $K$ denote the cokernel of $d_{\log} \phi_{i+1}$, we obtain an exact sequence of cokernels 
\begin{equation*}
0 \to K \to T_{E_{i+1}/S_{i}}(E_{i+1}) \to \phi_{i+1}^*N_{E_i}|_{E_{i+1} \cap \widetilde{E}_i} \to 0.
\end{equation*}
The identification of $T_{E_{i+1}/S_{i}}(E_{i+1})$ with the tautological quotient bundle of $\phi_{i+1}^* N_{S_i}$ shows  $T_{E_{i+1}/S_{i}}(E_{i+1})|_{E_{i+1} \cap \widetilde{E}_i} = \phi_{i+1}^*N_{E_i}|_{E_{i+1} \cap \widetilde{E}_i}$.   Thus, $K \cong T_{E_{i+1}/S_{i}}(E_{i+1} - \widetilde{E_i})$.  Pulling back via $s_{i+1}$, we may glue the restriction of $s_{i+1}^*T_{Y_{i+1}}(-\log \Delta_{i+1})$ to $C \setminus \mathcal{P}$ together with the restriction of $s_{i+1}^*T_{Y_{i+1}}$ to $C \setminus \mathcal{Q}$ using the natural isomorphism over $C \setminus (\mathcal{P} \cup \mathcal{Q})$.  The resulting sheaf is the local log tangent sheaf $s_{i+1}^*T_{Y_{i+1}}(-\log_{(s_{i+1},\mathcal{Q})}\Delta_{i+1})$.  
\end{proof}

\subsection{Curves on general exceptional towers}

    We will say that an exceptional tower $\Phi(Y,p,k)$ is \textit{general} among those based at $p$ if the choice of each $S_i \subset E_i$ lies in an open subset of $|\mathcal{O}_{E_i}(S_i)|$ that depends on previously specified quantities.  For our purposes, these quantities will be
    a smooth projective curve $C$, a finite set of points $\mathcal{Q} \subset C$, and a quasi-projective subvariety $M \subset \Mor(C,Y)$ as discussed in 
    Lemmas \ref{lemm:meaningofvg} and \ref{lem: climb tower morphism scheme}.

\begin{lemma}\label{lemm:meaningofvg}
Let $(Y,\Delta)$ be a log smooth pair with reduced boundary.  Let $p \in Y\setminus \Delta$ be a closed point. 
Fix a smooth projective curve $C$ and a finite set of points $\mathcal{Q} \subset C$. 
Fix a quasi-projective subvariety $M \subset \Mor(C,Y)$.
Then a general exceptional tower $\Phi(Y,p,k)$ based at $p$ has the following property.

Suppose $s : C \to Y$ is a nonconstant map parameterized by $M$ whose image is not contained in $\Delta$.  Let $i < k$ be a nonnegative integer.  Let $T_i \subset H^0\big(C, s_i^{*}T_{Y_i}(-\log_{(s_i,\mathcal{Q})} \Delta_i)\big)$ be the preimage of 
$T_s M \subset H^0(C, s^{*}T_{Y})$ under the natural inclusion map.  Then there is a global section $\epsilon \in T_i$ such that the composition
\begin{equation*}
\mathcal{O}_{C} \xrightarrow{\epsilon} s_i^{*}T_{Y_i}(-\log_{(s_i,\mathcal{Q})} \Delta_i) \xrightarrow{\rho} \big (s_{i+1}^{*}T_{E_{i+1}/S_{i}}(E_{i+1} - \widetilde{E}_i)\big )_{\mathcal{Q}}
\end{equation*}
is surjective (where the rightmost map $\rho$ is induced by the map $\tau$ in Lemma \ref{lemm:loctancomputation}).
\end{lemma}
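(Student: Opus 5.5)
We argue by induction on $i$, choosing the centers $S_0, S_1, \dots$ one at a time. At step $i$, $S_i$ is taken general in the linear system $|\mathcal{O}_{E_i}(S_i)|$, a minimal moving section of the $\PP^1$-bundle $E_i \to S_{i-1}$, or a hyperplane in $E_0$ when $i=0$. First we identify the target sheaf of $\rho$. It is supported only at those points $q \in \mathcal{Q}$ with $s_{i+1}(q) \in E_{i+1}\cap$ (something nontrivial), and it vanishes unless $s_i(q) \in S_i$. At such a $q$, since $s_{i+1}(q) \in E_{i+1}$ and $T_{E_{i+1}/S_i}$ has rank one, the stalk is a length-$m_q$ torsion module, where $m_q$ is determined by the tangency of $s_i$ with $S_i$ relative to $\widetilde E_i$. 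So the composition $\rho\circ\epsilon$ is surjective exactly when, at each such $q$, $\epsilon$ maps to a generator of this cyclic module. Because the target is a finite direct sum of cyclic modules over distinct points, it is enough to find, for each $q$ separately, some element of $T_i$ whose image generates at $q$. A general linear combination of these elements then works at all $q$ simultaneously, since nonvanishing of the leading coefficient at each point is an open condition on $T_i$.

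The key step is to see which sections of $T_i$ reach the quotient at $q$. Such a section preserves the local multiplicities of $\Delta_i$ at $\mathcal{Q}$, so to first order it moves $s_i(q)$ inside $\widetilde E_i$ (or inside $E_0$ when $i=0$). Its image in the normal direction to $S_i$ inside $E_i$ is the corresponding component of $\rho$. The obstruction is therefore that every deformation in $T_i$ might be tangent to $S_i$ to the relevant order at $s_i(q)$. We would rule this out by a dimension count that uses the generality of $S_i$. Consider the incidence variety $I = \{(s,S) : s\in M,\ S \in |\mathcal{O}_{E_i}(S_i)| \text{ open}\}$, restricted to the pairs where the bad condition holds at some $q\in\mathcal{Q}$. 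The bad condition says the image of $T_s M\cap H^0(\ldots)$ in the relevant jet space at $s_i(q)$ lies in the tangent data of $S$. Fix $s$ and $q$. The image of $T_i$ in the $\PP^1$-fiber direction spans some subspace $V$. If $V$ is nonzero, a general $S$ (a section moving in a family that is transitive on points and tangent directions of the fibers) is not tangent to $V$. If $V=0$ for every $s$, then all such $s$ would have $s_i(q)$ fixed in the fiber direction, and we can choose $S$ to avoid that locus. Either way, the bad locus in $I$ has positive codimension in every fiber over $M$. Since $M$ is of finite type, its projection to the linear system is then a proper constructible set, and we take $S_i$ outside its closure.

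The main obstacle is making this uniform over the infinitely many $s\in M$. To handle it, we stratify $M$ into finitely many locally closed pieces. On each piece the numbers $\mathrm{mult}_q$ and the dimensions of $T_i$ and $V$ are constant, so the bad locus is constructible there. The needed transitivity is that minimal moving sections of $\mathbb{P}_{S_{i-1}}(\mathcal{O}\oplus\mathcal{O}(i+1))$ separate points and tangent vectors along fibers. We would check this using $H^0(\mathcal{O}(i+1))$ on $S_{i-1}\cong \PP^{\dim Y-2}$. Finally, we argue that the genericity at step $i$ depends only on $C$, $\mathcal{Q}$, $M$ and the previously chosen $S_j$, as the definition of a general tower requires.
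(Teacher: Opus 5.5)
There is a genuine gap in the genericity step. You infer that because the bad locus in $I$ has positive codimension in every fiber over $M$, and $M$ is of finite type, its projection to the linear system is proper. That inference is not valid. Fiberwise codimension $c$ only gives $\dim(\text{bad}) \le \dim M + \dim|\mathcal{O}_{E_i}(S_i)| - c$, which says nothing once $\dim M \ge c$.

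The same reasoning, applied to the codimension-one condition $s_i(q)\in S$, would show that a general $S$ contains none of the points $s_i(q)$. That is false: for $i=0$, a general hyperplane of $E_0$ meets any positive-dimensional locus swept out by the $s_0(q)$. Also, for a fixed $s$ the assertion ``a general $S$ is not tangent to $V$'' is vacuous, because a general $S$ does not pass through $s_i(q)$. Among the $S$ through $s_i(q)$, the bad ones are cut out by only $1+\dim V$ linear conditions, which in general is far less than $\dim M$. Finally, ``the image of $T_i$ in the $\PP^1$-fiber direction'' is not defined independently of $S$, since $T_{s_i(q)}E_i$ has no canonical complement to the vertical line.

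The missing idea is that the bad condition depends on $s$ only through the point $s_i(q)$ and the subspace $d\mathrm{ev}_q(T_i)\subset T_{s_i(q)}E_i$. Moreover, this subspace contains the tangent space of a \emph{fixed} subvariety of $E_i$. This is what the paper exploits:
\begin{itemize}
\item Stratify the finitely many multiplicity strata $M_i$ into finitely many smooth pieces $N_r$ on which $d\mathrm{ev}_q$ has constant rank.
\item Then $d\mathrm{ev}_q$ maps $T_{s_i}N_r\subset T_i$ onto $T_{s_i(q)}\mathrm{ev}_q(N_r)$.
\item Bertini for the base-point-free system $|\mathcal{O}_{E_i}(S_i)|$ makes a general $S_i$ transverse to each of the finitely many subvarieties $\mathrm{ev}_q(N_r)$.
\item This produces, for every $s$ simultaneously, some $\epsilon\in T_i$ with $d\mathrm{ev}_q(\epsilon)\notin T_{s_i(q)}S_i$.
\end{itemize}
Any dimension count has to be run over points of $\mathrm{ev}_q(N_r)$, which has dimension at most $\dim E_i$, not over $M$.

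You also assert without justification that the generator condition is governed by the normal component of $d\mathrm{ev}_q(\epsilon)$, and you hedge about higher-order ``jet'' tangency. The actual local fact is purely first order. $T_{E_{i+1}/S_i}(E_{i+1}-\widetilde{E}_i)$ restricts to $\mathcal{O}_F$ on the fiber $F\cong\PP^1$ of $E_{i+1}$ over $s_i(q)$. So a vector outside $T_{s_i(q)}S_i$ gives a nowhere-vanishing section on $F$, and hence a generator of the stalk at $q$. This holds regardless of where $s_{i+1}(q)$ lies on $F$ (even on $\widetilde{E}_i$) and regardless of the multiplicity.
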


\begin{proof}
The strict transforms of maps in $M$ to $Y_{i}$ are parameterized by finitely many subvarieties of $\Mor(C, Y_i)$. Each subvariety is stratified by finitely many locally closed subsets wherein the intersection number $\mult_{q_j}(s_i,D)$ is fixed for each $q_j \in \mathcal{Q}$ and each irreducible component $D \subset \Delta_i$. 
Let $M_i \subset \Mor(C, Y_i)$ be the stratum containing $s_i$.  The tangent space to $M_i$ at $s_i$ corresponds to $T_i$.

Let $q_j \in \mathcal{Q}$.  We first find a section $\epsilon_j\in T_i$ such that $\rho \circ \epsilon_j : \mathcal{O}_{C} \to (s_{i+1}^{*}T_{E_{i+1}/S_{i}}(E_{i+1} - \widetilde{E}_i))|_{q_j}$ is surjective.  If $s_{i+1}(q_j) \notin E_{i+1}$, then $(s_{i+1}^{*}T_{E_{i+1}/S_{i}}(E_{i+1} - \widetilde{E}_i))|_{q_j} = 0$ and so we set $\epsilon_j = 0$.  Otherwise, consider the map $\text{ev}_{q_j} : M_i \to E_i$ given by $[h : C \to Y_i] \mapsto h(q_j)$.  We may stratify $M_i$ by locally closed subsets $N_1, \ldots , N_t \subset M_i$ such that for all $r \leq t$ the subset $N_r$ is smooth and the map $d \text{ev}_{q_j} : T_{N_r} \to \text{ev}_{q_j}^* T_{E_i}$ has constant rank.  Since there are finitely many choices for $M_i$ and $N_r$, by Bertini's Theorem $S_i \subset E_i$ will be transverse to each $\text{ev}_{q_j}(N_r) \subset E_i$ for a general choice of $\Phi(Y,p,k)$.

Let $N_r$ be the stratum of $M_i$ containing $s_i$.  The assumption that $s_{i+1}$ maps $q_j$ into $E_{i+1}$ implies $p_j : = s_{i}(q_j)$ lies in $S_i \cap \text{ev}_{q_j}(N_r)$.  However, $S_i$ and $\text{ev}_{q_j}(N_r)$ are transverse subvarieties of $E_i$ and by construction $d \text{ev}_{q_j}|_{s_i} : T_{s_i} N_r \to T_{p_j} \text{ev}_{q_j}(N_r)$ is surjective.  Thus, we may find a section $\epsilon_j \in T_i$ such that $\vec{v} : = d \text{ev}_{q_j}|_{s_i} (\epsilon_j)$ satisfies $\vec{v} \notin T_{p_j} S_i$. 

We claim $\rho \circ \epsilon_j : \mathcal{O}_{C} \to  (s_{i+1}^{*}T_{E_{i+1}/S_{i}}(E_{i+1} - \widetilde{E}_i) )|_{q_j}$ is surjective.  Indeed, the fiber $F$ of $E_{i+1}$ over $p_j$ is isomorphic to $\PP^1$.  The tautological quotient-bundle map $\phi_{i+1}^*N_{S_i} \to T_{E_{i+1}/S_{i}}(E_{i+1})$ restricts to a surjective map $\mathcal{O}_F^{\oplus 2} \to \OO_F(1)$.  The tangent vector $\vec{v}$ corresponds to a nonzero global section $v \in H^0(F, \mathcal{O}_F^{\oplus 2})$ whose image in $\OO_F(1)$ vanishes over $F \cap \widetilde{E}_i$.  Hence, $\vec{v}$ is a nonzero global section of $T_{E_{i+1}/S_{i}}(E_{i+1} - \widetilde{E}_i)|_F \cong \OO_F$.  Any nonzero global section of $\OO_F$ generates every stalk of $\OO_F$.  Pulling back by $s_{i+1}$, we obtain a nonzero section that generates the stalk of $s_{i+1}^{*}T_{E_{i+1}/S_{i}}(E_{i+1} - \widetilde{E}_i)$ over $q_j$.

Let $\epsilon = \sum_{q_j \in \mathcal{Q}} a_j \epsilon_j$ be a general linear combination of the $\epsilon_j$ obtained above.  By construction, $\epsilon \in T_i$ and $\rho \circ \epsilon : \mathcal{O}_{C} \to (s_{i+1}^{*}T_{E_{i+1}/S_{i}}(E_{i+1} - \widetilde{E}_i))|_{\mathcal{Q}}$ is nonvanishing over any point of $\mathcal{Q}$ that maps into $E_{i+1}$.  Since $T_{E_{i+1}/S_{i}}(E_{i+1} - \widetilde{E}_i)$ is locally the quotient of a line bundle on $Y_{i+1}$, $ (s_{i+1}^{*}T_{E_{i+1}/S_{i}}(E_{i+1} - \widetilde{E}_i) )_{\mathcal{Q}}$ is the quotient of a line bundle on $C$ as well.  Thus, any map $\mathcal{O}_{C} \to (s_{i+1}^{*}T_{E_{i+1}/S_{i}}(E_{i+1} - \widetilde{E}_i))|_{\mathcal{Q}}$ which is nonvanishing over each nontrivial stalk of $(s_{i+1}^{*}T_{E_{i+1}/S_{i}}(E_{i+1} - \widetilde{E}_i))|_{\mathcal{Q}}$ will be surjective.
\end{proof}

\begin{corollary}\label{cor:meaningofvg}
    Adopt the notation of Lemma \ref{lemm:meaningofvg} and distinguish a point $q \in \mathcal{Q}$.  Let 
    $$M_{j} = \{ s \in M \mid \text{the strict transform } s_j : C \to Y_j \text{ of } s \text{ maps } q \text{ into } E_j \}.$$ 
    For any $j \geq 1$ and any map $s \in M_j$, $\dim_{s} M_{j} < \dim_{s} M_{j-1}$ and thus $\dim_{s} M_0 \geq \dim_{s} M_j + j$.
\end{corollary}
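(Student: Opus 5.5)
The plan is to show that each step up the tower cuts out a proper closed condition through $s$, using Lemma \ref{lemm:meaningofvg} to produce a first-order deformation that leaves the condition. The second inequality then follows by iterating. Fix $j \geq 1$ and $s \in M_j$. Since $M_j \subset M_{j-1}$, and membership in $M_j$ is a closed condition inside $M_{j-1}$ (it says $s_{j-1}(q) \in S_{j-1}$, equivalently $s_j(q) \in E_j$), it suffices to show that $M_j$ does not contain any irreducible component of $M_{j-1}$ through $s$. Then $\dim_s M_j < \dim_s M_{j-1}$. Induction on $j$ gives $\dim_s M_0 \geq \dim_s M_j + j$, because $s \in M_j \subset M_{j-1} \subset \cdots \subset M_0$ and each inclusion drops the local dimension by at least one.

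To see that the drop is strict, I would work with the stratification already used in the proof of Lemma \ref{lemm:meaningofvg}. Take the locally closed stratum of $M_{j-1}$ through $s$ on which the strict transforms $s_{j-1}$ have fixed multiplicities along the components of $\Delta_{j-1}$ at the points of $\mathcal{Q}$, and refine it so that it is smooth at a general point of a component of maximal dimension. On this stratum the tangent space is contained in $T_{j-1}$. Apply Lemma \ref{lemm:meaningofvg} with $i = j-1$ to obtain $\epsilon \in T_{j-1}$ whose image under $\rho$ generates $(s_j^*T_{E_j/S_{j-1}}(E_j - \widetilde{E}_{j-1}))_{q}$. This stalk is nonzero because $s_j(q) \in E_j$.

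By the exact sequence of Lemma \ref{lemm:loctancomputation}(2), a first-order deformation of $s_{j-1}$ that preserves the condition $s_j(q) \in E_j$, together with the relevant multiplicities, would lift to $s_j^*T_{Y_j}(-\log_{(s_j,\mathcal{Q})}\Delta_j)$. It therefore maps to zero under $\tau$ at $q$. Hence $\epsilon$ is a tangent direction of $M_{j-1}$ at $s$, within the stratum, that is not tangent to $M_j$. So $M_j$ is a proper closed subset of that stratum near $s$.

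The main obstacle is the passage from tangent spaces to dimensions at points where $M_{j-1}$ is singular or non-reduced. I would handle it as in the proof of Lemma \ref{lemm:meaningofvg}: stratify $M$ into finitely many smooth locally closed pieces and choose the tower general with respect to all of them. Then apply the argument at a general point $s'$ of each component of $M_j$ through $s$, where both $M_j$ and the ambient stratum of $M_{j-1}$ are smooth. This gives the strict inequality of dimensions of components, and hence of local dimensions at $s$.
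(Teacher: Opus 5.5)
Your reduction (show that no irreducible component of $M_{j-1}$ through $s$ is contained in $M_j$, then iterate) is the same as the paper's. The step meant to deliver it, however, does not work as written.

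The statement of Lemma \ref{lemm:meaningofvg} only produces $\epsilon \in T_{j-1}$, that is, an element of the Zariski tangent space $T_sM$ satisfying the first-order multiplicity conditions. It says nothing about $\epsilon$ being tangent to $M_{j-1}$ or to a stratum of it; you recorded only the opposite inclusion, that the stratum's tangent space lies in $T_{j-1}$. So the sentence ``Hence $\epsilon$ is a tangent direction of $M_{j-1}$ at $s$, within the stratum'' is unjustified.

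Consequently, $\tau(\epsilon)\neq 0$ at $q$ only shows $T_{s'}W \subsetneq T_{j-1}$ for a component $W$ of $M_j$, i.e.\ $\dim W < \dim T_{j-1}$. But $\dim T_{j-1}$ is an upper bound for a Zariski tangent space, not a lower bound for $\dim_{s'} M_{j-1}$. Moving to a general point $s'$ where the reduced stratum is smooth does not help. $M$ may be singular along $W$, and the first-order multiplicity conditions need not cut out a reduced locus, so $T_{j-1}$ can be strictly larger than the tangent space of the stratum everywhere along it. Nothing in your argument rules out $W$ being an entire component of $M_{j-1}$, with $\epsilon$ a non-integrable direction.

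What is actually needed is the property established inside the proof of Lemma \ref{lemm:meaningofvg}. There, $M_{j-1}$ is covered by finitely many smooth strata $N_r$ of strict transforms in $\Mor(C,Y_{j-1})$, on which evaluation at $q$ is a morphism $\mathrm{ev}_q$ of constant rank. A general $S_{j-1}$ is transverse to each $\mathrm{ev}_q(N_r)$. This is how that proof obtains an $\epsilon$ in the tangent space of $N_r$ with $d\,\mathrm{ev}_q(\epsilon)\notin TS_{j-1}$, which is the version your tangent argument would require.

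Once you have transversality, the detour through $\tau$ and Lemma \ref{lemm:loctancomputation}(2) is superfluous. Since $S_{j-1}$ is a divisor in $E_{j-1}$, transversality prevents $\mathrm{ev}_q$ of any component of $N_r$ meeting $S_{j-1}$ from lying inside $S_{j-1}$. So $M_j\cap N_r=\mathrm{ev}_q^{-1}(S_{j-1})\cap N_r$ is nowhere dense in $N_r$, and no component of $M_{j-1}$ can lie in $M_j$. That is exactly the paper's proof.

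Working stratum by stratum also corrects a smaller inaccuracy. Membership in $M_j$ is not in general a closed condition on $M_{j-1}$: the point $s_{j-1}(q)$ is governed by the leading jet of $s$ at $q$, and it jumps where the local multiplicities at $q$ jump. The condition is closed within each $N_r$, which is all the argument uses.
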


\begin{proof}
    Since $M_j$ is a subvariety of $M_{j-1}$, it suffices to prove that no irreducible component of $M_{j-1}$ lies in $M_j$.  This follows from our application of Bertini's Theorem in the proof of Lemma \ref{lemm:meaningofvg}: every map $s \in M_{j}$ belongs to a stratum $N_r \subset M_{j-1}$ such that  $\mathrm{ev}_q(N_r) \subset E_{j-1}$ is transverse to $S_{j-1}$.   
\end{proof}

The following lemma describes when a family of curves contains members that meet the ``top" of a general exceptional tower.  This will be used to find subfamilies that satisfy the hypotheses of Lemma \ref{lem-numericalExceptionalTowers}.

\begin{lemma} \label{lem: climb tower morphism scheme}
    Let $Y$ be a smooth variety and $C$ be a smooth curve.  Suppose $M \subset \Mor(C,Y)$ is a quasi-projective subvariety that parameterizes generically injective maps.  Let $q \in C$ be a general point.  Let $p \in Y$ be a general point in the subvariety of $Y$ swept out by $s(q)$ as we vary $s\in M$.  Let $\Phi(Y,p,k)$ be a general exceptional tower based at $p$.  For $0 \leq j \leq k$ set
    $$M_{j} = \{ s \in M \mid \text{the strict transform } s_j : C \to Y_j \text{ of } s \text{ maps } q \text{ into } E_j \}.$$ 
    Suppose that for a general map $s\in M$, the tangent space $T_s M \subset H^0(C, s^*T_Y)$ of $M$ at $s$ and the height $k$ of $\Phi(Y,p,k)$ satisfy one of the following hypotheses:
    \begin{enumerate}
    \item the image of $ds : T_C \to s^*T_Y$ intersects the subsheaf of $s^*T_Y$ generated by $T_s M$ only along the zero section, and $k \leq \dim M_0$; 
    \item the intersection of $T_s M$ with the image of $ds_* : H^0(C, T_C) \to H^0(C, s^*T_Y)$ is a subspace of dimension $r > 0$, and $k \leq \dim M_0 + 1 - r$; or
    \item there is a subsheaf $\mathcal{F} \subset s^*T_Y$ such that $T_s M = H^0(C,\mathcal{F})$, and $k \leq \frac{\dim M_0 -2}{\rk(\mathcal{F})} - 2g(C)$.
    \end{enumerate}
    Then $M_k$ is nonempty and of dimension $\dim M_0 - k$.
\end{lemma}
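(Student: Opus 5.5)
Since Corollary \ref{cor:meaningofvg} already gives $\dim_s M_{j} \le \dim_s M_{j-1} - 1$ at every point of $M_j$, the upper bound $\dim M_k \le \dim M_0 - k$ is automatic. What has to be shown is that the chain $M_0 \supset M_1 \supset \cdots \supset M_k$ never becomes empty before step $k$, and that each step cuts dimension by exactly one. I will argue by induction on $j$: assume $M_{j-1}$ is nonempty of dimension $\dim M_0 - (j-1)$, and show $M_j$ is a nonempty divisor in it.

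Here is the inductive step. The condition $s_j(q) \in E_j$ means $s_{j-1}(q) \in S_{j-1}$. So $M_j$ is the preimage of the hypersurface $S_{j-1} \subset E_{j-1}$ under the evaluation map $\mathrm{ev}_q : M_{j-1} \to E_{j-1}$, $s \mapsto s_{j-1}(q)$. Let $W_{j-1} := \overline{\mathrm{ev}_q(M_{j-1})} \subset E_{j-1}$. If $\dim W_{j-1} \ge 1$, then $S_{j-1}$ meets $W_{j-1}$, because $S_{j-1}$ is ample on $E_0 \cong \PP^{n-1}$ and is a moving section, hence nef and big, on the $\PP^1$-bundles $E_{j-1}$ for $j \ge 2$. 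For general $S_{j-1}$ it meets $W_{j-1}$ in the image of a dense open set, and transversally, by the Bertini argument of Lemma \ref{lemm:meaningofvg}. The fibres of $\mathrm{ev}_q$ then give $\dim M_j = \dim M_{j-1} - 1$. The real issue is therefore to guarantee $\dim W_{j-1} \ge 1$ for all $j \le k$. The cases where the image of $M_{j-1}$ is a point, or where $S_{j-1}$ happens to contain it, are ruled out by generality of $S_{j-1}$ together with dimension.

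To control $\dim W_{j-1}$, I would compute the rank of $d\,\mathrm{ev}_q$ on $T_s M_{j-1}$. This tangent space is the space $T_{j-1}$ of Lemma \ref{lemm:meaningofvg}, taking $\mathcal{Q} = \{q\}$ and $\Delta = \emptyset$. By Lemma \ref{lemm:loctancomputation}, its elements are sections $\sigma \in T_s M$ with prescribed vanishing of order about $j$ at $q$ in the directions normal to the tower. The tangent to $W_{j-1}$ at $s_{j-1}(q)$ is the image of such $\sigma$ in the jet, essentially the fibre coordinate of $T_{E_{j-1}/S_{j-2}}$, together with the $S_{j-2}$ directions. Each hypothesis excludes the degenerate situation in a different way.

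In case (1), sections of the subsheaf generated by $T_sM$ never lie in the image of $ds$. A section vanishing to order $j$ at $q$ in the normal directions therefore cannot be a pure reparametrisation. So whenever $\dim M_{j-1} \ge 1$, some section moves the point $s_{j-1}(q)$ nontrivially, and $k \le \dim M_0$ is exactly what keeps $\dim M_{j-1} \ge 1$ up to $j = k$.

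In case (2), the $r$-dimensional space of reparametrisation directions may be tangent to the fibres of $\mathrm{ev}_q$. Reparametrisations fixing $q$ do not move $s(q)$, and these account for $r-1$ dimensions. We therefore lose $r-1$, which gives the bound $k \le \dim M_0 + 1 - r$.

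In case (3), $T_sM = H^0(\mathcal{F})$. The sections vanishing to order $j$ at $q$ form $H^0(\mathcal{F}(-jq))$, whose dimension is at least $\dim M_0 - j\,\rk\mathcal{F}$ up to corrections. I need these sections to still generate $\mathcal{F}$ generically, so that the point moves. Riemann--Roch and the slope estimate $h^0 \ge \deg - \rk(g-1)$ give this as long as $(j+2g)\rk\mathcal{F} + 2 \le \dim M_0$, which is the stated bound.

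I expect the main obstacle to be the local computation identifying $d\,\mathrm{ev}_q$ on $T_{j-1}$ with a jet of $\sigma$ at $q$, and showing it is nonzero in the fibre direction of $E_{j-1}$. This requires care with the twists $E_{i+1} - \widetilde E_i$, and with generic injectivity of $s$, which ensures that $q$ is the only point of $C$ mapping to $p$. I would first try to do this in local coordinates in which $A$ is a hyperplane and the tower records the order of tangency of $s$ with $A$ at $q$.
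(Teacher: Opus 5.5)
Your reduction is reasonable: $M_j=\mathrm{ev}_q^{-1}(S_{j-1})\cap M_{j-1}$, so you need $\mathrm{ev}_q\colon M_{j-1}\to E_{j-1}$ to have positive-dimensional image that a general $S_{j-1}$ meets properly. But that non-constancy is the whole content of the lemma, and your case analysis does not establish it.

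\begin{itemize}
\item In case (1), ``not a pure reparametrisation'' does not imply ``moves $s_{j-1}(q)$''. A tangent vector can vanish to higher order at $q$, or have leading term in the radial direction, and then $d\,\mathrm{ev}_q$ kills it. Example \ref{example: Eric L's family of conics} is exactly this: the tangent vector to $M_0$ is not a reparametrisation, yet $\mathrm{ev}_q$ is constant on $M_0$. What separates hypothesis (1) from that example is a sheaf-level condition. Converting it into control of the jets of $T_sM$ at $q$ up to order $k$ is the missing work.
\item In case (2) you only show that the $(r-1)$-dimensional stabiliser of $q$ lies in the fibres of $\mathrm{ev}_q$. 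That is a necessary condition; you do not show that nothing else lies there.
\item In case (3), a lower bound on $h^0(\mathcal{F}(-jq))$ says nothing about generation at $q$. That needs an $H^1$-vanishing, hence a slope bound on a suitable subsheaf rather than a degree bound on $\mathcal{F}$.
\end{itemize}

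There are two further problems. First, the hypotheses hold only at a general $s\in M$. For a fixed general tower, the maps in $M_{j-1}$ satisfy $j-1$ jet conditions at $q$ and are special, so you cannot invoke (1)--(3) at them without an argument you have not supplied. Second, ``nef and big'' does not make $S_{j-1}$ meet $W_{j-1}$. Minimal moving sections are disjoint from the rigid section $E_{j-1}\cap\widetilde{E}_{j-2}$, so you must also keep $W_{j-1}$ off it.

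The missing idea is to reverse the order of choices.
\begin{itemize}
\item Fix a general $f\in M$, where the hypotheses do apply. Let $V\subset f^*T_Y$ be the subsheaf generated by $T_fM$.
\item Take a rank-one quotient $\pi\colon V\to Q$ that kills $V\cap\tilde{T}_C$ and a trivial subbundle spanned by $\rk\overline{V}-1$ general elements of $T_fM$. Under each of (1)--(3), the image $W\subset H^0(C,Q)$ of $T_fM$ has $\dim W>k$.
\item By the Pl\"ucker formula a general $q$ has weight $0$, so $T_fM\to Q|_{(k+1)q}$ is surjective. In case (3) you also need $H^1(F(-(k+1)q))=0$ for the maximal destabilising $F\subset V$, which follows from $\mu(F)>k+2g(C)$.
\item Build the tower through $f$, choosing each $S_j$ through $f_j(q)$ with tangent hyperplane given by the kernel of the next $Q$-jet. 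This keeps $f_j$ transverse to $E_j$ and off every $\widetilde{E}_i$ with $i<j$. It also shows inductively that $M_j$ is smooth at $f$ of dimension $\dim M_0-j$.
\item Finally, consider $\mathscr{M}_k\to\mathscr{E}(Y,p,k)$, whose components all have dimension at least $\dim\mathscr{M}_0-k$. Upper semicontinuity of fibre dimension transfers the result from this special tower to a general one.
\end{itemize}
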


We will apply Lemma \ref{lem: climb tower morphism scheme} to spaces $M$ parameterizing graphs of maps, in which case \ref{lem: climb tower morphism scheme}(1) is satisfied; to components $M \subset \Mor(\PP^1, Y)$, in which case \ref{lem: climb tower morphism scheme}(2) is satisfied; and to very specific subfamilies of free curves for which \ref{lem: climb tower morphism scheme}(3) is satisfied.  The following example of Eric Larson shows that if none of \ref{lem: climb tower morphism scheme}(1)--(3) hold for $T_s M$, then $M_k$ may be empty or have dimension less than $\dim M_0 - k$.

\begin{example}\label{example: Eric L's family of conics}
    Let $L \subset \PP^2$ be a line and $G \subset \mathrm{PGL}(3)$ be the subgroup that fixes $L$ pointwise.  Let $s : \PP^1 \to \PP^2$ be the embedding of a conic.  Consider the orbit $M \subset \Mor(\PP^1, \PP^2)$ of $s$ under $G$.  Then, with the notation of Lemma \ref{lem: climb tower morphism scheme}, we claim $M_0$ is nonempty and of dimension $1$, but $M_1$ is empty.  Indeed, the $G$-stabilizer of a point $p \in \PP^2\setminus L$ is isomorphic to $\mathbb{G}_m$ and it acts by scaling on lines through $p$.  Thus, deformations of $s$ that fix the image of a point $q \in \PP^1$ also fix the tangent line to the curve at $p = s(q)$.  

    Note that Lemma \ref{lem: climb tower morphism scheme} does not apply since the tangent space $T_s M$ and the value $k=1$ do not satisfy any of \ref{lem: climb tower morphism scheme}(1)--(3).  Indeed, the tangent space to $G$ in the automorphism group of $\PP^2$ corresponds to the space of global sections $H^0(\PP^2, T_{\PP^2}(-L))$.  The image of this space of sections in $H^0(\PP^1, s^*T_{\PP^2})$ is $T_s M$.  Moreover, the Euler sequence on $\PP^2$ shows $T_s M$ generates the subsheaf $s^*T_{\PP^2}(-L)$ inside $s^*T_{\PP^2}$.  The image of $ds : T_{\PP^1} \to T_{\PP^2}$ intersects $s^*T_{\PP^2}(-L)$ along $T_{\PP^1}(-s^*L)$ showing \ref{lem: climb tower morphism scheme}(1).  Only finitely many elements of $G$ fix the image of $s$.  Hence the intersection of $T_s M$ with the image of $ds_* : H^0(\PP^1, T_{\PP^1}) \to H^0(\PP^1, s^*T_{\PP^2})$ is zero showing \ref{lem: climb tower morphism scheme}(2) fails.  In particular, $T_s M$ does not contain the image of $H^0(\PP^1, T_{\PP^1}(-s^*L))$ under $ds_*$, and thus $T_s M \subsetneq H^0(\PP^1, s^*T_{\PP^2}(-L))$ showing \ref{lem: climb tower morphism scheme}(3) fails.  This proves our claim.
\end{example}

\begin{proof}[Proof of Lemma \ref{lem: climb tower morphism scheme}]
    We first construct a specific exceptional tower $\Phi(Y,p,k)$ such that $\dim M_k = \dim M_0 - k$.  We use Notation \ref{nota:exctower} throughout our proof to describe this tower.  We finish the proof by generalizing the choices of $p$ and $S_{i}$ that define this tower and appealing to upper semicontinuity of fiber dimension.

    Consider a general pair $(f,q) \in M \times C$. 
    By generality of $(f,q)$, the fiber of the evaluation map $\mathrm{ev} : M \times C \to Y \times C$ is smooth at $(f,q)$ and $f(C)$ is smooth at $p = f(q)$. 
    
    Let $\phi_0 : Y_0 \to Y$ be the blow-up of $Y$ at $p$.  Because $f(C)$ is smooth at $p$, the strict transform $f_0: C \to Y_0$ of $f$ meets $E_0$ transversely and only at one point, the image of $q$.  Observe that $f_0^*T_{Y_0}(-\log_{(f_0, \{q\})} \Delta_0) \cong f^*T_Y(-q)$ by Lemma \ref{lemm:loctancomputation}.
    
    We now describe some properties of the tangent space $T_f M$.  Let $\tilde{T}_C \subset f^*T_Y$ be the saturation of the image of $df : T_C \to f^*T_Y$.  Since $f \in M$ is general, by \cite[Lemma 3.41]{HM98}  
    the intersection of $T_f M$ and $H^0(C, \tilde{T}_C)$ inside $H^0(C, f^*T_Y)$ is contained in the image of $df_* : H^0(C, T_C) \to H^0(C, f^*T_Y)$.  Let $R \subset T_f M$ be this intersection and set $r = \dim R$.  

    Let $V \subset f^*T_Y$ be the subsheaf generated by $T_f M$.  Note that in the case where (3) is satisfied, $V \subset \mathcal{F}$ and so $H^0(C,V) = T_f M$.  As $\rk(V) \leq \rk(\mathcal{F})$, it suffices to consider the special case $\mathcal{F} = V$.
    
    Let $\overline{V}$ be the image of $V$ in $f^*T_Y / \tilde{T}_C$.  Since $V$ is generated by $T_f M$,  $\overline{V}$ is generated by the image of $T_f M$ in $H^0(C, \overline{V})$ as well.  
    By an incidence correspondence argument, a general element of a space of global sections that generates a vector bundle on $C$ of rank at least two is nowhere vanishing.  It follows that there is a subspace $T \subset T_f M$ of dimension $\rk(\overline{V}) - 1$ such that the natural map $\iota : T \otimes \OO_C \to \overline{V}$ is an inclusion with saturated image.  Let $Q$ be the (rank $1$) cokernel of $\iota$ and let $\pi : V \to Q$ be the induced map.  Observe that the kernel of $\pi_* : T_f M \to H^0(C,Q)$ is the linear subspace $R + T$.

    Consider the image $W \subset H^0(C, Q)$ of $T_f M$ under $\pi_*$.  By the preceding paragraphs, we conclude $\dim W = \dim M - \rk(\overline{V}) + 1 - r$.  Using the equality $\dim M_0 = \dim M - \rk(V)$, and the fact that $r$ can only be nonzero when $\rk(\overline{V}) < \rk(V)$, we see that $\dim W > k$ whenever hypothesis (1), (2), or (3) holds.  
    
    The Pl\"ucker formula for ramification sequences \cite[Theorem~7.13]{3264} shows that the weight of a linear system is finite; since $q \in C$ is general its weight with respect to the linear system $W \subset H^0(C, Q)$ is 0.  In particular, the natural map
        $$\theta_W : W \to Q|_{(k+1)q}$$
    is surjective because $\dim W > k$.  We will use the map $\theta : T_f M \to Q|_{(k+1)q}$ induced by $\theta_W$ to construct the desired exceptional tower.

    Define $\psi : T_f M \to Q|_{(k+1) q} \oplus V|_q$ to be the map coming from $\theta$ and the evaluation of sections at $q$.  Let $\psi_i : T_f M \to Q|_{(i+1)q} \oplus V|_q$ be the truncation of $\psi$ for all $0 \leq i \leq k$.  Observe that the image of $\psi_i$ is contained in the preimage $G_i$ of the diagonal under the natural map $Q|_{(i+1) q} \oplus V|_q \to Q|_q \oplus Q|_q$.  Our first goal is to show that the image of $\psi_i$ is $G_i$ for all $0 \leq i \leq k$.  
    
    If either hypothesis (1) or (2) holds, the kernel of $\theta$ surjects onto the kernel of $V|_q \to Q|_q$.  It immediately follows that the image of $\psi_i$ is $G_i$ in these cases. 
    Hence, we may suppose hypothesis (3) holds instead.  In fact, since the upper bounds on $k$ in hypotheses (1) and (2) are greater than or equal to the upper bound on $k$ in hypothesis (3), we may suppose neither hypothesis (1) nor (2) holds.  
    In other words, we may assume that the intersection of $V$ with the image of $df : T_C \to f^*T_Y$ is nontrivial and that the intersection of $T_s M = H^0(C,V)$ with the image of $df_* : H^0(C, T_C) \to H^0(C, s^*T_Y)$ is the zero section.  Since the intersection of $V$ with the image of $df : T_C \to f^*T_Y$ is saturated in $V$ by \cite[Lemma 3.41]{HM98}, it follows that $\tilde{T}_C \cap V$ is a subsheaf of $T_C$ with no sections, and in particular is a line subbundle of negative degree. In contrast, we have $\deg V \geq h^0(C, V) - \rk(V) = \dim M_0$.

    If $V$ is unstable, let $F \subset V$ be the maximal destabilizing subsheaf; otherwise, let $F = V$.  Since $\mu(F) \geq \mu(V) > 0$, $\mu^{max}(\ker(V \to Q)) \leq 0$, and $q \in C$ is a general point, the induced map $F \to Q$ is surjective in a neighborhood of $q$.  Since $V$ is globally generated, to verify that the image of $\psi_i$ is $G_i$ it suffices to show the natural map $H^0(C,F) \to F|_{(k+1)q}$ is surjective.  To prove this, observe that $\mu(F) \geq \mu(V) > k + 2g(C)$.  Hence, the vector bundle $\Omega_C \otimes F^{\vee}((k+1)q)$ is semistable with negative slope.  Thus, by Riemann--Roch $H^1(C, F(-(k+1)q)) = 0$, which proves surjectivity of $H^0(C,F) \to F|_{(k+1)q}$.  We conclude that the image of $\psi_i$ is $G_i$ for all  $0 \leq i \leq k$.

    Having shown the image of $\psi_i$ is $G_i$, we will use the following consequence.  Identify the kernel of $Q|_{(i+1)q} \to Q|_q$ with $Q(-q)|_{iq}$.  For all $0 < i \leq k$, since the image of $\psi_i$ is $G_i$ we see that $\psi_i$ maps $\ker(\psi_0)$ surjectively onto $Q(-q)|_{iq} \oplus 0$.  This shows $\rk(\psi_{i}) > \rk(\psi_{i-1})$.

    By generality of $q \in C$, there exists a line bundle $Q'$ and a quotient map $\pi' : f^*T_Y \to Q'$ such that $\pi'|_V$ factors through $\pi : V \to Q$ and the induced map $Q \to Q'$ is an isomorphism near $q$.  For each $0 \leq i \leq k$, let $\tau_i : f^*T_Y \to Q'|_{(i + 1)q} \oplus f^*T_Y|_q$ be the natural map. We note that $\tau_0$ factors through $f^*T_Y \to f^*T_Y|_q$, so that 
    $$\ker(\tau_0) \cong f^*T_Y(-q) \cong f_0^*T_{Y_0}(-\log_{(f_0, \{q\})} \Delta_0).$$
    Since the map $Q\to Q'$ is an isomorphism near $q$, $\tau_i$ therefore restricts to a surjective map $f^*T_Y(-q) \to Q'(-q)|_{iq}$ for all $1 \leq i \leq k$.

    We now construct the $Y_j$ of the exceptional tower for $j > 0$, defining $M_j$ as in the statement of this lemma and letting $f_j: C \to Y_j$ denote the strict transform of $f$. We wish to show that $Y_j$ satisfies:
\begin{enumerate}[label=(\roman*)]
\item $f_j(C)$ meets $E_j$ transversely at $f_{j}(q)$ and does not meet $\widetilde{E}_i$ for any $i < j$,
\item $\dim_{f} M_j = \dim_f M_0 - j$ and $M_j$ is smooth at $f$, and
\item  $f_j^*T_{Y_j}(-\log_{(f_j, \{q\})} \Delta_j) \cong \ker(\tau_j)$.
\end{enumerate}

\noindent By construction these properties are satisfied for $j = 0$.

Suppose that for some $j < k$ and all $i \leq j$ we've constructed $Y_i$ with properties (i)-(iii).  As $\tau_{j+1}$ restricts to a surjective map $f^*T_Y(-q) \to Q'(-q)|_{(j+1)q}$, it follows that $\tau_{j+1}$ maps $\ker(\tau_j) \cong f_j^*T_{Y_j}(-\log_{(f_j, \{q\})} \Delta_j)$ surjectively onto the kernel of the natural map $Q'(-q)|_{(j+1)q} \to Q'(-q)|_{jq}$.  We identify this latter space with $\mathbb{K}(q)$ and denote the induced map by $\tau_{j+1}' : f_j^*T_{Y_j}(-\log_{(f_j, \{q\})} \Delta_j) \to \mathbb{K}(q)$.

We next analyze the maps on tangent spaces at $q$ coming from the exceptional tower.  In the fiber over $q$, the kernel of $\tau_{j+1}'|_q$ is a hyperplane $K \subset f_j^*T_{Y_j}(-\log_{(f_j, \{q\})} \Delta_j)|_q$.  Moreover, for all $i < j$ the inclusion $\ker(\tau_j) \subset \ker(\tau_i)$ induces the natural map 
$$\gamma_{i,j} : f_j^*T_{Y_j}(-\log_{(f_j, \{q\})} \Delta_j)|_q \to f_i^*T_{Y_i}(-\log_{(f_i, \{q\})} \Delta_i)|_q$$ 
on log tangent spaces.  We claim the restriction of these maps to $K$ is injective.  To see this, tensor the following commutative diagram with $\mathbb{K}(q)$:
\[\begin{tikzcd}
	& {\ker(\tau_{j+1})} & {\ker(\tau_j)} & {\mathbb{K}(q)} & 0 \\
	0 & {f^*T_Y(-q) } & {f^*T_Y(-q) } & 0.
	\arrow["{\varsigma}"', from=1-2, to=1-3]
	\arrow["{\iota_{j+1}}", from=1-2, to=2-2]
	\arrow["{\tau_{j+1}'}"', from=1-3, to=1-4]
	\arrow["{\iota_j}", from=1-3, to=2-3]
	\arrow[from=1-4, to=1-5]
	\arrow[from=1-4, to=2-4]
	\arrow[from=2-1, to=2-2]
	\arrow["{\mathrm{id}}", from=2-2, to=2-3]
	\arrow[from=2-3, to=2-4]
\end{tikzcd}\]
The image of $\varsigma|_q$ is $K$ and $\rk(\iota_{j+1}|_q) = \rk(\iota_{j}|_q) = \dim K$.  Since $\ker(\tau_0) \cong f^*T_Y(-q)$ and $\iota_j$ is the inclusion map, we see that $\gamma_{0,j} = \iota_j|_q$ is injective on $K$.  Since $\gamma_{i,j}$ factors $\gamma_{0,j}$ for all $i > 0$, this implies our claim. 

Observe that the composition of maps $T_C \to f^*T_Y \to Q'$ is 0. Thus, $K$ contains the image of the log tangent space of $C$ at $q$ under $d_{\log}f_j|_q$.  This latter space is the kernel of the natural map from the log tangent space of $Y_j$ at $f_j(q)$ to the tangent space $T_{f_j(q)} Y_j$.  Since the image of the log tangent space in $T_{f_j(q)} Y_j$ is the tangent space $T_{f_j(q)} E_j$ of $E_j$, we conclude that the image of $K$ in $T_{f_j(q)} Y_j$ is a hyperplane section $\overline{K}$ of $T_{f_j(q)} E_j$.

If $j = 0$, then we define $S_0$ to be the hyperplane in $E_0 \cong \PP^{\dim Y-1}$ passing through $f_0(q)$ with tangent space $\overline{K}$.  
If $j > 0$, then injectivity of $\gamma_{j-1,j}$ on $K$ implies that $\overline{K}$ does not contain the kernel of the map $T_{f_j(q)} E_j \to T_{f_{j-1}(q)}S_{j-1}$.   By property (i), the point $f_{j}(q)$ does not meet $\widetilde{E}_{j-1}$ and so does not lie on the rigid section of $E_{j} \to S_{j-1}$.  Because of this, we can find a minimal moving section $S_j$ of $E_j \to S_{j-1}$ that passes through $f_j(q)$ with tangent space $\overline{K}$.    
In both cases we define $\phi_{j+1} : Y_{j+1} \to Y_{j}$ to be the blow-up of $S_{j}$.

With this construction, Remark \ref{rem: cokernel} shows $\tau_{j+1}$ is the map $\tau$ in Lemma \ref{lemm:loctancomputation}(2), which verifies (iii).  
Because $f_j(C)$ is transverse to $E_j$ and meets $S_j$, we see that $Y_{j+1}$ satisfies property (i).  To check (ii), observe that the restriction of $\tau_{j+1}$ to $T_f M \subset H^0(C, f^*T_Y)$ factors through $\psi_{j+1}$.  
As $\rk(\psi_{j+1}) > \rk(\psi_j)$, the tangent space to $M_{j+1}$ at $f$ has dimension $\dim_f M_j - 1$.  This shows $M_{j+1}$ is smooth at $f$ because $\codim(M_{j+1}, M_j) \leq 1$ and $M_j$ is smooth at $f$.  Thus $Y_{j+1}$ satisfies property (ii).

Continuing in this way, we may increment $j$ until $j = k$. This constructs the desired exceptional tower.  We now prove that $M_{k}$ satisfies the statement of this lemma for a general exceptional tower based at $p$ by a deformation argument.
One can construct a moduli space $\mathscr{E}(Y,p,k)$ of exceptional towers based at $p$ and of height $k$ on $Y$ by keeping track of the choice of the divisors $S_{i} \subset E_{i}$.  Let $\mathscr{M}_j \subset M \times \mathscr{E}(Y,p,k)$ be the relative version of $M_j$.  An incidence correspondence argument shows the dimension of each irreducible component of $\mathscr{M}_k$ is at least $\dim \mathscr{M}_0 - k$.   Our constructed exceptional tower implies the projection $\mathscr{M}_k \to \mathscr{E}(Y,p,k)$ is dominant, as fiber dimensions are upper semicontinuous.   This proves that for a general exceptional tower based at $p$, $M_k$ is non-empty and of dimension $\dim M_0 - k$.
\end{proof}

\subsection{Breaking curves}
To prove Theorem \ref{theo:exctowerbreaking} we will lift a curve on $Y$ to an exceptional tower  $Y_{k}$, break the curve on  $Y_{k}$, and then push the resulting broken curve back down to $Y$.  In this section we will study the stable limits of curves passing through the top exceptional divisor in an exceptional tower.  As we will see, certain geometric properties of the curve are controlled by the combinatorics of how the broken curve interacts with the exceptional divisors.  

\begin{lemma}\label{lem-numericalExceptionalTowers}
Let $Y$ be a projective variety and let $\Phi(Y,p,k)$ be an exceptional tower of height $k$ based at a smooth point $p$ of $Y$.  Suppose $\pi : \mathcal{C} \to B$, $\mathrm{ev} : \mathcal{C} \to Y_k$ is a one-dimensional family of stable maps with a marking $\sigma : B \to \mathcal{C}$ such that $\mathrm{ev}(\sigma(B)) \subset E_k$.  Suppose as well that a general map parameterized by $B$ is the strict transform of an irreducible curve on $Y$. 

Consider any pointed map $s : (C,q) \to Y_k$ parameterized by $B$.  Let $C_{exc} \subset C$ be the connected component of 
$s^{-1}(\Delta_k)$ that contains $q$.  Let $\{C_\ell\}_{\ell \in \mathcal{L}}$ be the set of irreducible components of $C$ that meet $C_{exc}$ but are not contained in it and set $\mathcal{Q}_\ell = C_\ell \cap C_{exc}$.  Then   
\begin{equation*}
    \sum_{\ell \in \mathcal{L}} \sum_{q_{j,\ell} \in \mathcal{Q}_\ell} \left(\sum_{i = 0}^k (i+1) \cdot \mult_{q_{j,\ell}}\big(s|_{C_\ell}, \widetilde{E}_i\big)\right) \geq k+1.
\end{equation*}
\end{lemma}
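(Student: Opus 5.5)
The plan is to find a single divisor on $Y_k$ whose intersection with the fibres is constant, forced to be at least $k+1$ on a general fibre, and whose value on the special fibre localizes to the points of $\mathcal{Q}_\ell$. The natural candidate is
$$F = \sum_{i=0}^{k} (i+1)\,\widetilde{E}_i ,$$
since its coefficients are exactly the weights in the statement. By Lemma \ref{lemm:intersectionvalues} we have $F\cdot \ell_0 = -2+2 = 0$, $F\cdot \ell_i = i - 2(i+1) + (i+2) = 0$ for $0<i<k$, and $F \cdot \ell_k = k - (k+1) = -1$. So $F$ is trivial on every $\ell_i$ with $i<k$ and negative on $\ell_k$.

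First I would show that $F\cdot \gamma \le 0$ for every curve $\gamma \subset Y_k$ lying over $p$. I expect the relative cone of curves of $Y_k \to Y$ over $p$ to be generated by $\ell_0,\dots,\ell_k$. I would prove this inductively along the tower: each $E_i$ is a $\mathbb{P}^1$-bundle over $S_{i-1}\cong \mathbb{P}^{\dim Y-2}$, and curves in its strict transform are combinations of a fibre $\ell_i$ and lifts of curves from the previous level. Granting this, $-F$ is nef over $Y$ on curves supported in $\Delta_k$ over $p$. Components of the exceptional locus lying over $\Delta$ but away from $p$ do not meet $F$ at all.

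Next comes the degeneration argument. After normalizing and resolving, I would work on the surface $\mathcal{C}$ and consider the effective Cartier divisor $\mathrm{ev}^*F$. Let $D_Z$ be the part of $\mathrm{ev}^*F$ supported on the connected component $Z$ of $\Supp(\mathrm{ev}^*F)$ containing $\sigma(B)$; near the special fibre, $Z$ contains $C_{exc}$.
\begin{itemize}
\item Since fibres of $\pi$ are algebraically equivalent, the number $D_Z\cdot(\text{fibre})$ is constant.
\item On a general fibre, the map is a strict transform of an irreducible curve not contained in $\Delta_k$ and meeting $E_k$ at $\sigma$. The horizontal component $\sigma(B)$ therefore appears in $\mathrm{ev}^*E_k$, hence with multiplicity at least $k+1$ in $\mathrm{ev}^*F$.
\item All other local contributions on the general fibre are nonnegative.
\end{itemize}
Together these give $D_Z\cdot(\text{fibre}) \ge k+1$.

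On the special fibre $C = C_{exc} + \sum_\ell C_\ell + (\text{rest})$, I would split $D_Z\cdot C$ into three pieces:
\begin{itemize}
\item Components not meeting $Z$ contribute $0$.
\item Components $C_\ell$ not in $\Delta_k$ contribute only through the points $\mathcal{Q}_\ell$ where they meet $C_{exc}$. At $q_{j,\ell}$ the local intersection of $s|_{C_\ell}$ with $F$ is $\sum_i (i+1)\,\mult_{q_{j,\ell}}(s|_{C_\ell},\widetilde{E}_i)$.
\item The components of $C_{exc}$ contribute $D_Z \cdot C_{exc}$. This equals $\mathrm{ev}^*F\cdot C_{exc} - (\mathrm{ev}^*F - D_Z)\cdot C_{exc}$, and the first term is $\le 0$ by the nefness step.
\end{itemize}
Then $k+1 \le D_Z\cdot C \le$ the displayed sum, which is the claim.

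The main obstacle is this last localization. One has to check that $(\mathrm{ev}^*F - D_Z)\cdot C_{exc} \ge 0$, which holds because the two divisors share no components near $C_{exc}$. One must also handle the case where $\sigma$ or other components of $Z$ pass through nodes, which would let $Z$ pick up intersections that should not be attributed to $C_{exc}$. If this bookkeeping becomes delicate, I would first try to contract or blow up the family surface so that $\sigma(B)$ is a section through smooth points and the fibre is reduced, and then compute local intersection numbers directly.
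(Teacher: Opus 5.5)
Your divisor $F=\sum_{i=0}^{k}(i+1)\widetilde{E}_i$ is the right one. So is the computation $F\cdot\ell_i=0$ for $i<k$ and $F\cdot\ell_k=-1$. Writing $s_*C_{exc}\equiv\sum_i b_i\ell_i$ with $b_i\ge 0$ gives $F\cdot s_*C_{exc}=-b_k$, which is exactly the linear combination the paper's proof sums.

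The gap is in the localization, and it is not the one you flag. Your $Z$ is the connected component of $\Supp(\mathrm{ev}^*F)$ containing $\sigma(B)$ on the whole surface $\mathcal{C}$. Nothing in the hypotheses stops $Z$ from containing other horizontal components of $\mathrm{ev}^{-1}(\Delta_k)$, coming from other branches of the general curve through $p$. Two examples:
\begin{itemize}
\item a multisection $H$ through a point of $C_{exc}$ that meets the special fibre $C$ again at some $x\notin C_{exc}$;
\item a horizontal $H$ joined to $\sigma(B)$ through exceptional components of a different fibre.
\end{itemize}
Then the component of $C$ through $x$ (one of the $C_\ell$ or one of the ``rest'') contributes to $D_Z\cdot C$. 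So do any other exceptional clusters of $C$ lying in $Z$, together with their neighbours. These extra contributions are $\ge 0$ and do not appear in the target sum. Your three-way split tacitly assumes that each $C_\ell$ meets $Z$ only along $\mathcal{Q}_\ell$ and that every other component misses $Z$; neither holds. Since your chain needs the \emph{upper} bound $D_Z\cdot C\le\sum_{\ell}\sum_{j}\sum_i(i+1)\mult_{q_{j,\ell}}(s|_{C_\ell},\widetilde{E}_i)$, these terms break it. Blowing up or contracting $\mathcal{C}$ does not help: the problem is the global connectivity of $Z$, not singularities at the nodes.

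One repair is to localize. Over the henselization of $B$ at the special point, every horizontal branch meets $C$ in a single point. The component of $\Supp(\mathrm{ev}^*F)$ through $\sigma$ then meets $C$ only in $C_{exc}$, and your computation goes through.

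The cleaner repair, which is what the paper does, is to drop the horizontal part altogether.
\begin{itemize}
\item Let $D_i$ be the part of $\mathrm{ev}^*\widetilde{E}_i$ supported on $C_{exc}$, write $\mathrm{ev}^*\widetilde{E}_i=D_i+R_i$, and set $D=\sum_i(i+1)D_i$ and $C_{adj}=\bigcup_\ell C_\ell$.
\item Since $D$ is vertical, $D\cdot C=0$. Since it is supported on $C_{exc}$, it meets only $C_{exc}$ and the $C_\ell$. Hence $D\cdot C_{adj}=-D\cdot C_{exc}=-F\cdot s_*C_{exc}+\sum_i(i+1)R_i\cdot C_{exc}$.
\item Each $R_i\cdot C_{exc}\ge 0$, and $R_k\cdot C_{exc}\ge 1$ because $\sigma(B)\subset R_k$ passes through $q$. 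Thus $D\cdot C_{adj}\ge b_k+k+1$.
\item Finally, $D\cap C_\ell\subset\mathcal{Q}_\ell$, and the local intersection of $C_\ell$ with $D_i$ at $q_{j,\ell}$ is at most $\mult_{q_{j,\ell}}(s|_{C_\ell},\widetilde{E}_i)$. This yields the lemma.
\end{itemize}
So the $k+1$ should come from $\sigma(B)$ meeting $C_{exc}$, not from fibre-constancy of a horizontal divisor.
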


\begin{proof}
      Set $C_{adj} = \cup_{\ell \in \mathcal{L}} C_{\ell}$.  If $C_{exc}$ is a point $q$, then $C_{adj}$ must be the strict transform of an irreducible curve in $Y$ so that $\mult_{q}(s|_{C_{adj}},E_k) \geq 1$ and the inequality holds.  Thus, we may suppose $C_{exc}$ is a curve and consider $C$, $C_{adj}$, and $C_{exc}$ as divisors in $\mathcal{C}$.  As $C_{exc}$ lies in a fiber of $\pi : \mathcal{C} \to B$, each component of $C_{exc}$ pairs to 0 with the class of a $\pi$-fiber.  In particular, for any divisor $D \subset \mathcal{C}$ supported on $C_{exc}$, 
      $$D \cdot (C_{exc} + C_{adj}) = D \cdot C = 0.$$

      For each $0 \leq i \leq k$, the support of the pullback $\mathrm{ev}^*\widetilde{E}_i$ may contain components of $C_{exc}$ but not any component of $C_{adj}$.  Let $D_i$ be the part of $\mathrm{ev}^*\widetilde{E}_i$ supported on $C_{exc}$.  When $i<k$, by construction $\mathrm{ev}^{*}\widetilde{E}_i \cdot C_{exc} \geq D_i \cdot C_{exc}$ while $D_i \cdot C_{exc} = -D_i \cdot C_{adj}$, showing that $D_{i} \cdot C_{adj} \geq -\mathrm{ev}^{*}\widetilde{E}_i \cdot C_{exc}$.  As $\sigma(B)$ is contained in $\mathrm{ev}^*\widetilde{E}_k$ and $\sigma(B)$ meets $C_{exc}$, when $i=k$ the same argument yields the improved bound $ D_k \cdot C_{adj} \geq -\mathrm{ev}^{*}\widetilde{E}_k \cdot C_{exc} + 1$.

       Since $s(C_{exc})$ is contracted by the birational morphism $Y_{k} \to Y$, up to numerical equivalence we can write
\begin{equation*}
s_{*}C_{exc} \equiv \sum_{i=0}^{k} b_{i} \ell_{i}
\end{equation*}
for some integers $b_{i} \geq 0$.   Note that the multiplicity $\mult_{q_{j,\ell}}(s|_{C_\ell}, \widetilde{E}_i)$ is the local intersection number of $C_{\ell}$ with $D_i$ at $q_{j,\ell}$.  Since $D_i \cap C_{\ell} \subset \mathcal{Q}_{\ell}$ for all $\ell \in \mathcal{L}$, we can compute $D_{i} \cdot C_{adj}$ as a sum of multiplicities over all $\{ q_{j,\ell} \in \mathcal{Q}_{\ell}\}_{\ell \in \mathcal{L}}$.  On the other hand, $\ev^{*}\widetilde{E}_{i} \cdot C_{exc}$ can be computed by combining the above description of $s_{*}C_{exc}$ with Lemma \ref{lemm:intersectionvalues}.  With these modifications, our earlier inequalities show that for $0 < i < k$ we have
\begin{equation*}
\left( \sum_{\ell \in \mathcal{L}} \sum_{q_{j,\ell} \in \mathcal{Q}_{\ell}} \mult_{q_{j,\ell}}(s|_{C_\ell}, \widetilde{E}_i)  \right) \geq 2b_{i} - b_{i+1} - b_{i-1} 
\end{equation*}
and that
\begin{equation*}
\left( \sum_{\ell \in \mathcal{L}} \sum_{q_{j,\ell} \in \mathcal{Q}_{\ell}} \mult_{q_{j,\ell}}(s|_{C_\ell}, E_k)  \right)  \geq 1 + b_{k} - b_{k-1}, \qquad  \left( \sum_{\ell \in \mathcal{L}} \sum_{q_{j,\ell} \in \mathcal{Q}_{\ell}} \mult_{q_{j,\ell}}(s|_{C_\ell}, \widetilde{E}_0) \right)  \geq 2b_{0} - b_{1}.
\end{equation*}
Combining these equations, we obtain
\begin{align*}
\sum_{i=0}^{k} (i+1) & \left( \sum_{\ell \in \mathcal{L}} \sum_{q_{j,\ell} \in \mathcal{Q}_{\ell}} \mult_{q_{j,\ell}}(s|_{C_\ell}, \widetilde{E}_i) \right)  \\
& \geq (k+1)(1+b_{k} - b_{k-1}) + \left(\sum_{i=1}^{k-1} (i+1)(2b_{i} - b_{i+1} - b_{i-1})\right) + (2b_{0} - b_{1}) \\
& = k+1 + b_{k} \\
& \geq k+1. \qedhere
\end{align*}
\end{proof}

\begin{remark} \label{rmk-numericalExceptionalTowers}
    Observe from the proof of Lemma \ref{lem-numericalExceptionalTowers} that the key ingredients are the intersection numbers for the curves in the family $\mathcal{C}$ with the exceptional divisors $E_i$. We can also achieve the same conclusion if we vary $Y$ in a 1-parameter family $B$.
    
    More precisely, suppose that:
    \begin{enumerate}
        \item $\varphi : \mathcal{Y} \to B$ is a flat projective map, $p_B \in \mathcal{Y}$ is a $\mathbb{K}(B)$-point contained in the smooth locus of $\varphi$, and $\Phi(\mathcal{Y}_{\mathbb{K}(B)}, p_B, k)$ is an exceptional tower over the generic point of $B$;
        \item there is a sequence $\mathcal{Y}_{k} \to \ldots \to \mathcal{Y}_{0} \to \mathcal{Y}$ of integral models over $B$ of the blow-ups defining $\Phi(\mathcal{Y}_{\mathbb{K}(B)}, p_B, k)$ such that the restriction to each geometric fiber of $\varphi$ is an exceptional tower;
        \item $\mathcal{C} \to B$ is a generically smooth family of prestable curves equipped with a $B$-morphism $\ev: \mathcal{C} \to \mathcal{Y}_{k}$ whose image is not contained in the exceptional locus of $\mathcal{Y}_{k} \to \mathcal{Y}$.
    \end{enumerate}
    Suppose there is a marking $\sigma: B \to \mathcal{C}$ such that $\ev(\sigma(B))$ lies in the ``top'' exceptional divisor $\mathcal{E}_k$. 
    Then for every point $t \in B$, defining $\{ C_\ell \}$ and $C_{exc}$ as above, we still have 
    \begin{equation*}
    \sum_{\ell \in \mathcal{L}} \sum_{q_{j,\ell} \in \mathcal{Q}_\ell} \left(\sum_{i = 0}^k (i+1) \cdot \mult_{q_{j,\ell}}\big(s|_{C_\ell}, \widetilde{E}_i\big)\right) \geq k+1.
\end{equation*}
\end{remark}

We now connect the inequality in Lemma \ref{lem-numericalExceptionalTowers} to a lower bound on the anticanonical degree of a curve $s : C \to Y$.  The most general statement we will need is Proposition \ref{prop:lower_bound_exceptional_tower}.  Its corollary provides such a lower bound when $C = \PP^1$ and $s^*T_Y$ is globally generated.

\begin{proposition}\label{prop:lower_bound_exceptional_tower}
Let $(Y,\Delta)$ be a log smooth pair with reduced boundary and let $p \in Y\setminus \Delta$ be a closed point. 
Let $C$ be a smooth curve and fix a finite set of points $\mathcal{Q} \subset C$.  Suppose $M \subset \Mor(C,Y)$ 
is a quasi-projective subvariety.  

Consider a general exceptional tower $\Phi(Y,p,k)$ of height $k \geq 1$ based at $p$.
Suppose $s : C \to Y$ is a  map parameterized by $M$ such that $s(C) \not \subset \Delta$.  Let $V \subseteq s^*T_{Y}(-\log_{(s,\mathcal{Q})} \Delta)$ and $V_k \subseteq s_k^*T_{Y_k}(-\log_{(s_k,\mathcal{Q})} \Delta_{k})$ be the maximal generically globally generated subbundles of the respective local log tangent sheaves.  Then 
$$c_1(V) - c_1(V_k) \geq \sum_{q_{j} \in \mathcal{Q}} \left( \rk(V_k) \cdot \mathrm{mult}_{q_j}(s_0,E_0) +  \sum_{i = 1}^k \mathrm{mult}_{q_j}(s_i, E_i) \right).$$
\end{proposition}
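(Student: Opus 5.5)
We argue by induction up the tower, using the exact sequences of Lemma \ref{lemm:loctancomputation} to compare consecutive local log tangent sheaves, and the generality statement Lemma \ref{lemm:meaningofvg} to show that each step strictly decreases $c_1$ of the maximal generically globally generated subsheaf by the stated amount. Write $W_i \subseteq s_i^*T_{Y_i}(-\log_{(s_i,\mathcal{Q})}\Delta_i)$ for the maximal generically globally generated subsheaf, so that $W_k = V_k$. Because the sheaves form a decreasing chain with torsion cokernels, $H^0$ of each is contained in $H^0$ of the previous one, and hence $W_{i+1} \subseteq W_i$ (possibly after a twist at the first step). Consequently $\rk W_{i+1} \leq \rk W_i$, and in particular every $W_i$ has rank at least $\rk(V_k)$.

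\textbf{Step 0 (base of the tower).} By Lemma \ref{lemm:loctancomputation}(1), the sheaf $s_0^*T_{Y_0}(-\log_{(s_0,\mathcal{Q})}\Delta_0)$ is contained in $s^*T_Y(-\log_{(s,\mathcal{Q})}\Delta)(-\sum_{q_j\in\mathcal{Q}} m_j q_j)$ with $m_j = \mult_{q_j}(s,p) = \mult_{q_j}(s_0,E_0)$. Hence $W_0 \subseteq V(-\sum m_j q_j)$, because a generically globally generated subsheaf of the twisted sheaf is generated by sections of $V$ that vanish along $\sum m_j q_j$. For a saturated subsheaf $W$ of a generically generated sheaf $G$, where $G$ is generated by the same sections, we have $c_1(W) \le c_1(G)$. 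Concretely, if $W_0$ has rank $r_0$, then $W_0(\sum m_jq_j)$ is a generically globally generated subsheaf of $V$, so it lies in $V$. The quotient $V/W_0(\sum m_j q_j)$ is then generically globally generated, hence has $c_1\ge 0$. This gives $c_1(V) \geq c_1(W_0) + r_0\sum_j m_j \geq c_1(W_0) + \rk(V_k)\sum_j m_j$, which is the first term of the bound.

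\textbf{Step $i \to i+1$.} By Lemma \ref{lemm:loctancomputation}(2), the map $\tau$ restricted to $W_i$ lands in the $\mathcal{Q}$-part $T_{\mathcal{Q}} := (s_{i+1}^*T_{E_{i+1}/S_i}(E_{i+1}-\widetilde E_i))_{\mathcal{Q}}$, together with a $\mathcal{P}$-part. This target is a quotient of a line bundle and has length $\sum_j \mult_{q_j}(s_{i+1},E_{i+1})$ at the points of $\mathcal{Q}$. The kernel $\ker(\tau|_{W_i}) \subseteq s_{i+1}^*T_{Y_{i+1}}(-\log\ldots)$ contains $W_{i+1}$, since $W_{i+1}$ is generically globally generated inside the smaller sheaf and is therefore contained in $W_i$. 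Thus $c_1(W_{i+1}) \le c_1(\ker\tau|_{W_i}) = c_1(W_i) - \operatorname{length}(\operatorname{im}\tau|_{W_i})$. The key point is that $\tau|_{W_i}$ surjects onto $T_{\mathcal{Q}}$. For this we use Lemma \ref{lemm:meaningofvg}: the tower is general with respect to $M$, so there is a section $\epsilon \in T_i \subseteq H^0(s_i^*T_{Y_i}(-\log\ldots))$ whose composite $\rho\circ\epsilon$ is surjective. Since $\epsilon$ is a global section, it lies in $W_i$, and so $\rho|_{W_i}$ is surjective. Therefore $c_1(W_{i+1}) \le c_1(W_i) - \sum_j \mult_{q_j}(s_{i+1},E_{i+1})$. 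Summing over $i = 0,\dots,k-1$ and adding the result of Step 0 gives the claimed inequality.

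\textbf{Main obstacle.} The delicate point is the careful handling of the inclusions $W_{i+1}\subseteq \ker(\tau|_{W_i})$ and the resulting comparison of $c_1$. We must verify that the maximal generically globally generated subsheaf is functorial under inclusions with torsion cokernel. We must also check that passing from $\ker$ to a possibly smaller $W_{i+1}$ only decreases $c_1$. This holds because the quotient $\ker(\tau|_{W_i})/W_{i+1}$ is generically generated by the images of sections, or more carefully by a saturation argument, and so has nonnegative degree. A further subtlety is that Lemma \ref{lemm:meaningofvg} produces $\epsilon$ inside $T_i$, that is, in sections coming from $T_sM$. These are global sections of the local log tangent sheaf and hence lie in $W_i$, which is exactly what the surjectivity argument needs.
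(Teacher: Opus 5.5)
Your Step 0 is correct, and so is your use of Lemma~\ref{lemm:meaningofvg} to show that $\tau|_{W_i}$ surjects onto the $\mathcal{Q}$-part. The gap is in the inductive step: the inequality $c_1(W_{i+1}) \leq c_1(\ker \tau|_{W_i})$ holds only when $\rk W_{i+1} = \rk W_i$, because only then is the quotient torsion.

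When the rank drops, $\ker(\tau|_{W_i})/W_{i+1}$ sits in an exact sequence $0 \to \ker(\tau|_{W_i})/W_{i+1} \to W_i/W_{i+1} \to \operatorname{im}(\tau|_{W_i}) \to 0$. So its degree is $c_1(W_i/W_{i+1}) - \operatorname{length}(\operatorname{im})$, which can be negative, and it need not be generated by sections. Your ``main obstacle'' paragraph asserts exactly the step that fails.

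Concretely, take $Y = \mathbb{P}^2$, $\Delta = \emptyset$, $\mathcal{Q} = \{q\}$, $k = 1$, and $M$ the space of lines. Let $s$ be the line through $p$ with $s(q)=p$ whose tangent direction is the point $S_0 \in E_0$; such an $s$ exists for every tower.
\begin{itemize}
\item We have $V = \mathcal{O}(2)\oplus\mathcal{O}(1)$ and $W_0 = s^*T_Y(-q) = T_{\mathbb{P}^1}(-q)\oplus N(-q) = \mathcal{O}(1)\oplus\mathcal{O}$.
\item The map $\tau$ kills the tangential summand, since reparametrizations fixing $q$ lift to the log tangent sheaf on $Y_1$, and it maps $\mathcal{O}$ onto $\mathbb{K}(q)$.
\item Hence $\ker\tau \cong \mathcal{O}(1)\oplus\mathcal{O}(-1)$ and $W_1 = \mathcal{O}(1)$.
\end{itemize}
Thus $c_1(W_1) = 1 = c_1(W_0)$, not $\leq 0$ as your step claims. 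Your chain would give $c_1(V)-c_1(V_1) \geq 2\cdot 1 + 1 = 3$, whereas in fact $c_1(V)-c_1(V_1) = 2$. This equals the bound in the proposition, $\rk(V_1)\cdot 1 + 1$, so the proposition holds here with equality while your stronger intermediate claim fails.

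The missing idea is the case split the paper uses at each step.
\begin{itemize}
\item If $\rk W_{i+1} = \rk W_i$, then $W_{i+1} \subseteq \ker(\tau|_{W_i})$ with torsion quotient. You get the full drop $c_1(W_{i+1}) \leq c_1(W_i) - \sum_j \mult_{q_j}(s_{i+1},E_{i+1})$.
\item If the rank drops, you only get $c_1(W_{i+1}) \leq c_1(W_i)$, because $W_i/W_{i+1}$ is generically globally generated.
\end{itemize}
Rank drops occur at most $\rk W_0 - \rk V_k$ times. Each one forfeits at most $\sum_j \mult_{q_j}(s_{i+1},E_{i+1}) \leq \sum_j \mult_{q_j}(s_0,E_0)$, since $\mult_{q_j}(s_i,E_i)$ is non-increasing in $i$. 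Subtracting these losses from the $\rk(W_0)\sum_j \mult_{q_j}(s_0,E_0)$ you obtained in Step 0 leaves exactly the coefficient $\rk(V_k)$. That is why the statement has $\rk(V_k)$ rather than $\rk(W_0)$, and why your version, if it were correct, would prove something strictly stronger and false.
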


\begin{proof}
For $0 \leq i < k$, Lemma \ref{lemm:loctancomputation}.(2) yields an exact sequence
$$0 \to s_{i+1}^*T_{Y_{i+1}}(-\log_{(s_{i+1},\mathcal{Q})} \Delta_{i+1}) \to s_{i}^*T_{Y_{i}}(-\log_{(s_{i},\mathcal{Q})} \Delta_{i}) \to P_{i+1} \to 0$$
where $P_{i+1}$ is torsion and $(P_{i+1})_{\mathcal{Q}} = \big (s_{i+1}^{*}T_{E_{i+1}/S_{i}}(E_{i+1} - \widetilde{E_i}) \big )_{\mathcal{Q}}$.
  
For $0 \leq i \leq k$, let $V_i \subset s_i^*T_{Y_i}(-\log_{(s_i,\mathcal{Q})} \Delta_{i})$ be the maximal generically globally generated subbundle and let $\delta_{i}$ denote its rank.  We let $T_{i+1}$ denote the torsion sheaf which is the image of $V_{i} \to P_{i+1}$.  The exact sequence above shows that $V_{i+1}$ is contained in the kernel of $V_{i} \to T_{i+1}$ (but might not be equal to it).  
Lemma \ref{lemm:meaningofvg} shows that $(P_{i+1})_{\mathcal{Q}} \subset T_{i+1}$.  Since $T_{E_{i+1}/S_{i}}(E_{i+1} - \widetilde{E_i})$ is a line bundle on $E_{i+1}$, Definition \ref{def:local multiplicity} shows that the length of $(P_{i+1})_{\mathcal{Q}}$ is $\sum_{q_{j} \in \mathcal{Q}} \mathrm{mult}_{q_j}(s_{i+1}, E_{i+1})$.  This provides a lower bound for the length of $T_{i+1}$.

Altogether we deduce that for each $i$ with $0 \leq i < k$, one of the following occurs: 
\begin{enumerate}
\item $\delta_{i+1} = \delta_{i}$ and $c_1(V_{i+1}) \leq c_1(V_{i}) - \sum_{q_{j} \in \mathcal{Q}} \mathrm{mult}_{q_j}(s_{i+1}, E_{i+1})$,
\item $\delta_{i+1} <  \delta_{i}$ and $c_1(V_{i+1}) \leq c_1(V_{i})$.
\end{enumerate}
Case (2) occurs at most $\delta_{0} - \delta_k$ times.  Note that $\mathrm{mult}_{q_{j}}(s_i,E_i) \geq \mathrm{mult}_{q_{j}}(s_{i+1},E_{i+1})$ whenever $i$ is between $0$ and $k-1$.  Summing up the changes (1)-(2) over all $0 \leq i < k$ we obtain
$$c_1(V_0) \geq c_1(V_k) + \sum_{q_{j} \in \mathcal{Q}} \left( (\delta_k - \delta_0) \cdot \mathrm{mult}_{q_j}(s_0,E_0) +  \sum_{i = 1}^k \mathrm{mult}_{q_j}(s_i, E_i) \right).$$

By Lemma \ref{lemm:loctancomputation}.(1) we can identify $s_{0}^*T_{Y_{0}}(-\log_{(s_{0},\mathcal{Q})} \Delta_{0})$ with a subsheaf of the twist of $s^*T_Y(-\log_{(s,\mathcal{Q})} \Delta)$ by $\mathcal{O}_C\big(-\sum_{q_j \in \mathcal{Q}} \mathrm{mult}_{q_j}(s,p)\cdot q_j \big)$.  Since $\mathrm{mult}_{q_j}(s,p) = \mathrm{mult}_{q_j}(s_0,E_0)$, we have
\begin{align*}
c_1(V) \geq & c_1\bigg (V_0\big(  \sum_{q_j \in \mathcal{Q}} \mathrm{mult}_{q_j}(s,p)\cdot q_j\big)\bigg ) \\
\geq & c_1(V_k) + \sum_{q_{j} \in \mathcal{Q}} \left( \delta_k \cdot \mathrm{mult}_{q_j}(s_0,E_0) +  \sum_{i = 1}^k \mathrm{mult}_{q_j}(s_i, E_i) \right).
\end{align*}
\end{proof}

When we break a family of curves, the irreducible components of the broken curve might intersect many different exceptional divisors $\widetilde{E}_{i}$ in our tower.  The following result allows us to leverage information about these intersection numbers to prove that the images on $Y$ have high $-K_Y$-degree. 

\begin{corollary}\label{cor: rational curves degree}
    Let $Y$ be a smooth projective variety.  Suppose $s : \PP^1 \to Y$ is a nonconstant map such that $s^*T_Y$ is globally generated.  Let $p \in s(\PP^1)$ be a point and consider a general exceptional tower $\Phi(Y,p,k)$ based at $p$, where $k \geq 0$.  For any subset $\mathcal{Q} \subset \PP^1$ of at most two points,

    $$-K_Y \cdot s_*\PP^1 \geq 2 - |\mathcal{Q}| + \sum_{q_{j} \in \mathcal{Q}}  \sum_{i=0}^{k} (i+1) \cdot \mult_{q_{j}}(s_k, \widetilde{E}_i).$$
\end{corollary}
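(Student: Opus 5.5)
The plan is to apply Proposition \ref{prop:lower_bound_exceptional_tower} with $\Delta = \emptyset$, $C = \PP^1$, the given set $\mathcal{Q}$, and $M = \{s\}$ (or any quasi-projective family containing $s$). The tower is general relative to these data, which is what the Proposition requires. The inequality will then come from three ingredients: an upper bound $c_1(V) \leq -K_Y \cdot s_*\PP^1$, a lower bound $c_1(V_k) \geq 2 - |\mathcal{Q}|$ together with $\rk(V_k) \geq 1$, and a rewriting of the right-hand side of the Proposition in terms of the multiplicities $\mult_{q_j}(s_k,\widetilde{E}_i)$.

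\emph{Step 1: the $V$ term.} Since $\Delta = \emptyset$, the local log tangent sheaf $s^*T_Y(-\log_{(s,\mathcal{Q})}\Delta)$ is just $s^*T_Y$. This sheaf is globally generated by hypothesis, so $V = s^*T_Y$ and $c_1(V) = -K_Y \cdot s_*\PP^1$.

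\emph{Step 2: the $V_k$ term.} This is the step I expect to be the main obstacle. The idea is that reparametrizations of $\PP^1$ fixing $\mathcal{Q}$ preserve all local intersection multiplicities at the points of $\mathcal{Q}$. Concretely, I will show that $ds_k$ carries $T_{\PP^1}(-\mathcal{Q}) \cong \mathcal{O}(2-|\mathcal{Q}|)$ into $s_k^*T_{Y_k}(-\log_{(s_k,\mathcal{Q})}\Delta_k)$. A tangent vector field vanishing at $q_j$ maps into the log tangent sheaf near $q_j$, because the image of $T_{\PP^1}$ is tangent to any divisor containing $s_k(\PP^1)$ locally, and it vanishes at $q_j$ otherwise. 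I will verify this locally using the characterization of $T(-\log D)$ as vector fields preserving the ideal of $D$, applied to the components of $\Delta_k$. Since $|\mathcal{Q}| \leq 2$, the sheaf $\mathcal{O}(2-|\mathcal{Q}|)$ is globally generated, so it lies in $V_k$. Hence $\rk(V_k) \geq 1$. Moreover, $V_k$ splits as a sum of line bundles of nonnegative degree on $\PP^1$, because it is generically globally generated, and it contains a copy of $\mathcal{O}(2-|\mathcal{Q}|)$. Therefore some summand has degree $\geq 2-|\mathcal{Q}|$, which gives $c_1(V_k) \geq 2-|\mathcal{Q}|$.

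\emph{Step 3: rewriting the multiplicities.} The center $S_{i}$ is a minimal moving section of $E_{i} \to S_{i-1}$, so it is disjoint from the rigid section $\widetilde{E}_{i-1} \cap E_{i}$. (For $i = 0$, $S_0$ is a hyperplane in $E_0$.) It follows that $\phi_{i+1}^*E_i = \widetilde{E}_i + E_{i+1}$. Iterating up the tower, the pullback of $E_i$ to $Y_k$ is $\widetilde{E}_i + \widetilde{E}_{i+1} + \dots + E_k$, and therefore
\[
\mult_{q_j}(s_i,E_i) = \sum_{l=i}^{k} \mult_{q_j}(s_k,\widetilde{E}_l).
\]
Summing over $i = 0,\dots,k$ gives
\[
\sum_{i=0}^{k} \mult_{q_j}(s_i,E_i) = \sum_{l=0}^{k}(l+1)\,\mult_{q_j}(s_k,\widetilde{E}_l).
\]
Since $\rk(V_k) \geq 1$, the term $\rk(V_k)\cdot\mult_{q_j}(s_0,E_0)$ in the Proposition is at least $\mult_{q_j}(s_0,E_0)$. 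So the right-hand side of the Proposition is at least $\sum_{q_j}\sum_{l}(l+1)\mult_{q_j}(s_k,\widetilde{E}_l)$. Substituting Steps 1 and 2 then gives the claimed inequality.

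The case $k = 0$ is excluded by the hypothesis $k \geq 1$ in the Proposition, so it needs a separate argument. There I will use Lemma \ref{lemm:loctancomputation}(1) directly: it gives $V_0 \subset s^*T_Y(-\sum_j \mult_{q_j}(s,p)\,q_j)$, and the same argument as in Step 2 bounds $c_1(V_0)$ from below.
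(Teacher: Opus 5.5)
Your proposal is correct and follows the paper's proof. In both, $V=s^*T_Y$, and the image of $T_{\PP^1}(-\log\mathcal{Q})\cong\mathcal{O}(2-|\mathcal{Q}|)$ gives $\rk(V_k)\geq 1$ and $c_1(V_k)\geq 2-|\mathcal{Q}|$; the case $k=0$ comes from Lemma \ref{lemm:loctancomputation}(1), and the case $k\geq 1$ from Proposition \ref{prop:lower_bound_exceptional_tower} together with $\mult_{q_j}(s_i,E_i)=\sum_{h\geq i}\mult_{q_j}(s_k,\widetilde{E}_h)$. Your extra details (the splitting argument for $c_1(V_k)$ and the pullback identity $\phi_{i+1}^*E_i=\widetilde{E}_i+E_{i+1}$) check out; just note that in Step 2 what makes $ds_k$ land in the local log tangent sheaf is $t\partial_t(t^m u)\in(t^m)$, which is stronger than vanishing at $q_j$ and is exactly what your planned local verification gives.
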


\begin{proof}
    We use the notation from Proposition \ref{prop:lower_bound_exceptional_tower}. Here, $V = s^* T_Y$ because it is globally generated.  The line bundle $T_{\PP^1}(-\log \mathcal{Q})$ has degree $2 - |\mathcal{Q}|$.  Because $\mathcal{Q}$ consists of at most two points, this line bundle is globally generated, and so the nontrivial map $T_{\PP^1}(-\log \mathcal{Q}) \to s_k^*T_{Y_k}(-\log_{(s_k,\mathcal{Q})} \Delta_{k})$ induced by $s_k$ shows that $V_k$ has rank at least 1 and degree at least $2- |\mathcal{Q}|$.  If $k=0$, Lemma \ref{lemm:loctancomputation}(1) shows that the difference in first Chern classes between $s^{*}T_{Y}$ and $V_{0}$ is at least $\sum \mult_{q_{j}}(s_{0},E_{0})$.  If $k \geq 1$, the conclusion follows from Proposition \ref{prop:lower_bound_exceptional_tower} because $\mult_{q_j}(s_i, E_i) = \sum_{h = i}^k \mult_{q_j}(s_k, \widetilde{E}_h)$.
\end{proof}

\subsection{Proof of Theorem \ref{theo:exctowerbreaking}}  To prove Theorem \ref{theo:exctowerbreaking}, we construct the desired rational curves by smoothing a connected component of a stable limit of curves parameterized by $M$.  The next result constructs this stable limit.

\begin{theorem}\label{theo:strongest breaking result}
    Let $Y$ be a smooth projective variety and $C$ be a smooth curve.    Suppose $M \subset \Mor(C,Y)$ is a quasi-projective subvariety parameterizing a dominant family of
    maps.  Let $q \in C$ be a general point.  Let $p \in Y$ be a general point in the subvariety of $Y$ swept out by $s(q)$ as we vary $s \in M$.  Set
    $$M_0 = \{s \in M \mid s(q) = p\}.$$
    Consider the natural morphism $M_0 \to \overline{\mathcal{M}}_{g,0}(Y)$ and let $W$ be the closure of its image.  If $\dim M_0 > 0$, then the domain of some stable map $f : C' \to Y$ parameterized by $W$ contains a connected subcurve $R \subset C'$ of arithmetic genus $0$ with the following properties:  
    \begin{enumerate}
        \item the image of each irreducible component of $R$ contains $p$;
        \item $-K_Y \cdot f_* R \geq \dim M_0$.
    \end{enumerate}
\end{theorem}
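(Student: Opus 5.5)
We will take $k = \dim M_0 - 1$, lift the family to the top of a general exceptional tower $\Phi(Y,p,k)$, break it there, and read off the degree of the rational subcurve from Lemma \ref{lem-numericalExceptionalTowers} and Corollary \ref{cor: rational curves degree}. We may assume $M$ parameterizes generically injective maps. The reduction is to replace $M$ by the family of graphs $C \to C \times Y$: this is harmless because $-K$ of the projection to $Y$ of a rational tree contracted in the $C$-direction is unchanged. For the graph family, hypothesis (1) of Lemma \ref{lem: climb tower morphism scheme} holds. Hence for a general tower of height $k=\dim M_0-1$ the locus $M_k \subset M_0$ of maps whose strict transform sends $q$ into $E_k$ is nonempty of dimension $\dim M_0 - k = 1$.

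Next, choose an irreducible complete curve $B$ in the closure of the image of $M_k$ in $\overline{\mathcal{M}}_{g,1}(Y_k)$, with marking $q$ mapping into $E_k$. Because $q$ is marked, $p$ is fixed, and the curves genuinely vary, bend-and-break says the family cannot be isotrivial with fixed domain $C$. So some fiber $s:(C',q)\to Y_k$ degenerates and $s^{-1}(\Delta_k)$ has a component $C_{exc}\ni q$ of positive dimension; if instead $C_{exc}$ is a point, one more breaking is forced by rigidity. Since $C_{exc}$ is contracted to $p$ in $Y$, the curves attached to it are the natural candidates for $R$.

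Apply Lemma \ref{lem-numericalExceptionalTowers}. It gives
\[\sum_\ell\sum_{q_{j,\ell}}\sum_i (i+1)\mult_{q_{j,\ell}}(s|_{C_\ell},\widetilde E_i)\ge k+1.\]
The component $C_\ell$ of genus $g(C)$, if it exists, is unique, and the others are rational trees. Take $R$ to be $C_{exc}$ together with all rational components $C_\ell$, plus the rational trees hanging off them. Each component of $R$ passes through $p$ or is contracted there, which gives (1). For (2), apply Corollary \ref{cor: rational curves degree} to each rational $C_\ell$ with $\mathcal{Q}$ its attaching points (at most two after choosing suitably). This needs the images to be free, i.e.\ $s^*T_Y$ globally generated; that holds because $p$ is general, so components through $p$ are free. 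Summing gives $-K_Y\cdot f_*R \ge \sum(\text{multiplicity terms}) + \sum_\ell(2-|\mathcal{Q}_\ell|)$. We then need the multiplicity contributions from the higher-genus component to be small.

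The main obstacle is controlling the higher-genus component $C_0$ and the count of attaching points. The genus-$g$ component can also meet $C_{exc}$ and absorb part of the $k+1$. To handle this, apply Proposition \ref{prop:lower_bound_exceptional_tower} to $C_0$ inside the family $W$, using the tangent-space description from Lemma \ref{lemm:meaningofvg}. The deformations of $C_0$ preserving its tower multiplicities have dimension bounded by $\dim M_0$ minus those multiplicities, so the contribution of $C_0$ can be transferred: the rational part must carry at least $\dim M_0 = k+1$ of weighted multiplicity. If that bookkeeping fails, the fallback is to choose $B$ inside the sublocus where $C_0$'s multiplicity is maximal and induct on $k$. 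For the $2-|\mathcal{Q}|$ terms, when a rational component has more than two attaching points, split off subtrees so that each component sees at most two; the degree $\ge 1$ of free curves compensates for the lost terms.
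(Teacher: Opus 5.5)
Your setup matches the paper's: graphs in $C\times Y$, a general tower of height $k=\dim M_0-1$, Lemma \ref{lem: climb tower morphism scheme}(1), bend-and-break for the one-dimensional pointed family, then Lemma \ref{lem-numericalExceptionalTowers} and Corollary \ref{cor: rational curves degree}. Note that the tower must be based at $(q,p)$ on $C\times Y$, not on $Y$, for the argument below to work. Your argument handles the case where the genus-$g$ component $C_0$ does not meet $C_{exc}$, with two small corrections:
\begin{enumerate}
\item Every component other than $C_0$ is rational and $C'$ has arithmetic genus $g$, so the dual graph is a tree and each $C_\ell$ meets $C_{exc}$ in exactly one point. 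No splitting of subtrees is needed, and the terms $2-|\mathcal{Q}_\ell|=1$ give $-K\cdot R\geq k+2$ directly.
\item Do not add the rational trees hanging off the $C_\ell$ to $R$. Their components need not pass through $p$, so (1) would fail.
\end{enumerate}

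The genuine gap is the case $C_0\in\{C_\ell\}$, which you correctly flag as the main obstacle but do not resolve.
\begin{itemize}
\item \emph{The transfer claim is false.} You assert that the weight absorbed by $C_0$ ``can be transferred'' so that the rational part carries weight at least $k+1$. Lemma \ref{lem-numericalExceptionalTowers} is the only constraint. In a limit where $C_0$ meets $\widetilde{E}_m$, the rational components attached to $C_{exc}$ need only carry weight $k-m$, so $R$ has degree only $\geq k-m+1$. No dimension count for deformations of $C_0$ changes this.
\item \emph{Proposition \ref{prop:lower_bound_exceptional_tower} does not help.} Applied to $C_0$, it bounds the degree of $C_0$ itself, which is not what (2) asks for.
\end{itemize}

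The missing idea is the paper's induction on $\dim M_0$:
\begin{itemize}
\item Because $C_0$ is the graph of a map (a section of $C\times Y\to C$), it meets $\Delta_k$ transversally at the single point $q_0$, lying on exactly one $\widetilde{E}_m$. Its weight is therefore exactly $m+1$, and $R$ has degree $\geq k-m+1$.
\item If $m>0$, let $N\subset \Mor(C,Y)$ be the family obtained by restricting to $C_0$ the node-preserving deformations of the stabilized limit. There are only finitely many such strata, independent of $(q,p)$ and of the tower, so the tower may be taken general for $N$ as well.
\item Corollary \ref{cor:meaningofvg} then gives $\dim N_0\geq m$. Also $\dim N_0\leq k$, since $N_0$ comes from a proper subvariety of the original family.
\item The inductive hypothesis applied to $N$ yields a further degeneration of $C_0$ containing a rational subcurve through $(q,p)$ of degree $\geq m$. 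This can be realized inside a degeneration of the original limit that keeps the image of $R$ fixed, and together with $R$ it gives degree $\geq k+1$.
\end{itemize}

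Your fallback (``choose $B$ inside the sublocus where $C_0$'s multiplicity is maximal and induct on $k$'') does not supply this. $M_k$ is already one-dimensional, so there is no smaller sublocus to choose. You also do not say which family with smaller invariant the induction is applied to, or how the resulting degrees add up.
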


We will prove this theorem by looking at graphs of maps parameterized by $M$.

\begin{proof}
If $C$ is rational, our claim is trivial.  Otherwise, set $X = C \times Y$ 
and $k = \dim M_0 - 1$.  Let $\pi_Y : X \to Y$ and $\pi_C : X \to C$ be the projections and let $\Phi(X,(q,p),k)$ be a general exceptional tower based at $(q,p)$.  Let $\psi : X_k \to X$ denote the natural blow-up map.

Let $M_X \subset \Mor(C,X)$ be the space of graphs of maps in $M$.  
Observe that each map parameterized by $M_X$ is an embedding.  Let $\mathrm{ev} : C \times M_X \to X$ be the evaluation map of the universal family over $M_X$.  Our hypotheses ensure that $(q,p)$ is a general point in the image of $\mathrm{ev}$.  Moreover, the preimage $M_{X,0} : = \mathrm{ev}^{-1}(q,p)$ is naturally isomorphic to $M_0$ and henceforth we do not distinguish between them.

Since graphs of morphisms can only deform transversely to their tangent direction, the hypotheses of Lemma \ref{lem: climb tower morphism scheme}(1) are satisfied.  It follows from Lemma \ref{lem: climb tower morphism scheme} that there exists a one-parameter subfamily $M_{X,k} \subset M_{X,0}$ consisting of maps whose strict transforms in $X_k$ meet $E_k$.  We may consider (an open subset of) $M_{X,k}$ as a subvariety of $\Mor(C, X_k)$ by replacing each map with its strict transform.  Let $M_{X,k} \to \overline{\mathcal{M}}_{g,1}(X_k)$ be the morphism that sends a map $s \in M_{X,k}$ to the pointed map $s : (C,q) \to X_k$ and let $\overline{M}_{X,k} \subset \overline{\mathcal{M}}_{g,1}(X_k)$ be the closure of its image.  By Mori's Bend-and-Break, $\overline{M}_{X,k}$ must parameterize a stable pointed map $f : (C', q') \to X_k$ whose domain is reducible with a rational component.

We will now describe some elementary properties of the map $f : (C',q') \to X_k$.  By construction the marked point $q'$ lies in a rational component of $C'$.  As well, $f(q')$ must lie in $E_k$.  Since each map parameterized by $M_X$ is a section of $\pi_C$, there is a unique component $C_0 \subset C'$ that is not contracted by the composition of the blow-up $\psi : X_k \to X$ with the projection $\pi_C : X \to C$.  Observe that $\pi_C \circ \psi \circ f|_{C_0} : C_0 \to C$ must be an isomorphism and let $q_0 \in C_0$ be the preimage of $q$ under this map.

As in Lemma \ref{lem-numericalExceptionalTowers}, let $C_{exc} \subset C'$ denote the connected component of $f^{-1}(\Delta_k)$ that contains $q'$.  Further let $\{C_\ell\}_{\ell \in \mathcal{L}}$ be the set of irreducible components of $C'$ that meet $C_{exc}$ but are not contained in it.  For each $\ell \in \mathcal{L}$, set $\mathcal{Q}_\ell = C_\ell \cap C_{exc}$.  By Lemma \ref{lem-numericalExceptionalTowers} we have 
\begin{equation}\label{eq: mult bound}
    \sum_{\ell \in \mathcal{L}} \sum_{q_{j,\ell} \in \mathcal{Q}_\ell} \left(\sum_{i = 0}^k (i+1) \cdot \mult_{q_{j,\ell}}\big(f|_{C_\ell}, \widetilde{E}_i\big)\right) \geq k+1.
\end{equation}

Let $R$ be the union of $C_{exc}$ with all rational components of $C'$ in $\{C_\ell\}_{\ell \in \mathcal{L}}$.  Every component of $C'$ aside from $C_0$ is rational.  Therefore, as the dual graph of $C'$ is a tree, $R$ is a connected curve of arithmetic genus $0$.  This also implies $| \mathcal{Q}_\ell | = 1$ for all $\ell \in \mathcal{L}$.

If $C_0 \notin \{C_\ell\}_{\ell \in \mathcal{L}}$, then by Equation \eqref{eq: mult bound} and Corollary \ref{cor: rational curves degree}.  we obtain $-K_X \cdot (\psi \circ f)_* R \geq k + 2$.   In this case, taking the stabilization of $\pi_Y \circ \psi \circ f: C' \to Y$ yields the desired map.  

Suppose instead that $C_0 \in \{C_\ell\}_{\ell \in \mathcal{L}}$. 
Since $\Delta_k$ lies in the fiber of $\pi_C \circ \psi : X_k \to C$ over $q$, the only point at which $f(C_0)$ meets $\Delta_k$ is $f(q_0)$.  Moreover, since $f|_{C_0}$ is a section of $\pi_C \circ \psi$ we have $\sum_{i=0}^k \mult_{q_0}(f|_{C_0}, \widetilde{E}_i) = 1$.  Let $0 \leq m \leq k$ be the unique integer such that $\mult_{q_0}(f|_{C_0}, \widetilde{E}_m) = 1$.  By Equation \eqref{eq: mult bound} and Corollary \ref{cor: rational curves degree} we obtain $-K_X \cdot (\psi \circ f)_* R \geq k - m + 1$.  Therefore, if $m = 0$ we may again take the stabilization of $\pi_Y \circ \psi \circ f : C' \to Y$ to obtain the desired map.  

To finish our proof, we will induct on $k = \dim M_0 - 1$.  For the base case $k = 0$, observe that $m = 0$ so we have already proven the claim.  Now suppose that $m > 0$ and we have proven our theorem for any dominant family of maps $N \subset \Mor(C,Y)$ such that the corresponding variety $N_0 \subset \Mor(C,Y)$ satisfies $\dim N_0 \leq k$.

Let $M_X \to \overline{\mathcal{M}}_{g,0}(X)$ be the natural map and let $\overline{M}_X$ be the closure of its image.  Let $\theta : C'' \to X$ be the stabilization of $\psi \circ f: C' \to X$ and observe that $\theta$ corresponds to a point in $\overline{M}_X$.  The family $N \subset \Mor(C,Y)$ we would like to consider next corresponds to deformations of $\theta$ that do not smooth any node of $C''$.

Stratifying $\overline{M}_X$ by the combinatorial type of a map's domain decomposes $\overline{M}_{X}$ into finitely many locally closed substacks.  The subvariety $N \subset \Mor(C,Y)$ parameterizes one of finitely many families obtained by restricting maps in these substacks to the unique irrational component of their domain and post-composing with $\pi_Y$.  Since these finitely many families do not depend on our choices of $(q,p)$ or the exceptional tower $\Phi(X,(q,p),k)$, we may assume these choices are still general with respect to each such family, and so in particular with respect to $N$.  Correspondingly, $\Phi(X,(q,p),k)$ is also general with respect to the space $N_X \subset \Mor(C,X)$ of graphs of maps in $N$.

By construction, for every stable degeneration of a map in $N_X$ there is a degeneration of $\theta$ whose restriction to a subcurve agrees with the map in $N_{X}$.  Moreover, let $N_0$, $N_{X,0}$, and $\overline{N}_{X,0}$ be defined analogously to $M_0$, $M_{X,0}$, and $\overline{M}_{X,0}$. 
For any degeneration of $\theta|_{C_0}$ lying in $\overline{N}_{X,0}$, we may find a degeneration of $\theta$ in $\overline{M}_{X,0}$ that fixes the image in $X$ of the subcurve of $C''$ corresponding to $R \subset C'$.   Thus, to finish our proof it suffices to show maps in $N_{X,0}$ specialize to a stable curve containing a rational subcurve through $(q,p)$ of $-\pi_{Y}^{*}K_{Y}$-degree at least $m$.

To prove this, first note that $N$ parameterizes a dominant family of curves on $Y$ because $p \in Y$ is a general point and a curve in $N$ passes through $p$.  As $\theta|_{C_0} = \psi \circ f|_{C_0}$ is a map in $N_Y$ whose strict transform passes through $\widetilde{E}_{m}$, Corollary \ref{cor:meaningofvg} implies that $\dim_{\theta} N_0 = \dim_{\theta} N_{X,0} \geq m$.   We must have $\dim_{\theta} N_{X,0} \leq k$ because $N_{X,0}$ parameterizes restrictions to $C_0$ of deformations of $\theta$ in a proper subvariety of $M_{X,0}$.  Thus, by the inductive hypothesis 
applied to $N$, $\overline{N}_{X,0}$ contains a stable map $h : C''' \to X$ such that $C'''$ contains a connected rational subcurve $R'''$, all of whose components meet $h^{-1}(q,p)$, and such that $-K_X \cdot h_*R''' \geq \dim_{\theta} N_{X,0} \geq m$.  This finishes our proof.
\end{proof}

We are now ready to prove Theorem \ref{theo:exctowerbreaking}.

\begin{proof}[Proof of Theorem \ref{theo:exctowerbreaking}:] We may suppose the genus of $C$ is greater than 0.  Let $\phi: Y' \to Y$ be a smooth birational model.  Since $M$ parameterizes a dominant family of maps, there is a non-empty open sublocus $M' \subset M$ parameterizing the strict transforms on $Y'$ of general curves parameterized by $M$.  Since rational curves passing through general points of $Y'$ must have globally generated restricted tangent bundle, Theorem \ref{theo:strongest breaking result} allows us to find a stable map $f : R \to Y'$ from a genus 0 curve such that $f^*T_{Y'}$ is globally generated and $-K_{Y'} \cdot f_*R \geq \dim M' - \dim Y$.  Since $f^*T_{Y'}$ is globally generated, we may deform $f$ to obtain a free curve $s : \PP^1 \to Y'$ of the same degree.  The component of $\Mor(\PP^1, Y')$ that contains $s$ satisfies \ref{theo:exctowerbreaking}(1)-(3) for $M'$; we may choose $N$ to be the corresponding component of $\Mor(\PP^{1},Y)$.  Lastly, if $Y$ is already smooth then the dimension of $M$ is at least $-K_Y \cdot \alpha + (1-g) \dim Y$.   This proves the last statement of Theorem \ref{theo:exctowerbreaking}.  
\end{proof}

\section{Rational curves on terminal Fanos}

In this section our goal is to prove the following theorem describing the existence of particular dominant families of rational curves on mildly singular Fano varieties. The argument combines exceptional towers with the techniques behind the proof of \cite[Theorem 1.4]{JLR25}.

\begin{theorem}\label{thm: nicest curves}
Let $X$ be a terminal Fano variety of dimension $n \geq 3$.  Suppose $\pi: X' \to X$ is a resolution of singularities. 
For a general point $p \in X'$ and for any integer $\Theta \geq 1$, there is a rational curve $s : \PP^1 \to X'$ that passes through $p$ and satisfies
\begin{itemize}
\item $-K_{X'} \cdot s_* \PP^1 \geq \Theta$;
\item $ - K_{X} \cdot (\pi \circ s)_* \PP^1 \leq \Theta + n$;
\item $(K_{X'} - \pi^* K_{X}) \cdot s_* \PP^1 \leq n - 2$; and
\item $\pi \circ s(\PP^1)$ meets the singular locus of $X$ at most $n -2$ times. 
\end{itemize}
\end{theorem}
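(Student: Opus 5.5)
Since $X$ is terminal, we write $K_{X'} = \pi^*K_X + \sum a_i F_i$ with every $a_i > 0$, where the $F_i$ are the $\pi$-exceptional divisors. For a curve meeting the singular locus, each point of contact therefore contributes positively to $(K_{X'}-\pi^*K_X)\cdot s_*\PP^1$. Our plan is to start from a very free (or at least free) family of rational curves on $X'$ whose $-\pi^*K_X$-degree is large, and then apply the breaking result Theorem \ref{theo:strongest breaking result}. We would not run it on $X'$ itself. Instead we would run it inside a family whose dimension is computed against $-\pi^*K_X$, so that the rational subcurve $R$ we break off has controlled degree on both models.

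\textbf{Step 1.} We need a dominant family $M$ of curves on $X'$ (for instance of genus $g\ge 1$, or rational curves) with the following property: its expected dimension is governed by $-\pi^*K_X$ rather than $-K_{X'}$, up to a bounded error. Concretely, following \cite{JLR25}, we would take the strict transforms of free curves on $X$ that avoid the singular locus in codimension $\ge 3$, or curves cut out by complete intersections of very ample divisors on $X$. These miss the codimension $\ge 2$ singular locus of $X$, so on them $-K_{X'}$ and $-\pi^*K_X$ have equal degree. Such a family has dimension at least $-K_{X'}\cdot \alpha + (1-g)n$. The parameter $\Theta$ lets us take $\alpha$ as large as we like.

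\textbf{Step 2.} Fix a general $p$ and let $M_0$ be the curves in $M$ through $p$. Choose a subfamily of $M_0$ of dimension exactly $\Theta$, or as close to it as the dimension allows; we can cut down by passing through additional general points, or by climbing an exceptional tower as in Lemma \ref{lem: climb tower morphism scheme}. Theorem \ref{theo:strongest breaking result} then produces a connected genus $0$ subcurve $R$ through $p$ with $-K_{X'}\cdot f_*R \ge \Theta$. The generic component through $p$ has globally generated tangent bundle. We smooth $R$, which is possible after restricting to components through general points, and this gives $s:\PP^1\to X'$ with $-K_{X'}\cdot s_*\PP^1\ge\Theta$.

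\textbf{Step 3 (main obstacle).} We need the upper bounds: $-\pi^*K_X\cdot s_*\PP^1\le \Theta+n$ and $(K_{X'}-\pi^*K_X)\cdot s\le n-2$. The second bound gives the count of at most $n-2$ contacts with the singular locus, because each contact contributes at least the minimal discrepancy, which is $\ge 1$ after choosing the resolution suitably, or after counting with integral multiplicity. Lower bounds on degree come out of the breaking argument naturally, but upper bounds do not. For them we would choose $R$ minimally: among degenerations, take a free rational curve through $p$ with $-K_{X'}\ge\Theta$ whose $-K_{X'}$-degree is minimal. Then, if the degree exceeds $\Theta+n$, we break it further. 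Bend-and-break for free rational curves through a general point fixes two points and splits off a component, and the degree of the piece through $p$ drops by at most $n+1$ (Mori's bound on the degree of rational curves through general points). This gives $-K_{X'}\cdot s\le\Theta+n$. To control $-\pi^*K_X$, we need $\dim$ of the family of curves $s$ through $p$ to be at most $-\pi^*K_X\cdot s - 2$, which by the analogue of Proposition \ref{prop:fujupperbound} on the terminal Fano $X$ holds up to error $n-2$. Comparing this with $\dim \ge -K_{X'}\cdot s - 2$ for free curves on $X'$ gives $(K_{X'}-\pi^*K_X)\cdot s\le n-2$. Adding $-K_{X'}\cdot s\le \Theta+2$ after an optimal choice in the minimality argument then bounds $-\pi^*K_X\cdot s\le\Theta+n$. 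The delicate part is making this dimension comparison precise on the singular $X$. For that we would invoke the techniques of \cite{JLR25}, using $a(X,-K_X)=1$ to bound deformations of $\pi\circ s$ in $X$.
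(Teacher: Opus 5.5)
You have a genuine gap: Step 3, which carries all the content of the theorem, does not work.

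Steps 1--2 give only the lower bound $-K_{X'}\cdot s_*\PP^1\ge\Theta$, and that is the easy half. Even the window $\Theta\le -K_{X'}\cdot s_*\PP^1\le\Theta+n$ follows just by gluing copies of a free curve of $-K_{X'}$-degree at most $n+1$; but gluing multiplies the discrepancy term. The three mechanisms you propose in Step 3 each fail.

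\textbf{The dimension comparison runs the wrong way.} Proposition \ref{prop:fujupperbound} with $a(X',-\pi^*K_X)=1$ gives an \emph{upper} bound $\dim\le -\pi^*K_X\cdot s_*\PP^1+\mathrm{const}$. Freeness on $X'$ gives $\dim=-K_{X'}\cdot s_*\PP^1+\mathrm{const}$. Combining them yields only $(\pi^*K_X-K_{X'})\cdot s_*\PP^1\le n-2$. That is vacuous, since $K_{X'}-\pi^*K_X=\sum a_iF_i$ is effective and does not contain $s(\PP^1)$. An upper bound on $(K_{X'}-\pi^*K_X)\cdot s_*\PP^1$ would need a \emph{lower} bound on the deformations of $s$ in terms of its $-\pi^*K_X$-degree. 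Nothing supplies that for rational curves meeting $\mathrm{Sing}(X)$.

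\textbf{The degree window is not controlled.} Bend-and-break gives no control over how much degree the piece through $p$ keeps; Mori's $n+1$ bound only says some low-degree curve passes through a general point. Your final requirement $-K_{X'}\cdot s_*\PP^1\le\Theta+2$ already fails for $X=X'=\PP^n$, $n\ge3$, $\Theta=1$, where every rational curve has degree divisible by $n+1$. So the budget $\Theta+n$ cannot be split as $(\Theta+2)+(n-2)$; the two upper bounds must be traded off jointly.

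\textbf{Discrepancies need not be at least $1$.} The Kawamata blow-up of a terminal cyclic quotient point of index $r$ extracts a divisor of discrepancy $1/r$, and passing to a higher model never removes such a divisor. So the discrepancy bound does not bound the number of contacts with $\mathrm{Sing}(X)$. The paper proves the contact bound separately.

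\textbf{The missing idea.} The upper bounds come from a global degree count over many broken-off pieces, not from minimality. Your Step 1 family is the right starting point, except that rational curves in $X^{sm}$ are not available a priori (that is the conjecture of \cite{KM99}), so it must have positive genus. But you have to keep using this family, not discard it after one application of Theorem \ref{theo:strongest breaking result}. The paper proceeds as follows:
\begin{enumerate}
\item It takes $r$-free curves of a fixed genus $g\ge2$ on $X$ from \cite[Theorem 1.3]{JLR25}. Their family has dimension exactly $d+(1-g)n$, where $d$ is the $-K_X$-degree; this is the only place the dimension is computed by $-K_X$.
\item It imposes exceptional towers of height $\delta=\max(0,\Theta-2)$ at $k+e$ general points and degenerates so that at least $k$ rational trees $T_j$ split off.
\item Lemma \ref{lem-numericalExceptionalTowers} and Corollary \ref{cor: rational curves degree} give $-K_{X'}\cdot R_j\ge 2+\delta-\beta_j$.
\item \cite[Lemmas 4.2, 4.3]{JLR25} show that a tree failing the degree, discrepancy, or contact bound carries extra degree: at least $n+\delta+a_{\min}$ of $-K_X$-degree in the main case.
\item Since $d$ is pinned by \eqref{eq: bound on d}, not every tree can be bad (Claim \ref{claim: finding Rj}).
\end{enumerate}
Strong induction on $\Theta$ handles trees attached to $C_0$ inside a tower. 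Lemma \ref{lemm:loggluing} then smooths the chosen tree while keeping its number of contacts with the exceptional divisor fixed.
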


\begin{example}
    For certain $X$ and $\Theta$ there may be no rational curves in the smooth locus $X^{sm}$ of degree between $\Theta$ and $\Theta + n$.  For example, let $X' = \mathbb{P}_{\mathbb{P}^{n-1}}(\mathcal{O} \oplus \mathcal{O}(n-1))$ be a projective bundle and $X' \to X$ be the contraction of its rigid section.  The monoid of nef curve classes on $X'$ is generated by a fiber of the projection $g : X' \to \PP^{n-1}$ and by a minimal moving section of the map $g|_{X_L} : X_L \to L$, where $L \subset \PP^{n-1}$ is a line.  In particular, the minimal $(-K_X)$-degree of a rational curve contained in $X^{sm}$ is $2n - 1$.
    
    This same example shows that there may not be any generically injective maps $s : \PP^1 \to X$ of $(-K_X)$-degree between $\Theta$ and $\Theta + n$.  For $X$ as above, there will not be any such maps when $3 \leq \Theta < n - 1$.  We believe this issue could be rectified for large enough $\Theta$ by weakening the bounds in Theorem \ref{thm: nicest curves}, but do not pursue this here.
\end{example}

\subsection{Log deformation theory}
Suppose that $(Y,\Delta)$ is a pair consisting of a smooth projective variety $Y$ and a simple normal crossings divisor $\Delta$.  Let $f: C \to Y$ be a morphism from a smooth projective curve such that $f(C) \not \subset \Supp(\Delta)$.  We would like to study the deformations of $f$ while keeping track of the set $f^{-1}(\Delta)$.  

This is best accomplished using log deformation theory.  We can equip $Y$ with the log structure associated to $\Delta$ and equip $C$ with the log structure associated with $f^{-1}(\Delta)$.  Then deformations of $f$ are described by the proper log Deligne-Mumford stack $\mathscr{M}_{g,\varsigma}(Y,\beta)$ parameterizing stable log maps of curves to $Y$ which have genus $g$, curve class $\beta$, and contact orders at the marked points described by $\varsigma$ as constructed by \cite{Chen14, AC14, GS13}.

The log deformation theory of $f$ depends on whether or not we allow the set $f^{-1}(\Delta)$ to vary:
\begin{itemize}
    \item The first order deformations and obstructions of $[f]$ which preserve $f^{-1}(\Delta)$ are respectively $H^{0}$ and $H^{1}$ of $f^{*}T_{Y}(-\log \Delta)$.
    \item If $f$ is non-constant, then the first order deformations and obstructions of $[f]$ which allow the log structure on $C$ to vary are respectively $H^{0}$ and $H^{1}$ of the log normal sheaf $N^{log}_{f}$ defined via the exact sequence
    \begin{equation*}
        0 \to T_{C}(-\log f^{-1}(\Delta)) \to f^{*}T_{Y}(-\log \Delta) \to N^{log}_{f} \to 0.
    \end{equation*}
\end{itemize}
Recall that a stable log map over a geometric log point is said to be non-degenerate if the log structure on the point is trivial.  In this case the domain of the map is a smooth irreducible curve and its image is not contained in $\Delta$.  We denote the open substack of non-degenerate stable log maps by $\mathscr{M}^{\circ}_{g,\varsigma}(Y,\beta)$.

We will need a criterion determining when a rational stable log map (i.e.~a stable log map whose domain is a smooth rational curve) has globally generated log normal sheaf.

\begin{lemma} \label{lem: normal bundles}
    Let $(Y,\Delta)$ be a log smooth pair.  Fix a big Cartier divisor $H$ on $Y$ and a positive integer $d$.  There is a non-empty open set $U_{H,d,\Delta} \subset Y\setminus \Delta$ such that for any non-degenerate rational stable log map $f : C \to Y$, if $f(C)$ intersects $U_{H,d,\Delta}$ and $H \cdot f_*C \leq d$ then $N^{log}_{f}$ is globally generated.
\end{lemma}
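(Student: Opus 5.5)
We follow the standard argument that free rational curves through a general point have globally generated restricted tangent bundle, adapted to the log setting. The plan is to parameterize all relevant maps by a finite type family and use generic smoothness of evaluation maps.

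First, a boundedness step. Since $H$ is big, write $H \sim_{\mathbb{Q}} A + F$ with $A$ ample and $F$ effective. Any curve whose image meets $Y \setminus \Supp(F)$ has $A \cdot f_*C \leq H \cdot f_*C \leq d$, so its class lies in a finite set of classes $\beta$. Consequently the non-degenerate rational stable log maps with image meeting $Y\setminus(\Supp F \cup \Delta)$ and $H$-degree at most $d$ are parameterized by finitely many components of finite type stacks $\mathscr{M}^{\circ}_{0,\varsigma}(Y,\beta)$. Contact orders are bounded by the intersection numbers with components of $\Delta$, which are bounded in terms of $A$-degree, so only finitely many $\varsigma$ occur.

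Next, the generic smoothness step. Take the universal curve $\mathscr{C} \to \mathscr{M}^{\circ}$ over each of these finitely many components $\mathscr{M}^{\circ}_i$, with evaluation map $\mathrm{ev}_i : \mathscr{C}_i \to Y$. If $\mathrm{ev}_i$ is not dominant, remove the closure of its image. If it is dominant, then in characteristic $0$ generic smoothness gives a dense open $U_i \subset Y\setminus \Delta$ over which $\mathrm{ev}_i$ is smooth on the reduced structure. We also remove the images of the non-reduced and singular strata, using the finitely many irreducible components of their reductions. Set
\[
U_{H,d,\Delta} = \bigcap_i U_i \setminus \big(\Supp F \cup \Delta \cup \textstyle\bigcup_{\text{non-dominant } i} \overline{\mathrm{ev}_i(\mathscr{C}_i)}\big).
\]

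Finally, the deformation-theoretic step, which I expect to be the main obstacle. Suppose $f$ meets $U$ at $f(x)$ with $x \notin f^{-1}(\Delta)$. Then $f$ lies in a component on which $\mathrm{ev}$ is dominant and smooth at $(f,x)$. The tangent map of $\mathrm{ev}$ at $(f,x)$ is surjective onto $T_{f(x)}Y$. The tangent space of the stack maps to $H^0(C, N^{log}_f)$, or to its suitable quotient accounting for automorphisms and moving points. Composed with evaluation at $x$, this surjects onto $N^{log}_f|_x$, since $T_C$ accounts for moving $x$ along the curve. The main difficulty is that the stack may be non-reduced or obstructed, so the Zariski tangent space of the reduced stratum must be compared with $H^0(N^{log}_f)$. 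I would handle this by working on the reduced smooth locus of each stratum, as in \cite[II.3]{Kollar} style arguments, where tangent vectors still map into $H^0(N^{log}_f)$.

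Once this is in place, generic surjectivity of evaluation gives that $N^{log}_f$ is generated by global sections at a general point. On $\PP^1$ this means every summand of $N^{log}_f/\mathrm{torsion}$ has nonnegative degree. A torsion part of $N^{log}_f$ arises only from ramification of $f$, and it is automatically globally generated. Since $H^1$ of each summand vanishes, the full sheaf is globally generated. This works because on $\PP^1$ a vector bundle generically generated by global sections is globally generated, as it splits into line bundles of degree at least $0$.
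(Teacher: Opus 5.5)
Your proposal is correct and follows essentially the same route as the paper. Both arguments first bound the relevant maps by finitely many finite-type stacks of non-degenerate log maps via $H \sim_{\mathbb{Q}} A + F$, then run a Noetherian stratification: non-dominant pieces are discarded by removing the closure of the locus they sweep out, and dominant pieces contribute a dense open set over which members have globally generated $N^{log}_f$.

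The difference is only in bookkeeping. The paper inducts using openness of global generation and takes as known that a general member of a dominant family has globally generated log normal sheaf. You instead stratify the universal curve into smooth strata and prove that fact directly via generic smoothness of evaluation. Your phrase ``remove the images of the singular strata'' is best read as recursively applying the same dominant/non-dominant dichotomy to those strata.
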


\begin{proof}
The divisor $H$ is $\mathbb{Q}$-linearly equivalent to a sum $A + E$ of an ample $\mathbb{Q}$-Cartier divisor $A$ and an effective $\mathbb{Q}$-Cartier divisor $E$.  Any curve in $Y$ that has non-positive intersection with $H$ must pair negatively with $E$, and must therefore be contained in $E$.  It follows that irreducible rational curves that meet $Y\setminus E$ and have degree at most $d$ against $H$ also have degree at most $d$ against $A$ and therefore form a finite-type family.  Thus, for such curves there are only finitely many possible tuples $\varsigma$ recording the intersections with the boundary.  

Let $M \subset \bigcup_{\varsigma, \beta} \mathscr{M}^{\circ}_{0,\varsigma}(X',\beta)$ denote the finite-type substack parameterizing non-degenerate rational stable log maps whose image meets $Y \setminus E$ and whose degree against $H$ is at most $d$.
The proof is by Noetherian induction on $M$.  If $M$ does not parameterize a dominant set of maps, let $U_{H,d,\Delta}$ be the complement in $Y\setminus E$ of the closure of the sublocus swept out by the curves parameterized by $M$.  If $M$ does parameterize a dominant family, then for a general $f$ the log normal sheaf will be globally generated.  Since global generation of a log normal sheaf is an open condition on $M$, we can replace $M$ by the complement of this open set and conclude by induction.
\end{proof}

\begin{lemma} \label{lemm:loggluing}
    Let $(Y,\Delta)$ be a log smooth pair.  For $i=1,2$ let $f_{i}: C_{i} \to Y$ be non-degenerate non-contracted genus $0$ stable log maps such that $N_{f_{i}}^{log}$ is globally generated.  Then suitably chosen deformations of $f_{1},f_{2}$ can be glued to give a genus $0$ stable log map $f: C \to Y$ such that a general deformation of $f$ will be a non-degenerate non-contracted rational stable log map whose log normal sheaf is globally generated. 
\end{lemma}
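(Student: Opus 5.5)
We will glue at a general point of $Y\setminus\Delta$ and then smooth the node using log deformation theory. Since $N^{log}_{f_i}$ is globally generated and $f_i$ is non-degenerate, deformations of $f_i$ that keep the contact orders with $\Delta$ fixed sweep out a dense subset of $Y$. Concretely, a globally generated log normal sheaf lets us move the image of any fixed point of $C_i$ in every normal direction; together with the $T_{C_i}$ directions this makes the evaluation map $\mathscr{M}^\circ_{0,\varsigma_i,1}(Y,\beta_i)\to Y$ from the once-more-pointed space smooth and dominant near $[f_i]$. So we first replace $f_1,f_2$ by general deformations and choose points $x_i\in C_i$ away from $f_i^{-1}(\Delta)$ such that $f_1(x_1)=f_2(x_2)=y$ is a general point of $Y\setminus\Delta$. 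Gluing $x_1\sim x_2$ gives a nodal genus $0$ curve $C=C_1\cup_{x} C_2$ and a map $f:C\to Y$. Because the node maps into $Y\setminus\Delta$, the log structure there is trivial, and $f$ is a stable log map whose contact data is the union of $\varsigma_1$ and $\varsigma_2$.

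Next we show that $f$ is unobstructed and smooths. The log normal sheaf of the nodal map fits into the standard sequence relating it to the $N^{log}_{f_i}$ twisted at the node. Concretely, we would use the sequence
\[0\to N^{log}_{f_2}(-x_2)\to N^{log}_f|_{C}\to N^{log}_{f_1}\to 0\]
(more precisely, with the nodal version of the log normal sheaf, which contains the smoothing direction $T_{x_1}C_1\otimes T_{x_2}C_2$ as a torsion piece). Since $N^{log}_{f_2}$ is a globally generated sheaf on $\PP^1$, it is a sum of $\OO(a_j)$ with $a_j\ge 0$, so $H^1(N^{log}_{f_2}(-x_2))=0$. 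Together with $H^1(N^{log}_{f_1})=0$ this gives $H^1(N^{log}_f)=0$, so the log deformation space is smooth at $[f]$ of the expected dimension. To show that the node actually smooths, we use the standard comb-smoothing argument: the map $H^0(N^{log}_f)\to T_{x_1}C_1\otimes T_{x_2}C_2$ is surjective because the relevant $H^1$ vanishes, exactly as for free curves in \cite[II.7]{Kollar96}, but now carried out log-equivariantly. A general deformation is therefore a non-degenerate map with smooth rational domain, and it is non-contracted because its class is $\beta_1+\beta_2\neq 0$.

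Finally we obtain global generation for the general deformation. The restriction of $N^{log}_f$ to each component is an extension of globally generated sheaves by sheaves of the form $\OO(a)$ with $a\ge -1$. Under the smoothing, upper semicontinuity of $h^1(N\otimes\OO(-1))$, where $\OO(-1)$ is a relatively degree $-1$ line bundle on the family (e.g.\ $\OO(-\text{section})$ meeting $C_1$), gives $h^1(N^{log}_{f_t}(-1))=0$ on the generic fiber. On $\PP^1$ this is equivalent to global generation. The vanishing on the special fiber follows from the same sequence twisted by a point of $C_1$, because each $N^{log}_{f_i}(-1)$ has vanishing $H^1$.

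I expect the main obstacle to be setting up the nodal log normal sheaf correctly, including the torsion smoothing term at the node, and confirming that log deformation theory of \cite{GS13,AC14} behaves like the classical case when the node lies outside $\Delta$. Since the node lies where the log structure is trivial, I would reduce locally to the classical statement.
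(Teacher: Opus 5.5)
Your argument is essentially the paper's treatment of the case where the two images meet transversally. There is a genuine gap at the gluing step.

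\textbf{Transversality is not automatic.} Choosing $f_1(x_1)=f_2(x_2)=y$ general does not guarantee that the branches meet transversally at $y$. Your normal-sheaf computation needs this. The node lies outside $\Delta$, and only when $df_1(T_{x_1}C_1)\neq df_2(T_{x_2}C_2)$ is $f^*\Omega_Y\to\Omega_C$ surjective at the node. Only then is $N^{log}_f$ a locally free sheaf surjecting onto the smoothing direction $T_{x_1}C_1\otimes T_{x_2}C_2$. (Incidentally, it is the kernel of this surjection, not $N^{log}_f$ itself, that is the extension in your displayed sequence.)

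If one of the two families has a positive-dimensional set of tangent directions at a general point, you can arrange transversality by a further deformation, and you should say so. But this is not always possible. Suppose the curves of both families are leaves of a common rank-one foliation: for instance $Y=\PP^1\times Z$, $\Delta=\emptyset$, $f_1,f_2$ fibers of the projection to $Z$, so $N^{log}_{f_i}\cong\OO^{\oplus(\dim Y-1)}$. Then any two members through a general point have the same image. The differential of $f$ on the Zariski tangent space of the node has rank one, so your exact sequence and the local freeness of $N^{log}_f$ fail. Moreover the smoothings are multiple covers whose log normal sheaves have torsion at the ramification points. So neither your $H^1$-vanishing nor your semicontinuity step applies as written in this case.

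\textbf{How the paper handles it.} It first deforms so that the $f_i$ are immersions at the glued points and one of two things holds. Either the images meet transversally, which is handled as you do. Or all curves are leaves of a rank-one foliation and $f_1(C_1)=f_2(C_2)$. In the latter case it argues as follows:
\begin{enumerate}
\item It factors $f=h\circ\phi$, with $\phi\colon C\to\PP^1$ a cover and $h$ generically injective.
\item The snake lemma shows $N^{log}_h$ is globally generated. Since deformations of $h$ are leaves of the foliation, this forces $N^{log}_h\cong\OO^{\oplus(\dim Y-1)}$, hence $h^*(K_Y+\Delta)=K_{\PP^1}+h^{-1}(\Delta)$.
\item Comparing expected dimensions of log deformations of $f_i$ and of $\phi|_{C_i}$ then produces a log deformation of $\phi$ that smooths the node.
\item Global generation for the general deformation $h'\circ\phi'$ again follows from the snake lemma.
\end{enumerate}
You need to add this case, or some other argument covering it, to complete the proof.
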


\begin{proof}
    Note that a non-degenerate non-contracted rational stable log map $g: C \to Y$ such that $N_{g}^{log}$ is globally generated will deform in a dominant family on $Y$.  Furthermore the global generation of the log normal sheaf is an open property in $\mathscr{M}^{\circ}_{0,\varsigma}(Y,\beta)$. Thus after replacing $f_{1},f_{2}$ by suitably chosen deformations we may ensure that $f_{1}(C_{1})$ and $f_{2}(C_{2})$ intersect at a point $p$ in $Y \backslash \Delta$ while preserving global generation of the log normal sheaf.  We may also ensure that both $f_{1}$ and $f_{2}$ are local immersions at points $q_{1},q_{2}$ mapping to $p$.  Finally, we may ensure that either $f_{1}(C_{1})$ and $f_{2}(C_{2})$ meet transversally at $p$ or
    that all the curves of our family have the same tangent direction at each general point of $Y$. In this latter case, there must be a rank 1 foliation on $Y$ with the curves of the family as algebraic leaves. This shows that $f_1(C_1)$ and $f_2(C_2)$ must be equal to each other. By gluing the domains at $q_{1},q_{2}$ we obtain a stable log map $f$.
    
    First suppose that $f_1(C_1) = f_2(C_2)$.  Then $f$ factors through a covering map $\phi : C \to \PP^1$ followed by a generically injective map $h : \PP^1 \to Y$. 
    We may view $h$ as a log map by equipping $\PP^1$ with the log structure coming from $h^{-1}(\Delta)$.  Because $N_{f_i}^{log}$ is globally generated, the snake lemma applied to the commutative diagram
\[\begin{tikzcd}
	0 & {T_{C_i}(-\log f_i^{-1}(\Delta))} & {f_i^*T_Y(-\log \Delta)} & {N_{f_i}^{log}} & 0 \\
	0 & {\phi^*T_{\mathbb{P}^1}(-\log h^{-1}(\Delta))|_{C_i}} & {(h \circ \phi)^*T_Y(-\log \Delta)|_{C_i}} & {\phi^*N_h^{log}|_{C_i}} & 0
	\arrow[from=1-1, to=1-2]
	\arrow[from=1-2, to=1-3]
	\arrow[from=1-2, to=2-2]
	\arrow[from=1-3, to=1-4]
	\arrow["{\mathrm{id}}", from=1-3, to=2-3]
	\arrow[from=1-4, to=1-5]
	\arrow[from=1-4, to=2-4]
	\arrow[from=2-1, to=2-2]
	\arrow[from=2-2, to=2-3]
	\arrow[from=2-3, to=2-4]
	\arrow[from=2-4, to=2-5]
\end{tikzcd}\]
    shows that $N^{log}_{h}$ is globally generated. Hence $h$ deforms with the expected dimension.  
    Since deformations of $h$ parameterize leaves of a rank one foliation, this expected dimension must be $\dim Y - 1$.  We conclude that $N_{h}^{log} \cong \OO_{\PP^1}^{\oplus \dim Y - 1}$.

    We claim there is a log deformation of $\phi$ that smooths $C$.  Indeed, by the preceding paragraph $h^*(K_Y + \Delta) = K_{\PP^1} + h^{-1}(\Delta)$.  This shows the expected dimension of log deformations of $f_i$ is larger than the expected dimension of log deformations of $\phi|_{C_i}$ by exactly $\dim Y - 1$.  Since $h$, $f_1$, and $f_2$ deform with the expected dimension, the space of log deformations of $\phi$ that do not smooth the node also has the expected dimension.  As this is one less than the expected dimension of the total space of log maps, this proves the existence of a smoothing.  
    
    The deformation of $\phi$ smoothing $C$ induces a deformation $f'$ of $f$ as a log map.  In fact, a dimension count shows that a general deformation of $f$ is the composition of deformations of $\phi$ and $h$.  Thus, a general deformation $g$ of $f$ will be a non-degenerate stable log map.  As before, we see from the snake lemma that $N_{g}^{log}$ will be globally generated.

    Next suppose that $f_1(C_1) \neq f_2(C_2)$.  By construction $f$ is locally an immersion near the node $q$ of $C$ and thus $N^{log}_{f}$ is a well defined sheaf.  Let $N \subset N_f^{log}$ be the subsheaf of the log normal bundle that does not smooth the node.  We have an exact sequence
    $$0 \to N_{f_1}^{log}(-q) \to N \to N_{f_2}^{log} \to 0.$$
    We conclude that $H^{1}(C,N^{log}_{f}(-t))=0$ where $t$ is any point on $C_{2}\setminus\{q\}$.  Since $f$ is unobstructed, a general deformation $g$ of $f$ will be a non-degenerate stable log map. By upper semicontinuity of sheaf cohomology, we see that the log normal sheaf of $g$ will be globally generated.
\end{proof}

\subsection{Proof of Theorem \ref{thm: nicest curves}}

The core idea in the proof of Theorem \ref{thm: nicest curves} is to carefully degenerate a large-degree free curve of higher genus.  By studying the degrees of the various components of the degenerate curve, we will show that $X$ has to contain a rational curve with the desired properties.

\begin{proof}[Proof of Theorem \ref{thm: nicest curves}]
The proof is by strong induction on $\Theta$.

\textbf{Step 1:} It suffices to prove the statement after blowing $X'$ up further as this operation can only decrease $-K_{X'} \cdot s_*\mathbb{P}^{1}$.  Thus we may assume that the exceptional locus of $X' \to X$ is a simple normal crossings divisor.  Let $\{ a_i \}_{i=1}^{t}$ be the discrepancies of the $\pi$-exceptional divisors $\{ E_{i}\}_{i=1}^{t}$ on $X'$.  Since each discrepancy is a rational number, there is a smallest positive number that may be written as a $\mathbb{Z}$-linear combination of them.  Let $a_{min} > 0$ be the minimum of this number and 1.

Next we set some constants.  Let $\delta = \max (0, \exctowerheight - 2)$.  By \cite[Theorem~1.3]{JLR25}, there is a smooth curve $C$ such that for any $r \geq 0$ the variety $X$ admits $r$-free curves $s: C \to X$.  By possibly replacing $C$ by a finite cover we may suppose that the genus $g$ of $C$ is at least $2$.  Choose positive integers $k$ and $\elle'$ so that
\begin{equation}\label{eq: def e'}
    k > \max\{(g+2)n + 2\delta, n(\delta+2g)+2 \} \hspace{.5cm} \text{ and } \hspace{.5cm} a_{\min}(k+\elle' - 2g) > 3gn+2g\delta + \delta + k.
\end{equation}

\textbf{Step 2:} Now we pick a family of large-degree free curves.  Select an irreducible component $M$ of $\Mor(C,X)$ that generically parameterizes $r$-free curves, where $r$ is chosen large enough to ensure that $\dim M \geq (n+\delta)(k+\elle') + k$.  By construction general curves parameterized by $M$ do not meet the image of the exceptional locus of $\pi : X' \to X$.  Let $M'$ be the irreducible component of $\Mor(C,X')$ that parameterizes the lifts of general maps in $M$ to $X'$. 

We let $d$ denote the $-K_{X}$-degree of the curves parameterized by $M$.  Note that $d$ is also the $-K_{X'}$-degree of the curves parameterized by $M'$.  Since $M$ has the expected dimension we have
\begin{equation*}
\dim M' = \dim M = d + (1-g)n.
\end{equation*}
We will degenerate maps in $M'$ to a stable map and analyze its free components to prove Theorem \ref{thm: nicest curves}.

\textbf{Step 3:}  Before selecting a degeneration, we must describe a closed set $Z \subset X'$ that contains all components of degenerations of maps $M'$ such that the restricted tangent bundle has undesirable behavior.  To do so,
let $W \subset \Mor(\mathbb{P}^1, X')$ and $W' \subset \Mor(C, X')$ be the union of all components parameterizing maps whose degree against some fixed ample divisor is at most the degree of a curve parametrized by $M'$.  Let $\mathcal{Q} = \{q_1, \ldots , q_{\dim M}\} \subset C$ be a general set of points.  We define $Z_{1} \subsetneq X'$ to be the closure of the locus swept out by $\sigma(\mathcal{Q})$ as $\sigma$ varies over all maps in $W'$ for which $\sigma^*T_{X'}$ is not generically globally generated.  We define $Z_{2} \subsetneq X'$ to be the closure of the locus swept out by all maps $\sigma$ in $W$ for which $\sigma^{*}T_{X'}$ is not globally generated.  Next, equip $X'$ with the log structure associated to the divisor $\cup_{i} E_{i}$ and set $H = -\pi^*K_X$.  We define $Z_3 \subset X'$ to be the complement of the non-empty open subset $U_{H, d, \cup_i E_i} \subset X'$ from Lemma \ref{lem: normal bundles}.  
Finally, we let $Z \subsetneq X'$ be the proper closed subset $Z_{1} \cup Z_{2} \cup Z_{3}$.

\textbf{Step 4:} We next construct a subfamily of $M'$ of dimension at least $k$ that parameterizes maps sending lots of the general points $\{q_1, \ldots , q_{\dim M} \}$ to fixed points of $X'$ and whose jets at these points satisfy certain conditions.  
We achieve this by inductively constructing exceptional towers over $X'$ and considering strict transforms of curves that meet the top of each tower. For the first step, note that as we vary over all maps in $M'$ the images of $q_1 \in \mathcal{Q}$ sweep out all of $X'$, which has dimension $n_1 = n$.  Fix a general point $p_{1} \in X'$.  Let $Y_1 = X'$ and let 
$$Y_{1,\delta} \xrightarrow{\phi_{1,\delta}} Y_{1,\delta - 1} \xrightarrow{\phi_{1, \delta - 1}} \ldots \xrightarrow{\phi_{1,1}} Y_{1,0} \xrightarrow{\phi_{1,0}} Y_1$$
be a general exceptional tower based at $p_1$. Let $\Phi(Y_1, p_1, \delta)$ denote this exceptional tower and let $E_{1,j} \subset Y_{1,j}$ denote the exceptional divisor of $\phi_{1,j}$.  By Lemma \ref{lem: climb tower morphism scheme}(3) with the sheaf $\mathcal{F}$ given by the restricted tangent bundle, there is a subvariety $M_1 \subset M'$ of dimension $\dim M' - n_1 - \delta$ that parameterizes maps whose strict transforms send $q_1$ into $E_{1,\delta}$.

Now suppose we have constructed an exceptional tower $\Phi(Y_i, p_i, \delta)$ and a family of maps $M_i$ of dimension at least $n + k + \delta$.  By replacing $M_i$ with an open subset and taking strict transforms, we may consider $M_i$ as a family of maps to the final blow-up $Y_{i, \delta}$.  Let $Y_{i+1} = Y_{i,\delta}$ and consider the point $q_{i+1} \in \mathcal{Q}$.  As we vary among maps in $M_i$, the point $q_{i+1}$ will sweep out a subvariety $G_{i+1} \subset Y_{i+1}$ of dimension $n_{i+1} \leq n$.  

We claim that $G_{i+1}$ is not contained in the preimage of $Z$ under the natural map $Y_{i+1} \to X'$.  To show this, we observe that the family $M_i \subset M'$ depends only on the previously constructed exceptional towers, and as we vary these towers the subvariety $M_i$ sweeps out a dense subset of $M'$.  If $G_{i+1}$ lay inside the preimage of $Z$, then, by generality of the choices of exceptional towers in our construction, the image of $q_{i+1}$ under every map in $M'$ would lie in $Z$, contradicting the fact that $M$ parameterizes free curves.
Similarly, one can show $G_{i+1}$ does not lie in the exceptional locus of $Y_{i+1} \to X'$.  

Choose a general point $p_{i+1} \in G_{i+1}$ and let 
$$Y_{i+1,\delta} \xrightarrow{\phi_{i+1,\delta}} Y_{i+1,\delta - 1} \xrightarrow{\phi_{i+1, \delta - 1}} \ldots \xrightarrow{\phi_{i+1,1}} Y_{i+1,0} \xrightarrow{\phi_{i+1,0}} Y_{i+1}$$
be a general exceptional tower based at $p_{i+1}$.  As before, let $\Phi(Y_{i+1}, p_{i+1},\delta)$ denote this exceptional tower and let $E_{i+1,j} \subset Y_{i+1,j}$ denote the exceptional divisor of $\phi_{i+1,j}$.  Consider the log structure on $Y_{i+1,j}$ given by the total exceptional divisor for the map to $X'$.  The hypotheses of Lemma \ref{lem: climb tower morphism scheme}(3) hold for $\mathcal{F}$ given by the restriction of the log tangent sheaf provided that $\dim M_i \geq n(\delta+2g)+2$.  Thus, when this inequality holds, by Lemma \ref{lem: climb tower morphism scheme}(3) there is a subvariety $M_{i+1} \subset M_i$ of dimension $\dim M_i - n_{i+1} - \delta$ that parameterizes maps whose strict transforms send $q_{i+1}$ into $E_{i+1,\delta}$. 

Since $\dim M' \geq (n+\delta)(k+\elle') + k$, this process will continue for $k+\elle$ steps for some integer $\elle$ satisfying $\elle' \leq \elle \leq \dim M' - k$.  In the end we obtain a parameter space $M_{k+\elle}$ of curves in $X'$ whose dimension is at least $k$ but less than $k+n + \delta$. Observe that from our construction, we have points $p_1, \dots, p_{k+\elle} \in X' \setminus Z$ such that every $s \in M_{k+\elle}$ satisfies $s(q_i) = p_i$.
The construction guarantees that $\dim M'$ is between $k + \sum_{j=1}^{k+\elle} (n_j + \delta)$ and $ k+n + \delta +\sum_{j=1}^{k+\elle} (n_j + \delta)$.  Because $\dim M' = d + (1-g)n$, we obtain
\begin{equation}\label{eq: bound on d}
    k+(g-1)n + \sum_{j=1}^{k+\elle} (n_j + \delta) \leq d \leq k+gn + \delta + \sum_{j=1}^{k+\elle} (n_j + \delta).
\end{equation}

\textbf{Step 5:} Now we degenerate our chosen family of large-degree curves. Consider $M_{k+\elle}$ as a family of curves on $Y_{k + \elle, \delta}$ by taking strict transforms of general elements of the family. Let $\overline{M}_{k+\elle}$ be the closure in $\overline{\mathcal{M}}_{g,k + \elle}(Y_{k + \elle, \delta})$ of the locus of pointed maps $s : (C,q_1, \ldots , q_{k+ \elle}) \to Y_{k+\elle, \delta}$ parameterized by $M_{k+\elle}$.

Let $\psi : Y_{k+ \elle, \delta} \to X'$ be the natural birational map.  By applying \cite[Lemma 2.1]{JLR25b} to the family of maps to $X'$ obtained by composing maps in $\overline{M}_{k+\elle}$ with $\psi$, we see that $\overline{M}_{k+\elle}$ parameterizes a stable map $s': (C',q'_{1},\ldots,q'_{k+\elle}) \to Y_{k + \elle, \delta}$ where $C'$ consists of:
\begin{itemize}
\item an irreducible component $C_{0}$ isomorphic to $C$, 
\item $k$ connected components of $C' \setminus C_{0}$ containing the first $k$ marked points, that are trees of rational curves which are not contracted by the map to $X'$, and
\item other connected components of $C' \setminus C_{0}$, that are trees of rational curves which may or may not contain marked points or be contracted by the map to $X'$.
\end{itemize}
Let $\Delta \subset Y_{k + \elle, \delta}$ be the exceptional locus of $\psi$.  Let $J \subset \{1,\ldots,k+\elle\}$ be the set of indices such that there is a tree of rational curves $T_j \subset C'\setminus C_{0}$ that contains $q'_{j}$ and is not contracted to a point by $\psi \circ s'$.   We select each $T_j$ so that it is a connected component of $C' \setminus C_0$.  Observe that $|J| \geq k$ because $\{1,\ldots , k\} \subset J$.

\textbf{Step 6:} We wish to carefully study the possibilities for the $T_j$. We adopt notation from Lemma \ref{lem-numericalExceptionalTowers}:
\begin{itemize}
    \item for each $i \leq k + \elle$, let $C_{i,exc} \subset C'$ be the connected component of $s'^{-1}(\Delta)$ that contains $q_i'$, which is either a curve or the point $q_i'$ itself;
    \item let $\{C_{\ell_i}\}_{\ell_i \in \mathcal{L}_i}$ be the set of irreducible components of $C'$ that meet $C_{i,exc}$ but are not contained in it;
    \item for $\ell_i \in \mathcal{L}_i$ such that $C_{\ell_i}$ is rational, set $\mathcal{Q}_{\ell_i} = C_{\ell_i} \cap C_{i,exc}$.
\end{itemize}
Observe that when $C_{\ell_i}$ is rational, $\mathcal{Q}_{\ell_i}$ contains exactly one point and $C_{\ell_i}$ cannot appear in any set  $\{C_{\ell_{i'}}\}_{\ell_{i'} \in \mathcal{L}_{i'}}$ with $i' \neq i$.  On the other hand, the component $C_0 \subset C'$ can appear in multiple such sets.  Let $H  \subset \{1,\ldots,k+\elle\}$ be the set of indices $h$ such that $C_0 \in \{C_{\ell_h}\}_{\ell_h \in \mathcal{L}_h}$.  For each $h \in H$, we regard $q_h \in C$ as the point $C_0 \cap C_{h,exc}$ and set $\mathcal{Q}_0 = \{q_h\}_{h \in H}$. Note that $J \cup H = \{1,\ldots,k+\elle\}$ because if $C_{0}$ does not meet $C_{i,exc}$ then there is a tree of rational curves containing $q_{i}'$ that is not entirely contracted by $\psi \circ s'$ (so $i \in J$).  Furthermore note that $\psi \circ s'(q_{h}) = \psi \circ s'(q'_{h}) = p_{h}$ for every $h \in H$.

We will construct the desired map $s : \PP^1 \to X'$ from the rational curves $s'|_{C_{\ell_i}}$.   To begin with, we outline some useful consequences from Section \ref{sec: exceptional towers}.  For any distinct $i,h \in H$, $\psi \circ s'(q_h) \neq p_i$ and so $\mult_{q_h}(s'|_{C_0}, \widetilde{E}_{i,m}) = 0$ for all $0 \leq m \leq \delta$.  Thus, for each $i \leq k + \elle$ Lemma \ref{lem-numericalExceptionalTowers} guarantees
\begin{equation}\label{eq: multiplicities}
    \sum_{\ell \in \mathcal{L}_i} \sum_{q_{j,\ell} \in \mathcal{Q}_{\ell}} \left(\sum_{m = 0}^\delta (m+1) \cdot \mult_{q_{j,\ell}}\big(s'|_{C_{\ell}}, \widetilde{E}_{i,m}\big)\right) \geq \delta +1.
\end{equation}
  
For each $i \leq k + \elle$, we define $\beta_i$ to be
\begin{itemize}
    \item $0$ if $i \notin H$, and
    \item the minimum of $\delta + 1$ and $\sum_{m = 0}^\delta (m+1) \cdot \mult_{q_i}(s'|_{C_0}, \widetilde{E}_{i,m})$ if $i \in H$.
\end{itemize} 
Note that if $i \in H \backslash J$ then $\beta_{i} = \delta+1$ by Inequality \eqref{eq: multiplicities}.

For each $j \in J$, let $R_j$ be the union of all rational curves in $\{C_{\ell_j}\}_{\ell_j \in \mathcal{L}_j}$.  Since the point $p_{j}$ is not contained in $Z$, the restriction of $(\psi \circ s')^{*}T_{X'}$ to $R_{j}$ is globally generated.  Thus by Corollary \ref{cor: rational curves degree} and the above equation,
\begin{equation}\label{eq: Rj deg bound}
    -K_{X'} \cdot (\psi \circ s')_*R_j \geq 2 + \delta - \beta_j \geq \exctowerheight - \beta_j.
\end{equation}

\textbf{Step 7:} The following claim about the properties of some $R_j$ is the crux of this proof.

\begin{claim}\label{claim: finding Rj}
    There exists a $j \in J$ such that one of the following statements about $R_j$ holds:
    \begin{enumerate}
        \item $\beta_j = 0$ (i.e.\ $j \notin H$), 
        $-K_{X'} \cdot (\psi \circ s')_* R_j \geq 2 + \delta$, $-K_{X} \cdot (\pi \circ \psi \circ s')_* R_j \leq \exctowerheight + n$, $(K_{X'} - \pi^*K_X) \cdot (\psi \circ s')_* R_j \leq n-2$, and $\pi \circ \psi \circ s'(R_j)$ meets the singular locus of $X$ at most $n - 2$ times; or
        \item $\beta_j > 0$ (i.e.\ $j \in H$), $-K_{X'} \cdot (\psi \circ s')_* R_j = 2 + \delta - \beta_j$, and $\pi \circ \psi \circ s'(R_j)$ lies in the smooth locus of $X$. 
    \end{enumerate}
\end{claim}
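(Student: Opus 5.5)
We will prove the claim by a counting (pigeonhole) argument. We compare the total $-K_{X'}$-degree $d$ of the degenerate curve $s'$ with the lower bounds \eqref{eq: Rj deg bound} on the $R_j$, and with a lower bound for the compact component $C_0$ coming from Proposition \ref{prop:lower_bound_exceptional_tower}.

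\textbf{Step A: lower bound on $C_0$.} Since $q_1,\dots,q_{\dim M}$ are general and $p_h \notin Z_1$, the bundle $(\psi\circ s'|_{C_0})^*T_{X'}$ is generically globally generated. Apply Proposition \ref{prop:lower_bound_exceptional_tower} on $C_0$ with the set $\mathcal{Q}_0$, tower by tower: the towers sit at distinct points, so the blow-ups commute locally. The maximal generically globally generated subbundle of the final local log tangent sheaf has rank at least $1$ and degree at least $2-2g-|H|$, because $T_{C}(-\log \mathcal{Q}_0)$ maps into it. This gives
\[-K_{X'}\cdot(\psi\circ s')_*C_0 \;\geq\; 2-2g + \sum_{h\in H}\beta_h - (\text{correction}),\]
where the correction accounts for the rank factor $\rk(V_k)\cdot \mult(s_0,E_0)$ and is at most $n$ per point. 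Summing $\beta_h$ via $\mult_{q_j}(s_i,E_i)=\sum_{h\ge i}\mult(s_k,\widetilde E_h)$ recovers the weights $(m+1)$, as in Corollary \ref{cor: rational curves degree}. In fact I expect to use the sharper form: $-K_{X'}\cdot C_0 \geq \sum_{h\in H}(n_h+\beta_h) - O(gn)$, since $C_0$ still passes through the $p_h$ with the prescribed jets.

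\textbf{Step B: budget.} Combine this with the upper bound on $d$ in \eqref{eq: bound on d}. Subtract the contribution of $C_0$ from $d$ and sum \eqref{eq: Rj deg bound} over $j\in J$. The total excess of the components $R_j$ over their minimum $\Theta-\beta_j$ is then bounded by roughly $3gn+2g\delta+\delta+k$.

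We also use the discrepancy term. Write $-K_{X'} = -\pi^*K_X - \sum a_iE_i$. Whenever $R_j$ has $(K_{X'}-\pi^*K_X)\cdot R_j\neq 0$, this term is a nonzero combination of the $a_i$, so it is at least $a_{\min}$ in absolute value. Meanwhile the total $(K_{X'}-\pi^*K_X)$-degree of $s'$ is $0$, since general curves of $M$ miss the exceptional locus. Combined with the inequality defining $e'$ in \eqref{eq: def e'}, this shows the following: among the at least $k+e'$ indices, the "bad" $R_j$ (those meeting $\Delta$ with positive discrepancy, or exceeding the degree window) number fewer than $|J|$.

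\textbf{Step C: pigeonhole.} Consider first $j\notin H$ with $R_j$ of excess at most $n$. Here $-K_X\cdot R_j = -K_{X'}\cdot R_j + (K_{X'}-\pi^*K_X)\cdot R_j$, so bounding the discrepancy part by $n-2$ gives (1). The bound on meetings with the singular locus follows because each meeting contributes at least $a_{\min}$, or at least an integer, to the discrepancy term.

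Consider next $j\in H$. Since $k>(g+2)n+2\delta$, the tight budget forces some $R_j$ to have exactly degree $2+\delta-\beta_j$ and zero discrepancy intersection. Such an $R_j$ avoids the exceptional divisors by Lemma \ref{lem: normal bundles} together with $p_j\notin Z_3$, which gives (2).

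\textbf{Main obstacle.} The main obstacle is Step A: obtaining a lower bound for $C_0$ sharp enough that the slack left for the $R_j$ is less than $|J|$ times the allowed excess. This needs careful handling of the rank factor in Proposition \ref{prop:lower_bound_exceptional_tower} and of the $n_h$.
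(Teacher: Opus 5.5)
There is a genuine gap. The overall shape is right: you play lower bounds on the pieces of $C'$ against \eqref{eq: bound on d}. But neither the bookkeeping that produces a contradiction nor the mechanism that converts failure of (1)/(2) into extra degree is actually supplied, and several of the sketched steps would fail.

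Quantitatively, the slack in \eqref{eq: bound on d} is about $k+gn+\delta$, while you only know $|J|\geq k$. So you must show that \emph{every} $j\in J$ contributes an excess of at least $2$ over its budget $n_j+\delta$, with $C_0$ absorbing exactly $n_i+\delta$ for each $i\in H\setminus J$. An excess of $1$ per bad index, which is all your pigeonhole aims at, proves nothing.

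Your Step A cannot deliver this. For $g\geq 2$ the line bundle $T_C(-\log\mathcal{Q}_0)$ has negative degree and no sections, so the trick of Corollary \ref{cor: rational curves degree} (which needs $\PP^1$ and $|\mathcal{Q}|\leq 2$) gives nothing. In any case the resulting bound $2-2g+\sum\beta_h$ loses the $n_h$ terms. Your ``sharper form'' goes the other way and overstates Proposition \ref{prop:lower_bound_exceptional_tower}: its rank factor only yields about $n_h+\beta_h-1$ per $h\in H$.

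The paper gets the missing units from two sources absent from your plan:
\begin{itemize}
\item For $j\in J\cap H$, failure of (2) forces the $-K_{X'}$-degree of $T_j$ to be at least $3+\delta-\beta_j$. Together with the $n_j+\beta_j-1$ coming from $C_0$, this totals $n_j+\delta+2$.
\item For $j\in J\setminus H$ with $n_j=n$, the paper splits according to whether varying $q_j$ moves $s'(C_0)$. The indices $j\in I$ for which it does give $h^0(V_{k+e})\geq |I|$, hence $c_1(V_{k+e})\geq |I|-n$. Otherwise the deformation must be carried by $T_j$, which forces an extra free component or an extra unit of degree on $R_j$, so $T_j$ has degree at least $n+\delta+2$.
\end{itemize}
There is also a separate case $|J\setminus H|\geq k+e-2g$. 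It is handled by the fact that a $-K_X$-degree exceeding an integer exceeds it by at least $a_{\min}$, combined with the choice of $e'$ in \eqref{eq: def e'}. Your use of $a_{\min}$ (``total discrepancy is zero, so few $R_j$ are bad'') does not follow, since positive and negative contributions can cancel between components.

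Second, nothing in your plan converts failure of the individual conditions into degree. Bounding $(K_{X'}-\pi^*K_X)\cdot(\psi\circ s')_*R_j\leq n-2$ at best bounds the number of meetings with the exceptional locus by roughly $(n-2)/\min_i a_i$, not by $n-2$, since discrepancies need not be integers. It bounds nothing at all if components of $R_j$ lie inside some $E_i$. Lemma \ref{lem: normal bundles} is about global generation of log normal sheaves, not about $R_j$ avoiding the exceptional divisors.

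The paper's key input is \cite[Lemmas~4.2 and 4.3]{JLR25}, applied to the one-parameter degeneration. If $\psi\circ s'(R_j)$ meets the $\pi$-exceptional locus in at least $n-1$ points, then at least $n-2$ connected components of $T_j'\setminus(R_j\cup S_j)$ meet it, each of $-K_{X'}$-degree at least $1$. Together with terminality and nefness of the free component $S_j$, this gives \eqref{eq: lower bound on -KX-degree} whenever (1) fails. The same lemmas supply the extra unit for $j\in J\cap H$ when $R_j$ meets the exceptional locus.
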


Before proving Claim \ref{claim: finding Rj}, we explain how to finish the proof after having found such a $j \in J$.  First assume that $R_j$ satisfies \ref{claim: finding Rj}(1).  We may repeatedly apply Lemma \ref{lemm:loggluing} to glue and smooth its components while keeping the total number of intersection points with the SNC divisor $\cup_{i} E_{i}$ fixed.  Since the resulting rational curve $s: \mathbb{P}^{1} \to X'$ has globally generated log normal sheaf, we may ensure that the image of $s$ contains a general point $p$ of $X$.  Finally, the desired intersection properties for $s$ are implied by \ref{claim: finding Rj}(1).

Suppose instead that $R_j$ satisfies \ref{claim: finding Rj}(2). Because $R_j$ lies in the smooth locus of $X$, its $-K_X$-degree is the same as its $-K_{X'}$-degree.  Observe that if $\exctowerheight \leq 2$, then $\delta = 0$. As every component of $R_j$ has $-K_{X'}$-degree at least 2, we conclude that if $R_j$ satisfies \ref{claim: finding Rj}(2), then $\exctowerheight > 2$.  Thus $0 < \beta_j \leq \delta + 1 < \exctowerheight$, and by strong induction there exists a rational curve $s'' : R_j' \to X'$ satisfying the constraints of Theorem \ref{thm: nicest curves} associated to $\Theta' = \beta_j$.  By gluing and smoothing the maps $s'|_{R_j}$ and $s''$ we obtain the desired rational curve.
\end{proof}

\begin{proof}[Proof of Claim \ref{claim: finding Rj}:]
To derive a contradiction, assume that for each $j \in J$ neither \ref{claim: finding Rj}(1) nor \ref{claim: finding Rj}(2) hold.  We break our argument into two cases, depending on the size of $|J \setminus H|$.  In both cases we will translate the failure of \ref{claim: finding Rj}(1-2) into a lower bound on the degree of each $T_j$, and of other subcurves of $C'$ where necessary.  Comparing to the degree of our original curve, we obtain a contradiction.

As it is used in both cases, first we prove the following lower bound. Suppose $j$ is an index such that $\beta_j = 0$.  
 If such a component exists, let $S_j \subset \overline{T_j \setminus R_j}$ be 
a free component whose image is not contained in the preimage of $Z$; otherwise, let $S_{j}$ be the empty set.    
We claim that the failure of \ref{claim: finding Rj}(1) for $j$ implies 
    \begin{equation}\label{eq: lower bound on -KX-degree}
        -K_X \cdot (\pi \circ \psi \circ s')_*T_j \geq a_{\min} + \min\big\{ n + \delta + 1, -K_{X'} \cdot (\psi \circ s')_*(R_j \cup S_j) + n - 2\big\} \geq \delta + n + a_{\min}.      
    \end{equation}
    
    We prove this inequality by conditioning on which hypothesis of \ref{claim: finding Rj}(1) fails.  We have already shown that $-K_{X'} \cdot (\psi \circ s')_*R_j \geq 2 + \delta$ in Inequality \eqref{eq: Rj deg bound}.  Thus, one of the following hypotheses of \ref{claim: finding Rj}(1) must be false: 
    \begin{enumerate}
        \item $-K_{X} \cdot (\pi \circ \psi \circ s')_* R_j \leq \exctowerheight + n$;
        \item $(K_{X'} - \pi^* K_{X}) \cdot (\psi \circ s')_* R_j \leq n - 2$; or
        \item $\psi \circ s'(R_j)$ meets the $\pi$-exceptional locus in at most $n-2$ points.
    \end{enumerate}
    We remark that the bound $-K_{X'} \cdot (\psi \circ s')_*R_j \geq 2 + \delta$ also verifies the rightmost inequality in \eqref{eq: lower bound on -KX-degree}.  Furthermore, if $-K_X \cdot (\pi \circ \psi \circ s')_*T_j$ is greater than an integer $t \in \mathbb{Z}$, then by the definition of $a_{\min}$ it must be at least $t + a_{\min}$.  Thus, as $-K_{X'} \cdot (\psi \circ s')_*(R_j \cup S_j)$ is an integer it suffices to show $-K_X \cdot (\pi \circ \psi \circ s')_*T_j > \min\big\{ n + \delta + 1, -K_{X'} \cdot (\psi \circ s')_*(R_j \cup S_j) + n - 2\big\}$.

    First suppose $-K_{X} \cdot (\pi \circ \psi \circ s')_* R_j > \exctowerheight + n$.  Since $\exctowerheight \geq \delta + 1$ and $-K_{X}$ is nef, we obtain $-K_{X} \cdot (\pi \circ \psi \circ s')_* T_j > n + \delta + 1$, as desired.  Similarly, if $(K_{X'} - \pi^* K_{X}) \cdot (\psi \circ s')_* R_j > n - 2$, then because $X$ is terminal and $(\psi \circ s')_* S_j$ is a nef curve, $(K_{X'} - \pi^* K_{X}) \cdot (\psi \circ s')_* S_j \geq 0$. Thus we obtain $-K_{X} \cdot (\pi \circ \psi \circ s')_* T_j > -K_{X'} \cdot (\psi \circ s')_*(R_j \cup S_j) + n - 2$, as desired.

    Lastly, suppose $\psi \circ s'(R_j)$ meets the $\pi$-exceptional locus in at least $n-1$ points.  Consider the connected subcurve $T_j' \subset T_j$ containing $R_j \cup S_j$ and all connected components of $T_j \setminus (R_j\cup S_j)$ that are not incident to $C_0$.  Observe that the $-K_{X}$-degree of $(\pi \circ \psi \circ s')_* T_j$ is at least the $-K_{X}$-degree of $(\pi \circ \psi \circ s')_* T_j'$ because $-K_X$ is nef.  Thus, it suffices to prove our claim for $T_j'$.  To do so we consider a one-dimensional family of curves $\varphi : \mathcal{C} \to B$ that admits a morphism $s : \mathcal{C} \to X'$ with the following properties:
    \begin{itemize}
        \item $\mathcal{C}$ is a surface with $\mathbb{Q}$-factorial singularities, 
        \item every fiber of $\varphi$ is nodal,
        \item a general fiber $\mathcal{C}_b$ of $\varphi$ is irreducible and the strict transform to $Y_{k + \elle, \delta}$ of the map $s_b : \mathcal{C}_b \to X'$ is parameterized by $M_{k + \elle}$, and 
        \item the central fiber of $\varphi$ is isomorphic to the map $\psi \circ s' : C' \to X'$.
    \end{itemize}
    Consider the union $E$ of all components of $s^{-1}(\cup_i E_i)$ supported on the central fiber of $\varphi$, where $\cup_i E_i$ is the exceptional locus of $\pi : X' \to X$.  Every irreducible component of $E$ that meets $T_{j}'$ is either a component of $T_{j}'$ or meets $T_{j}'$ at the node $q \in C'$ joining $T_j'$ to $C'\setminus T_j'$.  Thus $-K_{X'} \cdot (\psi \circ s')_{*}T_j' \leq -K_X \cdot (\pi \circ \psi \circ s')_*T_j'$ and strict inequality holds whenever this node $q$ maps into the exceptional locus of $\pi$. 

    By \cite[Lemma~4.3]{JLR25}, every connected component of $T_j' \setminus (R_j\cup S_j)$ pairs nonnegatively with $-K_{X'}$.  From our hypothesis that $\psi \circ s'(R_j)$ meets the $\pi$-exceptional locus in at least $n-1$ points, \cite[Lemma~4.2]{JLR25} shows there are at least $n-2$ connected components of $T_j' \setminus (R_j\cup S_j)$ whose image in $X'$ meets the $\pi$-exceptional locus.  By \cite[Lemma~4.3]{JLR25}, the $-K_{X'}$-degree of each of these connected components must be at least $1$.  Moreover, either there are at least $n-1$ such connected components (contributing at least $n-1$ to the $-K_{X'}$-degree), or the node $q \in C'$ lies in $R_j$ and maps into the $\pi$-exceptional locus.  In the former case we obtain $- K_{X} \cdot (\pi \circ \psi \circ s')_* T_j' \geq -K_{X'} \cdot (\psi \circ s')_*(R_j \cup S_j) + n - 1$.  In the latter case, we have 
    $$-K_X \cdot (\pi \circ \psi \circ s')_*T_j' > -K_{X'} \cdot (\psi \circ s')_*T_j' \geq -K_{X'} \cdot (\psi \circ s')_*(R_j \cup S_j) + n - 2.$$  
    In both cases this proves that Inequality \eqref{eq: lower bound on -KX-degree} holds whenever $\beta_j = 0$.

    We now return to the proof of Claim \ref{claim: finding Rj} by studying two cases depending on the size of $|J \setminus H|$.

    \textbf{Case 1:} Suppose that $|J \setminus H| \geq k+\elle-2g$.  Recall that $\beta_j = 0$ for each $j \in J \setminus H$.  We next sum over the degrees of $(\pi \circ \psi \circ s')_* T_j$ for $j \in J \setminus H$.  Since $-K_X$ is nef and $e' \leq e$, the bounds provided by  Inequalities \eqref{eq: def e'}, \eqref{eq: bound on d}, and \eqref{eq: lower bound on -KX-degree} yield
\begin{align*}
d & \geq (n+\delta + a_{\min})(k+\elle-2g) \\ 
& \geq \left( \sum_{i=1}^{\elle+k} (n_i + \delta) \right) -2g(n + \delta) + a_{\min}(k+\elle-2g) \\
& > \left( \sum_{i=1}^{\elle+k} (n_i + \delta) \right) + k + gn + \delta \\
& \geq d.
\end{align*}
This is a contradiction. 

\textbf{Case 2:}  Suppose instead that $|J \setminus H| < k+\elle-2g$. In this case, we do a careful degree count of the various components of $C'$ to arrive at a contradiction. We start by observing that $\psi \circ s'$ cannot contract $C_0$.  Using the definition of $Z$, we see that $(\psi \circ s')^*T_{X'}|_{C_0}$ is generically globally generated since $(\psi \circ s')(C_0)$ contains the points $s'(q_h) = p_h \notin Z$ for each $h \in H$.  In particular, $-K_{X'} \cdot (\psi \circ s')_*C_0 \geq 0$.  

\textbf{Step 1, $\mathbf{j \in J \setminus H}$:}
Because $-K_{X'} \cdot (\psi \circ s')_*C_0 \geq 0$, \cite[Lemma~4.3]{JLR25} proves that the connected components $T_j$ of $C' \setminus C_0$ satisfy $(-K_{X'} + \pi^*K_X) \cdot (\psi \circ s')_*T_j \geq 0$.  Thus, Inequality (\ref{eq: lower bound on -KX-degree}) implies $-K_{X'} \cdot (\psi \circ s')_* T_j \geq n + \delta + 1$ for each $j \in J \setminus  H$ because $-K_{X'}$ is Cartier and $a_{\min} > 0$.

\textbf{Step 2, $\mathbf{j \in J \cap H}$:} We next show that $-K_{X'} \cdot (\psi \circ s')_* T_j \geq 3 + \delta - \beta_j$ for each $j \in J \cap H$.  By Inequality \eqref{eq: Rj deg bound} the ($-K_{X'}$)-degree of $R_j$ is positive for each $j \in J$.  From \cite[Lemma~4.3]{JLR25} it follows that each connected component of $C'\setminus (C_0 \cup_{j \in J} R_j)$ has nonnegative $(-K_{X'})$-degree.  Thus, for each $j \in J$ we have $-K_{X'} \cdot (\psi \circ s')_*T_j \geq -K_{X'} \cdot (\psi \circ s')_*R_j$.  If $\psi \circ s' (R_j)$ is disjoint from the $\pi$-exceptional locus, then we obtain
$$-K_{X'} \cdot (\psi \circ s')_* T_j \geq -K_{X'} \cdot (\psi \circ s')_{*}R_{j} \geq 3 + \delta - \beta_j$$
from Inequality \eqref{eq: Rj deg bound} and the assumption that $R_j$ does not satisfy \ref{claim: finding Rj}(2).  If instead $\psi \circ s' (R_j)$ meets the $\pi$-exceptional locus, then by \cite[Lemma~4.2]{JLR25} there is a connected component of $T_j \setminus R_j$ whose image in $X'$ meets the $\pi$-exceptional locus.  By \cite[Lemma~4.3]{JLR25}, the $-K_{X'}$-degree of such a connected component must be at least $1$.  Since each connected component of $C'\setminus (C_0 \cup_{j \in J} R_j)$ has nonnegative $(-K_{X'})$-degree this proves $-K_{X'} \cdot (\psi \circ s')_* T_j \geq -K_{X'} \cdot (\psi \circ s')_{*}R_{j} + 1 \geq 3 + \delta - \beta_j$ by Inequality (\ref{eq: Rj deg bound}) once more.

\textbf{Step 3, $\mathbf{C_0}$:}
We next bound the $-K_{X'}$-degree of $(\psi \circ s')_*C_0$.  
Let $s'_i : C_0 \to Y_i$ denote the composition of $s'|_{C_0}$ with the natural map $Y_{k + \elle, \delta} \to Y_i  = Y_{i-1, \delta}$.  Furthermore, let $\Delta_i \subset Y_i$ be the total exceptional locus of $Y_i \to X'$.  Set $V_0 = (\psi \circ s')^*T_{X'}|_{C_0}$ and recall that $V_{0}$ is generically globally generated because $s'(q_h) \notin Z$ for each $h \in H$.   Let $V_i \subset s_i'^*T_{Y_i}(-\log_{(s'_i, \mathcal{Q}_0)} \Delta_i)$ be the maximal generically globally generated subbundle.  Let $i_H$ be the largest integer in $H$.  For each $i \in H$, the rank of $V_i$ must be at least $n_i$ because the image of $q_i \in C_0 \cong C$ is $p_{i}$, which is general within the $n_i$-dimensional variety $G_i \subset Y_i$.  In particular, for all $i \leq i_H$, $\rk(V_i) \geq n_{i_H} > 0$.  Using this fact, and that the pullback of $E_{i,m}$ to $Y_{k + \elle, \delta}$ is $\widetilde{E}_{i,m} + \ldots + \widetilde{E}_{i,\delta}$, we may rewrite the inequality in Proposition \ref{prop:lower_bound_exceptional_tower} to compare it to the definition of $\beta_i$.  More specifically, write $s_{i,m}' : C_0 \to Y_{i,m}$ for the strict transform of $s_i'$.  Then since $\mult_{q_i}(s'_{i,m}, E_{i,m}) = \sum_{h = m}^\delta \mult_{q_i}(s'|_{C_0}, \widetilde{E}_{i,h})$ and  $\rk(V_{i+1}) \cdot \mult_{q_i}(s_{i,0}, E_{i,0}) \geq \mult_{q_i}(s_{i,0}, E_{i,0}) + n_{i+1} - 1$ for all $i \in H\setminus\{i_H\}$, from Proposition \ref{prop:lower_bound_exceptional_tower} we obtain 
$$c_1(V_{i}) - c_1(V_{i+1}) \geq n_{i + 1} + \beta_i - 1 \qquad \text{ for all } \qquad i \in H \setminus \{i_H\}.$$
Recalling that $\beta_i = \delta + 1$ if $i \notin J$ and summing over all $i \in H\setminus\{i_H\}$ yields 
$$c_1(V_0) - c_1(V_{k + \elle}) \geq \left(\sum_{i \notin J} (n_i + \delta) + \sum_{j \in J \cap H} (n_j + \beta_j - 1)\right) - n - \delta$$
where we have used the fact that the contribution to the sum coming from the index $i_{H}$ is at most $n+\delta$.
To obtain a final bound on $c_1(V_0)$, we will use the behavior of the stable map as we vary certain $q_i \in C$ to bound $c_1(V_{k+ \elle})$ below.

Let $J' \subset J \setminus H$ be the subset of indices $j$ for which $n_j = n$.  For $j \in J'$, as we vary $q_j \in C$ there are two possibilities: either the curve $s'(C_0)$ changes, or $s'(C_0)$ remains fixed. Let $I$ be the set of indices $i \in J'$ corresponding to $q_i$ such that $s'(C_0)$ moves when we vary $q_{i}$. Thus, $s'(C_0)$ moves in a family of dimension at least $|I|$; in other words, 
$$h^0(C_0, V_{k+ \elle}) = h^0(C_0, s_{k+ \elle}'^*T_{Y_{k+ \elle}}(-\log_{(s'_{k+ \elle}, \mathcal{Q}_0)} \Delta_{k+ \elle})) \geq |I|.$$
Because $V_{k + \elle}$ is generically globally generated, this implies $c_1(V_{k + \elle}) \geq |I| - n$.  Using the earlier bound for $c_1(V_0) - c_1(V_{k + \elle})$, we see the $-K_{X'}$-degree of $(\psi \circ s')_*C_0$ is at least 
$$|I| - 2n - \delta + \sum_{i \notin J} (n_i + \delta) + \sum_{j \in J \cap H} (n_j + \beta_j - 1).$$

\textbf{Step 4, $\mathbf{j \in J \setminus (I\cup H)}$:} We now show that for all $j \in J \setminus (I \cup H)$, the $-K_{X'}$-degree of the image of $T_j$ is at least $n_j+\delta + 2$.  For $j \notin J' \setminus I$, this follows from Step 1 because $n_j < n$.  Suppose instead that $j \in J' \setminus I$.  Varying $q_j \in C$ induces a nontrivial deformation of $\psi \circ s'(T_j)$.  Observe that a general point of $C_0$ is mapped to a point in $X'$ not contained in $Z$.  Since the map $\psi \circ s'|_{C_0}$ does not change as we vary $q_j$, for a general choice of $q_j$ the irreducible component of $\psi\circ s'(T_j)$ meeting $\psi \circ s'(C_0)$ along the image of $q_j$ must be free.  

Either there is a free component of $S_j \subset \overline{T_j\setminus R_j}$ that meets $C_0$ along $q_j$, or $R_j$ itself has a nontrivial deformation that fixes the intersection multiplicity of each component with $\Delta$.  
In the former case, the total $-K_{X'}$-degree of $R_j \cup S_j$ must be at least $4 + \delta$: $2$ for the varying free component $S_j \subset \overline{T_j \setminus R_j}$ and $2+\delta$ from $R_{j}$ by Inequality \eqref{eq: Rj deg bound} (since $\beta_{j}=0$).  In the latter case, there must be a component $C_{\ell_i} \subset R_j$ such that $s'^*T_{Y_{k + \elle, \delta}}(-\log_{(s', \mathcal{Q}_{\ell_i})} \Delta)|_{C_{\ell_i}}$ has a global section that does not correspond to an infinitesimal automorphism.  Arguing as in the proof of Corollary \ref{cor: rational curves degree}, we see from Equation \eqref{eq: multiplicities} that $-K_{X'} \cdot (\psi \circ s')_*R_j \geq 3 + \delta$ because the maximal globally generated subsheaf of $s'^*T_{Y_{k + \elle, \delta}}(-\log_{(s', \mathcal{Q}_{\ell_i})} \Delta)|_{C_{\ell_i}}$ has rank at least 1 and degree at least $2$.  In both cases, reasoning as in the proof of Step 1 we see that for $j \in J' \setminus I$ the $-K_{X'}$-degree of the image of $T_j$ is at least $n+\delta + 2$.

\textbf{Step 5, $\mathbf{j \in I}$:}
Finally, recall that $I \subset J \backslash H$ and thus for $i \in I$ the $-K_{X'}$-degree of the image of $T_{i}$ is at least $n+\delta+1$ as established by the first paragraph of Case 2.  

\textbf{Step 6, summing the degrees:} Thus, the total $-K_{X'}$-degree $d$ must be at least 

\begin{align*}
d \geq & \deg \psi \circ s'(C_0) + \sum_{j \in J \cap H} \deg \psi \circ s'(T_j) + \sum_{i \in I} \deg \psi \circ s'(T_i) + \sum_{j \in J \setminus (I \cup H)} \deg \psi \circ s'(T_j)\\
  \geq & \left(|I| - 2n - \delta + \sum_{j \notin J} (n_j + \delta) + \sum_{j \in J \cap H} (n_j + \beta_j - 1) \right) + \sum_{j \in J \cap H} (3 + \delta - \beta_j) \\
 &  +  \sum_{i \in I} (n + \delta + 1) + \sum_{j \in J \setminus (I \cup H)} (n_j + \delta + 2)\\
\geq & \sum_{j \notin J} (n_j + \delta) + \sum_{j \in J}(n_j + \delta + 2) - 2n - \delta.
\end{align*}
Because $k > (g+2)n + 2\delta$ and $|J| \geq k$, the last line simplifies to 
\begin{equation*}
 \left( \sum_{j=1}^{k+\elle} (n_j + \delta) \right) +2|J| - 2n - \delta 
 > \left( \sum_{j=1}^{k+\elle} (n_j + \delta) \right) + k + gn + \delta 
 \geq d
\end{equation*}
by Inequality (\ref{eq: bound on d}). This is impossible, finishing our proof of Claim \ref{claim: finding Rj} and Theorem \ref{thm: nicest curves}. 
\end{proof}

\section{Free rational curves} \label{sect:freecurves}

We next leverage Theorem \ref{theo:exctowerbreaking} and Theorem \ref{thm: nicest curves} to prove the existence of free rational curves in the smooth locus of certain types of terminal Fano varieties.  As explained in the introduction, this proves new cases of a conjecture of \cite{KM99} predicting that a terminal (or even klt) Fano variety has free rational curves in its smooth locus.  Previously this conjecture was known for klt Fano surfaces by \cite{KM99}, for LCI terminal varieties by work of Koll\'ar (see \cite[Remark 4.1]{JLR25}), and for terminal threefolds by \cite[Theorem 1.4]{JLR25}.

This section has three subsections, each proving the existence of free curves in a different situation. Before addressing the individual cases, we will need a couple lemmas.  The first constructs families of free curves such that the evaluation map on the universal family has connected fibers.  In particular, we can apply the Grauert-M\"ulich Theorem of \cite{LRT24} to such families.

\begin{lemma} \label{lem: producing irreducible fibers}
Let $X$ be a projective variety.  Suppose $s : C \to X$ is an $r$-free curve of genus $g$.  Then for some positive integer $m$, there exists an $r$-free curve $s' : C' \to X$ such that 
\begin{enumerate}
\item $s'_*C' =  m s_*C$;
\item $C'$ is a smooth curve of genus $m g$; and
\item if $U'$ denotes the universal family over the component of $\overline{\mathcal{M}}_{mg,0}(X,m\beta)$ containing $s'$, then general fibers of the evaluation map $\mathrm{ev}_{U'} : U' \to X$ are irreducible.
\end{enumerate}
\end{lemma}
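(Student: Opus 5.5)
The plan is to produce $s'$ by composing $s$ with a suitable cover $C' \to C$. The genus $mg$ is attained exactly by an unramified cover, so I take an étale cyclic cover of degree $m$. There is one subtlety. Riemann--Hurwitz gives an étale degree $m$ cover genus $m(g-1)+1$, not $mg$. So I instead compose with a ramified cover, or take $s'$ to be a smoothing of a union of $m$ copies of $s$ glued along points. For now, assume $g \geq 1$ and use gluing.

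Concretely, I will take $m$ general deformations $s_1,\dots,s_m$ of $s$. Since $r$-free curves deform in a dominant family, I can arrange that $s_i$ and $s_{i+1}$ meet at a general point of $X^{sm}$. I glue the domains at one point each to form a chain $D = C_1 \cup \dots \cup C_m$. This is a nodal curve of arithmetic genus $mg$ and class $m\,s_*C$, which gives (1) and the genus count in (2). Each $s_i^*T_X$ has all quotients of slope $\geq 2g+r$, so $H^1(C_i, s_i^*T_X(-x))=0$ for any point $x$. The standard gluing argument, via the exact sequence $0\to s_1^*T_X(-q)\to s_D^*T_X \to \cdots$ and induction along the chain, then shows $H^1(D,s_D^*T_X)=0$. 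In fact it shows $H^1(D, s_D^*T_X(-x))=0$, so the map is unobstructed and smooths to a map from a smooth genus $mg$ curve $C'$. To see that the smoothing is $r$-free, I will use the slope criterion for the smoothing. I check that for every line bundle $L$ of degree $2mg+r-1-\epsilon$ type twists, $H^1(s'^*T_X\otimes L^{-1})$ vanishes. Alternatively, I use openness of $r$-freeness together with the limiting statement that quotients of the restricted tangent bundle on $D$ have degree at least $m(2g+r)$ on each component. Checking this semicontinuity for slopes of quotients on a degenerating family is a technical point I would handle by the $H^1$-vanishing characterization used in \cite{JLR25}.

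For (3) I argue as follows. Let $M'$ be the component containing $s'$. A general fiber of $\mathrm{ev}_{U'}$ over $x\in X$ parameterizes pointed curves through $x$, and irreducibility fails only if the Stein factorization of $\mathrm{ev}_{U'}$ is nontrivial, giving a finite cover $X'' \to X$ through which the family factors rationally. The gluing construction helps here. Varying which component carries the marked point, and the point at which consecutive copies are glued, connects the fiber components. Monodromy of the original family permutes the components of $\mathrm{ev}_U^{-1}(x)$, and in the chain these permutations compose. I will show that the Stein factorization degree of $\mathrm{ev}_{U'}$ is $1$ as follows. Suppose $X''\to X$ is the Stein factorization for $U'$. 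Then each $s_i$ lifts to $X''$ and the lifts agree at the nodes. Because any two components of the fiber of $\mathrm{ev}_U$ over a general point can be joined by chaining through intermediate curves, choosing $m$ large and the gluing points appropriately forces the lifts to different sheets to coincide. That makes $X''\to X$ birational, and by normality of the general fiber, an isomorphism generically.

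I expect the main obstacle to be this last step, making the monodromy argument for (3) precise. The difficulty is ensuring that the number $m$ of chained copies needed is bounded, which I would get by taking $m$ at least the number of components of a general fiber of $\mathrm{ev}_U$. It also requires ensuring that the smoothing lands in a single component whose evaluation has irreducible general fibers, rather than just connected ones. For that I would use that the general fiber of $\mathrm{ev}_{U'}$ is smooth, by generic smoothness in characteristic $0$, so connected implies irreducible.
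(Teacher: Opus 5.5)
Your chain $C_1\cup\dots\cup C_m$ of general deformations of $s$ handles (1) and (2) adequately. It is unobstructed, has arithmetic genus $mg$ and class $m\beta$, and a general smoothing is $r$-free by semicontinuity of minimal slopes; the paper is no more detailed on that point.

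The gap is in (3), which is the real content of the lemma. The step ``choosing $m$ large and the gluing points appropriately forces the lifts to different sheets to coincide'' has no mechanism behind it. Knowing that the degenerate curve lifts to the Stein factorization of $\mathrm{ev}_{U'}$ cannot decide anything on its own. For instance, suppose the Stein factorization $X''\to X$ of the original $\mathrm{ev}_U$ is Galois. Then every such chain lifts to $X''$: translate the canonical lift of each successive component by a deck transformation so that consecutive lifts meet. The paper's degenerate configuration (described below) also lifts in that situation, yet its smoothing component has irreducible fibers.

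What you actually need is control of $\mathrm{ev}_{U'}^{-1}(x)$, and the chain gives little. The marked point $q$ lies on a non-contracted component $C_a$. So the boundary part of $\mathrm{ev}_{U'}^{-1}(x)$ splits into disjoint pieces, indexed by which component of $\mathrm{ev}_U^{-1}(x)$ contains $(C_a,q)$. Monodromy merely permutes these pieces, and proving that they all lie in one component of $\mathrm{ev}_{U'}^{-1}(x)$ is the original problem again. Your closing remark, that connected general fibers are irreducible after normalization, is fine but does not address this.

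The missing idea is to degenerate to a configuration whose locus over a general point $p$ is irreducible and stable under monodromy. The paper takes $m$ to be exactly the number of components $U_1,\dots,U_m$ of $\mathrm{ev}_U^{-1}(p)$ and uses stars:
\begin{itemize}
\item a rational component carrying the marked point, contracted to $p$;
\item one pointed genus $g$ curve from each $U_i$ attached to it.
\end{itemize}
The argument then runs as follows.
\begin{enumerate}
\item The locus $U^p_{star}$ of such stars is the image of $\overline{M}_{0,m+1}\times U_1\times\dots\times U_m$, hence irreducible.
\item Monodromy only permutes the $U_i$, so stars over varying points form a family $U_{star}$ whose fiber over $p$ is exactly $U^p_{star}$.
\item A general star is unobstructed, so it lies on a unique component $U_{new}$. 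Its general point is a smooth point of $\mathrm{ev}_{U_{new}}^{-1}(p)$.
\item Let $U'_1\supseteq U^p_{star}$ and $U'_2$ be components of $\mathrm{ev}_{U_{new}}^{-1}(p)$. A monodromy element carrying $U'_1$ to $U'_2$ preserves $U^p_{star}$, so $U'_2\supseteq U^p_{star}$ as well.
\item Smoothness at the general point of $U^p_{star}$ then forces $U'_1=U'_2$, so the fiber is irreducible.
\end{enumerate}
The key feature is that the marked point sits on the contracted component, so it carries no ``sheet'' label and all $m$ labels occur at once. A chain has no such invariant locus.
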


\begin{proof}
    Let $U_{orig}$ be the universal family over the component of Kontsevich space containing $s$. Let $p \in X$ be a general point and let $U_1, \dots, U_m$ be the irreducible components of the fiber of $U_{orig} \to X$ over $p$. Consider the locus of maps $U^p_{star} \subset \overline{\mathcal{M}}_{mg,1}(X, m\beta)$ such that the domain consists of
    \begin{itemize}
        \item a rational component $R$ that carries the marked point and is contracted to $p$, and
        \item $m$ genus $g$ components $C_i$ attached to $R$, with $C_i$ parameterized by $U_i$ for $i = 1,\ldots , m$.
    \end{itemize}
    Observe that $U^p_{star}$ is irreducible, since it is the image of a map $M_{0,m+1} \times U_1 \times \ldots \times U_m \to \overline{\mathcal{M}}_{mg,1}(X, m\beta)$.
    Since $U^{p}_{star}$ meets the smooth locus of $\overline{\mathcal{M}}_{mg,1}(X, m\beta)$, there is a unique irreducible component $U_{new}$ containing it.
    Let $U'_{star}$ be the locus of curves in $U_{new}$ of the same topological type as curves in $U^p_{star}$, that is, the locus of curves whose domain is a rational curve containing the marked point union $m$ other genus $g$ components. Let $U_{star}$ be the irreducible component of $U'_{star}$ containing $U^p_{star}$. The fibers of the evaluation map $U_{star} \to X$ are irreducible: as we go around a loop in $X$, we get a monodromy action on $U_1, \dots, U_m$ via permutation, so that any stable map parameterized by the fiber of $U_{star}$ over $p$ has one genus $g$ component from each $U_i$. This means that the fiber of $U_{star}$ over $p$ must be $U^p_{star}$, and is hence irreducible. Since $p$ is a general point, we conclude that a general fiber of the map $U_{star} \to X$ is irreducible.

    We claim that $U_{new} \to X$ has irreducible general fiber. To see this, let $U_{new}^p$ be the fiber of $U_{new} \to X$ over $p$.  Let $U_1' \subset U_{new}^p$ be the unique irreducible component containing $U_{star}^p$ and suppose $U_2' \subset U_{new}^p$ is any irreducible component.  Since $U_{new}$ is irreducible, the monodromy action coming from $U_{new} \to X$ acts transitively on components of $U_{new}^p$. Thus some element of the monodromy group of $U_{new} \to X$ sends $U'_1$ to $U'_2$, meaning that there is a loop in $X$ under which curves of $U_{star}^p$ go to curves in $U'_2$. This implies that $U'_{2}$ also contains $U_{star}^{p}$.  However, this contradicts the irreducibility of the fibers of $U_{star}$, because the generic point of $U_{star}^p$ lies in the smooth locus of the fiber. Thus, $U_1' = U_2'$ and $U_{new} \to X$ has irreducible general fiber, because the construction of $U_{star}$ does not depend on the choice of the general point $p$.

    By standard deformation theory, a general curve in $U_{new}$ will have genus $mg$ and will be $r$-free, and hence will have smooth domain.  The result follows.
\end{proof}

The next lemma allows us to construct very free rational curves from free rational curves.

\begin{lemma} \label{lemm:interiorfreeimpliesvf}
Let $X$ be a $\mathbb{Q}$-factorial normal projective variety.  Suppose there is a free rational curve $s: \mathbb{P}^{1} \to X^{sm}$ whose numerical class is in the interior of $\Nef_{1}(X)$. Then for every $r \geq 0$ there is an $r$-free rational curve $\widehat{s}: \mathbb{P}^{1} \to X^{sm}$ such that $\widehat{s}_{*}\mathbb{P}^{1}$ is proportional to $s_{*}\mathbb{P}^{1}$.
\end{lemma}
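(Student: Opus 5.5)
The plan is to use the free curve $s$ to produce a very free curve through general points, and then glue many copies. Because $\alpha = s_{*}\mathbb{P}^{1}$ lies in the interior of $\Nef_{1}(X)$, it pairs positively with every nonzero pseudo-effective Cartier divisor class. Since $X$ is $\mathbb{Q}$-factorial, every Weil divisor is $\mathbb{Q}$-Cartier. Hence any divisor $D$ on $X$ satisfies $D \cdot \alpha > 0$, and so meets a general deformation of $s$. The curve $s$ is free and lies in $X^{sm}$, so its deformations sweep out a dense open subset of $X^{sm}$. Moreover, gluing free curves in $X^{sm}$ at general points and smoothing gives free curves in $X^{sm}$; the curves stay in $X^{sm}$ because $\operatorname{Sing}(X)$ has codimension $\geq 2$ and free curves avoid codimension $2$ loci after deformation.

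\textbf{Step 1: a very free curve with class a multiple of $\alpha$.} Consider the rationally connected fibration (MRC quotient) $X \dashrightarrow Z$ associated to chains of deformations of $s$, i.e.\ the $\alpha$-rationally-connected quotient in the sense of Koll\'ar. If $\dim Z > 0$, pull back an ample divisor $A$ from a birational model of $Z$ to obtain a divisor $D = \overline{\pi^{-1}(A)}$ on $X$. A general deformation of $s$ lies in a fiber of $\pi$, so it is disjoint from the pullback of a general member of $|A|$ on the locus where $\pi$ is regular and has equidimensional fibers. One checks that it therefore has $D \cdot \alpha = 0$. This contradicts interiority, since $D$ is an effective nonzero $\mathbb{Q}$-Cartier divisor. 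Thus two general points of $X$ are connected by a chain of deformations of $s$. Gluing $m$ copies of $s$ along a comb or chain through general points and smoothing (the standard Koll\'ar argument \cite{KM99}) produces a very free curve in $X^{sm}$ with class $m\alpha$, for some $m$.

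\textbf{Step 2: from very free to $r$-free.} Take a very free $t : \mathbb{P}^{1} \to X^{sm}$ of class $m\alpha$. Glue $N$ copies of $t$ into a chain at general points and smooth. Each step of gluing a $1$-free curve onto a chain increases the minimal slope of the quotients of the restricted tangent bundle. Concretely, $t^{*}T_X(-1)$ is globally generated, and a chain of $N$ very free curves smooths to a curve $u$ with $u^{*}T_X(-N')$ globally generated, where $N' \to \infty$ as $N \to \infty$; this is the standard fact that attaching very free curves at $N$ general points allows one to impose $N$ points. Hence $u$ is $r$-free for $N \gg 0$, and its class $Nm\alpha$ is proportional to $\alpha$.

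The main obstacle is Step 1. One must be careful that the divisor $D$ built from the quotient map is genuinely $\mathbb{Q}$-Cartier and nonzero in $N^{1}(X)$, and that $D \cdot \alpha = 0$ holds in the singular setting. I would handle this by working on a resolution $X' \to X$ on which the quotient is a morphism, pushing forward the pullback of an ample class to get a Weil divisor on $X$. By $\mathbb{Q}$-factoriality this divisor is $\mathbb{Q}$-Cartier, and it is disjoint from general fibers, which contain the deformations of $s$ since these avoid the exceptional locus.
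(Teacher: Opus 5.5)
Your proof is correct but follows a different route from the paper.

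\textbf{The paper's route.} It uses Lemma~\ref{lem: producing irreducible fibers} to pass to a free curve of class $m\alpha$, with $m>(\dim X)^{2}/2$, whose evaluation map has connected fibers. It then uses the slope-theoretic fact that an interior class satisfies $\mu^{min}_{\alpha}(T_{X})>0$. A Grauert--M\"ulich bound then shows that a general deformation is already very free. Finally, it gets $r$-freeness by precomposing with a high-degree self-map of $\mathbb{P}^{1}$.

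\textbf{Your route.} You use interiority only in the weaker form that no nonzero effective $\mathbb{Q}$-Cartier divisor is $\alpha$-trivial. Via $\mathbb{Q}$-factoriality, this kills the rationally connected quotient for the deformations of $s$. You then run the classical Koll\'ar--Miyaoka--Mori chain-smoothing argument.

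\textbf{Comparison.} Your argument is more elementary: it needs no slope theory, no Grauert--M\"ulich, and no connected-fiber construction. The paper's argument gives explicit control of the multiple and fits the slope framework used elsewhere in the paper. In yours, the multiple is the uncontrolled length of a connecting chain.

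\textbf{Points to make explicit when writing it up.}
\begin{itemize}
\item Take the quotient with respect to the open family of free deformations of $s$ lying in $X^{sm}$; Koll\'ar's quotient construction does not require a proper family. This guarantees every link of a connecting chain is free and lies in $X^{sm}$. That is exactly what the smoothing with a fixed endpoint needs, and what the criterion ``a covering family of rational curves through a fixed general point has very free general member'' needs.
\item The same choice makes your ``one checks'' step immediate. A general member avoids the codimension $\geq 2$ indeterminacy locus of $\pi$ and is contracted by $\pi$. Hence it misses $\Supp(D)$ for general $A$, giving $D\cdot\alpha=0$.
\item Your Step~2 is valid: twisting down by one point on each component of a chain of very free curves kills $H^{1}$, so the smoothing has all splitting degrees $\geq N-1$. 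But precomposing a very free curve with a degree-$r$ cover $\mathbb{P}^{1}\to\mathbb{P}^{1}$ gives $r$-freeness in one line.
\end{itemize}
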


\begin{proof}
By Lemma \ref{lem: producing irreducible fibers}, for some $m>0$ we may construct a free curve $s': \mathbb{P}^{1} \to X^{sm}$ whose class is equal to $ms_{*}\mathbb{P}^{1}$ such that the evaluation map for the family of deformations of $s'$ has connected fibers.  After possibly increasing the number of times we glue-and-smooth, we may ensure that $m > (\dim X)^{2}/2$.

\cite[Proposition 2.13]{JLR25} shows that $\mu^{min}_{s_{*}\mathbb{P}^{1}}(T_{X}) > 0$.  In particular this slope must be at least $\frac{1}{\dim X}$.  By choice of $m$, we see that $\mu^{min}_{s'_{*}\mathbb{P}^{1}}(T_{X}) > (\dim X)/2$.  Since the evaluation map for the family of deformations of $s'$ has connected fibers, the Grauert-M\"ulich theorem of \cite[Proposition 3.1]{PRT24} (applied on a resolution of singularities) shows that for a general deformation $\widetilde{s}$ of $s'$ we have
\begin{equation*}
|\mu^{min}(\widetilde{s}^{*}T_{X}) - \mu^{min}_{\widetilde{s}_{*}\mathbb{P}^{1}}(T_{X})| \leq \rk(T_{X})/2 = (\dim X)/2
\end{equation*}
and thus $\mu^{min}(\widetilde{s}^{*}T_{X}) > 0$.  We obtain the desired map $\widehat{s}$ by precomposing $\widetilde{s}$ with a finite morphism $\mathbb{P}^{1} \to \mathbb{P}^{1}$ of sufficiently large degree.
\end{proof}

\begin{remark}
    Lemma \ref{lemm:interiorfreeimpliesvf} shows that the existence of free rational curves on all klt Fano varieties is equivalent to the existence of very free rational curves.  In brief, by passing to $\mathbb{Q}$-factorializations, it suffices to consider only $\mathbb{Q}$-factorial klt Fano varieties.  Assuming free rational curves always exist, one can use the MMP to find free rational curves on each extremal ray of $\Nef_{1}(X)$ and hence (after gluing and smoothing) a free rational curve with class in the interior of $\Nef_{1}(X)$.  Lemma \ref{lemm:interiorfreeimpliesvf} then yields a very free rational curve.
\end{remark}

\subsection{Smoothable terminal Fano varieties}

We first address the case of smoothable terminal Fano varieties.

\begin{theorem} \label{theo:smoothableterminal}
Let $X$ be a terminal Fano variety. Suppose that $X$ is a fiber of a flat projective family $\pi : \mathcal{X} \to T$ whose general fiber is smooth. Then there is a free rational curve $\widehat{s}: \mathbb{P}^{1} \to X^{sm}$. If we assume further that $X$ is $\mathbb{Q}$-factorial, then we can find an $r$-free curve $\widehat{s}$ for every $r \geq 0$.
\end{theorem}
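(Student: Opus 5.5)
Our plan is to produce, on the smooth nearby fibers, a family of rational curves whose degree is controlled and which meets the singular locus only a bounded number of times. We then specialize this family to $X$ and argue that some rational curve lying in $X^{sm}$ survives and is free. First we reduce to a one-parameter family. Replace $T$ by a smooth curve $B$ through $[X]$ with smooth general fiber, and after base change assume a section $p_B$ through general points of the fibers. Since terminality and the Fano condition are open, and a general fiber $\mathcal{X}_t$ is a smooth Fano, Theorem \ref{thm: nicest curves} gives, for each $\Theta$, a rational curve on $X$ through a general point with $\Theta \le -K_X\cdot s_*\PP^1 \le \Theta+n$ meeting $\mathrm{Sing}(X)$ at most $n-2$ times. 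The intended use, however, is the family version: we run the breaking construction of Theorem \ref{theo:exctowerbreaking} in families, using Remark \ref{rmk-numericalExceptionalTowers}.

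Concretely, take a large-degree $r$-free curve $C \to \mathcal{X}_{\eta}$ over the generic point, which exists by \cite[Theorem 1.3]{JLR25}. Build exceptional towers over $p_B$ relative to $B$ as in Remark \ref{rmk-numericalExceptionalTowers}. Then degenerate the family over $B$ to the special fiber $X$, or alternatively degenerate within fibers and take the limit. The aim is a rational stable map into $X$ all of whose components pass through the base point $p$ of the tower, which lies in $X^{sm}$. By Lemma \ref{lem-numericalExceptionalTowers} and Corollary \ref{cor: rational curves degree}, such a subcurve $R$ has large $-K$-degree.

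The key point is that on the smooth fibers $-K_{\mathcal{X}_t}$ and $-K_X$ agree numerically in the family, and the components of $R$ through $p$ are deformations of free curves. We then need to show that some free component of $R$ avoids $\mathrm{Sing}(X)$. For this we would compare degrees as in the Claim in the proof of Theorem \ref{thm: nicest curves}. Each time a tree meets the singular locus, \cite[Lemmas 4.2, 4.3]{JLR25} on a resolution $X'$ forces an extra discrepancy contribution of at least $a_{\min}$. Summing over many marked points, the total degree would exceed the degree $d$ of the original curve, which is computed in the smooth fiber and hence equals $-K_X$-degree by flatness. This contradiction produces a free rational component in $X^{sm}$. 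Gluing and smoothing via Lemma \ref{lemm:loggluing} then yields the free curve $\widehat s$.

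For the $\mathbb{Q}$-factorial statement, we would produce free curves in $X^{sm}$ whose classes span the interior of $\Nef_1(X)$. One way is to do the above with $C\to\mathcal{X}_\eta$ of class in the interior, since specialization $N_1(\mathcal{X}_t)\to N_1(X)$ is surjective for terminal Fanos. Gluing these gives a class in the interior, and Lemma \ref{lemm:interiorfreeimpliesvf} then gives $r$-free curves for all $r$. The main obstacle is the degree-comparison step ensuring the limiting free component avoids $\mathrm{Sing}(X)$. There the singular fiber's discrepancies must be weighed against the degree computed on the smooth fiber. We would try first to transport the argument of Claim \ref{claim: finding Rj} verbatim to the family setting via Remark \ref{rmk-numericalExceptionalTowers}, using a simultaneous log resolution of $\mathcal{X}$ near $X$.
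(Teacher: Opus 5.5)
\textbf{There is a genuine gap.} You correctly flag the step that forces the limiting rational curve to avoid $\mathrm{Sing}(X)$ as the main obstacle. But that step is the whole content of the theorem, and the route you propose for it cannot work.

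Transporting Claim \ref{claim: finding Rj} verbatim only reproduces the conclusion of Theorem \ref{thm: nicest curves}, namely at most $n-2$ meetings with $\mathrm{Sing}(X)$. That counting closes only because failing the ``at most $n-2$'' condition produces, via \cite[Lemmas 4.2, 4.3]{JLR25}, about $n-2$ extra units of degree on $T_j$. This is exactly what is needed to beat the roughly $n_j+\delta$ of slack per marked point in (\ref{eq: bound on d}). If you instead demand that $R_j$ miss $\mathrm{Sing}(X)$, failure buys only an extra $a_{\min}$ (or $1$). That is far below the per-point slack, let alone the additional $O(gn)$ slack coming from the genus-$g$ component $C_0$, so no contradiction arises.

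More fundamentally, your sketch never uses the smoothness of $\mathcal{X}_t$ in an essential way:
\begin{itemize}
\item the limit has $-K_X$-degree $d$, exactly as a degeneration of curves on $X$ would;
\item $r$-free higher-genus curves already exist on $X$ itself by \cite[Theorem 1.3]{JLR25};
\item components of a limit stable map on $X$ are not ``deformations of free curves'' on $\mathcal{X}_t$.
\end{itemize}
If your scheme worked, it would prove the conjecture of \cite{KM99} for every terminal Fano variety.

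\textbf{The missing idea is a tightness argument starting from rational curves.} What the smooth fibers actually supply is a free rational curve $C_t$ of some degree $d>2$. By \cite{KMM92b} it can be chosen with class in the interior of $\Nef_1(\mathcal{X}_t)$. Lemma \ref{lem: climb tower morphism scheme}(2) shows a deformation of $C_t$ meets the top of a general tower of height exactly $k=d-2$, the maximum possible. Extend a general tower $\Phi(X,p,k)$ at a general $p\in X$ to a relative tower over an \'etale neighborhood. Then take the limit $f_0\colon C\to X$ of these $(d-2)$-tangent rational curves and pass to a resolution $\phi\colon X'\to X$. Using Remark \ref{rmk-numericalExceptionalTowers} and Corollary \ref{cor: rational curves degree} with $|\mathcal{Q}_\ell|=1$, you get
\begin{equation*}
d = -K_X\cdot f_{0*}C \geq -\phi^*K_X\cdot s_*C_{adj} \geq -K_{X'}\cdot s_*C_{adj} \geq |\mathcal{L}|+k+1 \geq d.
\end{equation*}
Equality throughout forces $|\mathcal{L}|=1$ and $C$ irreducible through the general point $p$. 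Since terminal discrepancies are positive, it also forces $f_0(C)\subset X^{sm}$, so the curve is free. This chain has no genus-$g$ component and no slack, and that is what your proposal lacks.

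The same argument settles the $\mathbb{Q}$-factorial case cleanly. The limit has the same class as $C_t$, and \cite{dFH11} identifies the pseudo-effective (hence nef) cones of the fibers. So the class is interior and Lemma \ref{lemm:interiorfreeimpliesvf} applies. In your version, by contrast, the class of a broken-off component is uncontrolled. Mere surjectivity of $N_1(\mathcal{X}_t)\to N_1(X)$ would not carry interior classes to interior classes in any case.
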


\begin{proof} 
We may assume that $T$ is smooth of dimension $1$.  Since $X$ is a normal Cartier divisor in $\mathcal{X}$, \cite[Corollaire 5.12.7]{EGAIVII} shows that $\mathcal{X}$ is a normal variety on an open neighborhood of $X$. Thus we may apply \cite[Proposition 3.8]{dFH11} to see that the general fiber of $\pi$ is a Fano variety and that the canonical divisor of $\mathcal{X}$ restricts to the canonical divisor in each fiber.  Let $0 \in T$ be a point such that $X \cong \mathcal{X}_0$.  We will construct the desired curve on $\mathcal{X}_{0}$ by specializing free curves on general fibers using exceptional towers.

Let $\mathcal{X}_{t}$ denote a general fiber of $\pi$.  Choose a finite set of divisors $\{ D_{i} \}$ which generate $\Eff^{1}(\mathcal{X}_{t})$ and choose a finite set of points in $\mathcal{X}_{t}$ that includes one point on each $D_{i}$ as well as a general point of $\mathcal{X}_{t}$.  By \cite[2.1 Theorem]{KMM92b} we know that $\mathcal{X}_{t}$ contains an irreducible rational curve $C_{t}$ through this finite set of points.  In particular $C_{t}$ is a free rational curve of anticanonical degree $d > 2$ whose class lies in the interior of $\Nef_{1}(\mathcal{X}_{t})$.  By Lemma \ref{lem: climb tower morphism scheme}(2), if we take a general exceptional tower $\Phi(\mathcal{X}_{t},p,k)$ of height $k = d-2$ then there will be a deformation of $C_{t}$ whose strict transform meets the top divisor $E_{k}$ of the exceptional tower.

Let $\Phi(X,p,k)$ be a general exceptional tower of height $k = d-2$ on the central fiber.  There exists an \'etale neighborhood $U$ of $0$ such that the base change of $\pi$ to $U$ admits a section $\sigma : U \to \mathcal{X}_U$ with $\sigma(0) = p$ and an extension of $\Phi(X,p,k)$ to a relative exceptional tower $\Phi(\mathcal{X}_U,\sigma,k)$, i.e.~a family of exceptional towers in the fibers of the map $\mathcal{X}_{U} \to U$.  By applying a Hilbert scheme argument to the rational curves constructed above, we obtain a family $f : \PP^1 \times U\setminus\{0\} \to \mathcal{X}_{U}$ of rational curves contained in the fibers of $\pi_U : \mathcal{X}_{U} \to U$ whose strict transforms meet the top divisor $\mathcal{E}_k$ of the exceptional tower.  Let $f_0: C \to X$ be the limit of these curves in $X = \mathcal{X}_0$. We note that $C$ must have arithmetic genus $0$, must have $-K_X$ degree $d$, and it must meet $E_k$ in $\Phi(X,p,k)$.

Let $\phi: X' \to X$ be a resolution of singularities of $X$ and let $C_{exc}$, $\mathcal{L}$, $C_{adj}$, and $\mathcal{Q}_\ell$ be as given in Lemma \ref{lem-numericalExceptionalTowers} and Remark \ref{rmk-numericalExceptionalTowers}. Observe that for each component $C_{\ell}$ of $C_{adj}$, $|\mathcal{Q}_\ell| = 1$, so let $q_{\ell}$ be the unique point of $\mathcal{Q}_\ell$. Let $C_{small} = C_{exc} \cup C_{adj}$. Taking strict transforms, we get a map $s: C_{small} \to X'$. By Corollary \ref{cor: rational curves degree}, we see that for each component $C_{\ell}$ of $C_{adj}$ we have 
$$-K_{X'} \cdot s_*C_{\ell} \geq 1 + \sum_{i=0}^{k} (i+1) \cdot \mult_{q_{\ell}}(s_k, \widetilde{E}_i).$$
Thus, 
\begin{align*}
    -K_{X} \cdot f_{0*}C & \geq -\phi^{*}K_X \cdot s_* C_{adj} \\
    & \geq -K_{X'} \cdot s_*C_{adj} \\
    & \geq |\mathcal{L}| + \sum_{\ell \in \mathcal{L}} \left(\sum_{i = 0}^k (i+1) \cdot \mult_{q_{\ell}}\big(s|_{C_\ell}, \widetilde{E}_i\big)\right) \\
    & \geq |\mathcal{L}| + k+1 = |\mathcal{L}| + d - 1 \geq d.
\end{align*}
Hence all these inequalities must be equalities.  In particular $|\mathcal{L}| = 1$ and $-K_{X} \cdot f_{0*}C = -K_{X} \cdot \phi_{*}s_{*}C_{adj}$, and hence, $C$ is irreducible. If $s(C)$ met the singular locus of $X$, then we would have $-\phi^{*}K_X \cdot s_*C > -K_{X'} \cdot s_* C$, contradicting the equality above. Hence, $s(C)$ is contained in the smooth locus, and since it meets a general point, it must be a free curve.

Now suppose further that $X$ is $\mathbb{Q}$-factorial. By \cite[Proposition 6.5]{dFH11} there is a sheaf $\mathcal{GN}^{1}(\mathcal{X}/T)$ on $T$ in the analytic topology whose fiber at a point $t \in T$ is isomorphic to $N^{1}(\mathcal{X}_{t})$.  Furthermore, by \cite[Discussion before Theorem 6.8]{dFH11} this identification preserves the pseudo-effective cone of divisors.  Dually, we obtain an identification of the nef cone of curves for fibers of $\pi: \mathcal{X} \to T$.  This implies that $f_{0}: C \to X$ lies in the interior of the nef cone of curves on $X$.  The existence of the desired $r$-free curves follows from Lemma \ref{lemm:interiorfreeimpliesvf}.
\end{proof}

\subsection{Terminal Fano varieties of low dimension} 
We next discuss LCIQ terminal fourfolds.  By combining Theorem \ref{theo:introratcurves} with \cite[Corollary 5.9]{KM99}, we obtain a quick proof of the existence of a free rational curve.

\begin{theorem} \label{theo:lciqfourfold}
    Let $X$ be an LCIQ terminal Fano fourfold.  Then there is a free rational curve in the smooth locus of $X$.
\end{theorem}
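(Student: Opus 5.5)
We would combine Theorem \ref{thm: nicest curves} (the version of Theorem \ref{theo:introratcurves} on a resolution) with \cite[Corollary 5.9]{KM99}. As we recall it, that result says the following. Let $X$ be a variety with LCIQ singularities, and let $s: \PP^1 \to X$ be a rational curve through a general point that meets the singular locus in a controlled way. If its anticanonical degree is large compared to the number of singular points it passes through (roughly $-K_X \cdot s_*\PP^1 \geq \dim X + 1$ more than the contribution of the singular points met), then $s$ deforms so as to move off $\operatorname{Sing}(X)$. More precisely, a general deformation of $s$ through the general point lies in $X^{sm}$ and is free. The mechanism is that for LCIQ singularities the deformation space of $s$ has dimension at least $-K_X\cdot s_*\PP^1 + \dim X$. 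Moreover, the deformations keeping the curve through a given singular point form a locus of codimension at least $\dim X - 2$ or so. So the curves through singular points cannot fill out a component.

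The steps are as follows.

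\textbf{Step 1.} Take $n = 4$ and a resolution $\pi: X' \to X$. Apply Theorem \ref{thm: nicest curves} at a general point $p$ with a suitable $\Theta$; a large $\Theta$ should suffice. This produces a rational curve $s$ through $p$ satisfying:
\begin{itemize}
\item $-K_X \cdot (\pi\circ s)_*\PP^1 \geq -K_{X'} \cdot s_*\PP^1 \geq \Theta$;
\item the image $\pi\circ s(\PP^1)$ meets $\operatorname{Sing}(X)$ at most $n-2 = 2$ times.
\end{itemize}
Since $p$ is general and the curves deform (their log normal sheaves are globally generated), the curve $\pi \circ s$ is a member of a dominant family on $X$.

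\textbf{Step 2.} Check that the hypotheses of \cite[Corollary 5.9]{KM99} hold for $\pi\circ s$. In dimension $4$, the singular locus of a terminal variety has codimension at least $3$, so it is a finite set of points. Meeting it at most twice, with anticanonical degree at least $\Theta$, gives enough room once $\Theta$ is chosen large relative to the bound in that corollary. This step is where the choice of $\Theta$ enters.

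\textbf{Step 3.} Conclude that a general deformation of $\pi\circ s$ through $p$ avoids the finitely many singular points. Since it passes through a general point, it is a free rational curve in $X^{sm}$.

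The main obstacle is matching the precise hypotheses of \cite[Corollary 5.9]{KM99}. Its numerical inequality involves local invariants of the LCIQ singularities, such as the index and the local smoothing multiplicity at each singular point met. We need to verify that "at most $n-2$ intersections" together with "degree at least $\Theta$" implies that inequality.

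If the corollary instead requires the curve to meet the singular locus at most $\dim X - 3 = 1$ times, we would use a different fact. Each intersection with a $0$-dimensional singular locus costs roughly $\dim X - 1 = 3$ conditions. These can be absorbed by taking $\Theta$ large, because the dimension lower bound $-K_X\cdot s_*\PP^1 + \dim X - 3$ for LCIQ singularities then exceeds the dimension of the subfamily forced through the singular points. We would first try to prove this directly by such a dimension count, and only fall back to the literal statement of \cite{KM99} if needed.
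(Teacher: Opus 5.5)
Your plan is the paper's proof: Theorem \ref{thm: nicest curves} with $n=4$ (any $\Theta \geq 1$ works) yields a dominant family of rational curves meeting $\mathrm{Sing}(X)$ at most twice, and \cite[5.9 Corollary]{KM99} takes exactly this as its hypothesis for an LCIQ terminal Fano and outputs a free rational curve in $X^{sm}$; so there is no degree inequality to check, and your Step 2 worries and the fallback dimension count are unnecessary. Discard the claim that $\mathrm{Sing}(X)$ is finite: terminality only gives codimension $\geq 3$, so in a fourfold the singular locus may be a curve. The fallback would fail in any case, since the deformation bound $-K_X \cdot C + \dim X - 3$ holds for LCI singularities and is not available in general at the quotient-type points of an LCIQ variety.
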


\begin{proof}
Theorem \ref{theo:introratcurves} shows that there is a dominant family of rational curves on $X$ which meet the singular locus at most $2$ times.  The existence of a free rational curve follows from \cite[5.9 Corollary]{KM99} which shows that any terminal Fano variety that has LCIQ singularities and admits a dominant family of rational curves meeting the singular locus at most twice will also have a free rational curve in its smooth locus.
\end{proof}

\begin{proof}[Proof of Theorem \ref{theo:freeonterminal}:]
Combine Theorem \ref{theo:smoothableterminal} and Theorem \ref{theo:lciqfourfold}.
\end{proof}

We can also improve \cite[Theorem 1.4]{JLR25}:

\begin{theorem} \label{theo:terminalthreefoldveryfree}
    Suppose that $X$ is a terminal Fano threefold.  Then $X$ admits an $r$-free rational curve for every $r \geq 0$.
\end{theorem}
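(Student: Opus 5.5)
The plan is to reduce to Lemma \ref{lemm:interiorfreeimpliesvf}. First we pass to a $\mathbb{Q}$-factorialization $\widetilde{X} \to X$. A terminal Fano threefold admits a small $\mathbb{Q}$-factorial modification $\widetilde{X}$ that is terminal, $\mathbb{Q}$-factorial, and weak Fano, with $-K_{\widetilde{X}}$ the pullback of $-K_X$. Because the map is small, it is an isomorphism over the complement of finitely many points of $X$, since the singularities of a terminal threefold are isolated. A free curve in $\widetilde{X}^{sm}$ through a general point avoids the finitely many exceptional curves and so maps isomorphically onto a free curve in $X^{sm}$. It therefore suffices to produce $r$-free curves on $\widetilde{X}$. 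The statement of Lemma \ref{lemm:interiorfreeimpliesvf} only requires $\mathbb{Q}$-factoriality and normality, not Fano-ness, so this replacement is harmless.

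The next step is to find a free rational curve in $\widetilde{X}^{sm}$ whose class lies in the interior of $\Nef_1(\widetilde{X})$. We first obtain free curves in the smooth locus along enough directions. By \cite[Theorem 1.4]{JLR25} there is at least one free rational curve in the smooth locus. To reach the interior, we run the MMP on $\widetilde{X}$. Since $-K_{\widetilde{X}}$ is big and nef, $\widetilde{X}$ is a Mori dream space and $\Nef_1(\widetilde{X})$ is rational polyhedral. Its extremal rays correspond, dually, to faces of $\Eff^1$ and hence to the divisorial contractions and Mori fibrations appearing on various birational models. On each such extremal ray we want a free rational curve in the smooth locus, for example a general fiber curve of a Mori fiber space on a small modification. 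Here the generality of the fibers, together with the result of \cite{JLR25} in the relative setting or in lower dimension (klt surfaces by \cite{KM99}, and curves), gives free rational curves missing the finitely many singular points. Gluing one such curve from each ray at general points and smoothing, which is possible since all are free in the smooth locus, yields a free rational curve with class in the interior.

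An alternative for this step, if the ray-by-ray argument is awkward, uses Theorem \ref{thm: nicest curves} with $n=3$. It produces curves meeting the singular locus at most once through general points, and for large $\Theta$ the standard Fano threefold arguments of \cite{JLR25} then show the component lies in $X^{sm}$. One can then combine free curves of different classes, via interior points of the cone spanned by classes of free curves, to land in the interior.

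Finally, apply Lemma \ref{lemm:interiorfreeimpliesvf} to obtain an $r$-free curve $\widehat{s}:\PP^1 \to \widetilde{X}^{sm}$ for every $r$, and push it forward to $X$. I expect the main obstacle to be the middle step. It must show that every extremal ray of the nef cone of curves is approached by free curves in the smooth locus, or equivalently that the free classes in the smooth locus are not confined to a face. I would attack it by dualizing: if all such classes lie on a face, there is a nonzero nef divisor $D$ vanishing on them. I would then use the Mori fibration or contraction defined by $D$ (available by the Mori dream space property) and apply \cite[Theorem 1.4]{JLR25} or the surface case to its fibers, producing a free curve with $D$-degree positive or zero and contradicting the choice.
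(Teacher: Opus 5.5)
Your main line matches the paper's proof. You $\mathbb{Q}$-factorialize, obtain a free rational curve on each extremal ray $R$ of $\Nef_{1}$ from a general fiber of a Mori fibration on an MMP output using \cite[Theorem 1.4]{JLR25}, glue and smooth into the interior, and apply Lemma \ref{lemm:interiorfreeimpliesvf}. You should also deform the fiber curve off the codimension-two locus where the birational map is not an isomorphism, as the paper does.

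Your closing ``dualizing'' paragraph is muddled, though. The cone dual to $\Nef_{1}$ is $\Eff^{1}$, so the divisor $D$ with $D^{\perp}\cap\Nef_{1}=R$ is pseudo-effective rather than nef. The Mori fibration comes from running the $D$-MMP, which may contract divisors, not only a small modification. The curves it yields have $D$-degree zero and lie on $R$, which is what you want, so no contradiction argument is needed there.
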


\begin{proof}
    Since a general $r$-free curve will avoid any fixed codimension $\geq 2$ subset, it suffices to find an $r$-free curve on a $\mathbb{Q}$-factorialization of $X$.  So we will assume $X$ is $\mathbb{Q}$-factorial.
    
    We first show that each extremal ray $R$ of $\Nef_{1}(X)$ is spanned by a free rational curve. Indeed, suppose that $D$ is an effective divisor on $X$ such that $\Nef_{1}(X)$ intersects $D^{\perp}$ only along the ray $R$.  By running the $D$-MMP, we obtain a birational model $\phi: X \dashrightarrow X'$ and a Mori fibration $\pi: X' \to Z$.  \cite[Theorem 1.4]{JLR25} guarantees that there is a free rational curve in a general fiber of $\pi$.  By deforming this curve, we may assume it is contained in the locus where $\phi^{-1}$ is an isomorphism.  Then the strict transform on $X$ is a free rational curve whose numerical class is in $R$.
    
    Thus, for each extremal ray $R$ of $\Nef_{1}(X)$ we may find a free rational curve whose numerical class lies in $R$.  By gluing and smoothing suitable deformations of these curves, we find a free rational curve whose numerical class lies in the interior of $\Nef_{1}(X)$.  Then the existence of an $r$-free rational curve follows from Lemma \ref{lemm:interiorfreeimpliesvf}.
\end{proof}

\subsection{Very free rational curves on smooth Fano varieties}

Our final result in this section addresses very free curves on smooth Fano varieties.

\begin{theorem}
  Let $X$ be a smooth Fano variety.  Then every rational ray in the interior of $\Nef_{1}(X)$ is generated by the class of a very free rational curve.  
\end{theorem}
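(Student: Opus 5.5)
Let $\gamma$ be a rational class in the interior of $\Nef_{1}(X)$. By Lemma \ref{lemm:interiorfreeimpliesvf} (a smooth variety is $\mathbb{Q}$-factorial and $X^{sm}=X$), it suffices to produce a single free rational curve whose class is proportional to $\gamma$; that lemma then upgrades it to an $r$-free curve in the same ray for every $r$, in particular a very free one. So the plan is: first produce free curves of higher genus with class a large multiple of $\gamma$, then break them with Theorem \ref{theo:exctowerbreaking}, and finally correct the bounded error in the class by gluing.

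\textbf{Step 1.} By \cite[Theorem~1.3]{JLR25}, or classically since $X$ is rationally connected, there is a smooth curve $C$ of genus $g$ carrying $r$-free maps for all $r$. After gluing and smoothing such curves with free rational curves spanning $\Nef_1(X)$, I expect to obtain, for every sufficiently large and divisible $m$, a free curve $s_m : C_m \to X$ of class $m\gamma + \eta$, where $\eta$ ranges over a fixed finite set. Its component $M_m$ has the expected dimension $-K_X\cdot(m\gamma+\eta) + (1-g_m)\dim X$, which exceeds $\dim X$ for $m\gg0$, provided the genus $g_m$ of the glued curve is kept bounded. This is possible by gluing only rational tails to one fixed $C$.

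\textbf{Step 2.} Apply Theorem \ref{theo:exctowerbreaking} to $M_m$. It gives a dominant component $N_m\subset\Mor(\PP^1,X,\beta_m)$; since $X$ is smooth, a general member is free. The class difference $m\gamma+\eta-\beta_m$ is effective and lies in a bounded set $B$ independent of $m$. Since $B$ has finitely many integral classes, passing to a subsequence fixes the difference $\epsilon$, so $\beta_m = m\gamma + \eta - \epsilon$ with $\eta-\epsilon$ fixed.

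\textbf{Step 3, the main obstacle.} We must now land exactly on the ray of $\gamma$. Take $m_1<m_2$ in the subsequence. The two free rational curves have classes $m_i\gamma+\theta$ with the same $\theta$, and their difference is $(m_2-m_1)\gamma$. But a difference cannot be realized by gluing, so this approach needs modification. I would instead exploit that $\gamma$ is interior. Write $\theta$ as a difference of nef integral classes, $\theta = \theta^+-\theta^-$, and choose free rational curves $F$ in the class $\theta^-+c\gamma'$. These exist because every integral class deep in the interior is represented by free rational curves: apply the argument of Steps 1--2 to such classes, where the bounded error stays interior. Choose them so that $\theta+[F]$ is a multiple of $\gamma$ after adding $N\gamma$. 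Concretely, pick an integer $t$ with $t\gamma-\theta$ integral and deep in the interior of $\Nef_1(X)$. Steps 1--2 applied to the class $t'\gamma-\theta$, with large $t'$, produce a free rational curve of class $t'\gamma-\theta+\theta'$ for some other bounded error $\theta'$. The difficulty is that the errors do not cancel. To resolve this, I would run the pigeonhole argument of Step 2 on the finitely many error classes modulo the lattice $\mathbb{Z}\gamma$. This yields free rational curves whose classes are $a\gamma+\theta_1$ and $b\gamma-\theta_1$ with the same $\theta_1$. Gluing and smoothing them gives a free rational curve of class $(a+b)\gamma$, and Lemma \ref{lemm:interiorfreeimpliesvf} then finishes the argument. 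Arranging this exact cancellation of the bounded errors is the step I expect to require the most care.
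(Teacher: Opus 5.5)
\textbf{There is a genuine gap, in two places.}

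\textbf{Step 1 is unjustified and essentially circular.} The existence result you cite only gives $r$-free maps $C \to X$, with no control over their numerical class. To reach classes $m\gamma+\eta$ with $\eta$ bounded by gluing, you need free curves whose classes surround the ray of $\gamma$. Your proposed tails, ``free rational curves spanning $\Nef_1(X)$'', presuppose exactly that. If you already had free rational curves whose cone contains $\gamma$ in its interior, the theorem would follow immediately (see below). Using positive-genus pieces instead does not help: gluing $m$ copies of a genus-$g$ free curve makes the genus grow with $m$. That destroys the bound $-K_X\cdot(\alpha-\beta)\le g\dim X$ from Theorem~\ref{theo:exctowerbreaking}. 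The input you are missing is twofold. First, rays spanned by curves $s:C\to X$ with $\mu^{min}(s^*T_X)>0$ are dense in $\Nef_1(X)$ \cite[Theorem 3.6]{JLR25}. Second, for such a curve there are free maps from the \emph{same} $C$ in classes $m's_*C$ \cite[Theorem 3.2]{JLR25}, which keeps the genus fixed ray by ray.

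\textbf{Step 3 does not close.} Applying Steps 1--2 to $t'\gamma-\theta$ just produces a new bounded error $\theta'$. No pigeonhole on the finite set of error residues modulo $\mathbb{Z}\gamma$ produces the pair $a\gamma+\theta_1$, $b\gamma-\theta_1$ by itself. Your auxiliary claim that every integral class deep in the interior is represented \emph{exactly} by a free rational curve is unproven, and it is stronger than the theorem.

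\textbf{The missing idea is convexity.} Do not use the breaking step to hit $\gamma$. Use it to show that rays of free rational curves are dense in $\Nef_1(X)^\circ$.
\begin{enumerate}
\item Take a ray spanned by $s_*C$ as above, and a free $s'$ of class $m's_*C$. Breaking $s'$ with Theorem~\ref{theo:exctowerbreaking} gives a free rational curve $T$ with $s'_*C-T$ in a fixed finite set of effective classes. Hence $T/m'$ is as close to $s_*C$ as you like, and Lemma~\ref{lemm:interiorfreeimpliesvf} upgrades $T$ to a very free curve on the same ray.
\item Choose finitely many such very free rational classes whose cone contains the ray of $\gamma$ in its interior.
\item The classes are integral and $\gamma$ is rational, so some positive \emph{integer} combination of them lies on the ray of $\gamma$.
\item Gluing and smoothing suitable deformations of the corresponding curves gives a very free rational curve on that ray.
\end{enumerate}
This is the paper's argument. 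Your ``exact cancellation'' is a special case of it. For instance, when $\rho(X)=2$ you must glue $q$ copies of one curve to $p$ copies of another, not one copy of each. Making Step 3 rigorous therefore requires this cone argument anyway, at which point fixing $\gamma$ in advance buys nothing.
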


\begin{proof}
Let $\mathcal{R} \subset \Nef_{1}(X)^{\circ}$ denote the set of rays generated by classes of very free rational curves.  We first show that $\mathcal{R}$ is dense in $\Nef_{1}(X)^{\circ}$.  By \cite[Theorem 3.6]{JLR25} there is a dense set of rays in $\Nef_{1}(X)$ which are represented by (not necessarily rational) curves $s: C \to X$ such that $\mu^{min}(s^{*}T_{X}) > 0$.  Thus it suffices to show that for every open neighborhood $U \subset \Nef_{1}(X)$ of a class $s_{*}C$ as above there is an element of $\mathcal{R}$ which intersects $U$.

Choose a positive integer $m$ sufficiently large so that the finitely many $\mathbb{Z}$-classes $\beta \in \Eff_{1}(X)$ with $-K_{X} \cdot \beta \leq g(C) \dim X$ each satisfy $s_{*}C - \frac{1}{m} \beta \in U$.  By \cite[Theorem 3.2]{JLR25}  we can find a (not necessarily rational) free curve $s': C \to X$ such that $s'_{*}C = m's_{*}C$ for some $m' \geq m$.  If $g(C) = 0$, let $T$ be $s'_{*}C$.  If $g(C)>0$, apply Theorem \ref{theo:exctowerbreaking} to $s': C \to X$ to find a free rational curve $T$ such that $(s'_{*}C - T) \in \Eff_{1}(X)$ and $-K_{X} \cdot (s'_{*}C - T) \leq g(C) \dim X$.  Then $\frac{1}{m'}T \in U$.  Lemma \ref{lemm:interiorfreeimpliesvf} shows that there is a very free rational curve whose numerical class is proportional to $T$.  This establishes density of $\mathcal{R}$.

Finally, we must show that every rational ray $R$ in the interior of $\Nef_{1}(X)$ is generated by a very free curve.  We can choose a simplicial cone in the interior of $\Nef_{1}(X)$ containing $R$.  By choosing sufficiently small open neighborhoods of the generators of this simplicial cone and appealing to density of $\mathcal{R}$, we can find a finite set of very free rational curves $\{ C_{i} \}$ such that the cone generated by their numerical classes contains $R$ in its interior.  In particular there is a positive $\mathbb{Z}$-linear combination of the numerical classes of the $C_{i}$ that lies on $R$.  Thus we can glue and smooth a compatibly chosen collection of deformations of the various $C_{i}$ to find a very free rational curve along the ray $R$.
\end{proof}

\section{Fujita invariants} \label{sect:fujitainvariants}

The Fujita invariant compares the negativity of the canonical divisor against the positivity of a chosen big and nef $\mathbb{Q}$-Cartier divisor $L$.  The following definition slightly extends Definition \ref{defi:introa-invariant}.

\begin{definition}
Suppose $(Y,L)$ is a pair consisting of a projective variety $Y$ and a big and nef $\mathbb{Q}$-Cartier divisor $L$.  When $Y$ is smooth, the Fujita invariant is
\begin{equation*}
a(Y,L) = \min \{ t\in \mathbb{R} \mid  K_Y + tL \textrm{ is pseudo-effective }\}.
\end{equation*}
If $Y$ is singular we choose a resolution of singularities $\phi: Y' \to Y$ and define $a(Y,L)$ to be $a(Y',\phi^{*}L)$; this value is independent of the choice of $\phi$ by \cite[Proposition 2.7]{HTT15}.
\end{definition}

The papers \cite{LT19} and \cite{LRT24} demonstrate that the existence of a family of curves of large dimension implies that the Fujita invariant is also large.  The following easy result demonstrates a typical application of Fujita invariants to curves on projective varieties.

\begin{proposition} \label{prop:fujupperbound}
    Let $Y$ be a smooth projective variety equipped with a big and nef $\mathbb{Q}$-Cartier divisor $L$. Suppose that $M$ is an irreducible component of $\Mor(C,Y)$ parameterizing a dominant family of curves on $Y$.  Then for $s \in M$ we have
    \[ \dim M \leq a(Y,L) (L \cdot s_{*}C) + \dim Y + 2g(C). \]
\end{proposition}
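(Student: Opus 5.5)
The plan is to compare $\dim M$ with a lower bound on the degree of the restricted tangent bundle, and then use $K_Y + a(Y,L)L$ being pseudo-effective together with the fact that a dominant family consists of movable curves. Write $a = a(Y,L)$. If $a \leq 0$ then $K_Y$ is pseudo-effective and $Y$ is not uniruled; we handle this case by the same argument below, since nothing in it uses the sign of $a$.

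First, I would bound the tangent space. For a general $s \in M$ we have $\dim M \leq h^0(C, s^*T_Y)$. Let $V \subset s^*T_Y$ be the subsheaf generated by global sections. Since $M$ is dominant, a general $s$ gives $V$ of full rank at a general point, but we do not even need that. We have $h^0(C, s^*T_Y) = h^0(C,V)$, so it suffices to bound $h^0(C,V)$ for a generically globally generated sheaf $V$ of rank $\rho \leq \dim Y$. The standard estimate I would use is $h^0(C,V) \leq \deg V + \rho + g\rho$, or more crudely $h^0(C,V)\le \deg V + \dim Y + 2g$. To prove it, filter $V$ by taking a general section to obtain a line subbundle, and induct: for a globally generated line bundle $N$ one has $h^0(N) \leq \deg N + 1$ when $\deg N \leq 2g$ (Clifford gives $\deg N/2 + 1$), and $h^0(N) = \deg N + 1 - g$ otherwise. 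Care is needed to keep the genus contribution at $2g$ total rather than $g\rho$.

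The main obstacle is exactly this genus bookkeeping. To handle it, I would instead quote the known result underlying \cite{LT19} and \cite{LRT24}, which the proposition says is "prior work": $\dim M \leq -K_Y\cdot s_*C + \dim Y + 2g$ for dominant families with generically globally generated $V$. Concretely, $\deg V \leq \deg s^*T_Y = -K_Y \cdot s_*C$, because the quotient $s^*T_Y/V$ has nonnegative degree in the relevant range after saturation. I would then apply a Clifford-type inequality for generically generated bundles, $h^0(V) \le \deg V + \rho + g\cdot\min(1,\ldots)$, arguing via a maximal destabilizing filtration. Pieces of slope $\ge 2g-1$ contribute $\deg + \rk(1-g)$, and pieces of smaller slope contribute via Clifford at most $\deg/2 + \rk$. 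Summing gives at most $\deg V + \dim Y + $ a genus term, and I would check this is bounded by $2g$. I expect this last check to be the delicate point.

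Finally, I would convert to $L$. The curves $s_*C$ for $s$ in a dominant family are movable, hence nef classes, so $(K_Y + aL)\cdot s_*C \geq 0$ by duality of $\Eff^1$ and movable/nef curves (pseudo-effective divisors pair nonnegatively with curves through general points). Thus $-K_Y \cdot s_*C \leq a\, L\cdot s_*C$. Combining this with the previous step gives $\dim M \leq a(Y,L)(L\cdot s_*C) + \dim Y + 2g(C)$.
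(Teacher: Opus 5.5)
Your final argument is essentially the paper's proof. You quote the bound $\dim M \leq -K_Y\cdot s_{*}C + \dim Y + 2g(C)$ for dominant families (the paper cites \cite[Proposition 2.5]{LRT23}). You then combine it with $(K_Y + a(Y,L)L)\cdot s_{*}C \geq 0$, which holds because a pseudo-effective divisor pairs nonnegatively with curves moving in a dominant family.

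The self-contained sketch in your middle paragraphs should be dropped or repaired, for two reasons:
\begin{enumerate}
\item $s^*T_Y/V^{sat}$ can have negative degree. For the graphs $c \mapsto (c,t)$ in $C \times \PP^1$ with $g(C)\geq 2$ one has $V=\OO$ with quotient $T_C$, so even $\deg V \leq -K_Y\cdot s_{*}C$ fails.
\item $\deg V + \rho + g\rho$ is not bounded by $\deg V + \dim Y + 2g$, so your ``more crudely'' step goes the wrong way.
\end{enumerate}
A correct elementary route goes as follows. For globally generated $V$ one has $h^0(V)\leq \deg V + \rk V$. Dominance forces $\rk V \geq \dim Y - 1$. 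When equality holds, the line bundle $s^*T_Y/V^{sat}$ receives a nonzero map from $T_C$, which gives $\deg V \leq -K_Y\cdot s_{*}C + 2g-2$.
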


\begin{proof}
    Since $K_{Y} + a(Y,L)L$ is pseudo-effective, it has non-negative intersection against any curve that moves in a dominant family, and in particular against $s_{*}C$.  Thus the statement follows from the upper bound $\dim M \leq -K_{Y} \cdot s_{*}C + \dim Y + 2g(C)$ that holds for a dominant family of curves on $Y$ by \cite[Proposition 2.5]{LRT23}.
\end{proof}

The main results of this section prove a type of converse to Proposition \ref{prop:fujupperbound}: there exist irreducible components of $\Mor(C,Y)$ whose dimension is bounded below by an expression involving the Fujita invariant.  We will deduce these dimension bounds from bounds on intersection numbers of moving curves against the adjoint divisor $K_{Y}+a(Y,L)L$.  Proposition \ref{prop:orthogonaltoadjoint} proves optimal bounds for certain curves $C$, while Theorem \ref{theo:approximatebound} gives slightly weaker bounds for all curves.

We also give numerous applications of these results to the Fujita invariant and its relationship with families of curves.  In particular, we show that as we vary the curve $C$ the moduli space $\Mor(C,Y)$ behaves ``uniformly'': we can study the behavior for a single curve and then deduce geometric properties of this space for curves of arbitrary genus.

\subsection{Fujita invariants and existence of curves}

In this section we prove two main results: 
we construct irreducible components of $\Mor(C,Y)$ whose dimension captures the Fujita invariant of $Y$.  A crucial ingredient is the recent work of \cite{JLR25} which constructs free curves in specified parts of the nef cone of curves.  Our first result finds curves which pair to $0$ with the adjoint divisor $K_Y+a(Y,L)L$.

\begin{proposition} \label{prop:orthogonaltoadjoint}
Let $Y$ be a smooth projective uniruled variety and let $L$ be a big and nef $\mathbb{Q}$-Cartier divisor on $Y$.  Then there is a curve $C$ with the following property: for any $r \geq 0$ and any $\Theta \geq 0$ there is a morphism $s: C \to Y$ satisfying:
\begin{enumerate}
    \item $s$ is an $r$-free curve in a fiber of the Iitaka fibration for $(K_{Y} + a(Y,L)L)$ (and in particular, $s$ is almost $r$-free),
    \item $L \cdot s_{*}C \geq \Theta$, and
    \item $(K_{Y} + a(Y,L)L) \cdot s_{*}C = 0$.
\end{enumerate}
\end{proposition}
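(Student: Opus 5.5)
Set $a = a(Y,L)$ and $D = K_Y + aL$. The plan is to pass to the general fiber $F$ of the Iitaka fibration of $D$ (after a birational modification) and produce $r$-free curves of a single genus there, using \cite{JLR25}. First I record that $a > 0$: since $Y$ is uniruled, $K_Y$ is not pseudo-effective, while $L$ is big. By \cite{BDPP} and the definition of $a$, the divisor $D$ is pseudo-effective but lies on the boundary of $\Eff^{1}(Y)$. After replacing $Y$ by a smooth birational model on which $D$ admits a birational Zariski decomposition or an Iitaka fibration $\pi: Y \dashrightarrow Z$, I may assume the fibration is a morphism on an open set. I also note that $L$ may be pulled back, and that the conditions (1)--(3) concern only curves in a general fiber, which avoid the modified locus, so nothing is lost by this replacement. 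Let $F$ be a general fiber. Then $\kappa(F, D|_F) = 0$, and $D|_F = K_F + aL|_F$ is pseudo-effective with $a(F, L|_F) = a$. If $D$ is not big, one can use the results of \cite{LT19} and \cite{LRT24} to get an adjoint-rigid structure on $F$. This holds because $K_F + a L|_F$ has Iitaka dimension $0$, so that $K_F + aL|_F \equiv N$ with $N$ effective and rigid.

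The key step is to find curves in $F$ that are $r$-free and satisfy $N \cdot s_*C = 0$. Running a $(K_F + aL|_F)$-MMP, or equivalently contracting the negative part, I expect $F$ to be birational to a model $F'$ with $K_{F'} + aL' \equiv 0$. This $F'$ is a klt-type log Fano-like variety, or a pair $(F', \Delta')$ with $-(K_{F'}+\Delta')$ big and nef. On $F'$ I invoke \cite[Theorem 1.3]{JLR25} (used in Step~1 of the proof of Theorem \ref{thm: nicest curves}) to obtain a smooth curve $C$ such that $F'$ admits $r$-free maps $s: C \to F'$ for every $r$. These curves may be chosen general in their families, hence they avoid the indeterminacy and exceptional loci and lift to $F$ with $N \cdot s_*C = 0$, giving (3). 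Freeness in $F$ and the fact that $F$ is a fiber of a smooth morphism on an open set give (1) and almost $r$-freeness. The curve $C$ is independent of $r$, as \cite{JLR25} provides, and it must also be independent of the fiber chosen; the latter follows by choosing $C$ for the generic fiber, which works because the fibers deform in a family.

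For (2), I observe that $r$-free curves have degree growing with $r$. Indeed, $-K_F \cdot s_*C \geq \dim F \cdot (2g + r)$, and since $aL \cdot s_*C = -K_F \cdot s_*C + N \cdot s_*C = -K_F \cdot s_*C$, taking $r \gg \Theta$ yields $L \cdot s_*C \geq \Theta$.

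The main obstacle is the passage from $F$ to a model where the adjoint divisor is numerically trivial. I need the model to be mild enough (for instance klt, or terminal after a small modification) for \cite[Theorem 1.3]{JLR25} to apply. I also need the general free curves to miss the base locus of $N$, so that the pairing with $N$ is exactly zero. I would first try to use the existence of a minimal model for $K_F + aL|_F$ with $L$ big, via \cite{BCHM}, to obtain this structure.
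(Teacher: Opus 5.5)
Your route differs from the paper's. Its outline is sound, but two steps are asserted rather than proved.

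\textbf{The paper's route.} The paper never restricts to a fiber. Wilson's theorem gives $a(Y,L)L \sim_{\mathbb{Q}} \Delta + H$ with $H$ ample and $(Y,\Delta+H)$ klt. The face $\mathcal{F} = \Nef_1(Y)\cap (K_Y+a(Y,L)L)^{\perp}$ is then a $(K_Y+\Delta)$-negative extremal face of $\Nef_1(Y) + \Eff_1(Y)_{K_Y+\Delta\geq 0}$. The argument of \cite[Proposition 3.5]{JLR25} then produces, directly on $Y$, curves of one fixed genus that are $r$-free in fibers of the Iitaka fibration and lie on $\mathcal{F}$; (2) follows exactly as in your last paragraph. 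This packages the MMP and the free-curve existence into one citation, and never has to compare $Y$ with a resolution. Your fiberwise version makes the geometry explicit, but costs the two points below.

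\textbf{First gap: applying \cite[Theorem 1.3]{JLR25} to $F'$.} Your MMP step is fine. Choose a klt $\Delta_F \sim_{\mathbb{Q}} aL|_F$; it is big, so \cite{BCHM} gives a good minimal model, $\kappa=0$ forces $K_{F'}+\Delta_{F'}\sim_{\mathbb{Q}}0$, and every component of $N$ is contracted by $\psi\colon F\dashrightarrow F'$. However, $F'$ is only of Fano type: $-K_{F'}\equiv aL'$ is big but not necessarily nef. The only form of \cite[Theorem 1.3]{JLR25} used in this paper is for terminal Fano varieties, so you still need a version that covers your $F'$. One option is to pass to the anticanonical model of $F'$. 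This is a klt Fano variety reached from $F'$ by a birational contraction, so general free curves lift back, but you would then need the free-curve theorem for klt Fano varieties. Alternatively, use the paper's device.

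\textbf{Second gap: avoiding the modified locus.} Your assertion that curves in a general fiber ``avoid the modified locus'' is false for arbitrary curves. When $Y\dashrightarrow Z$ has indeterminacy, the exceptional divisors $E_i$ of $\phi\colon Y'\to Y$ can dominate $Z$ and cut divisors on $F$, as for a pencil with base locus. Then (1) is not obtained: it must hold on $Y$ itself, since almost $r$-freeness needs an open $U\subset Y$ carrying a smooth morphism whose fiber contains the curve.

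For your specific curves the claim is true, but you must say why. Since $Y$ is smooth, $K_{Y'}+a\phi^*L=\phi^*(K_Y+aL)+\sum a_iE_i$ with all $a_i>0$. Your curves satisfy $(K_{Y'}+a\phi^*L)\cdot s_*C=(K_F+a\phi^*L|_F)\cdot s_*C=0$, because they avoid the $\psi$-exceptional divisors, which contain $N$. The term $\phi^*(K_Y+aL)\cdot s_*C$ is $\geq 0$: $K_Y+aL$ is pseudo-effective, and $s$ deforms to nearby fibers, so its deformations cover $Y'$. Each $E_i\cdot s_*C\geq 0$ as well, since $s(C)\not\subset E_i$. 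Hence $E_i\cdot s_*C=0$, so the curves miss every $E_i$. They therefore lie where $\phi$ is an isomorphism and the Iitaka map is a morphism, and (1) and (3) hold on $Y$.

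Finally, one very general fiber suffices, so making $C$ independent of the fiber is unnecessary.
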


\begin{proof}
By \cite[0.3 Corollary]{BDPP13} the uniruledness of $Y$ is equivalent to asserting that $a(Y,L) > 0$.
Since $K_{Y} + a(Y,L)L$ lies on the boundary of the pseudo-effective cone, the subset of $\Nef_{1}(Y)$ with vanishing intersection against this divisor is an extremal face $\mathcal{F}$.  By Wilson's theorem there is an effective $\mathbb{Q}$-Cartier divisor $\Delta$ and an ample $\mathbb{Q}$-Cartier divisor $H$ such that $\Delta + H \sim_{\mathbb{Q}} a(Y,L)L$ and $(Y,\Delta+H)$ is a klt pair.  Since $H$ is ample, $\mathcal{F}$ is a $(K_{Y} + \Delta)$-negative extremal face of the enlarged cone $\Nef_{1}(Y) + \Eff_{1}(Y)_{K_{Y} + \Delta \geq 0}$.  Thus the argument of \cite[Proposition 3.5]{JLR25} shows that there is a curve satisfying (1) that lies on the face $\mathcal{F}$.  By construction this curve satisfies (3). To see (2), we may increase $r$ so that $r \geq \frac{a(Y,L) \Theta}{\dim F } - 2g(C)$ where $F$ is a fiber of the Iitaka fibration for $K_{Y} + a(Y,L)L$.  Then
\begin{equation*}
L \cdot s_{*}C = \frac{-K_{Y} \cdot s_{*}C}{a(Y,L)} \geq \frac{(\dim F) (2g(C)+r)}{a(Y,L)} \geq \Theta.
\end{equation*}
\end{proof}

Proposition \ref{prop:orthogonaltoadjoint} leads to a precise dimension computation.  Since a family of almost $r$-free curves will always deform in a $H^{0}(C,s^{*}T_{X})$-dimensional family, any component $M$ as in Proposition \ref{prop:orthogonaltoadjoint} will satisfy
\begin{equation*}
    \dim M = a(Y,L)(L \cdot s_{*}C) + (\dim Y)(1-g(C)) + g(C) \kappa(Y,K_{Y}+a(Y,L)L).
\end{equation*}
We emphasize that this equation is an equality: the Fujita invariant and the Iitaka dimension of $K_Y + a(Y,L)L$ precisely determine the dimension.

Our second result shows that for every curve $C$ there is a morphism $s: C \to X$ such that $s_{*}C$ has ``small'' intersection against $K_Y+a(Y,L)L$.  In contrast to Proposition \ref{prop:orthogonaltoadjoint} which only guarantees the existence of a single curve, Theorem \ref{theo:ainvandcurves} applies to arbitrary curves at the cost of proving a slightly weaker statement.  The proof combines Theorem \ref{thm: nicest curves} with an MMP argument.

\begin{theorem} \label{theo:approximatebound}
Let $Y$ be a smooth uniruled projective variety and let $L$ be a big and nef $\mathbb{Q}$-Cartier divisor on $Y$.  Denote by $m$ the dimension of a general fiber of the Iitaka fibration for $K_{Y} + a(Y,L)L$.

Suppose $C$ is a smooth projective curve of gonality $\rho$.  For any positive integer $\Theta$, there is an irreducible component $N_{C,\Theta} \subset \Mor(C,Y)$ parameterizing a dominant family of curves with $\Theta \leq \frac{a(Y, L)}{\rho}L \cdot s_* C \leq \Theta + m$ such that
\begin{equation*}
(K_{Y} + a(Y,L)L) \cdot s_{*}C \leq \rho(m - 2).
\end{equation*}
\end{theorem}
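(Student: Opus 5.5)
The plan is to run an MMP for the adjoint divisor, apply Theorem \ref{thm: nicest curves} to a terminal Fano variety appearing as a general fiber of the output, and then compose rational curves with a degree $\rho$ map $C \to \PP^1$.

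\textbf{Step 1: the MMP.} Set $a = a(Y,L)$; we have $a>0$ by uniruledness. As in the proof of Proposition \ref{prop:orthogonaltoadjoint}, Wilson's theorem gives an effective $\Delta$ and an ample $H$ with $\Delta + H \sim_{\mathbb{Q}} aL$ and $(Y,\Delta+H)$ klt. Running a $(K_Y+\Delta+H)$-MMP with scaling, we use that $K_Y + aL$ is pseudo-effective but not big. The output should be a birational contraction $\phi: Y \dashrightarrow Y'$ together with a fibration $\pi: Y' \to Z$, essentially the Iitaka fibration of $K_Y+aL$. Its general fiber $F'$ has dimension $m$, $K_{F'} + a\phi_*L|_{F'} \equiv 0$, and $F'$ is a klt (after perturbation, log) Fano variety.

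\textbf{Step 2: reduce to a terminal Fano.} Passing to a terminalization of $F'$, I would like to arrange that $F'$ is a terminal Fano variety whose anticanonical divisor equals $a\phi_*L|_{F'}$ up to the boundary. I expect this to be the main obstacle. The boundary $\Delta$ breaks the clean identity $-K_{F'} = a L|_{F'}$, and terminality has to be extracted from a klt pair. My first attempt would be to exploit the freedom in Wilson's theorem: choose $\Delta$ with tiny coefficients, and use that $(K_Y+aL)$-trivial curves on fibers are exactly those with $-K\cdot \gamma = aL\cdot\gamma$. Then I would apply Theorem \ref{thm: nicest curves} to a resolution $F \to F'$, taking $F$ to be the general fiber of a resolution of the MMP map from $Y$. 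If $m \le 2$, I would treat the low-dimensional cases directly: for $m=1$, $F'\cong \PP^1$, and for $m=2$, surfaces are handled via \cite{KM99}.

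\textbf{Step 3: the rational curve.} Theorem \ref{thm: nicest curves}, applied with parameter $\Theta$, gives a rational curve $t:\PP^1 \to F$ through a general point. Its $-K_{F'}$-degree lies in $[\Theta,\Theta+m]$. The discrepancy term satisfies $(K_F - \pi^*K_{F'})\cdot t_*\PP^1 \le m-2$, and the curve $t$ meets the non-isomorphism locus at most $m-2$ times. Translating back to $Y$ via the strict transform, we get $aL\cdot t_*\PP^1 = -K_{F'}\cdot t_*\PP^1 \in [\Theta, \Theta+m]$, since $L$ descends on the locus where $\phi$ is an isomorphism. We also get
\[
(K_Y+aL)\cdot t_*\PP^1 = (K_F - \pi^*K_{F'})\cdot t_*\PP^1 \le m-2,
\]
because $K_Y$ restricts to $K_F$ on a general fiber and the discrepancies of $\phi$ contribute nonnegatively in the right direction.

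\textbf{Step 4: pass to $C$.} Compose $t$ with a degree $\rho$ map $C\to\PP^1$. This multiplies $L$-degree and adjoint degree by $\rho$, giving
\[
\Theta \le \tfrac{a}{\rho} L\cdot s_*C \le \Theta+m, \qquad (K_Y+aL)\cdot s_*C\le \rho(m-2).
\]
Since $t$ passes through a general point and moves with the fibers of a dominant family, the component $N_{C,\Theta}$ of $\Mor(C,Y)$ containing $s$ parameterizes a dominant family.
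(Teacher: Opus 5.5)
Your architecture matches the paper's: an adjoint MMP, Theorem \ref{thm: nicest curves} applied to a fiber, then precomposition with a degree-$\rho$ map $C\to\PP^1$. Step 4 is identical. Step 2, however, is a genuine gap, and it is exactly where the key idea of the paper's proof lies.

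\textbf{The gap in Step 2.} Theorem \ref{thm: nicest curves} needs a terminal \emph{Fano} variety. On the general fiber $F'$ of your good minimal model, $K_{F'}+a\phi_*L|_{F'}\equiv 0$ already holds; the boundary is harmless, since $\Delta+H\sim_{\mathbb{Q}}aL$. Singularities are not the obstruction either. Since $Y$ is smooth and $L$ is nef, comparing on a common resolution and applying the negativity lemma to $L$ shows that the output of your MMP has positive discrepancies, so $F'$ is terminal.

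What fails is positivity. $\phi_*L|_{F'}$ is big but not necessarily ample. Flipped curves can be $L$-negative, and even with no flips a big and nef $L$ only makes $F'$ weak Fano. So $F'$ need not be Fano, and neither a terminalization nor small coefficients in Wilson's theorem changes this.

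The missing step is a perturbation. For $0<\epsilon\ll 1$, the $(K+aL)$-MMP is also a run of the $(K+(1-\epsilon)aL)$-MMP. The paper runs this MMP relative to the Iitaka base $Z$, on a smooth model where the Iitaka map is a morphism. Continue the $(K+(1-\epsilon)aL)$-MMP over $Z$. Since $K+(1-\epsilon)aL\equiv-\epsilon aL$ on general fibers over $Z$, it ends in a Mori fibration. Its general fiber $T$, of dimension $\le m$, is terminal, has $L_T$ ample (where $L_T$ is the pushforward of $L$ restricted to $T$), and satisfies $-K_T\equiv aL_T$. This $T$ is the variety to which Theorem \ref{thm: nicest curves} applies.

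\textbf{Step 3 needs inequalities, not equalities.} Let $\gamma:Y\dashrightarrow\widetilde Y$ be the composite MMP, and let $W$ be a smooth model with morphisms $\phi_W:W\to Y$ and $\gamma_W:W\to\widetilde Y$. Let $T'\subset W$ be the strict transform of $T$, let $s':\PP^1\to T'$ be the curve from Theorem \ref{thm: nicest curves}, and set $s=\phi_W\circ s'$. This curve may meet the exceptional locus over the singular locus of $T$; that is why the discrepancy term can reach $\dim T-2$. Consequently, neither $aL\cdot s_*\PP^1=-K_T\cdot\gamma_{W*}s'_*\PP^1$ nor $(K_Y+aL)\cdot s_*\PP^1=(K_{T'}-\gamma_W^*K_T)\cdot s'_*\PP^1$ holds in general. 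The paper instead proves each bound separately:
\begin{itemize}
\item The negativity lemma $\gamma_W^*\gamma_{W*}\phi_W^*L\ge\phi_W^*L$ gives the upper bound $aL\cdot s_*\PP^1\le -K_T\cdot\gamma_{W*}s'_*\PP^1\le\Theta+\dim T$.
\item The same inequality gives $(K_{T'}+a\phi_W^*L)\cdot s'_*\PP^1\le (K_{T'}-\gamma_W^*K_T)\cdot s'_*\PP^1\le\dim T-2$.
\item Pseudo-effectivity of $(K_W+a\phi_W^*L)|_{T'}$, paired against the moving curve $s'$, gives the lower bound $aL\cdot s_*\PP^1\ge -K_{T'}\cdot s'_*\PP^1\ge\Theta$.
\item Freeness of $s'$ gives $K_{T'}\cdot s'_*\PP^1=K_W\cdot s'_*\PP^1\ge K_Y\cdot s_*\PP^1$.
\end{itemize}

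\textbf{The case $m=1$ cannot be handled directly.} There $\rho(m-2)<0$. But every dominant family satisfies $(K_Y+aL)\cdot s_*C\ge 0$, because $K_Y+aL$ is pseudo-effective. For example, take $Y=\PP^1$ and $L=\OO(1)$. So the inequality as stated can only hold when $m\ge 2$.
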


\begin{proof}
We first prove the statement for $\mathbb{P}^{1}$, then extend to arbitrary curves $C$ by taking covers $C \to \mathbb{P}^{1}$.

Consider the Iitaka fibration $\pi: Y \dashrightarrow Z$ for the divisor $K_{Y} + a(Y,L)L$.  Let $\phi: Y' \to Y$ be a smooth projective birational model admitting a morphism $\pi': Y' \to Z$ resolving $\pi$ and let $F'$ be a general fiber of $\pi'$.  By running a relative $(K_{Y'} + a(Y,L)\phi^{*}L)$-MMP over $Z$, we obtain a birational contraction $\psi: Y' \dashrightarrow \widehat{Y}$ such that a general fiber $\widehat{F}$ of $\widehat{Y} \to Z$ satisfies $-K_{\widehat{F}} \equiv a(Y,L)\psi_{*}\phi^{*}L|_{\widehat{F}}$.  For some sufficiently small $\epsilon > 0$, this is also a run of the $(K_{Y'} + (1-\epsilon)a(Y,L)\phi^{*}L)$-MMP. 
We continue running the latter MMP to obtain a birational model $\widehat{Y} \dashrightarrow \widetilde{Y}$ such that $\widetilde{Y}$ admits a relative Mori fibration over $Z$.  Letting $\gamma: Y' \dashrightarrow \widetilde{Y}$ denote the induced birational map, we see that a general fiber $T$ of this Mori fibration will be a terminal Fano variety satisfying $-K_{T} \equiv a(Y,L)\gamma_{*}\phi^{*}L$.

Let $W$ be a smooth birational model of $Y$ admitting birational morphisms $\phi_{W}: W \to Y$ and $\gamma_{W}: W \to \widetilde{Y}$.  Let $T'$ denote the strict transform of $T$ on $W$.  Then $T'$ is smooth and since $\phi_{W}^{*}L|_{T'}$ is big and nef we have $\gamma_{W}^{*}\gamma_{W,*}\phi^{*}L|_{T'} \geq \phi_{W}^{*}L|_{T'}$ by the negativity of contraction lemma (\cite[Lemma 3.39]{KM98}).  If $\dim T' \geq 3$, Theorem \ref{thm: nicest curves} shows that there is a free rational curve $s': \mathbb{P}^{1} \to T'$ satisfying
\begin{align*}
    \dim T - 2 & \geq (K_{T'} - \gamma_{W}^{*}K_{T}) \cdot s'_{*}\mathbb{P}^{1} \\
    & =  (K_{T'} + a(Y,L)\gamma_{W}^{*}\gamma_{W,*}\phi_{W}^{*}L) \cdot s'_{*}\mathbb{P}^{1} \\
    & \geq (K_{T'} + a(Y,L)\phi_{W}^{*}L) \cdot s'_{*}\mathbb{P}^{1}.
\end{align*}
(If $\dim T' \leq 2$, then $T$ is smooth and we can run a similar argument with a free rational curve on $T$.)
Since $T'$ is a fiber of a morphism from $W$, we have $K_{T'} \cdot s'_{*}\mathbb{P}^{1} = K_{W} \cdot s'_{*}\mathbb{P}^{1}$.  Finally, since $s'$ is a free curve, defining $s = \phi_{W} \circ s': \mathbb{P}^{1} \to Y$ we have $K_{W} \cdot s'_{*}\mathbb{P}^{1} \geq K_{Y} \cdot \phi_{W,*}s'_{*}\mathbb{P}^{1}$.  Thus $s_{*}\mathbb{P}^{1}$ satisfies the desired intersection bound against $K_{Y} + a(Y,L)L$.

It only remains to bound the $L$-degree of $s_{*}\mathbb{P}^{1}$.  We may use our choice of $\Theta$ when applying Theorem \ref{thm: nicest curves} to construct $s'$.  First, note that since $s'(\mathbb{P}^{1})$ moves in a dominant family on $T'$, we have
\begin{equation*}
    \Theta + \dim T \geq -K_{T} \cdot \gamma_{W,*}s'_{*}\mathbb{P}^{1} = a(Y,L)\gamma_{W,*}\phi_{W}^{*}L \cdot \gamma_{W,*}s'_{*}\mathbb{P}^{1} \geq a(Y,L)\phi_{W}^{*}L \cdot s'_{*}\mathbb{P}^{1}.
\end{equation*}
Second, note that since $K_{T'} + a(Y,L)\phi_{W}^{*}L$ is pseudo-effective
\begin{equation*}
    \Theta \leq -K_{T'} \cdot s'_{*}\mathbb{P}^{1} \leq a(Y,L)\phi_{W}^{*}L \cdot s'_{*}\mathbb{P}^{1}.
\end{equation*}
Together these prove the desired bounds for $\mathbb{P}^{1}$.

Now suppose that $C$ is an arbitrary curve equipped with a morphism $h: C \to \mathbb{P}^{1}$ of degree $\rho$.  Fix an $s \in N_{\mathbb{P}^{1}, \Theta}$ and let $N_{C,\Theta}$ denote the irreducible component of $\Mor(C,Y)$ containing $s \circ h$.  Then
\begin{equation*}
    (K_{Y} + a(Y,L)L) \cdot s_{*}h_{*}C \leq \rho (m-2)
\end{equation*}
and intersections against $L$ are also scaled by $\rho$.
\end{proof}

As a consequence, we prove that for every curve $C$ there is an irreducible component $M \subset \Mor(C,X)$ whose dimension is a bounded amount away from the expected value given by a Fujita invariant computation.

\begin{theorem}\label{theo:ainvandcurves}
Let $Y$ be a smooth uniruled projective variety and let $L$ be a big and nef $\mathbb{Q}$-Cartier divisor on $Y$.  Let $C$ be a smooth curve of genus $g \geq 0$ and gonality $\rho \geq 1$.  For any positive integer $\Theta$, there is an irreducible component $N_{C,\Theta} \subset \Mor(C,Y)$ parameterizing a dominant family of maps $s : C \to Y$ satisfying $ \Theta \leq \frac{ a(Y,L)}{\rho} \cdot (L \cdot s_{*}C) \leq \Theta + \dim Y$ and
\begin{equation*}
 \dim N_{C,\Theta} \geq a(Y,L) \cdot (L \cdot s_{*}C) +(1-g - \rho)\dim Y + 2\rho.
 \end{equation*}
\end{theorem}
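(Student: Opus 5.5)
Take $N_{C,\Theta}$ to be the component produced by Theorem \ref{theo:approximatebound}. It contains $s\circ h$, where $h: C\to\PP^1$ has degree $\rho$ and $s:\PP^1\to Y$ is a free rational curve with $(K_Y+a(Y,L)L)\cdot s_*\PP^1 \le m-2$. By that theorem, the degree condition $\Theta \le \frac{a(Y,L)}{\rho} L\cdot (s\circ h)_*C \le \Theta+m$ already holds. Since $m\le \dim Y$, the required range $\Theta \le \frac{a(Y,L)}{\rho}(L\cdot s_*C)\le \Theta+\dim Y$ follows. What remains is the dimension lower bound, which I will get from deformation theory at $s\circ h$.

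\textbf{Step 1 (dimension at the point).} Write $f=s\circ h$. Every component of $\Mor(C,Y)$ through $f$ has dimension at least $\chi(C,f^*T_Y) = -K_Y\cdot f_*C + (1-g)\dim Y$. If $f$ is unobstructed, i.e.\ $H^1(C,f^*T_Y)=0$, the dimension equals $h^0(f^*T_Y)$, which is at least this Euler characteristic anyway.

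\textbf{Step 2 (trade $-K_Y$ for $a(Y,L)L$).} By construction, $(K_Y+a(Y,L)L)\cdot f_*C = \rho\,(K_Y+a(Y,L)L)\cdot s_*\PP^1 \le \rho(m-2)$. Hence
\begin{equation*}
-K_Y\cdot f_*C \ge a(Y,L)(L\cdot f_*C) - \rho(m-2) \ge a(Y,L)(L\cdot f_*C) - \rho\dim Y + 2\rho,
\end{equation*}
again using $m\le\dim Y$. Inserting this into Step 1 gives
\begin{equation*}
\dim N_{C,\Theta}\ge a(Y,L)(L\cdot f_*C) + (1-g-\rho)\dim Y + 2\rho,
\end{equation*}
which is exactly the claimed bound.

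\textbf{Step 3 (dominance).} Since $s$ is free, it deforms in a dominant family. Composing these deformations with $h$ gives maps in $N_{C,\Theta}$ through a general point of $Y$, so $N_{C,\Theta}$ parameterizes a dominant family.

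\textbf{Main obstacle.} The only subtle point is that the lower bound on the dimension of a component must hold at an arbitrary point, even when $f$ is obstructed. Note that $f^*T_Y=h^*s^*T_Y$ is a pullback of a nonnegative split bundle and need not have $H^1=0$ when $g>0$. I would handle this by invoking the standard fact that every component of $\Mor(C,Y)$ through $f$ has dimension at least $\chi(f^*T_Y)$; this requires $Y$ smooth, which is assumed. If one wants to avoid that fact, an alternative is a direct count: the family of compositions $s'\circ h'$, with $s'$ varying in the free family and $h'$ varying among degree-$\rho$ maps, gives an independent but weaker lower bound. I would rely on the $\chi$ bound, which suffices.
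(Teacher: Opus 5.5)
Your proposal is correct and follows the paper's route. The paper's one-line proof also combines Theorem \ref{theo:approximatebound} with the expected-dimension bound $\dim N_{C,\Theta} \geq -K_Y\cdot s_*C + (1-g)\dim Y$ and then uses $m \leq \dim Y$. You make both steps explicit; for dominance, take $N_{C,\Theta}$ to be the component containing the irreducible family $\{s'\circ h\}$, in case $s\circ h$ lies on several components.
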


\begin{proof}
This follows from Theorem \ref{theo:approximatebound} and the lower bound on $\dim N_{C,\Theta}$ coming from the expected dimension.
\end{proof}

\begin{remark} \label{rmk:manyRationalCurves}
    In the case $C = \PP^1$, the statement says that $Y$ has infinitely many components $N$ of rational curves $s: \PP^1 \to Y$ satisfying $\dim N \geq a(Y,L) \cdot (L \cdot s_*\PP^1) + 2$. If $L=-K_{Y}$, this is $\dim Y - 2$ less than the ``expected'' value $a(Y,L) \cdot (L \cdot s_*\PP^1) + \dim Y$, so we see that there will be families of rational curves on $Y$ with ``almost'' the dimension that we would expect coming from the $a$-invariant.
\end{remark}

\begin{proof}[Proof of Theorem \ref{theo:introainv}:]
Follows immediately from Theorem \ref{theo:ainvandcurves} plus the gonality bound $\rho \leq g+1$.
\end{proof}

Suppose $X$ is a smooth Fano variety and that $f: Y \to X$ is a generically finite morphism from a projective variety $Y$.  Then the Fujita invariant $a(Y,-f^{*}K_{X})$ allows us to compare the behavior of curves on $Y$ and curves on $X$.  Since the behavior of the Fujita invariant is highly constrained by the Minimal Model Program, we obtain good control of the properties of curves on $X$ that factor through $Y$.

Let $N$ denote an irreducible component of $\Mor(C,Y)$ and let $M$ denote an irreducible component of $\Mor(C,X)$ containing $f_{*}N$.  Loosely speaking, we expect
\begin{equation*}
a(Y,-f^{*}K_{X}) = \limsup_{-K_{X}\textrm{-deg} \to \infty} \frac{\dim N}{\mathrm{expdim} \, M}
\end{equation*}
where $\mathrm{expdim}(M)$ denotes the expected dimension $-f^{*}K_{X} \cdot s_{*}C + (\dim X)(1-g(C))$.  One inequality was proved by \cite[Theorem 1.6]{LRT23}: we can bound the value of $a(Y,-f^{*}K_{X})$ from below using the dimension of families of curves on $Y$.  The following result establishes the reverse inequality.

\begin{corollary}
Let $X$ be a smooth projective Fano variety and let $C$ be a smooth curve.  Suppose that $f: Y \to X$ is a generically finite morphism from a smooth uniruled projective variety $Y$.  For any $\epsilon>0$, there is an irreducible component $N \subset \Mor(C,Y)$ such that
\begin{equation*}
\frac{\dim N}{\mathrm{expdim} \, M} > a(Y,-f^{*}K_{X}) - \epsilon
\end{equation*}
where $M$ denotes an irreducible component of $\Mor(C,X)$ containing the pushforward of $N$.
\end{corollary}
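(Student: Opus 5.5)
The plan is to take the components $N_{C,\Theta}$ from Theorem \ref{theo:ainvandcurves}, let $\Theta \to \infty$, and compare the lower bound on $\dim N_{C,\Theta}$ with the expected dimension of $M$. Write $a = a(Y,-f^{*}K_{X})$ and $L = -f^{*}K_{X}$. Since $X$ is Fano and $f$ is generically finite, $L$ is big and nef on $Y$. We may assume $a>0$, since otherwise the statement is trivial once $\dim N \geq 0$; uniruledness of $Y$ gives $a>0$ anyway. Let $\rho$ be the gonality of $C$ and fix $\Theta$.

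Theorem \ref{theo:ainvandcurves} gives a component $N_{C,\Theta}$ parameterizing dominant maps $s: C \to Y$ with $\Theta \le \frac{a}{\rho}(L\cdot s_*C) \le \Theta + \dim Y$ and
\[
\dim N_{C,\Theta} \ge a\,(L\cdot s_*C) + (1-g-\rho)\dim Y + 2\rho .
\]
Let $M$ be a component of $\Mor(C,X)$ containing $f\circ s$ for $s \in N_{C,\Theta}$. By the projection formula,
\[
\mathrm{expdim}\,M = -K_X\cdot f_*s_*C + (\dim X)(1-g) = L\cdot s_*C + (\dim X)(1-g).
\]
Put $d_\Theta = L\cdot s_*C$. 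By the degree bound, $d_\Theta \ge \rho\Theta/a \to \infty$ as $\Theta\to\infty$.

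Now compare the two quantities. We have
\[
\frac{\dim N_{C,\Theta}}{\mathrm{expdim}\,M} \ge \frac{a\,d_\Theta - c_1}{d_\Theta + c_2},
\]
where $c_1 = (g+\rho-1)\dim Y - 2\rho$ and $c_2 = (\dim X)(1-g)$ are constants independent of $\Theta$. For $\Theta$ large, $d_\Theta + c_2 > 0$. The right-hand side then tends to $a$ as $d_\Theta \to \infty$. So for $\Theta \gg 0$ it exceeds $a - \epsilon$, and we take $N = N_{C,\Theta}$.

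The only care needed is sign bookkeeping. The expected dimension must be positive, which holds once $d_\Theta > (\dim X)(g-1)$, so that dividing preserves the inequality. If $\mathrm{expdim}$ were small or negative the ratio could behave badly, and taking $\Theta$ large avoids this. No step is a serious obstacle. The substantive input is entirely Theorem \ref{theo:ainvandcurves}, in particular that the error term is bounded independently of $\Theta$ while the degree grows linearly in $\Theta$.
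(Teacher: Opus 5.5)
Your proposal is correct and is the paper's proof: apply Theorem \ref{theo:ainvandcurves} with $L=-f^{*}K_X$, use the projection formula to get $\mathrm{expdim}\,M = L\cdot s_*C + (1-g)\dim X$, and let $\Theta\to\infty$ so that the error terms, which are bounded independently of $\Theta$, become negligible. Your extra bookkeeping (positivity of the expected dimension for large $\Theta$, and $L\cdot s_*C \geq \rho\Theta/a$) makes explicit what the paper covers with ``when $\Theta$ is sufficiently large.''
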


\begin{proof}
Apply Theorem \ref{theo:ainvandcurves} to $Y$ with $L = -f^{*}K_{X}$.  When $\Theta$ is sufficiently large the desired inequality follows.
\end{proof}

In particular, the subvarieties where curves ``accumulate'' -- e.g.~where there are more curves than expected -- will be exactly the ones with large Fujita invariant.

\subsection{Uniform behavior of curves on Fano varieties}

Let $X$ be a Fano variety.  The following theorems prove an equivalence between properties of Fujita invariants of subvarieties, the moduli spaces of specific curves on $X$, and the moduli spaces of arbitrary curves on $X$. The two main results in this section are distinguished by the bound they place on the Fujita invariants of subvarieties of $X$.  We first discuss when the Fujita invariant of every subvariety is $\leq 1$.

\begin{theorem}\label{thm:otherequivalentaconditions}
For a smooth Fano variety $X$, the following are equivalent.
\begin{enumerate}
\item For every closed subvariety $Y \subsetneq X$, $a(Y,-K_X|_Y) \leq 1$.
\item There exists a smooth projective curve $C$ and a constant $R$ such that the dimension of every irreducible component of $\Mor(C,X)$ is at most $R$ greater than the  expected dimension.
\item There exists a smooth projective curve $C$ such that the dimension of every irreducible component of $\Mor(C,X)$ is at most $g(C)\dim X$ more than the expected dimension.
\item For every smooth projective curve $C$ and every irreducible component $M \subset \Mor(C,X)$ the dimension of $M$ is at most $g(C)\dim X$ more than the expected dimension.
\end{enumerate}
\end{theorem}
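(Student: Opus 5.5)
We will prove the cycle of implications $(4)\Rightarrow(3)\Rightarrow(2)\Rightarrow(1)\Rightarrow(4)$. The implications $(4)\Rightarrow(3)$ and $(3)\Rightarrow(2)$ are formal: take any $C$, and then take $R = g(C)\dim X$.

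For $(2)\Rightarrow(1)$ we argue by contrapositive. Suppose some closed $Y \subsetneq X$ has $a(Y,-K_X|_Y) > 1$. Let $Y' \to Y$ be a resolution and set $L = -K_X|_{Y'}$. Since $a(Y',L) > 0$, the variety $Y'$ is uniruled. Fix the curve $C$ from (2) and apply Theorem \ref{theo:ainvandcurves} to $(Y',L)$ with $\Theta \to \infty$. This produces components $N_{C,\Theta} \subset \Mor(C,Y')$ with
\begin{equation*}
\dim N_{C,\Theta} \geq a\,(L\cdot s_*C) + (1-g-\rho)\dim Y' + 2\rho,
\end{equation*}
where $L\cdot s_*C \to \infty$. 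Composing with $Y' \to X$, each $N_{C,\Theta}$ maps into some component $M_\Theta \subset \Mor(C,X)$. The map $N_{C,\Theta} \to M_\Theta$ is generically finite onto its image, since $Y' \to X$ is generically finite and the curves dominate $Y'$, so we may avoid the exceptional locus. Hence $\dim M_\Theta \geq \dim N_{C,\Theta}$. On the other hand, the expected dimension of $M_\Theta$ is $L\cdot s_*C + (1-g)\dim X$. Because $a > 1$, the difference $\dim M_\Theta - \mathrm{expdim}\, M_\Theta \geq (a-1)(L\cdot s_*C) - \mathrm{const}$ is unbounded, which contradicts (2).

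For $(1)\Rightarrow(4)$, fix $C$ and an irreducible component $M \subset \Mor(C,X)$. Let $Y$ be the closure of the locus swept out by the curves in $M$, and pass to a resolution $Y'$ so that $M$ lifts birationally to a dominant family of curves on $Y'$. If $Y = X$, then \cite[Proposition 2.5]{LRT23}, as used in Proposition \ref{prop:fujupperbound}, gives
\begin{equation*}
\dim M \leq -K_X\cdot s_*C + \dim X + 2g = \mathrm{expdim}\, M + g\dim X,
\end{equation*}
since $\mathrm{expdim}\, M = -K_X \cdot s_*C + (1-g)\dim X$. This is exactly (4). If $Y \subsetneq X$, then Proposition \ref{prop:fujupperbound} together with $a(Y,-K_X|_Y) \leq 1$ gives
\begin{equation*}
\dim M \leq -K_X\cdot s_*C + \dim Y + 2g.
\end{equation*}
We need this to be at most $-K_X\cdot s_*C + \dim X + g\dim X - g\dim X + \ldots$, that is, at most $\mathrm{expdim}\, M + g\dim X = -K_X\cdot s_*C + \dim X$. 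But the inequality above only gives a bound of $-K_X\cdot s_*C + \dim Y + 2g$, which exceeds $-K_X\cdot s_*C + \dim X$ when $2g > \dim X - \dim Y$. So Proposition \ref{prop:fujupperbound} alone falls short, and this gap is the main obstacle.

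To close the gap, we would use a sharper upper bound from \cite{LRT23}/\cite{LRT24} for non-free dominant families on $Y$. The bound we have in mind is $\dim M \leq -K_Y\cdot s_*C + (1-g)\dim Y + g\dim Y$, that is, the genus correction is $g\dim Y$ rather than $2g$ in the general estimate. Combining this with $-K_Y \cdot s_*C \leq a(Y,-K_X|_Y)\,(-K_X\cdot s_*C) \leq -K_X \cdot s_*C$ (pairing the pseudo-effective adjoint divisor with a moving curve) gives $\dim M \leq -K_X\cdot s_*C + \dim Y \leq -K_X\cdot s_*C + \dim X$, as required. If no such bound is available, our fallback is to break $M$. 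We would degenerate via Theorem \ref{theo:exctowerbreaking}, reducing to rational curves on $Y$ where the $2g$ term disappears, and control the loss by $g\dim Y$. Checking that this bookkeeping yields exactly $g\dim X$ is the step we expect to require the most care.
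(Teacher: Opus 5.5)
Your implications $(4)\Rightarrow(3)\Rightarrow(2)$ and your contrapositive proof of $(2)\Rightarrow(1)$ via Theorem \ref{theo:ainvandcurves} are correct and match the paper. The gap is in $(1)\Rightarrow(4)$, and it is wider than you say: the case $Y=X$ is not settled either. Since $\mathrm{expdim}\,M + g\dim X = -K_X\cdot s_*C + \dim X$, the bound $-K_X\cdot s_*C+\dim X+2g$ from \cite[Proposition 2.5]{LRT23} overshoots (4) by $2g$ for dominant families just as for non-dominant ones. Your displayed equality is an arithmetic slip.

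Your first proposed remedy is a genus-$g$ bound $\dim M\le -K_Y\cdot s_*C+\dim Y$ for dominant non-free families. As a general statement this is false. Take $Y=C\times\PP^1$ with $g(C)\ge 4$. The graphs of maps $h\colon C\to\PP^1$ of large degree $d$ form a dominant, non-free component of $\Mor(C,Y)$ of dimension $2d+1-g$. However, $-K_Y\cdot s_*C+\dim Y=2d+4-2g$, which is smaller.

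The missing step is to remove the $2g$ term by passing to rational curves. Your fallback points this way, but Theorem \ref{theo:exctowerbreaking} does it with no loss of dimension, so there is no $g\dim Y$ to control and no case split is needed. The argument runs as follows.

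\begin{itemize}
\item Suppose $\dim M>\mathrm{expdim}\,M+g\dim X=-K_X\cdot\alpha+\dim X$. Then $\dim M>\dim Y$, where $Y$ is the locus swept out by $M$.
\item Let $f\colon Y'\to X$ be a resolution of $Y$ composed with the inclusion. Lift $M$ to $Y'$ and apply Theorem \ref{theo:introexctower}. This gives a dominant family $N\subset\Mor(\PP^1,Y')$ of class $\beta$ with $\dim N\ge\dim M$ and $\alpha-\beta$ effective.
\item Ampleness of $-K_X$ gives $-K_X\cdot f_*\beta\le-K_X\cdot\alpha$. Hence $\dim N>-K_X\cdot f_*\beta+\dim X$.
\item The curves in $N$ are rational, so \cite[Proposition 2.5]{LRT23} gives $\dim N\le -K_{Y'}\cdot\beta+\dim Y'$ with no genus term. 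Combining, $-K_{Y'}\cdot\beta>-f^*K_X\cdot\beta+(\dim X-\dim Y')\ge -f^*K_X\cdot\beta$.
\item But $a(Y,-K_X|_Y)\le 1$ (or trivially $Y=X$) makes $K_{Y'}-f^*K_X$ pseudo-effective. It must therefore pair non-negatively with the class $\beta$ of a dominant family, which is a contradiction.
\end{itemize}

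The paper phrases this last step as an appeal to \cite[Theorem 1.1]{LT19}.
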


\begin{proof}
(1) $\implies$ (4): Fix a curve $C$ and suppose for a contradiction that $M \subset \Mor(C,X)$ is an irreducible component parameterizing maps $s: C \to X$ such that $\dim M > \mathrm{expdim}{M} + g(C)\dim X$.  Let $f: Y \to X$ denote a resolution of the closed subvariety of $X$ swept out by this family of curves.  Theorem \ref{theo:exctowerbreaking} yields a dominant family $N$ of rational curves on $Y$ satisfying $\dim N \geq \dim M$.  But this is larger than the expected dimension of the corresponding irreducible component of $\Mor(\mathbb{P}^{1},X)$.  By \cite[Theorem 1.1]{LT19} we conclude that $X$ admits a subvariety of Fujita invariant $>1$.

(4) $\implies$ (3), (3) $\implies$ (2): immediate.

(2) $\implies$ (1): we prove the contrapositive.  Suppose there is a closed subvariety $Y \subsetneq X$ such that $a(Y,-K_{X}|_{Y}) > 1$.  Let $f: Y' \to Y$ denote a resolution of singularities and apply Theorem \ref{theo:ainvandcurves} to $Y'$ with $L = -f^{*}K_{X}|_{Y}$.  Note that $Y'$ is uniruled by \cite[Theorem 0.3]{BDPP13}. For every sufficiently large $\Theta$ we obtain an irreducible component $N_{C,\Theta} \subset \Mor(C,Y')$ such that any irreducible component $M \subset \Mor(C,X)$ containing the images of the curves in $N_{C,\Theta}$ will violate the dimension bound in (2).
\end{proof}

\begin{proof}[Proof of Theorem \ref{thm:introequivalentaconditions}:]
Follows immediately from Theorem \ref{thm:otherequivalentaconditions}.
\end{proof}

The Fano varieties such that every subvariety has Fujita invariant $<1$ are even more special.  The following theorem shows that this condition is equivalent to having few families of curves of larger than expected dimension.

\begin{theorem}\label{thm:equivalentaconditions}
For a smooth Fano variety $X$, the following are equivalent.  
\begin{enumerate}
\item For every closed subvariety $Y \subsetneq X$, $a(Y,-K_X|_Y) < 1$.
\item For every closed subvariety $Y \subsetneq X$, there exists a curve $C$ whose genus $g$ and gonality $\rho$ satisfy $g > 1 + \rho \frac{\dim Y-2}{\dim X-\dim Y}$ such that there are only finitely many irreducible components of $\Mor(C,X)$ which sweep out $Y$ and have larger than expected dimension.
\item There exists a curve $C$ whose genus $g$ and gonality $\rho$ satisfy $g > 1 + \rho (\dim X - 3)$ such that there are only finitely many irreducible components of $\Mor(C,X)$ with larger than expected dimension.
\item For every curve $C$ there are only finitely many irreducible components of $\Mor(C,X)$ with larger than expected dimension.
\end{enumerate}
\end{theorem}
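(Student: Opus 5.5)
We prove the cycle (1) $\Rightarrow$ (4) $\Rightarrow$ (3) $\Rightarrow$ (1), and separately (1) $\Rightarrow$ (2) $\Rightarrow$ (1). The implication (4) $\Rightarrow$ (3) is immediate: a curve of the required genus and gonality exists, for instance a general curve of large genus $g$, which has gonality about $g/2$ while $g > 1 + \rho(\dim X-3)$ needs $g$ larger than $\rho$ times a constant. This fails if $\dim X$ is large, so instead take a hyperelliptic curve ($\rho = 2$) of genus $g > 1 + 2(\dim X - 3)$. Similarly, (4) implies (2) with a hyperelliptic curve of large genus.

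\textbf{Step (1) $\Rightarrow$ (4).} Fix $C$ and suppose there are infinitely many components $M \subset \Mor(C,X)$ of larger than expected dimension. Let $Y_M$ be the closure of the locus swept out by $M$. If $Y_M = X$, then $M$ is a dominant family on the smooth variety $X$, and dominant families on smooth varieties have the expected dimension generically: a general member has $H^1$ controlled, and since $\dim M > \mathrm{expdim}$, \cite[Proposition 2.5]{LRT23} bounds the excess by $2g$. To handle this case, I would invoke the known finiteness of non-dominant ``accumulating'' families from \cite{LRT24}. The main case is $Y_M \subsetneq X$. Pass to a resolution $Y' \to Y_M$; then $M$ lifts to a dominant family on $Y'$. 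By Proposition \ref{prop:fujupperbound},
\[
\dim M \leq a(Y',-K_X)\,(-K_X\cdot s_*C) + \dim Y + 2g.
\]
Since $a(Y',-K_X) < 1$, once $-K_X \cdot s_*C$ is large this is smaller than $-K_X\cdot s_*C + (1-g)\dim X$. Hence only families of bounded degree can have excess dimension. By boundedness of $\Mor$ in bounded degree there are finitely many such components. The subtlety is uniformity over all $Y$; I would use \cite{LRT24}, which shows that the subvarieties with $a$-invariant close to $1$ form a bounded family, so that there is a uniform $a_0 < 1$ with $a(Y,-K_X|_Y) \leq a_0$ for all $Y$. This uniformity is the key input.

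\textbf{Step (3) $\Rightarrow$ (1), and (2) $\Rightarrow$ (1).} Argue by contrapositive. Suppose $a(Y,-K_X|_Y) \geq 1$ for some $Y \subsetneq X$, with resolution $Y'$. Apply Theorem \ref{theo:ainvandcurves} to $Y'$ with $L = -K_X|_{Y'}$ for infinitely many $\Theta$. This gives components $N_{C,\Theta}$ of distinct degrees sweeping out $Y$, with
\[
\dim N \geq (-K_X\cdot s_*C) + (1-g-\rho)\dim Y + 2\rho .
\]
Compare this with $\mathrm{expdim} = -K_X\cdot s_*C + (1-g)\dim X$. The difference is
\[
(g-1)(\dim X - \dim Y) - \rho(\dim Y - 2),
\]
which is positive exactly when $g > 1 + \rho\frac{\dim Y -2}{\dim X - \dim Y}$. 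This is the hypothesis in (2). It is implied by the hypothesis in (3), since $\dim Y \le \dim X -1$ gives $\frac{\dim Y-2}{\dim X-\dim Y} \leq \dim X - 3$. Therefore any component $M \supset f_*N_{C,\Theta}$ has larger than expected dimension. Distinct degrees give infinitely many such components, contradicting (3), and, restricting to those that sweep out $Y$ (taking $M$ to be the image families), contradicting (2).

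\textbf{Main obstacle.} I expect the main obstacle to be the uniform gap in (1) $\Rightarrow$ (4). Pointwise $a(Y) < 1$ does not immediately give finitely many components. I would first try to extract from \cite{LRT24} and the boundedness of adjoint-rigid subvarieties that the maximum of the $a$-invariants over proper subvarieties is attained, which makes it strictly less than $1$. The remaining argument is then a degree bound combined with finiteness of components in bounded degree.
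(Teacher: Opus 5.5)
Your arguments for (4) $\Rightarrow$ (3) and (4) $\Rightarrow$ (2) are fine, and so is your argument for (2)/(3) $\Rightarrow$ (1) via Theorem \ref{theo:ainvandcurves} with the excess $(g-1)(\dim X-\dim Y)-\rho(\dim Y-2)$. These agree with the paper, which routes (3) $\Rightarrow$ (2) $\Rightarrow$ (1) using exactly the inequality you note.

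The gap is in (1) $\Rightarrow$ (4), in the case where the component $M$ is dominant. It is not true that dominant families on a smooth variety generically have the expected dimension once $g\geq 1$. A general member only has $s^{*}T_X$ generically globally generated, and $H^1(C,s^{*}T_X)$ need not vanish. For instance, on $X=\mathbb{P}^1\times\mathbb{P}^1$ the maps $C\to\{x\}\times\mathbb{P}^1$ of degree $m\gg 0$ form a dominant component of dimension $1+(2m+1-g)$. This exceeds $\mathrm{expdim}=2m+2-2g$ by $g$, for every $m$. \cite[Proposition 2.5]{LRT23} only caps the excess (by $g\dim X+2g$, not $2g$) and gives no finiteness. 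A finiteness statement for non-dominant families says nothing about this case. Above all, your dominant-case reasoning never uses hypothesis (1). The example above, where (1) fails because the rulings have $a=1$, shows that the conclusion genuinely depends on it.

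The missing idea is how to extract a proper subvariety with Fujita invariant $\geq 1$ from a dominant non-free family. The paper handles all components at once. Infinitely many excess-dimensional components must have unbounded degree, and none of them generically parameterizes free curves. By \cite[Theorem 1.2]{LRT23}, such a component of large degree falls into one of three cases:
\begin{enumerate}
\item it sweeps out a proper $Y$ with $a(Y,-K_X)\geq 1$;
\item it comes from a dominant generically finite $f\colon Y\to X$, with curves lying in subvarieties $F\subsetneq Y$ with $a(F,-f^{*}K_X|_F)=1$ (such families can dominate $X$, but $f(F)$ is a proper subvariety with $a\geq 1$);
\item it forces a rational curve of anticanonical degree $2$, which has $a=1$ (the $\mathbb{P}^1\times\mathbb{P}^1$ example is of this type).
\end{enumerate}
Each case contradicts (1).

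Your non-dominant case is salvageable as a substitute for the first case. The uniform constant does not come from \cite{LRT24} but from the ACC result \cite[Theorem 1.3]{HL20}. That result gives $a_0<1$ such that no variety of dimension $\leq \dim X$ has Fujita invariant in $(a_0,1)$ for a big and nef Cartier divisor, as used in the proof of Theorem \ref{theo:strongestainvcurves}. Proposition \ref{prop:fujupperbound} then excludes non-dominant components of degree at least $(g\dim X+2g)/(1-a_0)$. So the uniformity you flagged is not the real obstacle; the dominant case is.
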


\begin{proof}
(1) $\implies$ (4): We prove the contrapositive statement. Suppose there exists a curve $C$ such that $\Mor(C,X)$ has infinitely many components with larger than expected dimension.   Such a component cannot parameterize any free curves.  According to \cite[Theorem 1.2]{LRT23}, when the degree of the curves is sufficiently large then any such component falls into one of three categories.  In the first category, the curves sweep out a subvariety $Y \subsetneq X$ with $a(Y,-K_{X}) \geq 1$.  In the second category, there is a dominant generically finite map $f: Y \to X$ and a subvariety $F \subsetneq Y$ with $a(F,-f^{*}K_{X}|_{F}) = 1$.  Then the image of $F$ in $X$ will be a proper subvariety of Fujita invariant at least $1$.  In the third category, $X$ must carry a rational curve $C$ with $-K_{X} \cdot C = 2$ and such a curve has Fujita invariant $1$.  Thus in every case we find a subvariety of Fujita invariant $\geq 1$, proving the contrapositive.

(4) $\implies$ (3), (3) $\implies$ (2): immediate

(2) $\implies$ (1): We prove the contrapositive statement.  Suppose there is a proper subvariety $Y \subsetneq X$ with Fujita invariant $\geq 1$.  Let $C$ be any curve whose genus and gonality satisfy the inequality in (2).  We apply Theorem \ref{theo:ainvandcurves} to a resolution of singularities of $Y$ and with $L$ equal to the pullback of $-K_{X}$ to obtain infinitely many irreducible components $M \subset \Mor(C,Y)$ satisfying
\begin{align*}
\dim M & \geq \mathrm{expdim}_{X} M + (\dim X-\dim Y)(g(C)-1) - \rho(\dim Y-2) \\
& > \mathrm{expdim}_{X} M.
\end{align*}
This finishes the proof.
\end{proof}

\begin{remark}
We expect the statement of Proposition \ref{prop:orthogonaltoadjoint} to hold for every smooth projective curve $C$.  Then the argument above would imply that the equivalent conditions of Theorem \ref{thm:equivalentaconditions} are also equivalent to:
\begin{enumerate}
\setcounter{enumi}{4}
\item There exists a smooth projective curve $C$ of genus $\geq 2$ such that there are only finitely many irreducible components of $\Mor(C,X)$ with larger than the expected dimension.
\end{enumerate}
\end{remark}

In fact, we can put even stronger restrictions on families of curves on Fano varieties with no subvarieties of Fujita invariant $\geq 1$.

\begin{theorem} \label{theo:strongestainvcurves}
Let $X$ be a smooth Fano variety such that there are no proper subvarieties $Y$ with $a(Y,-K_{X}) \geq 1$.  There is a quadratic function $L(g)$ such that for a smooth curve $C$ of genus $g$ and number $r \geq 0$, every irreducible component $M \subset \Mor(C,X)$ parameterizing maps of $(-K_X)$-degree $\geq L(g)+r \dim X$ will generically parameterize $r$-free maps.  Furthermore, the evaluation morphism for the universal family over $M$ will have geometrically irreducible generic fiber.
\end{theorem}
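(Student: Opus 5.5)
We will derive the statement from the structure theory of \cite[Theorem 1.2]{LRT23}, as in the proof of Theorem \ref{thm:equivalentaconditions}, while keeping track of how the degree bound depends on $g$. The plan is to rule out, for degree at least $L(g)+r\dim X$, every way a component $M \subset \Mor(C,X)$ can fail to generically parameterize $r$-free maps. Each failure mode should produce a subvariety of Fujita invariant $\geq 1$, contradicting the hypothesis.

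\textbf{Step 1: free curves.} Let $M$ be a component of $\Mor(C,X)$ whose general member is not free. There are two possibilities.
\begin{enumerate}
\item If $M$ is non-dominant, its curves sweep out a proper subvariety $Y$. Since $a(Y,-K_X|_Y)<1$, the upper bound of Proposition \ref{prop:fujupperbound} on a resolution of $Y$ gives $\dim M \leq a\,(-K_X\cdot s_*C) + \dim Y + 2g$. This is smaller than the expected dimension $-K_X\cdot s_*C + (1-g)\dim X$ once the degree exceeds a linear function of $g$. We must first make $a<1$ uniform: we expect a uniform gap $a(Y,-K_X|_Y)\leq 1-\epsilon$, since by boundedness and MMP results (as in \cite{LRT23}) the $a$-invariants of subvarieties take finitely many values above any fixed bound. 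Then $M$ cannot be a component, because components have at least the expected dimension.
\item If $M$ is dominant but not free, \cite[Theorem 1.2]{LRT23} gives that, in large degree, $M$ is accounted for by a generically finite cover $f\colon Y\to X$ with a subvariety of $a$-invariant $1$, or by anticanonical conics. Both are excluded by the hypothesis, exactly as in the proof of (1)$\implies$(4) in Theorem \ref{thm:equivalentaconditions}. Here the degree threshold again depends on $g$; we check in \cite{LRT23} that it is at most quadratic in $g$.
\end{enumerate}

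\textbf{Step 2: $r$-freeness.} Given a dominant component of free curves, pass to the quotient with minimal slope, $s^*T_X \to Q$ with $\mu(Q) < 2g + r$. This quotient defines a foliation, or more generally a family of subvarieties. Its leaves, or the images of curves in $M$ under the corresponding rational map, give a subvariety $F$, via a covering family, with $a(F,-K_X|_F)$ controlled from below by $\dim M$. This is the Grauert--M\"ulich / \cite{LRT24} mechanism. Alternatively, and more concretely, use Theorem \ref{theo:exctowerbreaking}: breaking curves in $M$ produces rational curves of degree at least $\deg - g\dim X$ that deform at least as much. Combined with the uniform free-ness of rational curves in large degree (the $g=0$ case of Step 1), this gives control on $\mu^{\min}$ that grows linearly in degree. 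We then set $L(g)$ as the maximum of the quadratic thresholds from Steps 1 and 2, plus $r\dim X$.

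\textbf{Step 3: irreducible fibers.} If the generic fiber of the evaluation map is not geometrically irreducible, the Stein factorization gives a dominant generically finite $f\colon Y\to X$ of degree $>1$ through which $M$ lifts. In that case $a(Y,-f^*K_X)\geq 1$ with the adjoint divisor $K_Y - f^*K_X$ having positive Iitaka dimension, which is the "second category" of \cite{LRT23}. This should again be excluded in large degree by the hypothesis on proper subvarieties.

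The main obstacle is Step 2 together with the quadratic dependence on $g$. We must make the \cite{LRT23} classification effective with an explicit threshold in $g$, and deduce $r$-freeness rather than mere freeness. We would first try to reduce to the rational case using Theorem \ref{theo:exctowerbreaking}, where the bound is uniform, and recover the genus dependence only through the $g\dim X$ loss and the $2g$ term in the slope condition.
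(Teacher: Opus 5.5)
\textbf{Step 2 has a genuine gap.} Nothing in it produces a lower bound on $\mu^{min}(s^*T_X)$ that grows linearly in the degree.

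The missing idea is that the hypothesis forces $X$ to have Picard rank $1$. This is the last part of Lemma \ref{lemm:ainvsubtodom}: an extremal contraction would yield either an exceptional locus with $a>1$ or a Mori fiber with $a=1$. Consequently $s_*C$ is a multiple of the generator $\alpha$ of $\Nef_{1}(X)$, so $\mu^{min}_{s_*C}(T_X)=\frac{\mu^{min}_{\alpha}(T_X)}{\mu_{\alpha}(T_X)}\cdot\frac{-K_X\cdot s_*C}{\dim X}$. Here $\mu^{min}_{\alpha}(T_X)>0$ by \cite[Proposition 2.13]{JLR25}. The Grauert--M\"ulich inequality \cite[Theorem 3.8.(1)]{LRT23} gives $\mu^{min}(s^*T_X)\geq\mu^{min}_{s_*C}(T_X)-Q_2(g)$ with $Q_2$ quadratic. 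Together these give $r$-freeness past a threshold that is quadratic in $g$ plus a multiple of $r$.

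Your foliation sketch does not supply this. A destabilizing quotient with $\mu(Q)<2g+r$ need not come from a foliation whose leaves have large Fujita invariant. Even Grauert--M\"ulich only reduces the problem to bounding $\mu^{min}_{s_*C}(T_X)$, and that is exactly where $\rho(X)=1$ is needed.

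Your alternative via Theorem \ref{theo:exctowerbreaking} also fails. Breaking off a free rational curve of degree $\geq \deg - g\dim X$ says nothing about the splitting of $s^*T_X$ for a general genus $g$ member. The residual genus $g$ component of the degeneration can have small degree and be far from free, so no semicontinuity argument applies.

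Note also that Grauert--M\"ulich requires the evaluation map to have irreducible general fibers. So your Step 3 has to be proved before Step 2, not after.

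\textbf{Step 3 is also incomplete.} You assert $a(Y,-f^*K_X)\geq 1$ for the Stein factorization $f$, and $\kappa(K_Y-f^*K_X)>0$, without argument. The second claim is not automatic, and the case $\kappa=0$ is exactly the one that the hypothesis on proper subvarieties does not directly exclude.

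The paper handles dominance and irreducibility at once, in three steps.
\begin{enumerate}
\item Let $Z$ be a smooth model of the Stein factorization of the evaluation map. Comparing \cite[Proposition 2.5]{LRT23} with the expected dimension gives $a(Z,-f^*K_X)\geq 1-\frac{g(\dim X+2)}{-K_X\cdot s_*C}$. This also covers non-dominant $M$, so it subsumes your Step 1(1).
\item The gap from \cite[Theorem 1.3]{HL20} is a constant $a<1$ such that no Fujita invariant lies in $(a,1)$. This, rather than a finiteness statement, is the correct source of your uniform $\epsilon$. It upgrades the bound to $a(Z,-f^*K_X)\geq 1$.
\item Lemma \ref{lemm:ainvsubtodom}(2) then forces $f$ to be birational. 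In the case $\kappa=0$ this relies on part (1) of that lemma. Ramification divisors are contracted by the adjoint MMP, so by \cite{Sengupta21} and the hypothesis, $f$ is \'etale in codimension one. It is then \'etale by purity of the branch locus, and hence birational because $X$ is simply connected.
\end{enumerate}

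Your Step 1(2), which obtains freeness from \cite[Theorem 1.2]{LRT23} with a quadratic threshold, agrees with the paper.
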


\begin{proof}
By \cite[Theorem 1.3]{HL20} there is a constant $a<1$ such that no varieties of dimension $\leq \dim X$ have Fujita invariant in the interval $(a,1)$ with respect to any big and nef Cartier divisor.  
Let $M \subset \Mor(C,X)$ be an irreducible component   
and let $ev : U \to X$ denote the evaluation map on the normalization $U$ of the universal family over $M$.  Let $f' : Z' \to X$ denote the Stein factorization of (a projective closure of) $ev$ and let $f: Z \to X$ denote a birational model of $f'$ with $Z$ smooth.  Note that there is an open subset of $M$ parameterizing curves on $Z$ which are the strict transforms of general curves parameterized by $M$ on $Z'$.  

Let $s : C\to Z$ be a general map parameterized by $M$.  Since $M$ parameterizes a dominant family of curves on $Z$, it has dimension at most $-K_Z \cdot s_*C + \dim Z + 2g(C)$ by \cite[Proposition 2.5]{LRT23}.  On the other hand, $M$ has at least the expected dimension $-K_X \cdot (f \circ s)_*C + (1-g(C))\dim X$.  As $s_*C$ moves in a dominant family it has non-negative intersection against $K_{Z} - a(Z,-f^{*}K_{X})f^{*}K_{X}$, yielding 
\begin{equation*}
a(Z,-f^*K_X) \geq \frac{-K_{Z} \cdot s_{*}C}{-K_X \cdot (f \circ s)_*C} \geq \frac{-K_X \cdot (f \circ s)_*C + (\dim X - \dim Z) -g(C)(\dim X + 2)}{-K_X \cdot (f \circ s)_*C}.
\end{equation*}
If $-K_X \cdot (f \circ s)_*C > \frac{g(C)(\dim X + 2)}{1-a}$ then $a(Z,-f^*K_X) > a$.  By choice of $a$ we conclude $a(Z,-f^{*}K_{X}) \geq 1$ and so Lemma \ref{lemm:ainvsubtodom}.(2) shows that $f$ is birational.  Thus we have a dominant family of curves and the evaluation morphism will have geometrically irreducible generic fiber.

Since $K_{X} + a(X,-K_{X})(-K_{X}) = 0$, Lemma \ref{lemm:ainvsubtodom} shows that $X$ has Picard rank $1$.  Letting $\alpha$ be a generator of $\Nef_{1}(X)$, we have $\mu^{min}_{\alpha}(T_{X}) > 0$ by \cite[Proposition 2.13]{JLR25}.  \cite[Theorem 1.2]{LRT23} shows that there is a quadratic function $Q_{1}(g)$ such that when the degree of the curves is larger than $Q_{1}(g)$ then the general map $s$ parameterized by $M$ is free and maps birationally onto its image.   
The Grauert-M\"ulich theorem of \cite[Theorem 3.8.(1)]{LRT23} yields a quadratic function $Q_{2}(g)$ such that a general curve $s: C \to X$ parameterized by $M$ satisfies
\begin{equation*}
\mu^{min}(s^{*}T_{X}) \geq \mu^{min}_{s_{*}C}(T_{X}) - Q_{2}(g) = \frac{\mu^{min}_{\alpha}(T_{X})}{\mu_{\alpha}(T_{X})} \cdot \frac{-K_{X} \cdot s_{*}C}{\dim X} - Q_{2}(g).
\end{equation*}
Rearranging, we see that when $-K_{X} \cdot s_{*}C \geq \sup\{Q_{1}(g), \frac{\mu_{\alpha}(T_{X})}{\mu^{min}_{\alpha}(T_{X})}(\dim X)(2g + r + Q_{2}(g))\}$ we have the desired freeness.
\end{proof}

The previous result relied on the following lemma describing the geometry of subvarieties with large Fujita invariant.  It is well-known to experts  -- for example, some version of it appears in \cite[Section 2]{HaseLiu25}.

\begin{lemma} \label{lemm:ainvsubtodom}
Let $X$ be a smooth projective uniruled variety and let $L$ be an ample $\mathbb{Q}$-Cartier divisor on $X$.  Assume that $\kappa(K_{X} + a(X,L)L) = 0$.
\begin{enumerate}
\item Suppose that there are no proper subvarieties $Y \subsetneq X$ such that $a(Y,L|_{Y}) > a(X,L)$.  Then the only dominant generically finite morphisms $f: Y \to X$ with the properties $a(Y,f^{*}L) = a(X,L)$ and $\kappa(K_{Y}+a(Y,f^{*}L)f^{*}L) = 0$ are birational maps.  

\item Suppose that there are no proper subvarieties $Y \subsetneq X$ such that $a(Y,L|_{Y}) \geq a(X,L)$.  Then the only generically finite morphisms $f: Y \to X$ such that $a(Y,f^{*}L) \geq a(X,L)$ are birational maps.
\end{enumerate}
If furthermore $X$ is Fano and $L = -K_{X}$, then in case (2) the Picard rank of $X$ is $1$.
\end{lemma}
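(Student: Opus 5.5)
The plan is to reduce both statements to a comparison between $K_Y + a f^*L$ and $f^*(K_X + aL)$, where $a = a(X,L)$. The link is the ramification formula: for a generically finite $f: Y \to X$ with $Y$ smooth (we may replace $Y$ by a resolution without changing Fujita invariants), we have $K_Y = f^*K_X + R$ with $R$ effective. Hence
\begin{equation*}
K_Y + a f^*L = f^*(K_X + aL) + R.
\end{equation*}
Since $\kappa(K_X+aL)=0$, the divisor $K_X + aL$ is $\mathbb{Q}$-linearly equivalent to a unique effective divisor $D$, and $K_Y + a f^*L \sim_{\mathbb{Q}} f^*D + R$.

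For (1), assume $f$ is dominant with $a(Y,f^*L) = a$ and $\kappa(K_Y + af^*L) = 0$. Then $f^*D + R$ is the unique effective representative of its class. The goal is to show that $R$ is $f$-exceptional, so that $f$ is étale in codimension one over the locus where it is finite. To get this, run the $(K_Y + a f^*L)$-MMP, or pass to the Iitaka-type analysis of \cite{LRT23} and \cite{HaseLiu25}. By the hypothesis that no proper subvariety $V \subsetneq X$ has $a(V,L) > a$, the components of $R$ whose image in $X$ is a divisor $B$ must satisfy
\begin{equation*}
a(B',L) \ge a,
\end{equation*}
where $B'$ ranges over the associated divisors on $Y$. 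I then want to combine this with the adjunction-type inequality (positive ramification along $B'$ forces the restriction of $K_Y + af^*L$ to be "more effective") to produce a subvariety with $a > a$, which is a contradiction. Once $R$ is exceptional, $f$ is unramified in codimension one over a big open subset of $X$. Since $X$ is smooth and uniruled with $\kappa(K_X + aL) = 0$, I would then use that $X$ (more precisely, the Mori fiber model obtained from the $(K_X+aL)$-MMP, a Fano-type variety) is simply connected in codimension one, for instance via Takayama's result on klt Fano varieties. This forces $\deg f = 1$.

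For (2), the hypothesis is stronger: there is no proper subvariety with $a(V,L) \ge a$. First, if $f$ is not dominant, its image $V$ is proper. Then
\begin{equation*}
a(Y, f^*L) \le a(V, L|_V) < a
\end{equation*}
by birational invariance, together with the fact that for a generically finite map $a(Y,f^*L) \le a(V,L)$ (again by ramification), which contradicts the assumption. So $f$ is dominant. From $K_Y + af^*L \sim f^*D + R$ being effective we get $a(Y,f^*L) \le a$, hence equality. For the Iitaka dimension, the Iitaka fibration of $K_Y + af^*L$ would have fibers $F$ with $a(F, f^*L) = a$ covering $Y$. Their images are subvarieties of $X$ with $a \ge a$, and they must be all of $X$ by hypothesis. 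This should force $\kappa = 0$, after which (1) applies, noting that the hypothesis of (2) implies that of (1).

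For the final claim, take $X$ Fano with $L = -K_X$, so $a = 1$ and $D = 0$. If $\rho(X) \ge 2$, choose an extremal contraction of fiber type or a divisorial one. In the fiber-type case, a general fiber $F$ is a proper subvariety with $-K_X|_F = -K_F$, giving $a(F, -K_X) = 1$, a contradiction. In the birational case, use the exceptional locus, or alternatively a Mori fiber structure reached after running the MMP with scaling, and compute $a \ge 1$ there. I expect the main obstacle to be the ramification step in (1), namely showing that $R$ must be exceptional, together with the degree-one conclusion via simple connectedness.
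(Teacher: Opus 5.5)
\textbf{There is a genuine gap in (1).} Your overall architecture matches the paper:
\begin{itemize}
\item write $K_Y+af^*L=f^*(K_X+aL)+R$;
\item show the finite part of the Stein factorization has no branch divisor;
\item conclude by purity and simple connectedness;
\item reduce (2) to (1) via the Iitaka fibration and $a(F,f^*L|_F)\le a(f(F),L|_{f(F)})$.
\end{itemize}
The gap is the one step that carries the content of (1): showing that a ramification divisor $B'$ of $f$ cannot map onto a divisor $B\subset X$. You assert that the hypothesis gives $a(B',L)\ge a$. It does not. The hypothesis only gives upper bounds, namely $a(B',f^*L|_{B'})\le a(B,L|_B)\le a$. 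Your heuristic that ramification makes $(K_Y+af^*L)|_{B'}$ ``more effective'' also points toward $a(B')\le a$, not toward a contradiction.

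The missing idea is negativity, not effectivity:
\begin{itemize}
\item Since $\kappa(K_Y+af^*L)=0$, the unique effective $\mathbb{Q}$-divisor in this class is $f^*D+R$. So every component of $R$ lies in its support and is contracted by the $(K_Y+af^*L)$-MMP.
\item A divisor $E$ contracted by that MMP is covered, on the model $Y_i$ where it is contracted, by curves $C$ with $(K_{Y_i}+aL_i)\cdot C<0$ and $E_i\cdot C<0$. Adjunction then roughly gives $a(E,f^*L|_E)>a$.
\item Since $E\to B$ is generically finite, $a(B,L|_B)\ge a(E,f^*L|_E)>a$, contradicting the hypothesis of (1).
\end{itemize}
This is exactly what the paper imports from \cite[Corollary 2.8]{Sengupta21}. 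Without it, or an equivalent argument, nothing in your proposal rules out ramification over a divisor $B$ with $a(B,L|_B)\le a$.

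\textbf{Two smaller repairs.}
\begin{itemize}
\item \emph{The simple-connectedness step.} Once branch divisors are excluded, work on $X$ itself. $X$ is smooth, so Zariski--Nagata purity makes the finite part of the Stein factorization \'etale over all of $X$. $X$ is rationally connected (it is birational to the Fano-type output of the $(K_X+aL)$-MMP; cf.\ \cite[Proposition 2.5]{LTT18}), hence simply connected. Appealing to simple connectedness ``in codimension one'' of the singular Fano-type model is not correct as stated. Takayama gives $\pi_1=1$ for klt Fano varieties and their resolutions, not for their smooth loci.
\item \emph{The Picard-rank claim.} The birational-contraction case, divisorial or small, needs an actual argument. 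A component $E$ of the exceptional locus is swept out by a non-dominant family of contracted rational curves. Either \cite[Theorem 1.1]{LT19}, as in the paper, or a deformation count gives $a(E,-K_X|_E)\ge 1$. The count shows $-K_{\widetilde E}\cdot\widetilde C\ge -K_X\cdot C$ for the contracted curves through a general point of $E$, and this contradicts the hypothesis of (2).
\end{itemize}
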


\begin{proof}
(1)  Suppose that $f: Y \to X$ is a generically finite morphism from a smooth projective variety $Y$ such that $a(Y,f^{*}L) = a(X,L)$ and $\kappa(K_{Y} + a(Y,f^{*}L)f^{*}L) = 0$.   Note that  $K_{Y} + a(Y,f^{*}L)f^{*}L = K_{Y/X} + f^{*}(K_{X} + a(X,L)L)$ and so every divisor $E$ contained in the ramification locus of $f$ is contracted by the $(K_{Y} + a(Y,f^{*}L)f^{*}L)$-MMP.  \cite[Corollary 2.8]{Sengupta21} shows that the image of each such divisor $E$ in $X$ either has codimension $\geq 2$ or is an irreducible divisor with $L$-Fujita invariant $>a(X,L)$.  By assumption the latter case cannot happen, thus the Stein factorization of $f: Y \to X$ cannot have any branch divisors.  We conclude $f$ must be \'etale by purity of the branch locus as in \cite[Tag 0EA4]{stacks-project}.  Since $X$ is rationally connected by \cite[Proposition 2.5]{LTT18}, we conclude that the Stein factorization is trivial, i.e.~$f$ must be birational.

(2) Suppose for a contradiction that there was a non-birational generically finite morphism $f: Y \to X$ such that $a(Y,f^{*}L) \geq a(X,L)$.  Let $\pi: Y \dashrightarrow Z$ be the Iitaka fibration for $(K_{Y} + a(Y,f^{*}L)f^{*}L)$ and let $F$ denote the closure of a general fiber of $\pi$.  By \cite[Lemma 4.7 and Lemma 4.9]{LST22} we see that
\begin{equation*}
a(Y,f^{*}L) = a(F,f^{*}L|_{F}) \leq a(f(F),L|_{f(F)}).
\end{equation*}
By our assumption, the only possibility is that $a(Y,f^{*}L) = a(X,L)$ and that $f(F) = X$.  In other words, $\kappa(K_{Y} + a(Y,f^{*}L)f^{*}L) = 0$.  Then (1) implies that $f$ is birational.

It only remains to prove the final statement.  Assume $X$ is Fano and $L=-K_X$.  If $X$ has Picard rank $>1$, let $\pi: X \to Z$ be the contraction of an extremal ray of the Mori cone.  If $\pi$ is birational, then an irreducible component $Y$ of the exceptional locus is swept out by a non-dominant family of rational curves and thus satisfies $a(Y,-K_{X}) > 1$ by \cite[Theorem 1.1]{LT19}.  If $\pi$ is a Mori fibration, then a general fiber $F$ of $\pi$ will satisfy $a(F,-K_{X}) = 1$.  In either case $X$ carries a subvariety with Fujita invariant at least $a(X,-K_{X})$.
\end{proof}

\subsection{Dominant families of rational curves} 
In this section, we show that varieties with large Fujita invariant must contain dominant families of low-degree rational curves.  Before giving the statement, we need a preliminary result that uses  \cite{JLR25b} to bound the anticanonical degrees of dominant families of rational curves.

\begin{lemma} \label{lemm: uniruled minimal curve}
Let (X,B) be a projective log pair such that $X$ is uniruled.  Then there is a dominant family of rational curves on $X$ whose $-(K_X + B)$ degree is at most $\dim X + 1$.
\end{lemma}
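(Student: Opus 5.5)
The plan is to pass to a log resolution, produce a dominant family of free rational curves there, and break those curves at a general point while keeping their intersections with the exceptional boundary fixed. The degree bound $\dim X+1$ should then come from the standard bend-and-break dimension count applied to \emph{log} deformations, rather than to ordinary deformations.

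\textbf{Setup.} Let $n=\dim X$ and let $\phi: X' \to X$ be a log resolution of $(X,B)$. Write
\[
\phi^*(K_X+B) = K_{X'} + B_{X'}, \qquad B_{X'} = B^{+} - B^{-},
\]
where $B^{+}, B^{-} \geq 0$ have no common components. Here $B^{-}$ is $\phi$-exceptional and $B^{+}$ contains the strict transform of $B$. Set $D = \Supp(B^{-})_{\mathrm{red}}$, so that $(X',D)$ is log smooth. A general member $s:\PP^1\to X'$ of a dominant family is not contained in $\Supp B_{X'}$, so $B^{+}\cdot s_*\PP^1 \geq 0$ and
\[
-\phi^*(K_X+B)\cdot s_*\PP^1 \leq -K_{X'}\cdot s_*\PP^1 + B^{-}\cdot s_*\PP^1 .
\]
The task is therefore to control the right-hand side. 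The term $B^{-}\cdot s_*\PP^1$ has no a priori sign-favourable bound, since the coefficients of $B^{-}$ can be large.

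\textbf{Choice of family.} Among all dominant families of rational curves on $X'$ whose general member is not contained in $D$, I would choose one, $N$, minimizing $-\phi^*(K_X+B)$-degree. Such families exist because $X'$ is uniruled. By minimality and generic smoothness, a general member $s$ is free. Moreover, its log normal sheaf with respect to $(X',D)$ is globally generated; this is the situation of Lemma~\ref{lem: normal bundles}, applied with $H$ an ample divisor and the degree bounded by the degree of $N$. It follows that the log deformations of $s$ fixing a general point $x$ and preserving the contact orders with $D$ form a family of dimension at least
\[
-(K_{X'}+D)\cdot s_*\PP^1 - 2 .
\]

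\textbf{Breaking step (main obstacle).} Suppose $-\phi^*(K_X+B)\cdot s_*\PP^1 > n+1$. I would pass to the space of stable log maps and apply \cite[Lemma 2.1]{JLR25b}, as in Step~5 of the proof of Theorem~\ref{thm: nicest curves}, to a one-parameter subfamily through $x$ and a second general point. This should break $s$ into a tree whose component $R$ through $x$ again deforms dominantly and is not contained in $D$.

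The hard part is showing that $R$ has strictly smaller $-\phi^*(K_X+B)$-degree. This requires the complementary tree to have positive $-\phi^*(K_X+B)$-degree, and since $-(K_X+B)$ is not nef this is not automatic. I would try to handle it with the arguments of \cite[Lemma 4.2, Lemma 4.3]{JLR25}, which control the degrees of subtrees meeting the exceptional locus. The point is that preserving contact orders with $D$ forces every subtree that absorbs an intersection with $B^{-}$ to carry that intersection with it. As a fallback, if positivity of the complementary tree cannot be forced, I would apply the exceptional-tower count of Theorem~\ref{theo:strongest breaking result} on $(X',D)$, with $k$ chosen from the excess degree. The aim there is to bound $-K_{X'}\cdot R$ from above directly through the local log tangent sheaf estimate of Proposition~\ref{prop:lower_bound_exceptional_tower}, instead of from below.

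\textbf{Conclusion.} Either route gives a contradiction with the minimality of $N$, and hence the bound $-(K_X+B)\cdot \phi_*s_*\PP^1 \leq n+1$. Pushing $N$ forward to $X$ then gives the required dominant family of rational curves on $X$.
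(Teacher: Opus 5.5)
\textbf{There is a genuine gap, and it starts before the step you flag as the main obstacle: the breaking step cannot even begin.} Your deformation count controls $-(K_{X'}+D)$, or $-K_{X'}$ for ordinary deformations. It does not control $-\phi^*(K_X+B)$. Write $B^-=\sum a_iE_i$, where the $a_i>0$ are the positive discrepancies and $D=\sum E_i$. Then
\[
-\phi^*(K_X+B)\cdot s_*\PP^1 \;=\; -(K_{X'}+D)\cdot s_*\PP^1 \;+\; \sum_i (a_i+1)\,E_i\cdot s_*\PP^1 \;-\; B^{+}\cdot s_*\PP^1 ,
\]
and the middle term can be arbitrarily large. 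This already happens for $B=0$ on a terminal $X$ whenever the minimizing family has to meet $\phi$-exceptional divisors with large discrepancy.

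So the assumption $-\phi^*(K_X+B)\cdot s_*\PP^1>n+1$ does not give a positive-dimensional family of log maps through two general points, and there is nothing to break. Even when breaking does occur, you correctly note that $-(K_X+B)$ is not nef. The complementary tree can then have negative degree, and your argument does not handle this.

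Your two proposed remedies do not close this. The lemmas of \cite{JLR25} are used in this paper to bound $-K_{X'}$-degrees of subtrees in the terminal Fano setting. Nothing in them makes the non-nef divisor $-\phi^*(K_X+B)$ positive on the complementary tree. Proposition~\ref{prop:lower_bound_exceptional_tower} gives lower bounds on degrees, not the upper bound your fallback needs. So no contradiction with minimality is reached, and the core claim is never established.

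\textbf{The missing idea is to make the relevant divisor positive first, using the MMP, and then apply an optimal bend-and-break that is valid on singular varieties.} The paper proceeds as follows.
\begin{enumerate}
\item It uses \cite[Corollary 1.4.4]{BCHM} to pass to a $\mathbb{Q}$-factorial klt pair $(X',\Gamma_1)$ with $\phi^*(K_X+B)-(K_{X'}+\Gamma_1)\geq 0$. It suffices to prove the bound there.
\item If $K_X+B$ is pseudo-effective, every moving rational curve has non-positive degree and there is nothing to prove.
\item Otherwise it runs a $(K_X+B)$-MMP with scaling to a Mori fibration. A general fiber $(F,B_F)$ is a klt log Fano pair, so $H=-(K_F+B_F)$ is ample.
\item A general complete-intersection curve $C$ through a general point, lying in the smooth locus, satisfies $K_F\cdot C<0$. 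Then \cite[Theorem 1.1]{JLR25b} gives a rational curve through that point with $H$-degree at most $\dim F+1$, and its deformations dominate.
\item Finally, the negativity lemma \cite[Lemma 3.38]{KM98} shows that on a common resolution the pullback of $K_X+B$ exceeds the pullback of $K_{\widetilde{X}}+\psi_*B$ by an effective divisor. Hence the strict transforms on $X$ satisfy the same bound.
\end{enumerate}
Your resolution-based approach would in effect have to reprove \cite[Theorem 1.1]{JLR25b} for a non-nef divisor. Deformation theory on $X'$ alone cannot do this, because of the discrepancy term above.
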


\begin{proof}
We first apply \cite[Corollary 1.4.4]{BCHM} to find a birational morphism $\phi: X' \to X$ from a $\mathbb{Q}$-factorial normal projective variety $X'$ and effective $\mathbb{Q}$-divisors $\Gamma_{1},\Gamma_{2}$ on $X'$ such that
\begin{equation*}
    K_{X'} + \Gamma_{1} + \Gamma_{2} = \phi^{*}(K_{X} + B)
\end{equation*}
and the pair $(X',\Gamma_{1})$ is klt.  Since the difference $\phi^{*}(K_{X} + B) - (K_{X'} + \Gamma_{1})$ is effective the statement for $(X',\Gamma_{1})$ implies the statement for $(X,B)$.  So we may assume that $(X,B)$ is $\mathbb{Q}$-factorial klt.

If $K_{X}+B$ is pseudo-effective, then $-(K_{X}+B)$ has non-positive intersection against every moving rational curve on $X$ and the statement follows.  Otherwise, we can run a $(K_{X}+B)$-MMP with scaling $\psi: X \dashrightarrow \widetilde{X}$ to obtain a Mori fibration $\pi: \widetilde{X} \to Z$.  For a general fiber $F$ of $\pi$ the pair $(F,\psi_{*}B|_{F})$ is a klt Fano pair.  By taking general complete intersections of very ample divisors, we can find an irreducible curve $C$ through a general point $x \in F$ that is entirely contained in the smooth locus.  Since $(K_{F} + \psi_{*}B|_{F}) \cdot C < 0$, we also have $K_{F} \cdot C < 0$.  We apply \cite[Theorem 1.1]{JLR25b} with $H = -(K_{F}+\psi_{*}B|_{F})$ to see that there is a rational curve through $x$ satisfying $-(K_{F} + \psi_{*}B|_{F}) \cdot R \leq \dim F+1$.  Thus
\begin{equation*}
-(K_{\widetilde{X}} + \psi_{*}B) \cdot R = -(K_{F} + \psi_{*}B|_{F}) \cdot R \leq (\dim F + 1) \leq (\dim X + 1).
\end{equation*}
Since $x$ is a general point and $F$ is a general fiber, the deformations of $R$ cover all of $\widetilde{X}$.  

By \cite[Lemma 3.38]{KM98} if we pass to a smooth birational model $W$ with morphisms $g_{1}: W \to X$ and $g_{2}: W \to \widetilde{X}$ then the difference $g_{1}^{*}(K_{X} + B) - g_{2}^{*}(K_{\widetilde{X}} + \psi_{*}B)$ is an effective divisor.  Thus the strict transform of $R$ on $X$ satisfies the desired inequality.
\end{proof}

If $X$ is a smooth Fano variety with $-K_{X} \sim rH$ then by Mori's Bend-and-Break $X$ is covered by rational curves of $H$-degree $\leq \frac{\dim X+1}{r}$.  The following result gives us a birational version of this observation.

\begin{theorem} \label{theo:ainvandcoveringrationals}
Let $X$ be a uniruled projective variety and let $L$ be a big and nef $\mathbb{Q}$-Cartier divisor on $X$.  Then $X$ is dominated by rational curves $C$ satisfying $L \cdot C \leq (d+1)/a(X,L)$ where $d$ is the dimension of a general fiber of the rational map $\pi: X \dashrightarrow Z$ to the canonical model of $K_{X} + a(X,L)L$.
\end{theorem}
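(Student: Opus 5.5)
We reduce to a general fiber of the adjoint fibration and apply Lemma \ref{lemm: uniruled minimal curve} to a suitable log pair there. First replace $X$ by a resolution $\phi: X' \to X$ and $L$ by $\phi^{*}L$. This changes neither $a(X,L)$ (by definition) nor the canonical model of the adjoint divisor. Moreover, rational curves on $X'$ push forward to rational curves on $X$ with the same $L$-degree and dominate $X$. So we may assume $X$ is smooth. Set $a = a(X,L) > 0$; positivity follows from uniruledness via \cite[0.3 Corollary]{BDPP13}.

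As in the proof of Theorem \ref{theo:approximatebound}, we run the MMP. Pass to a smooth model $Y'$ on which the Iitaka fibration $\pi': Y' \to Z$ of $K_X + aL$ is a morphism, and run a relative $(K_{Y'} + aL)$-MMP over $Z$. This yields $\psi: Y' \dashrightarrow \widehat{Y}$ whose general fiber $\widehat{F}$, of dimension $d$, satisfies
\[ K_{\widehat{F}} + a\,\psi_{*}L|_{\widehat{F}} \equiv 0. \]
The fiber $\widehat{F}$ is uniruled, since $a > 0$ and $\psi_{*}L$ is big on it.

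The key step is to produce a boundary $B$ on $\widehat{F}$ with $-(K_{\widehat{F}} + B) \equiv a\,\psi_{*}L|_{\widehat{F}}$. A naive choice of $B = 0$ already works at the level of $\widehat{F}$. However, the $L$-degree on $X$ and the $\psi_{*}L$-degree on $\widehat{F}$ need to be compared. To avoid this comparison, I would instead work directly on a general fiber $F'$ of $\pi'$ on the smooth model. There, $K_{F'} + aL|_{F'}$ is pseudo-effective of Iitaka dimension $0$, so it is $\mathbb{Q}$-linearly equivalent (numerically, after the MMP) to an effective divisor $N$ that is contracted by the MMP. Take the pair $(F', B)$ with $B = N$ when this can be arranged to be a log pair, that is, when $K_{F'} + B \equiv -aL|_{F'}$ with $B \geq 0$. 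Applying Lemma \ref{lemm: uniruled minimal curve} to $(F',B)$ gives a dominant family of rational curves $R$ on $F'$ with
\[ a\,L \cdot R = -(K_{F'} + B) \cdot R \leq d + 1. \]
Since $F'$ is a general fiber, these curves dominate $X$, which gives the theorem.

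I expect the main obstacle to be exactly this boundary issue. We have $K_{F'} + aL|_{F'} \equiv N$ with $N \geq 0$, but we need the opposite sign: an effective $B$ with $K_{F'} + B \equiv -aL$. The first fix I would try is to choose $B$ on $\widehat{F}$, where the relation $K_{\widehat{F}} + a\psi_{*}L \equiv 0$ holds exactly, and take $B$ to be a general effective $\mathbb{Q}$-divisor in the class of $a\,\psi_{*}L|_{\widehat{F}}$. This is possible because the class is big. We then apply Lemma \ref{lemm: uniruled minimal curve} on $\widehat{F}$, whose proof resolves and uses \cite[Lemma 3.38]{KM98}. Finally, we transfer degrees back to $F'$ using the negativity lemma. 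On a common resolution $W$, the pullback of $\psi_{*}L$ dominates $L$, since the difference is effective and exceptional, exactly as in the proof of Theorem \ref{theo:approximatebound}. Rational curves through a general point avoid the exceptional locus, so their $L$-degree is at most their $\psi_{*}L$-degree, which is bounded by $(d+1)/a$.
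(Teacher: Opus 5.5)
Your outline is the paper's argument: reduce to smooth $X$ with the Iitaka fibration of $K_X + aL$ a morphism, pass from a general fiber to an MMP output $\widehat{F}$ with $K_{\widehat{F}} + a\,\psi_{*}L|_{\widehat{F}} \equiv 0$, apply Lemma \ref{lemm: uniruled minimal curve} there, and transfer the bound back with the negativity lemma. However, the boundary you finally choose breaks the key step. If $B$ is a general effective $\mathbb{Q}$-divisor in the class of $a\,\psi_{*}L|_{\widehat{F}}$, then $K_{\widehat{F}} + B \equiv K_{\widehat{F}} + a\,\psi_{*}L|_{\widehat{F}} \equiv 0$. Lemma \ref{lemm: uniruled minimal curve} then only yields $0 \leq d+1$ and says nothing about $L$-degrees. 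The condition you actually want, $-(K_{\widehat{F}} + B) \equiv a\,\psi_{*}L|_{\widehat{F}}$, forces $B \equiv 0$, hence $B = 0$ since $B$ is effective. That is precisely the choice you noted ``already works''. With $B = 0$ the lemma gives a dominant family of rational curves $C'$ on $\widehat{F}$ with $a\,\psi_{*}L \cdot C' = -K_{\widehat{F}} \cdot C' \leq d+1$.

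So there is no boundary issue, and you should drop the detour through $F'$ and the divisor $N$, where, as you saw, the sign is wrong. The only remaining point is the degree comparison, and your last paragraph is exactly what is needed. Take a common resolution $p \colon W \to F'$, $q \colon W \to \widehat{F}$. The divisor $q^{*}\psi_{*}L - p^{*}L$ is effective and $q$-exceptional by \cite[Lemma 3.39]{KM98}, since $p^{*}L$ is nef and $\psi$ is a birational contraction. The strict transform of a general $C'$ is not contained in its support; it need not avoid the exceptional locus, only not lie inside it. Hence $L \cdot C \leq \psi_{*}L \cdot C' \leq (d+1)/a$. With this correction your proof coincides with the paper's.
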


\begin{proof}
By pulling back $L$ to a resolution, it suffices to consider the case when $X$ is smooth.  Let $\pi: X \dashrightarrow Z$ be the Iitaka fibration for $K_{X} + a(X,L)L$.  By again pulling back $L$ to a birational model, we may suppose that $\pi$ is a morphism.  It suffices to prove that a general fiber of $\pi$ is covered by rational curves satisfying the desired bound.

A general fiber $F$ of $\pi$ has the property that $\kappa(K_{F} + a(X,L)L|_{F}) = 0$.  Thus there is a birational map $\psi: F \dashrightarrow F'$ to a terminal variety $F'$ such that $K_{F'} + a(X,L)\psi_{*}L|_{F} \equiv 0$.  Lemma \ref{lemm: uniruled minimal curve} implies that there is a dominant family of rational curves $C'$ on $F'$ satisfying $\psi_{*}L|_{F} \cdot C' \leq (\dim F+1)/a(X,L)$.  Letting $C$ denote the strict transform on $F$ of the general curve $C'$,  the negativity of contraction lemma (\cite[Lemma 3.39]{KM98}) shows that
\begin{equation*}
L|_{F} \cdot C \leq \psi^{*}\psi_{*}L|_{F} \cdot C = \psi_{*}L|_{F} \cdot C' \leq (\dim F+1)/a(X,L).
\end{equation*}
\end{proof}

\subsection{Dominant maps}
The final result in this section clarifies the possible geometric behavior of dominant morphisms $f: Y \to X$ which preserve the Fujita invariant.  It partially answers a question asked to us by Koll\'ar.

\begin{theorem}
Let $X$ be a smooth Fano variety.  Suppose that $f: Y \to X$ is a dominant generically finite morphism from a smooth projective variety $Y$ such that $a(Y,-f^{*}K_{X}) = 1$ and $\kappa(K_{Y}-f^{*}K_{X}) = 0$.  Then either:
\begin{enumerate}
\item there is an irreducible component $M \subset \mathcal{M}_{g,0}(X)$ such that the finite part $\psi: Z \to X$ of the Stein factorization of the evaluation morphism for the normalization of a compactification of the universal family over $M$ has degree $\geq 2$, and $f$ factors rationally through $\psi$, or

\item there is a morphism $f': Y' \to X'$ that is birationally equivalent to $f$ such that $X'$ has log Fano type and $f'$ is \'etale away from a codimension $2$ subset of $X'$.
\end{enumerate}
\end{theorem}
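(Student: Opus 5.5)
Assume (1) fails and aim to prove (2). Set $D := K_Y - f^{*}K_X = K_{Y/X}$, which is effective since $X$ is smooth and $f$ is generically finite. By hypothesis $D$ is pseudo-effective, $a(Y,-f^{*}K_X)=1$, and $\kappa(D)=0$, so $D$ is numerically equivalent to the negative part of its divisorial Zariski decomposition. Run a $D$-MMP, equivalently a $(K_Y - f^{*}K_X)$-MMP, over $X$, or rather over a Stein factorization model of $f$. It contracts every ramification divisor of $f$ that is exceptional over its image, and terminates at a model $Y'$ on which $K_{Y'} \equiv f'^{*}K_{X}$ in codimension one. The induced $f' \colon Y' \to X$ is then finite over an open set whose complement in $X$ has codimension $\geq 2$, away from the images of exceptional loci, and its only possible branching is along divisors $B \subset X$ whose preimages are not contracted. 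The picture to aim for is: either there is no branch divisor in codimension one, which gives (2), or there is a branch divisor $B$ that we can exploit to produce (1).

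\textbf{Analysis of branch divisors.} Let $E \subset Y$ be a ramification divisor dominating a divisor $B\subset X$. Since $\kappa(D)=0$, $E$ is contracted by the MMP, and \cite[Corollary 2.8]{Sengupta21} (used as in Lemma \ref{lemm:ainvsubtodom}(1)) gives $a(B,-K_X|_B) > 1$. By Theorem \ref{theo:ainvandcurves} and \cite{LT19}, $B$ is then swept out by families of rational curves of larger than expected dimension. The idea is to use these accumulating families to build a dominant family of curves on $X$ whose evaluation map has nontrivial Stein factorization through which $f$ factors. Concretely, take a family $N$ of very free curves on $Y$ given by Theorem \ref{theo:introexctower} or Proposition \ref{prop:orthogonaltoadjoint}; these exist since $Y$ is uniruled, as $a=1>0$. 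Let $M$ be the component of $\mathcal{M}_{g,0}(X)$ containing $f_{*}N$. If the evaluation map for $M$ has Stein factorization $\psi\colon Z \to X$ of degree $\geq 2$, then the lift of curves from $Y$ shows that $f$ factors rationally through $\psi$, giving (1). So, in the contrary case, we may assume that for every such $M$ the evaluation map has connected fibers.

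\textbf{Ruling out branching in codimension one.} Under that assumption, we show there are no branch divisors. A curve in $f_{*}N$ passing through a general point of $X$ deforms in $M$, and connected fibers of evaluation forces the family on $X$ to be "the same" as the one lifted to $Y$ only if $\deg f = 1$, or if the monodromy is hidden in codimension-two behavior. Here the comparison $\dim N$ versus $\dim M$ uses $K_{Y}\cdot s_*C = f^{*}K_X\cdot s_*C + E\cdot s_*C$. If $B$ were a branch divisor, curves on $Y$ tangent to $E$ give components of $M$ whose general members meet $B$ with contact conditions. One then expects this to produce the nontrivial Stein factorization, for example via the family of curves with prescribed tangency to $B$, using exceptional towers based at a point of $B$. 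This would contradict the reduction.

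Once there are no codimension-one branch divisors, $f'$ is étale in codimension one. The remaining task is to ensure that $X'$ has log Fano type. Take $X' $ to be a small modification or the same $X$, and apply \cite{BCHM} to the pair $(Y',\Gamma)$ with the boundary pushed forward. Since $X$ is already Fano, the log Fano type condition is automatic if $X'=X$. The main obstacle is the previous step: producing the degree $\geq 2$ Stein factorization from a branch divisor of large Fujita invariant. I would first try to construct it via the accumulating rational curves on $B$ glued to free curves.
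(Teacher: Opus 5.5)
There is a genuine gap, at exactly the step you flag yourself. Your plan is to show that if (1) fails then $f$ has no branch divisor in codimension one. By purity and simple connectedness of $X$, that would force $f$ to be birational, so you would be proving the stronger statement ``(1) or $f$ birational''. Nothing in the proposal supports this. The paragraph on ruling out branching only records expectations (``one then expects this to produce the nontrivial Stein factorization''), and the observation $a(B,-K_X|_B)>1$ is never turned into a component $M$ with $\deg\psi\ge 2$. The paper never rules out branch divisors at all.

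The missing idea is the choice of family. Because $\kappa(K_Y-f^*K_X)=0$, Proposition~\ref{prop:orthogonaltoadjoint} gives free curves $s\colon C\to Y$ with $(K_Y-f^*K_X)\cdot s_*C=0$. Lemma~\ref{lem: producing irreducible fibers} then lets you assume $\mathrm{ev}_N$ has irreducible general fibers; you need this, and do not mention it, to get the rational factorization of $f$ through $\psi$. Since $K_Y-f^*K_X$ is the ramification divisor $R$, a general such curve is disjoint from $R$. An arbitrary very free family, as in your proposal, meets $R$ and carries no monodromy information.

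With this family, the proof splits according to whether $f(s(C))$ meets the branch divisor $B$, not whether $B$ exists.
\begin{itemize}
\item If $f(s(C))$ meets $B$, suppose $\mathrm{ev}_M$ had irreducible fiber over a general $p$. Then a general $h$ through $p$ lifts to curves $h_q$ through every $q\in f^{-1}(p)$. These lifts cover $f^{-1}(h(C))$, which meets $R$, yet $R\cdot h_{q*}C=0$ for each $q$. This contradiction gives $\deg\psi\ge 2$, and (1) holds.
\item If $f(s(C))$ avoids $B$, then $B$ is disjoint from a family of curves connecting two general points, so $\kappa(B)=0$. Since $X$ is Fano, $B$ can be contracted by a birational map $X\dashrightarrow X'$ to a variety of log Fano type. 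The Stein factorization of a resolution of $Y\dashrightarrow X'$ is then \'etale over the smooth locus of $X'$ by purity.
\end{itemize}
So (2) comes from \emph{contracting} the rigid branch divisor, which your outline never considers.

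Separately, your opening MMP does not behave as you describe. Run relatively over $X$, it cannot contract ramification divisors that dominate divisors of $X$. Run absolutely, it contracts all of them, so $Y'\dashrightarrow X$ is no longer a morphism.
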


\begin{proof}
Since $\kappa(K_{Y}-f^{*}K_{X}) = 0$, Proposition \ref{prop:orthogonaltoadjoint} yields a $1$-free curve $s: C \to Y$ such that $(K_{Y} - f^{*}K_{X}) \cdot C = 0$. By Lemma \ref{lem: producing irreducible fibers}, we may assume $s$ belongs to a component $N \subset \mathcal{M}_{g,0}(Y,\alpha)$ such that the evaluation map of the normalization of the universal family over $N$, $\mathrm{ev}_N : U_N \to Y$, has irreducible fibers over general points in $Y$.  

First suppose that for $s$ general $f(s(C))$ avoids the branch divisor $B$.  Since deformations of $f(s(C))$ connect two general points of $X$, we see that $\kappa(B) = 0$.  Since $X$ is Fano, there is a birational map $\phi: X \dashrightarrow X'$ to a normal projective variety $X'$ of log Fano type that contracts $B$.  Taking the Stein factorization of a resolution of $Y \dashrightarrow X'$, we obtain a finite morphism of normal varieties $f': Y' \to X'$ that is birationally equivalent to $f$.  Since the branch locus for $f'$ contains no divisors, by purity of the branch locus as in \cite[Tag 0EA4]{stacks-project} we see that $f'$ must be \'etale over the smooth locus of $X'$.

Next suppose that for $s$ general $f(s(C))$ intersects the branch divisor $B$.  Let $M \subset \mathcal{M}_{g,0}(X, f_*\alpha)$ be the component dominated by $N$ under the natural map $f_* : \mathcal{M}_{g,0}(Y, \alpha) \to \mathcal{M}_{g,0}(X,f_*\alpha)$.  Consider the normalization $U_{M}$ of the universal family over $M$.  We claim the evaluation map $\mathrm{ev}_M : U_M \to X$ has reducible fiber over a general point $p \in X$. If instead the fiber over $p$ were irreducible, then for any point $q \in f^{-1}(p)$ composition with $f$ would induce a birational map $\mathrm{ev}_N^{-1}(q) \dashrightarrow \mathrm{ev}_M^{-1}(p)$.  Letting $\pi: U_{M} \to M$ be the natural map, it follows that for a general map $h \in \pi(\mathrm{ev}_M^{-1}(p))$ and for any point $q \in f^{-1}(p)$ there is a lifting $h_q : C \to Y$ of $h$ such that $q \in h_q(C)$.  Note that $\alpha$ pairs to $0$ with $K_Y - f^*K_X$, and so a general map in $N$ is disjoint from the ramification locus of $f : Y \to X$.  Thus the generality of $h$ ensures $f : Y \to X$ is finite over $h(C)$.  Hence, for any point $y \in f^{-1}(h(C))$, there exists some $q \in f^{-1}(p)$ such that $y \in h_q(C)$.  
This cannot be, as by hypothesis $f^{-1}(h(C))$ meets the ramification divisor $R \subset Y$ of $f$, but for all $q \in f^{-1}(p)$ we have $R \cdot h_{q*}C = (K_Y - f^*K_X) \cdot h_{q*}C = 0$.  Since $U_{M}$ is normal, we conclude that the finite part $g: Z \to X$ of the Stein factorization of the extension of the evaluation morphism to a normal compactification of $U_{M}$ has degree $\geq 2$.  By construction $f$ factors rationally through $g$.
\end{proof}

\section{Curves on general hypersurfaces}

The goal of this section is to study general hypersurfaces in $\PP^n$ of degree $m \leq n-2$ and show that they have no subvarieties of $a$-invariant at least 1. We start by considering the case where $m = n-2$. Let $H$ be the hyperplane class. We begin with a lemma about the curves on a subvariety $Y$ with $a$-invariant at least 1. 

\begin{lemma} \label{lem: Eric J section 7 Lemma 1}
    Let $X$ be a general hypersurface in $\PP^n$ of degree $n-2$. If $Y \subset X$ is any subvariety with $a(Y, -K_X) \geq 1$, then $a(Y, -K_X) = 1$ and there is a degree $d$ such that $\dim \overline{M}_{0,0}(Y,e) = 3d + \dim Y - 3$.
\end{lemma}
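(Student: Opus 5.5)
The plan is to squeeze the dimension of spaces of rational curves on $Y$ between a lower bound coming from the Fujita invariant and an upper bound coming from the ambient hypersurface. Here $-K_X = 3H$, and for $X$ general of degree $n-2$ the results of \cite{RY19} give that every space $\overline{M}_{0,0}(X,e)$ has the expected dimension $3e+n-4$. Throughout, let $f\colon Y' \to Y$ be a resolution and put $L = f^{*}(-K_X|_Y)$, so that a rational curve of $H$-degree $e$ has $L$-degree $3e$.

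\textbf{Step 1: the Fujita invariant equals $1$.} Apply Theorem \ref{theo:ainvandcurves} to $Y'$ with $C = \PP^1$ (so $g=0$, $\rho=1$); note $Y'$ is uniruled since $a(Y,-K_X)\geq 1>0$. This gives, for every $\Theta$, a dominant family of rational curves on $Y'$ of some $H$-degree $e \geq \Theta/(3a)$ with $\dim \Mor \geq 3a(Y,-K_X)e + 2$. Subtracting $3$ for automorphisms and pushing forward to $X$, these curves form a family of dimension at least $3ae-1$ inside $\overline{M}_{0,0}(X,e)$. The generically injective such maps give an injection of families, and the degree $e$ is unbounded as $\Theta$ grows. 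Comparing with $\dim \overline{M}_{0,0}(X,e) = 3e+n-4$ gives $3ae - 1 \leq 3e+n-4$ for infinitely many $e$, which forces $a(Y,-K_X)\leq 1$. Hence $a(Y,-K_X)=1$.

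\textbf{Step 2: upper bound $3e+\dim Y-3$.} For any dominant family of rational curves on $Y'$, Proposition \ref{prop:fujupperbound} with $a=1$ and $g=0$ gives $\dim \Mor \leq 3e + \dim Y$, hence $\dim \leq 3e+\dim Y - 3$ in $\overline{M}_{0,0}$. Non-dominant components sweep out proper subvarieties $Y_1\subsetneq Y$. I would handle these by Noetherian induction, applying Step 1 to $Y_1$: either $a(Y_1)<1$, which gives a strictly smaller bound for large $e$, or $a(Y_1)=1$ and $\dim Y_1<\dim Y$, which again gives a smaller bound. So for large $e$ we get $\dim \overline{M}_{0,0}(Y,e) \leq 3e+\dim Y-3$.

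\textbf{Step 3: a degree achieving equality (the main obstacle).} We need a dominant family of rational curves on $Y'$ with $(K_{Y'}+L)\cdot s_*\PP^1 = 0$ that deforms with the expected dimension $-K_{Y'}\cdot s_*\PP^1 + \dim Y - 3$. Proposition \ref{prop:orthogonaltoadjoint} provides such curves only for some curve $C$, and Theorem \ref{theo:approximatebound} only gives $(K_{Y'}+L)\cdot s \leq m-2$. My first attempt is to pass, as in the proof of Theorem \ref{theo:approximatebound}, to a general fiber $T$ of the relative Mori fibration. This $T$ is a terminal Fano variety with $-K_T \equiv \gamma_*L$, and I would seek a free rational curve on it lying in $T^{sm}$ and in the locus where $\gamma$ is an isomorphism, of which Theorem \ref{theo:terminalthreefoldveryfree} and Theorem \ref{theo:smoothableterminal} give instances. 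Its strict transform then satisfies $K_{Y'}\cdot s = -L\cdot s$ exactly, so its component has dimension at least $3e+\dim Y-3$; with Step 2 this gives equality. If such a curve in $T^{sm}$ cannot be produced in general, the fallback is to replace $Y$ by the subvariety swept out by a component of $\overline{M}_{0,0}(X,e)$ realizing the excess found in Step 1. One would then use Theorem \ref{theo:exctowerbreaking} to break higher-genus free curves on $Y'$, obtaining dominant rational families whose dimension meets the bound of Step 2.
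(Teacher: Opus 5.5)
Steps 1 and 2 are essentially sound; Step 1 is a clean way to get $a(Y,-K_X)=1$, which the paper leaves implicit. The gap is in Step 3, which is the real content of the lemma.

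Your first attempt needs a free rational curve in $T^{sm}$ on a general fiber $T$ of the relative Mori fibration. That is exactly the \cite{KM99} conjecture for the terminal Fano $T$. It is known only in special cases, such as smoothable ones, LCIQ fourfolds and threefolds (Section \ref{sect:freecurves}), and nothing places your $T$ among them. Step 3 also never uses that $X$ is a general hypersurface. In that generality its conclusion would imply the \cite{KM99} conjecture: take $Y$ a resolution of a terminal Fano and $L$ the pullback of its anticanonical divisor. So some input specific to $X$ is unavoidable.

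The fallback does not help. Proposition \ref{prop:orthogonaltoadjoint} gives genus-$g$ curves that are $r$-free in the $m$-dimensional fibers of the Iitaka fibration, so their component has dimension $3e+\dim Y-gm$. Theorem \ref{theo:exctowerbreaking} then only returns a rational family of dimension $\geq 3e+\dim Y-gm$. You cannot recover this defect, because $K_{Y'}+L$ is only pseudo-effective: $(K_{Y'}+L)\cdot\alpha=0$ with $\alpha-\beta$ effective does not force $(K_{Y'}+L)\cdot\beta=0$. Theorem \ref{theo:ainvandcurves} already gives families of bounded defect. Turning one into a family of defect zero is precisely what must be proved.

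The missing idea is the pointwise bound of \cite[Theorem 3.3]{RY19}: through \emph{every} point of $X$, degree-$e$ rational curves form a family of dimension at most $3e-2$. The paper pairs this with the count \cite[Proposition 6.5]{RY19} for stable maps with several components, and argues as follows.
\begin{enumerate}
\item Suppose, for contradiction, that for every $d$ the degree-$d$ curves through each point of $Y\setminus Z$ form a family of dimension at most $3d-3$, where $Z$ is the non-flat locus of the evaluation maps.
\item Take a dominant family of dimension $3e+\dim Y-G$ from Theorem \ref{theo:introainv} and impose $r+k$ general point conditions.
\item Use \cite[Lemma 2.1]{JLR25b} to degenerate to stable maps in which $r$ marked points lie on contracted components.
\item Count dimensions component by component. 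Each of the $r$ components glued at those points loses one dimension: off $Z$ by the contradiction hypothesis, and at the finitely many attachment points in $Z$ by the pointwise bound.
\item This gives an upper bound $3e+\dim Y-2-2r$ against the lower bound $3e+\dim Y-r-G$, which is absurd for $r>G$.
\end{enumerate}

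The same pointwise bound is also what your Step 2 would need to control components of $\overline{M}_{0,0}(Y,e)$ whose general member has reducible domain. Proposition \ref{prop:fujupperbound} does not see those components.
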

\begin{proof}
    By Theorem \ref{theo:introainv} applied to a resolution of $Y$, there is a positive integer $G$ such that for arbitrarily large curve classes $\beta$, we can find a dominant family of maps $M_{mor} \subset \Mor(\PP^1, Y, \beta)$ of dimension $-K_X \cdot \beta + \dim Y - G$. Select integers $r, k, e$ with $r,k > G$ and $e \geq (\dim Y + 1)r + (\dim Y )k$ such that there is a family in $\Mor(\PP^1, Y, e)$ of dimension $3e + \dim Y - G$. Let $Z \subset Y$ be the union of the loci over which the evaluation maps fail to be flat as we take all of the components of rational curves in $Y$ of degree at most $e$.  To derive a contradiction, suppose that through every point of $Y \setminus Z$, there is at most a $3d-3$ family of degree $d$ rational curves.
 
    Let $M_{konts} \subset \overline{\mathcal{M}}_{0,r+k}(Y, e)$ be the closure of the image of the map $M_{mor} \times \mathcal{M}_{0,r+k} \to \overline{\mathcal{M}}_{0,r+k}(Y, e)$.  We study the locus $M_{r,k} \subset M_{konts}$ of tuples $(f,C,q_1, \dots, q_{r+k})$ satisfying the open condition that $f(q_i) \notin Z$ for all $i$ and the closed condition that $q_1, \dots, q_r$ are all on distinct components of $C$ that are contracted by the stabilizing the domain curve.  
    
    We claim there is an irreducible component $M_{r,k}' \subset M_{r,k}$ that dominates $\overline{\mathcal{M}}_{0,r+k}$ under the forgetful map and generically parameterizes maps that send the $q_i$ to distinct points in $Y\setminus Z$.  Indeed, let $s : \PP^1 \to Y$ be a general map parameterized by $M_{mor}$ and select general points $q_1', \dots, q_{r+k}' \in \PP^1$.  Since $M_{mor}$ parameterizes a dominant family, $s(q_i') \notin Z$ for all $i$.  The space of maps fixing the images of $q_1', \ldots , q_{r+k}'$ has dimension at least $r$.  Hence, by \cite[Lemma~2.1]{JLR25b} $M_{r+k}$ is non-empty and contains at least one stable map sending the $q_{i}$ to distinct points.  Since we can find such a map for any general points $q_{1}',\ldots,q_{r+k}'$, this proves the existence of $M_{r,k}'$.

    Because $M_{r,k}$ is defined as the locus where we break off $r$ rational components, each irreducible component of $M_{r,k}$ has codimension at most $r$ in $M_{konts}$, i.e., dimension at least $3e+\dim Y + k - 3 - G$.  In fact, since $M_{r,k}'$ dominates $\overline{\mathcal{M}}_{0,r+k}$, the family $M_{r,k}'' \subset \overline{\mathcal{M}}_{0,0}(Y,e)$ underlying $M_{r,k}'$ has dimension at least $3e+\dim Y - r - G$.  Below, we will perform a careful dimension count of $M_{r,k}''$ to obtain a contradiction. 
    
    Let $N$ be the number of irreducible components of the domain of a general map parameterized by $M''_{r,k}$. Since every rational curve on $Y$ is also contained in $X$, by applying \cite[Proposition 6.5]{RY19} to $X$ we see the dimension of $M''_{r,k}$ is at most $3e+\dim Y - 2 - N$.  Thus $N$ is at most $G + r - 2 < k + r$. 
    
    Let $(f,C, q_1, \dots, q_{r+k})$ be a general element of $M'_{r,k}$. Then for some $i$ there is a $q_i$ on a component $C_0$ of $C$ that is not contracted by the stabilization map.  In particular, since $q_i$ does not map into $Z$, a general point of $C_0$ will not map into $Z$. 

    Let $e_0$ be the degree of $C_0$ and $e_1, \dots, e_{N-1}$ be the degrees of the other irreducible components of $C$. We do a dimension-counting argument analogous to \cite[Proposition 6.5]{RY19}. Namely, $C_0$ is chosen from a family of curves of dimension at most $3e_0 + \dim Y -3$. We then build up our curve component by component.  Since $(f,C, q_1, \dots, q_{r+k})$ was chosen generally and $M_{r,k}'$ dominates $\overline{\mathcal{M}}_{0,r+k}$, there are $r$ irreducible components of $C$ attached to $C_0$ at $q_1, \ldots, q_r$.  To count the moduli of these components we break our argument into two cases.  First, if the image of the point where $C_0$ meets $C_i$ is in $Z$, there is by \cite[Theorem 3.3]{RY19} a $3e_i-2$ dimensional family of choices for $C_i$. Otherwise, there is a 1-dimensional family of choices of attachment point on $C_0$ plus a $3e_i - 3$ dimensional family of choices for $C_i$ passing through that point. In either case, we have $3e_i - 2$ choices for adding these curves. 
    
    For adding other irreducible components of $C$, we have a $3e_i-1$-dimensional family of choices as in \cite[Theorem 3.3]{RY19}. Thus, the total dimension of $M''_{r,k}$ is at most $3 \sum_i e_i + \dim Y - 3 - (N -1) - r \leq 3e + \dim Y - 2 - 2r$. Since $r > G$, we obtain a contradiction.
\end{proof}

\begin{lemma} \label{lem: Eric J section 7 Lemma 2}
    Let $X$ be a general hypersurface of degree $n-2$ in $\PP^n$.  Suppose $Y \subset X$ is a subvariety such that $a(Y, -K_X) = 1$.  Let $c = \codim (Y,X)$.  Then there exists a positive integer $e \neq 3$ with $\frac{n-c+2}{3} < e \leq 2 \frac{n-c+2}{3}$ and a component $M_e \subset \Mor(\PP^1,Y,e)$ such that
\begin{enumerate}
    \item $\dim M_e = 3e + \dim Y$;
    \item $M_e$ parameterizes a dominant family of curves on $Y$;
    \item a general map
    parameterized by $M_e$ is generically injective;
\end{enumerate}
\end{lemma}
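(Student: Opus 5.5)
The plan is to start from the degree $d$ given by Lemma \ref{lem: Eric J section 7 Lemma 1}, then use bend-and-break to push the degree down into the stated window while keeping the family of maximal dimension. Throughout I use $-K_X = 3H$ (so a rational curve of $H$-degree $e$ has $-K_X$-degree $3e$) and write $\dim Y = n-1-c$. Lemma \ref{lem: Eric J section 7 Lemma 1} gives a component of $\Mor(\PP^1,Y,d)$ of dimension $3d+\dim Y$. Since $a(Y,-K_X)=1$, Proposition \ref{prop:fujupperbound} applied on a resolution of $Y$, together with the bound of \cite[Proposition 2.5]{LRT23}, shows that every dominant family of rational curves on $Y$ of degree $e$ has dimension at most $3e+\dim Y$. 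Call a degree $e$ \emph{extremal} if some dominant component attains this bound. The set of extremal degrees is nonempty; let $e$ be the smallest one and $M_e$ a component realizing it.

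First I would check properties (1)--(3) for $M_e$. Properties (1) and (2) hold by construction. For (3), suppose a general map in $M_e$ were a $k$-fold cover, $k\geq 2$, of a curve of degree $e/k$. That curve moves in a dominant family of dimension at most $3e/k+\dim Y$, and the covers add only $2k-2$ further parameters. This gives
\[
\dim M_e \leq \frac{3e}{k}+\dim Y+2k-2,
\]
which is $<3e+\dim Y$ unless $e/k$ is very small. The remaining small cases would be ruled out by \cite{RY19}, which gives the dimension of low-degree rational curves on the general hypersurface $X$. Those same cases also contradict the minimality of $e$.

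Next, the upper bound $e\le \frac{2(n-c+2)}{3}$. I fix two general points of $Y$ and look at the maps in $M_e$ whose images contain both. Their dimension is at least $3e+\dim Y-2\dim Y$, since two marked points add $2$ and the two incidence conditions cost $2\dim Y$. If this exceeds $\dim \mathrm{Aut}(\PP^1,\{0,\infty\})=1$, Mori's bend-and-break splits the curve into pieces of degrees $e_1+e_2=e$. Adding up the dimension bounds for the pieces, with each piece bounded by the Fujita bound above, or by \cite[Theorem 3.3]{RY19} when it passes through a special point, forces one piece to be a dominant extremal family of smaller degree. That contradicts minimality. This yields roughly $3e-\dim Y\le 1$ up to the attachment constants, that is, $e\le \frac{2(n-c+2)}{3}$. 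Checking the constants exactly is routine.

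The lower bound $e>\frac{n-c+2}{3}$ would come from the opposite count. The maps in $M_e$ through one general point of $Y$ form a family of dimension $3e$, and their images have to fill out $Y$. They are also rational curves on $X$ through a (non-general) point, so I would compare this family with the bounds of \cite[Proposition 6.5, Theorem 3.3]{RY19} on $X$ to force $3e>\dim Y+3=n-c+2$. The step I expect to be the main obstacle is excluding $e=3$. I would first try to show that if $e=3$ were the minimal extremal degree, then the resulting cubics through a general point of $Y$ have too many moduli compared with what the general hypersurface $X$ allows. Concretely, each such cubic spans a $\PP^3$, and I would use the generality of $X$ to bound how these spans meet $X$. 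If this works, a different degree in the window is extremal instead, and the argument above applies to it.
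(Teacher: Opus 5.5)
\textbf{Gap: the minimal extremal degree is not in the window.} Your ingredients are mostly the right ones: the bound $3e+\dim Y$ from Proposition~\ref{prop:fujupperbound}, bend-and-break through two general points, and the covering count for (3). The central strategy fails, though. You take $e$ to be the \emph{smallest} extremal degree and try to show it lies in $\bigl(\frac{n-c+2}{3},\frac{2(n-c+2)}{3}\bigr]$, and your own bend-and-break step shows that it does not. Whenever the two-point family is positive-dimensional modulo $\mathbb{G}_m$, the curve breaks and a smaller extremal degree appears. So minimality pushes $e$ \emph{below} the window: your inequality $3e-\dim Y\le 1$ gives $e\le\frac{n-c}{3}$. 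The step ``that is, $e\le\frac{2(n-c+2)}{3}$'' is a non sequitur. The paper's descending-induction argument proves exactly this: the minimal degree $d_0$ carrying a $(3d_0-2)$-dimensional family through a general point satisfies $d_0\le\frac{n-c+2}{3}$. Your lower-bound step ($3e>n-c+2$ for the minimal degree) therefore contradicts your upper-bound step and cannot be proved. Concretely, the hypotheses allow $Y=X$, and there lines give extremal degree $1<\frac{n+2}{3}$. The heuristic that curves through a general point ``fill out $Y$'' is also false for merely free curves: lines through a general point of $X$ sweep out only a surface.

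\textbf{Missing idea: glue and smooth upward.} Start from the minimal-degree family rather than trying to sit at it. By generality and the dimension count, its members are free on a resolution $\widetilde Y$ with $-K_{\widetilde Y}$-degree $3d_0$. Glue $m$ general copies into a chain and smooth. This gives a dominant family of degree $md_0$ and dimension $3md_0+\dim Y$, which meets the upper bound and so is a component. Take the least $m$ with $md_0>\frac{n-c+2}{3}$. Since $d_0\le\frac{n-c+2}{3}$, this gives $md_0\le\frac{2(n-c+2)}{3}$.

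\textbf{Excluding $e=3$.} This is arithmetic, not a statement about cubics spanning $\PP^3$. The value $3$ can only arise as $3\cdot 1$, with $2\le\frac{n-c+2}{3}<3$. In that case take $e=4$ instead, which lies in the window exactly when $\dim Y\ge 3$. You also need that inequality and do not prove it. The paper gets it because the only subvarieties of $\PP^n$ of dimension $\le 2$ with $a(\,\cdot\,,H)=3$ are $2$-planes, and a general $X$ contains none.

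\textbf{Part (3) is fine.} Your covering argument is correct, and no small cases need \cite{RY19}. Writing $e=ke'$ with $k\ge 2$, the inequality $3e'+2k-2<3ke'$ is equivalent to $2<3e'$, which always holds. So every extremal family, including the glued one, is generically injective.
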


\begin{proof}
        Since $X$ has degree $n-2$, it follows that $a(X,H) = 3$, so we have $a(Y,H) = 3$ as well.  Observe that $X$ contains no 2-planes by \cite[Theorem 6.28]{3264}. Since the only subvarieties of $\PP^n$ of dimension at most 2 with $a$-invariant 3 are 2-planes, $Y$ cannot have dimension 1 or 2.  Hence $\dim Y = n-1-c \geq 3$.

        By Lemma \ref{lem: Eric J section 7 Lemma 1} and \cite[Proposition 6.5]{RY19}, there is an integer $d$ such that through a general point of $Y$ there is a $3d-2$ dimensional family of irreducible rational curves of degree $d$. We claim that we can find such a family with $d \leq \frac{n-c+2}{3}$.  Our argument uses descending induction on $d$ via a threshold degree argument originally in \cite{HRS04}. If we have such a family with $ d > \frac{n-c+2}{3}$, we see that we have a family of curves through a general point of $Y$ of dimension at least $n-c+1$. Hence, we have a 1-parameter family of rational curves through two general points of $Y$, which means we can break into several components, at least one of which passes through a general point of $Y$. Let $d' < d$ be the degree of the curve passing through the general point of $Y$. By a dimension count similar to the proof of Lemma \ref{lem: Eric J section 7 Lemma 1} (modeled after \cite[Proposition 6.5]{RY19}), it follows that there must be a $3d'-2$ dimensional family of such curves. The result follows by descending induction on $d$.

    Observe that for the minimal such $d$ with a $3d-2$-dimensional family of curves through a general point, the general curve in the family will be a generically injective map.

    By gluing together and smoothing curves of degree at most $\frac{n-c+2}{3}$, we see that we can find a $3e-2$-dimensional family of curves of degree $e$ through a general point of $Y$, where $e$ is strictly larger than $\frac{n-c+2}{3}$ and at most $2 \frac{n-c+2}{3}$. In fact, we can even ensure that $e \neq 3$, since if $\frac{n-c+2}{3} < 3$ then we must have a curve of degree $1$ or $2$ and by gluing and smoothing such curves we can obtain a suitable curve of degree 4. Since $n-1-c \geq 3$, it follows that $4 \leq 2 \frac{n-c+2}{3}$ as required.
\end{proof}

\begin{proposition}\label{prop:behaviorOfCurvesOnGeneralHyps}
Let $X$ be a general hypersurface of degree $n-2$ in $\PP^n$ with $n \geq 4$. Let $e \leq n$ with $e \neq 3$. Let $M'$ be the open locus in $\Mor(\PP^1,X,e)$ consisting of generically injective maps. Then outside of a set of codimension at least $n-e+1$, $M'$ parameterizes rational normal curves. 
\end{proposition}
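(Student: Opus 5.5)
We stratify $M'$ by the linear span of the image curve and bound the dimension of each stratum. The expected dimension of $\Mor(\PP^1,X,e)$ is $-K_X\cdot e + (n-1) = 3e+n-1$, and by \cite{RY19} (general Fano hypersurfaces of degree $n-2$ have $\overline{\mathcal{M}}_{0,0}(X,e)$ irreducible of expected dimension $3e+n-4$), every component of $M'$ has dimension exactly $3e+n-1$. A generically injective degree $e$ map $s:\PP^1\to\PP^n$ whose image spans a $\PP^e$ is a rational normal curve. So it suffices to show that the locus $M'_j\subset M'$ of maps whose image spans a linear space of dimension $j<e$ has dimension at most $3e+n-1-(n-e+1) = 4e-2$.

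\textbf{Step 1: parameter count via an incidence correspondence.} Fix $j<e$ (with $j\ge 2$, since for $j=1$ a generically injective map would be a line, forcing $e=1=j$). Consider
$$I_j=\{(\Lambda,s) : \Lambda\in G(j,n),\ s:\PP^1\to \Lambda \text{ generically injective of degree } e \text{ spanning } \Lambda,\ s(\PP^1)\subset X\cap\Lambda\}.$$
For fixed $\Lambda$, the maps into $\Lambda\cong\PP^j$ of degree $e$ form a space of dimension $(j+1)(e+1)-1$. The condition $s(\PP^1)\subset X$ imposes at most $(n-2)e+1$ conditions, namely the vanishing of the restriction of the degree $n-2$ equation to $\PP^1$. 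The key input is that, for general $X$, these conditions are independent. We would prove this by the standard incidence argument over the space of all degree $n-2$ hypersurfaces: for a fixed degree $e$ nondegenerate curve $R\subset\Lambda$, the restriction $H^0(\PP^n,\OO(n-2))\to H^0(\PP^1,\OO((n-2)e))$ is surjective, because $R$ is projectively normal enough in the range $e\le n$. (For curves in $\PP^j$ of degree $e\le n$, the Castelnuovo bound gives surjectivity in degree $n-2\ge e-j+1$; we would cite Gruson--Lazarsfeld--Peskine regularity $e-j+2$ and check $e-j+2\le n-1$, which holds since $j\ge 2$ and $e\le n$.)

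This gives, for general $X$,
$$\dim M'_j\le j(n-j)+(j+1)(e+1)-1-(n-2)e-1 .$$

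\textbf{Step 2: compare with the target.} We then need
$$j(n-j)+(j+1)(e+1)-(n-2)e-2 \le 4e-2,$$
that is, $j(n-j)+(j+1)(e+1)\le (n+2)e$. Viewed as a function of $j$, the left side is concave, so it is enough to check the endpoints $j=2$ and $j=e-1$, and also to account for $\dim\mathrm{Aut}$ conventions consistently (we are working with maps, not images, throughout).
\begin{itemize}
\item At $j=e-1$, the left side is $(e-1)(n-e+1)+e(e+1)$. Subtracting from the right side gives $(n+2)e-(e-1)(n-e+1)-e^2-e = n+1-e+ e - \ldots$, which we would simplify to a nonnegative quantity using $e\le n$.
\item At $j=2$, the requirement reduces to $2(n-2)+3(e+1)\le (n+2)e$. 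This holds for $e\ge 2$ and $n\ge 4$.
\end{itemize}
The hypothesis $e\neq 3$ should enter at the boundary case $j=2$, $e=3$: plane cubics. Here the plane section $X\cap\Lambda$ is itself a curve, and the general-$X$ count needs adjusting, because a plane section of degree $n-2$ may contain a rational cubic only in special ways. This is presumably why that case is excluded.

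\textbf{Main obstacle.} The hard step is the independence of conditions (surjectivity of restriction) uniformly over all degenerate curves in the stratum, including singular images and those of low regularity. We would handle this by stratifying further by the Hilbert function of the image and applying the incidence argument on each stratum. For the worst strata (plane curves, $j=2$), we would instead argue directly: a general $X$ contains no planes (\cite[Theorem 6.28]{3264}), so $X\cap\Lambda$ is a plane curve of degree $n-2$. Hence rational curves of degree $e$ in it exist only when $e$ equals $n-2$ or a component degree, and a direct count of planes whose section has a rational component of degree $e$ bounds this stratum.
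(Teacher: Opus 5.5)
Your overall strategy (stratify generically injective maps by the dimension of the linear span of the image, then bound each stratum with an incidence correspondence over all degree $n-2$ hypersurfaces) is the paper's strategy. However, Step 1 has a genuine gap.

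\textbf{Independence fails for singular images.} You claim that $s(\PP^1)\subset X$ imposes $(n-2)e+1$ independent conditions on hypersurfaces. This is false whenever the image $C=s(\PP^1)$ is singular. The restriction $H^0(\PP^n,\OO(n-2))\to H^0(\PP^1,\OO_{\PP^1}((n-2)e))$ factors through $H^0(C,\OO_C(n-2))$, and for nonplanar $C$ this space has dimension $(n-2)e+1-p_a(C)$ by Gruson--Lazarsfeld--Peskine. Projective normality of $C$ does not help, because it only concerns $H^0(\OO_C(n-2))$. So the fiber of the incidence correspondence over $s$ is larger by $p_a(C)$, which is the wrong direction for an upper bound. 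Such curves are not exotic: every plane curve of degree $e\ge 3$ has $p_a=\binom{e-1}{2}$, and nodal rational curves occur in every span stratum.

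\textbf{A counting error hides this.} The family of $j$-planes in $\PP^n$ has dimension $(j+1)(n-j)$, not $j(n-j)$. With the correct count, maps spanning a $\PP^{e-\delta}$ have codimension exactly $\delta(n-e+\delta)$ in $\Mor(\PP^1,\PP^n,e)$. So at $\delta=1$ the genus-zero count gives codimension exactly $n-e+1$, with no slack at all. The margin $n+1-e$ your formula produces at $j=e-1$ is precisely the $n-j$ you dropped. Separately, checking endpoints does not bound a concave function from above; it works here only because the function is increasing on $[2,e-1]$.

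\textbf{What is missing is control of the genus jump.} For $\delta<e-2$, the paper uses two facts:
\begin{itemize}
\item $p_a(C)\le\delta^2$, by the Castelnuovo bound;
\item positive arithmetic genus occurs in codimension at least $1$ inside each span stratum, since the general rational curve spanning $\PP^{e-\delta}$ is smooth.
\end{itemize}
Together these give $\dim I_\delta\le E-\delta(n-e+\delta)+\delta^2-1\le E-(n-e+1)$.

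\textbf{The planar stratum needs a direct count.} One counts planes, degree $e$ maps to the plane, degree $n-2-e$ cofactors, and extensions to $\PP^n$. This gives codimension $e\bigl(n-\frac{e+1}{2}\bigr)-2n+3=\frac{(e-2)(2n-3-e)}{2}$. That is at least $n-e+1$ for $4\le e\le n$, but equals $n-3<n-2$ when $e=3$. This is exactly where $e\neq 3$ enters: plane cubics have arithmetic genus $1$, not because of anything special about plane sections.

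\textbf{Your argument proves too much.} Your inequalities never use $e\neq 3$; your $j=2$ check passes for $e=3$. Yet the statement is false for $e=3$. For $n=5$, singular plane sections of a general cubic fourfold give an $11$-dimensional family of rational plane cubic maps inside the $13$-dimensional $M'$, which is codimension $2<3$.

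Your ``main obstacle'' paragraph correctly senses that singular and planar images are the problem, but it does not supply these bounds. Using \cite{RY19} to get $\dim M'=3e+n-1$, instead of the irreducibility of $I$, is a fine substitute.
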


\begin{proof}
If $e = 1, 2$, the statement is trivial, so we may assume $e > 3$.  We consider the incidence correspondence $I \subset \Mor(\PP^1, \PP^n) \times \PP^{N-1}$ of pairs $(f,X)$ where $f$ is a generically injective map sending $\PP^1$ to a degree $e$ rational curve in $\PP^n$ and $X$ is a degree $n-2$ hypersurface containing $f(\PP^1)$. By \cite[Theorem in introduction]{GrusonLazarPeskine}, nonplanar curves of degree $e$ have the property that for every $q \geq e-1$ the degree $q$ hypersurfaces in $\PP^n$ trace out a non-special complete linear system on the curve. Since $n \geq e$, the fibers of the map $I \to \Mor(\PP^1, \PP^n)$ have dimension $\binom{2n-2}{n} - e(n-2)-2+g$,  where $g$ is the arithmetic genus of $C$.

We work out the dimension of the algebraic subset $I_{\delta}$ of $I$ corresponding to curves spanning a $\PP^{e-\delta}$. Observe that every irreducible component of $I$ will have dimension at least $E = 3e+n-1 + \binom{2n-2}{n} - 1$, since $I$ is locally cut out by $e(n-2)+1$ equations in the incidence correspondence.

First we study $I_{e-2}$. There is a $3(n-2)$ dimensional choice of 2-planes in $\PP^n$. For each 2-plane, there is a $3e+2$ dimensional family of degree $e$ morphisms from rational curves to it. For each curve in that $\PP^2$, there is a $\binom{n-e}{2}-1$ dimensional family of degree $n-2$ hypersurfaces in $\PP^2$ containing the curve. For each such hypersurface in $\PP^2$, there is a $\binom{2n-2}{n} - \binom{n}{2}$ dimensional family of hypersurfaces in $\PP^n$ with the given restriction to $\PP^2$. Thus, the dimension of $I_{e-2}$ is 
\[ 3(n-2)+3e+2+\binom{n-e}{2}-1+\binom{2n-2}{n} - \binom{n}{2} \]
\[ = E - \left( e \left(n-\frac{e+1}{2} \right) -2n+3 \right) \]
To complete our study of $I_{e-2}$, it remains to show that the codimension $ e( n- \frac{e+1}{2} )-2n+3 $ is at least $n-e+1$ when $n \geq e > 3$. The difference between these functions is $\frac{-1}{2}e^{2} + (n+\frac{1}{2})e - 3n + 2$.  For $e=4$, this simplifies to $n-4$ which is non-negative since $n\geq 4$.  Since the difference is increasing in $e$ on the interval $4 \leq e \leq n$, we conclude that it is non-negative in this entire range.

Now we study $I_{\delta}$ for $\delta < e-2$. Observe that the general rational curve spanning a $\PP^{e-\delta}$ has arithmetic genus 0. The space of degree $e$ maps from $\PP^1$ to $\PP^n$ has dimension $(e+1)(n+1) - 1$, while the space of degree $e$ maps from $\PP^1$ to $\PP^n$ whose image lies in an $(e-\delta)$-plane has dimension
\begin{align*}
    (e-\delta+1)(n-(e-\delta))+(e+1)(e-\delta+1)-1 & = (e-\delta+1)(n+\delta+1) - 1 \\
    & = ((n+1)(e+1)-1)-\delta (n-e+\delta) .
\end{align*} 
Thus, the codimension of the space of curves spanning a $\PP^{e-\delta}$ is $\delta(n-e+\delta)$. The fiber over a curve $f$ of the map from $I_{\delta}$ to $\Mor(\PP^1, \PP^n)$ will jump by the arithmetic genus $p_a(f(\PP^1))$ as compared to the fiber over a general curve. Since the generic rational curve spanning $\mathbb{P}^{e-\delta}$ will have arithmetic genus $0$, $I_\delta$ will have dimension at most $E - \delta(n-e+\delta) + g-1$, where $g \geq 1$ is the maximum genus of a curve parameterized by $I_{\delta}$.

We next claim that $g \leq \delta^2$. 
By the Castelnuovo bound (as in \cite[Page 40]{Harris81}), we see that $g$ is at most ${q \choose 2}(e-\delta-1) + qm$ where $q$ and $m$ are the quotient and the remainder of dividing $e-1$ by $e-\delta-1$. Observe that if $2\delta < e-1$, we have that $q \leq 1$, and the relationship is trivially satisfied. Similarly, if $2\delta = e-1$, we have $q=2$, $m=0$ and the relationship is again satisfied because $e - \delta - 1 = \delta$ is less than $\delta^2$.  Thus, we may assume $2\delta > e-1$, i.e., $2\delta \geq e$. We have
\[ {q \choose 2}(e-\delta-1) + qm \leq \frac{q}{2} \left( q (e-\delta-1) + 2m \right) = \frac{q}{2} \left( e-1+m \right) .\]
Using the fact that $m < e-\delta-1$ and $q \leq \frac{e-1}{e-\delta-1}$, this last quantity is strictly less than
\[ \frac{e-1}{2(e-\delta-1)} (e-1+e-\delta-1) = \frac{(e-1)^2}{2(e-\delta-1)} + \frac{e-1}{2}  .\]
Using the fact that $2\delta \geq e$ and $e-\delta \geq 3$, this is at most
\[ \frac{(2\delta-1)^2}{4} + \frac{2\delta-1}{2} = \frac{4\delta^2-4\delta+1 + 4\delta - 2}{4} = \frac{4\delta^2-1}{4} < \delta^2 . \]
This shows that $g \leq \delta^2$.

Using the fact that $I_\delta$ has dimension at most $E-\delta(n-e) - \delta^2 + g - 1$, we see that the dimension of $I_\delta$ is at most $E - \delta(n-e)-1 \leq E-(n-e+1)$.

Because the fibers of the natural map $I \to \Mor(\mathbb{P}^{1}, \mathbb{P}^{n})$ are projective spaces, $I$ can only be reducible if there is some locus of curves in one of the $I_{\delta}$ where the fibers have dimension so large that it forms a component of dimension at least $E$. The above analysis shows that this is impossible. Thus, $I$ is irreducible, and the codimension of every $I_\delta$ is at least $n-e+1$, as required.
\end{proof}

\begin{lemma} \label{lem: Eric J section 7 Lemma 3}
    Let $X$ be a general hypersurface of degree $n-2$ in $\mathbb{P}^{n}$.  Suppose $Y \subset X$ is a subvariety such that $a(Y,-K_{X}) = 1$ and let $c$ denote the codimension of $Y$.  Let $C$ be a general curve parameterized by the moduli space $M_{e}$ of Lemma \ref{lem: Eric J section 7 Lemma 2}.  Then the normal bundle $N_{C/X}$ has a quotient $Q$ of rank $c$ and degree at most 0.
\end{lemma}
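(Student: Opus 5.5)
The natural candidate is $Q = N_{Y/X}|_C$. Let $C$ be the image of a general map $s$ in $M_e$; by Lemma \ref{lem: Eric J section 7 Lemma 2} it is a generically injective rational curve of degree $e$ through a general point of $Y$. I would first arrange that $C$ avoids the singular locus of $Y$. Then there is an exact sequence
\[ 0 \to N_{C/Y} \to N_{C/X} \to N_{Y/X}|_C \to 0, \]
and $N_{Y/X}|_C$ has rank $c$. If $C$ does meet $Y^{sing}$, I would replace $Y$ by a resolution $f: Y' \to Y$ and use instead the cokernel of $N_{C'/Y'} \to N_{f \circ s}$. This cokernel is a quotient of rank $c$ up to torsion, and torsion only helps the degree bound after saturating. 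So it suffices to compute its degree.

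The computation is by adjunction. On $X$ we have $\deg N_{C/X} = -K_X\cdot C - 2 = 3e-2$. On $Y$ (or $Y'$) we have $\deg N_{C/Y} = -K_Y \cdot C - 2$. Hence
\[ \deg Q = 3e - 2 - (-K_Y\cdot C - 2) = (K_Y - K_X|_Y)\cdot C. \]
The claim therefore reduces to showing $-K_Y \cdot C \geq 3e$, where $-K_X\cdot C = 3e$.

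To get this, use that $M_e$ is a dominant family on $Y$ with $\dim M_e = 3e + \dim Y$. By the upper bound \cite[Proposition 2.5]{LRT23} used in Proposition \ref{prop:fujupperbound}, with $g=0$,
\[ 3e + \dim Y = \dim M_e \leq -K_Y\cdot C + \dim Y. \]
This gives $-K_Y\cdot C \geq 3e$, hence $\deg Q \leq 0$. This inequality is equivalent to $(K_Y + a(Y,-K_X)(-K_X))\cdot C \leq 0$. That is compatible with $a(Y,-K_X)=1$, since the pseudo-effective adjoint divisor must pair nonnegatively with $C$, so in fact $\deg Q = 0$.

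The main obstacle is making the sequence honest when $Y$ is singular or $s$ is not an embedding. A generically injective rational curve can still have cusps or nodes, so $N_s$ may have torsion. I would handle this by working with the normal sheaves $N_s$ modulo torsion on $\PP^1$ and lifting to the resolution $Y'$: a general member of a dominant family lifts and avoids the exceptional locus generically. The dimension inequality holds on $Y'$ with $K_{Y'}$. The map $N_{s'} \to N_{f\circ s'}$ then has a cokernel of generic rank $c$. Its torsion-free quotient has degree at most $\deg N_{f\circ s}/\mathrm{tors} - \deg N_{s'}/\mathrm{tors} \leq (3e-2) - (-K_{Y'}\cdot C - 2) \leq 0$, because quotienting by torsion only lowers the degree.
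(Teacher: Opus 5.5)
Your argument is correct and is essentially the paper's proof. The paper also takes the strict transform $f\colon \PP^1\to\tilde Y$ on a resolution and notes that the saturation of the image of $N_{f/\tilde Y}$ in $N_{f/X}$ has degree at least $\deg N_{f/\tilde Y}=3e-2$; it then takes $Q$ to be the quotient by that saturation. The only difference is that the paper computes this degree as $h^0(N_{f/\tilde Y}(-1))$, whereas you get $-K_{\tilde Y}\cdot C\geq 3e$ from the dimension bound.

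One small repair in your last paragraph: discarding torsion from the \emph{subtracted} sheaf $N_{s'}$ raises the difference rather than lowering it. Justify the bound instead by the snake lemma. It shows that $N_{s'}\to N_{f\circ s'}$ is injective with cokernel $\mathrm{coker}(s'^{*}T_{Y'}\to (f\circ s')^{*}T_X)$, whose degree is exactly $(K_{Y'}-f^{*}K_X)\cdot s'_{*}\PP^1\leq 0$. The torsion-free quotient of this cokernel has degree no larger.
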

\begin{proof}
    Let $f:\PP^1 \to \tilde{Y}$ be the strict transform of $C$ on a resolution $\tilde{Y}$ of $Y$. Then the natural map $N_{f/\tilde{Y}} \to N_{f/X}$ shows that $N_{f/X}$ has a subbundle of degree at least $\deg N_{f/ \tilde{Y}}$. However, $\deg N_{f/ \tilde{Y}} = h^0(N_{f/ \tilde{Y}}(-1)) = 3e-2$. Thus, the quotient of $N_{C/X}$ by the saturation of the image of $N_{f/ \tilde{Y}}$ has non-positive degree.
\end{proof}

\begin{lemma} \label{lemma:hypersurfaceNormalBundleNoQuotients}
Let $n \geq 5$ and $X \subset \PP^n$ be a general hypersurface of degree $n-2$.  For any integers $c,e$ with $1 \leq c \leq n-3$ and $\frac{n-c+2}{3} \leq e \leq 2 \frac{n-c+2}{3}$, let $M_{e,c} \subset \Mor(\PP^1, X, e)$ be the locus of maps parameterizing rational normal curves $C \subset X$ for which $N_{C/X}$ admits a rank $c$ quotient $Q$ with $c_1(Q) \leq 0$.  Then the codimension of $M_{e,c}$ in $\Mor(\PP^1, X, e)$ is at least $c+1$.
\end{lemma}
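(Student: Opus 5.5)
We will prove the lemma by an incidence correspondence over the space of all rational normal curves, in the style of Proposition \ref{prop:behaviorOfCurvesOnGeneralHyps}. Let $\mathcal{R} \subset \Mor(\PP^1,\PP^n,e)$ be the locus of maps onto rational normal curves of degree $e$, each spanning a $\PP^e \subset \PP^n$. Let $I \subset \mathcal{R} \times \PP(H^0(\OO_{\PP^n}(n-2)))$ be the incidence of pairs $(C,X)$ with $C \subset X$. For $e \leq n$ the restriction map $H^0(\OO_{\PP^n}(n-2)) \to H^0(\OO_C(n-2))$ is surjective, so $I$ is a projective bundle over $\mathcal{R}$. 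Consider the sublocus $I_c \subset I$ of pairs for which $N_{C/X}$ has a rank $c$ quotient of degree $\le 0$. It suffices to show that $\codim(I_c, I) \geq c+1$. Then, for general $X$, the fiber of $I_c$ has codimension at least $c+1$ in the fiber of $I$, and that fiber is $\Mor(\PP^1,X,e)$ near rational normal curves.

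To compute this codimension, we fix $C$ and study the fiber of $I_c \to \mathcal{R}$. There is an exact sequence
\[0 \to N_{C/X} \to N_{C/\PP^n} \to \OO_C(n-2) \to 0,\]
where $N_{C/\PP^n} \cong \OO(e+2)^{e-1} \oplus \OO(e)^{n-e}$ for a rational normal curve. The last map is determined by the equation $F$ of $X$ through its image in $H^0(N^\vee_{C/\PP^n}(n-2)) = H^0(\mathcal{I}_C/\mathcal{I}_C^2(n-2))$. Since the restriction $H^0(\mathcal{I}_C(n-2)) \to H^0(\mathcal{I}_C/\mathcal{I}_C^2 (n-2))$ is surjective (for $C$ a rational normal curve, by projective normality and the vanishing $H^1(\mathcal{I}_C^2(n-2))=0$ in this range), a general $F$ containing $C$ induces a general surjection $\sigma: N_{C/\PP^n} \to \OO(e(n-2))$. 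The problem thus reduces to a statement about vector bundles on $\PP^1$. We need to show that the locus of $\sigma \in \operatorname{Hom}(N_{C/\PP^n},\OO(e(n-2)))$ whose kernel $K$ admits a rank $c$ quotient of degree $\le 0$ has codimension at least $c+1$. Note that $\deg K = (e+2)(e-1)+e(n-e) - e(n-2) = 3e-2$ and $\rk K = n-2$, so a quotient of rank $c$ and degree $\le 0$ is a strong degeneracy.

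For the bundle computation, we stratify by the splitting type of $K = \bigoplus \OO(a_i)$. A quotient of rank $c$ and degree $\le 0$ forces $\sum_{i\le c} a_i \le 0$ for the $c$ smallest $a_i$. The codimension of a splitting-type stratum in the space of maps is bounded below by $h^1(\mathcal{E}nd(K))$. This follows from the standard result that the kernel of a general map is balanced and that strata have codimension equal to $h^1(\mathcal{E}nd)$ of the type, which we verify for kernels via the versality of the family of maps $\sigma$. Since $K$ has average slope $(3e-2)/(n-2) > 0$, the degrees $a_i$ of the $c$ smallest summands sum to $\le 0$ while the total is $3e-2$. This gives $h^1(\mathcal{E}nd K) \geq \sum_{i \le c < j}(a_j - a_i - 1)$, which is at least about $c(n-2-c)\cdot\frac{3e-2}{n-2}$ minus lower order terms. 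We then check that this exceeds $c$ using $e \geq \frac{n-c+2}{3}$ and $c \leq n-3$. The constraint $e\ge \frac{n-c+2}{3}$ gives $3e-2 \geq n-c$, so the $n-2-c \geq 1$ large summands carry total degree at least $n-c$ beyond the small ones. This yields at least $c$ pairs with a gap of at least $2$, hence codimension $\ge c$ plus an extra unit from at least one larger gap.

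The main obstacle is this last estimate, and especially the boundary cases with $n-2-c$ small (for example $c=n-3$, where $K$ has only one large summand). There the naive $h^1$ bound gives exactly $c$, and we must gain one more. We would first handle these cases directly by noting that the large summand must then have degree $\ge 3e-2$. Since each $a_i \le e+2$ because $K \subset N_{C/\PP^n}$, we get $3e-2 \le e+2$, forcing $e \le 2$. That contradicts $e \ge \frac{n-c+2}{3}$ combined with $n\ge5$, or else leaves only finitely many small cases, which we verify by hand. A secondary point requiring care is the surjectivity of $\sigma$ onto the whole space of homomorphisms, which is what justifies transferring codimension from maps to hypersurfaces.
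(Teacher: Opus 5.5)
Your outline matches the paper's: stratify by the splitting type of $K=N_{C/X}$ and bound the codimension of each stratum by an $h^1(\mathcal{E}nd K)$-type quantity. However, the key step is false. You claim the codimension of a splitting-type stratum is at least $h^1(\mathcal{E}nd K)$ ``by versality of the family $\sigma$''. That fails as soon as $K$ has an $\OO(e+2)$ summand.

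The differential of $\sigma \mapsto \ker\sigma$ factors through the connecting map $\operatorname{Hom}(K,\OO(e(n-2))) \to \operatorname{Ext}^1(K,K)$ of the sequence $0 \to K \to N_{C/\PP^n} \to \OO(e(n-2)) \to 0$. Since $\operatorname{Ext}^1(K,\OO(e(n-2)))=0$, the cokernel of this map is $\operatorname{Ext}^1(K,N_{C/\PP^n})$. Because $N_{C/\PP^n}\cong \OO(e+2)^{e-1}\oplus\OO(e)^{n-e}$ and every summand of $K$ has degree at most $e+2$, this cokernel has dimension $(n-e)z$, where $z$ is the number of $\OO(e+2)$ summands of $K$. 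So the family of kernels is not versal once $z\ge 1$. The correct codimension is $h^1(\mathcal{E}nd K)-(n-e)z$, which is exactly Mioranci's formula (Corollary 1.4) that the paper uses.

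This is not a technicality: it is exactly the boundary case you tried to dispose of. For $c=n-3$ you correctly deduce that the large summand is $\OO(3e-2)$ with $3e-2\le e+2$, i.e.\ $e=2$. But $e=2$ satisfies $\frac{5}{3}=\frac{n-c+2}{3}\le e\le \frac{10}{3}$ for every $n\ge 5$, so this gives neither a contradiction nor finitely many cases. Here $K\cong\OO(4)\oplus\OO^{n-3}$ and $z=1$. The stratum is the condition that $\sigma$ vanish on the summand $N_{C/\PP^2}\cong\OO(4)$, which has codimension $h^0(\OO(2n-8))=2n-7$, whereas $h^1(\mathcal{E}nd K)=3n-9$. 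For $n=5$ the true codimension is $3=c+1$ on the nose, so your uncorrected bound overshoots by $n-2$ in a case with no slack at all.

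The repair is to split by $z$ as the paper does.
\begin{itemize}
\item If $z\ge 1$: bound $h^1(\mathcal{E}nd K)$ below by $h^1(\mathcal{H}om(\OO(e+2)^{z},Q'))$, where $Q'$ is the sum of the remaining summands. This gives $h^1(\mathcal{E}nd K)-(n-e)z\ge z(z+e(n-4))\ge 2n-7\ge c+1$.
\item If $z=0$: there is no correction. Your pairwise sum is just $-\chi$ of $\mathcal{H}om$ from the $n-2-c$ largest summands to the $c$ smallest. Using $3e-2\ge n-c$, this is at least $c(3e-2)-c(n-2-c)\ge 2c$, so no boundary analysis is needed.
\end{itemize}

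Separately, your surjectivity claim $H^1(\mathcal{I}_C^2(n-2))=0$ is plausible but needs a reference or proof. The paper sidesteps it by applying Mioranci's result directly in $\Mor(\PP^1,X,e)$.
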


\begin{proof}
We start by considering curves having $N_{C/X}$ of some fixed splitting type. Fix a vector bundle $V$ on $\PP^1$ that has a rank $c$ quotient of non-positive degree and let $z$ be the number of $\OO(e+2)$ factors in $V$.  By \cite[Corollary~1.4]{Mioranci}, the codimension in $\Mor(\PP^1, X, e)$ of the space of rational normal curves $C \subset X$ with $N_{C/X} \cong V$ is
$$h^1(C, \mathcal{E}nd(V)) - (n-e)z.$$
We will estimate $h^1(C, \mathcal{E}nd(V))$ to prove our claim.  

Write $V = \oplus_{i = 1}^{n-2}\OO_{\PP^1}(a_i)$ with $a_i \geq a_{i+1}$ for all $i$. Because there exists some rank $c$ quotient $Q$ with non-positive degree, we know that $Q_0 = \OO_{\PP^1}(a_{n-c-1})\oplus \ldots \oplus \OO_{\PP^1}(a_{n-2})$ has non-positive degree.

First, assume that $z \geq 1$. Set $K' = \OO_{\PP^1}(e+2)^{\oplus z}$, set $Q' = \oplus_{i = z+1}^{n-2}\OO_{\PP^1}(a_i)$, and observe that $V \cong K' \oplus Q'$. Then
$$h^1(C, \mathcal{E}nd(V)) \geq h^1(C,\mathcal{H}om(K', Q')) = \sum_{i = z + 1}^{n-2} z(e+2 - a_i - 1).$$
Moreover $\sum_{i = z+1}^{n-2} a_i = 3e - 2 - z(e+2)$ since $c_1(V) = 3e - 2$.  Thus, 
\begin{align*}
    h^1(C, \mathcal{E}nd(V)) -(n-e)z &\geq z(n-2-z)(e+1) - z(3e - 2 -z(e + 2)) -(n-e)z\\
    &=z(z + e(n-4)) \geq 1 + e(n-4).
\end{align*} 
Since $e \geq 2$ and $n \geq 5$ this last expression is at least $2n-7$.  As $n \geq 5$ and $c \leq n - 3$, we have $2n-7 \geq c+1$ showing the desired lower bound on the codimension.

Suppose instead that $z= 0$. Let $K$ be $\OO(a_1) \oplus \dots \oplus \OO(a_{n-c-2}) $.  Observe that by Riemann Roch
$$h^1(C, \mathcal{E}nd(V)) \geq h^1(C, \mathcal{H}om(K, Q_0)) \geq c(3e - 2) - c(n-2-c).$$
Since $(n-e)z = 0$, this is a lower bound for the codimension of $M_{e,c}$ in $\Mor(\PP^1, X, e)$.  As the coefficient of $e$ is positive, we may substitute $e = \frac{n - c + 2}{3}$ into the preceding expression to get a lower bound of
$c(n-c) - c(n-2-c) = 2c \geq c +1$
which proves our claim.
\end{proof}

Now we are ready to prove the main theorem of this section.

\begin{proof}[Proof of Theorem \ref{theorem:hypersurfaces_a_invariant}]
   When $n \leq 4$, $X$ is a hyperplane or a quadric threefold and the statement is well-known.  Thus we may assume $n \geq 5$.  We must show that $X$ has no subvarieties $Y$ with $a(Y,-K_{X}|_{Y}) \geq 1$.  Equivalently, letting $H$ denote the hyperplane class and $m$ denote the degree of $X$, we must show that there is no subvariety $Y$ such that $a(Y,H|_{Y}) \geq n+1-m$.  By \cite[Theorem 3.15]{LT19b}, it suffices to prove the result after cutting down $X$ and $Y$ by hyperplanes so that either $Y$ is a curve or $m=n-2$.  When $Y$ is a curve, then $a(Y,H|_{Y}) \leq 2$ so the desired inequality holds.  
   
   Now suppose $m=n-2$. In this case, observe that a general such hypersurface contains no 2-planes by \cite[Theorem 6.28]{3264}. To get a contradiction, suppose $Y$ is a subvariety with $a(Y,-K_{X}|_{Y}) \geq 1$. Let $M_e$ be the sublocus of $\Mor(\mathbb{P}^{1},Y)$ constructed by Lemma \ref{lem: Eric J section 7 Lemma 2} and let $c$ be the codimension of $Y$ in $X$. According to Lemma \ref{lem: Eric J section 7 Lemma 2}(1), $c$ is also the codimension of $M_{e}$ in the space of maps of degree $e$ rational curves on $X$.  We see by Proposition \ref{prop:behaviorOfCurvesOnGeneralHyps} that the codimension of the locus of degree $e$ curves in $X$ that are not rational normal curves is at least $n-e+1$. We claim $n-e+1$ is at least $c+1$, which shows that a general curve of $M_e$ is a rational normal curve. Using the upper bound on $e$, we see that $n-e+1$ is at least
    \[ n- 2 \frac{n-c+2}{3} + 1 = c-2 + (n-c+2) - 2 \frac{n-c+2}{3} + 1 = c-2+ \frac{n-c+2}{3} + 1 . \]
    This is greater than $c$ because $n -c \geq 2$, i.e.\ $Y$ has positive dimension.
    
    By Lemma \ref{lem: Eric J section 7 Lemma 3}, we see that a general curve in $M_e$ has $N_{C/X} = E \oplus Q$, with $Q$ having rank $c$ and $c_1(Q) \leq 0$. However, by Lemma \ref{lemma:hypersurfaceNormalBundleNoQuotients}, this is impossible.
\end{proof}

\begin{proof}[Proof of Theorem \ref{theo:introhypcomparison}:]
Combine Theorem \ref{theorem:hypersurfaces_a_invariant} and Theorem \ref{thm:otherequivalentaconditions}.
\end{proof}

\begin{proof}[Proof of Corollary \ref{coro:besthypersurfaces}:]
Combine Theorem \ref{theorem:hypersurfaces_a_invariant} and Theorem \ref{theo:strongestainvcurves}.
\end{proof}

\bibliographystyle{alpha}
\bibliography{exctowers}

\def\cprime{$'$}
\begin{thebibliography}{BCHM10}

\bibitem[AC14]{AC14}
Dan Abramovich and Qile Chen.
\newblock Stable logarithmic maps to {D}eligne-{F}altings pairs {II}.
\newblock {\em Asian J. Math.}, 18(3):465--488, 2014.

\bibitem[BCHM10]{BCHM}
Caucher Birkar, Paolo Cascini, Christopher. Hacon, and James McKernan.
\newblock Existence of minimal models for varieties of log general type.
\newblock {\em J. Amer. Math. Soc.}, 23(2):405--468, 2010.

\bibitem[BDPP13]{BDPP13}
S{\'e}bastian Boucksom, Jean-Pierre Demailly, Mihai Pa{\v u}n, and Thomas Peternell.
\newblock The pseudo-effective cone of a compact {K}\"ahler manifold and varieties of negative {K}odaira dimension.
\newblock {\em J. Algebraic Geom.}, 22(2):201--248, 2013.

\bibitem[BZ16]{BZ16}
Caucher Birkar and De-Qi Zhang.
\newblock Effectivity of {I}itaka fibrations and pluricanonical systems of polarized pairs.
\newblock {\em Publ. Math. Inst. Hautes \'Etudes Sci.}, 123:283--331, 2016.

\bibitem[Che14]{Chen14}
Qile Chen.
\newblock Stable logarithmic maps to {D}eligne-{F}altings pairs {I}.
\newblock {\em Ann. of Math. (2)}, 180(2):455--521, 2014.

\bibitem[DC14]{Dicerbo14}
Gabriele Di~Cerbo.
\newblock Uniform bounds for the {I}itaka fibration.
\newblock {\em Ann. Sc. Norm. Super. Pisa Cl. Sci. (5)}, 13(4):1133--1143, 2014.

\bibitem[dFH11]{dFH11}
Tommaso de~Fernex and Christopher~D. Hacon.
\newblock Deformations of canonical pairs and {F}ano varieties.
\newblock {\em J. Reine Angew. Math.}, 651:97--126, 2011.

\bibitem[EH16]{3264}
David Eisenbud and Joe Harris.
\newblock {\em 3264 and all that---a second course in algebraic geometry}.
\newblock Cambridge University Press, Cambridge, 2016.

\bibitem[Fuj89]{Fujita89}
Takao Fujita.
\newblock Remarks on quasi-polarized varieties.
\newblock {\em Nagoya Math. J.}, 115:105--123, 1989.

\bibitem[GLP83]{GrusonLazarPeskine}
Laurent Gruson, Robert Lazarsfeld, and Christian Peskine.
\newblock On a theorem of {C}astelnuovo, and the equations defining space curves.
\newblock {\em Invent. Math.}, 72(3):491--506, 1983.

\bibitem[Gro65]{EGAIVII}
Alexander Grothendieck.
\newblock {\'E}l{\'e}ments de g{\'e}om{\'e}trie alg{\'e}brique. {IV}. { \'E}tude locale des sch{\'e}mas et des morphismes de sch{\'e}mas. {II}.
\newblock {\em Inst. Hautes {\'E}tudes Sci. Publ. Math.}, (24):231, 1965.

\bibitem[GS13]{GS13}
Mark Gross and Bernd Siebert.
\newblock Logarithmic {G}romov-{W}itten invariants.
\newblock {\em J. Amer. Math. Soc.}, 26(2):451--510, 2013.

\bibitem[Har81]{Harris81}
Joe Harris.
\newblock A bound on the geometric genus of projective varieties.
\newblock {\em Ann. Scuola Norm. Sup. Pisa Cl. Sci. (4)}, 8(1):35--68, 1981.

\bibitem[HL20]{HL20}
Jingjun Han and Zhan Li.
\newblock On {F}ujita's conjecture for pseudo-effective thresholds.
\newblock {\em Math. Res. Lett.}, 27(2):377--396, 2020.

\bibitem[HL25]{HaseLiu25}
Matthew Hase-Liu.
\newblock A converse to geometric {M}anin's conjecture for general low degree hypersurfaces, 2025.
\newblock https://arxiv.org/abs/2501.12506.

\bibitem[HM98]{HM98}
Joe Harris and Ian Morrison.
\newblock {\em Moduli of curves}, volume 187 of {\em Graduate Texts in Mathematics}.
\newblock Springer-Verlag, New York, 1998.

\bibitem[H{\"o}r10]{Horing10}
Andreas H{\"o}ring.
\newblock The sectional genus of quasi-polarised varieties.
\newblock {\em Arch. Math. (Basel)}, 95(2):125--133, 2010.

\bibitem[HRS04]{HRS04}
Joe Harris, Mike Roth, and Jason Starr.
\newblock Rational curves on hypersurfaces of low degree.
\newblock {\em J. Reine Angew. Math.}, 571:73--106, 2004.

\bibitem[HTT15]{HTT15}
Brendan Hassett, Sho Tanimoto, and Yuri Tschinkel.
\newblock Balanced line bundles and equivariant compactifications of homogeneous spaces.
\newblock {\em Int. Math. Res. Not. IMRN}, (15):6375--6410, 2015.

\bibitem[JLR25]{JLR25}
Eric Jovinelly, Brian Lehmann, and Eric Riedl.
\newblock Free curves and fundamental groups, 2025.
\newblock https://arxiv.org/abs/2510.27031.

\bibitem[JLR26]{JLR25b}
Eric Jovinelly, Brian Lehmann, and Eric Riedl.
\newblock Optimal bounds in {B}end-and-{B}reak.
\newblock {\em Forum Math. Pi}, 14:Paper No. e16, 2026.

\bibitem[KM98]{KM98}
J\'anos Koll\'ar and Shigefumi Mori.
\newblock {\em Birational geometry of algebraic varieties}, volume 134 of {\em Cambridge Tracts in Mathematics}.
\newblock Cambridge University Press, Cambridge, 1998.
\newblock With the collaboration of C. H. Clemens and A. Corti, Translated from the 1998 Japanese original.

\bibitem[KM99]{KM99}
Se\'an Keel and James McKernan.
\newblock Rational curves on quasi-projective surfaces.
\newblock {\em Mem. Amer. Math. Soc.}, 140(669):viii+153, 1999.

\bibitem[KMM92]{KMM92b}
J\'anos Koll\'ar, Yoichi Miyaoka, and Shigefumi Mori.
\newblock Rational curves on {F}ano varieties.
\newblock In {\em Classification of irregular varieties ({T}rento, 1990)}, volume 1515 of {\em Lecture Notes in Math.}, pages 100--105. Springer, Berlin, 1992.

\bibitem[LRT25]{LRT23}
Brian Lehmann, Eric Riedl, and Sho Tanimoto.
\newblock Non-free curves on {F}ano varieties.
\newblock {\em Osaka J. Math.}, 62(1):19--50, 2025.

\bibitem[LRT26]{LRT24}
Brian Lehmann, Eric Riedl, and Sho Tanimoto.
\newblock Non-free sections of {F}ano fibrations.
\newblock {\em Mem. Amer. Math. Soc.}, 319(1626):v+109, 2026.

\bibitem[LST22]{LST22}
Brian Lehmann, Akash~Kumar Sengupta, and Sho Tanimoto.
\newblock Geometric consistency of {M}anin's conjecture.
\newblock {\em Compos. Math.}, 158(6):1375--1427, 2022.

\bibitem[LT19a]{LT19}
Brian Lehmann and Sho Tanimoto.
\newblock Geometric {M}anin's conjecture and rational curves.
\newblock {\em Compos. Math.}, 155(5):833--862, 2019.

\bibitem[LT19b]{LT19b}
Brian Lehmann and Sho Tanimoto.
\newblock On exceptional sets in {M}anin's conjecture.
\newblock {\em Res. Math. Sci.}, 6(1):Paper No. 12, 41, 2019.

\bibitem[LTT18]{LTT18}
Brian Lehmann, Sho Tanimoto, and Yuri Tschinkel.
\newblock Balanced line bundles on {F}ano varieties.
\newblock {\em J. Reine Angew. Math.}, 743:91--131, 2018.

\bibitem[Mio25]{Mioranci}
Lucas Mioranci.
\newblock Normal bundles of rational normal curves on hypersurfaces.
\newblock {\em Michigan Math. J.}, 75(3):639--671, 2025.

\bibitem[PRT24]{PRT24}
Anand Patel, Eric Riedl, and Dennis Tseng.
\newblock Moduli of linear slices of high degree smooth hypersurfaces.
\newblock {\em Algebra Number Theory}, 18(12):2133--2156, 2024.

\bibitem[RY19]{RY19}
Eric Riedl and David Yang.
\newblock Kontsevich spaces of rational curves on {F}ano hypersurfaces.
\newblock {\em J. Reine Angew. Math.}, 748:207--225, 2019.

\bibitem[Sen21]{Sengupta21}
Akash~Kumar Sengupta.
\newblock Manin's conjecture and the {F}ujita invariant of finite covers.
\newblock {\em Algebra Number Theory}, 15(8):2071--2087, 2021.

\bibitem[{Sta}25]{stacks-project}
The {Stacks project authors}.
\newblock The stacks project.
\newblock \url{https://stacks.math.columbia.edu}, 2025.

\bibitem[Tia12]{Tian12}
Zhiyu Tian.
\newblock Symplectic geometry of rationally connected threefolds.
\newblock {\em Duke Math. J.}, 161(5):803--843, 2012.

\end{thebibliography}
\end{document}